\definecolor{LGray}{rgb}{0.8,0.8,0.8}
\newtheorem{thm}{Theorem}[section]
\newtheorem{prop}[thm]{Proposition}
\newtheorem{coro}[thm]{Corollary}
\newtheorem{lem}[thm]{Lemma}
\newtheorem{conj}[thm]{Conjecture}
\newtheorem{ex}[thm]{Example}
       \theoremstyle{definition}
       \newtheorem{dfn}{Definition}[section]
       \theoremstyle{remark}
       \newtheorem*{rmk}{Remark}
\newcommand{\E}{\mathbb{E}}
\newcommand{\R}{\mathbb{R}}
\newcommand{\C}{\mathbb{C}}
\newcommand{\N}{\mathbb{N}}
\newcommand{\Z}{\mathbb{Z}}
\newcommand{\U}{\mathrm{U}}
\newcommand{\SU}{\mathrm{SU}}
\newcommand{\SO}{\mathrm{SO}}
\newcommand{\Sp}{\mathrm{Sp}}
\newcommand{\GL}{\mathrm{GL}}
\newcommand{\Tb}{\mathbb{T}}
\newcommand{\Sbb}{\mathbb{S}}
\newcommand{\Gbb}{\mathbb{G}}
\newcommand{\Fbb}{\mathbb{F}}
\newcommand{\Dbb}{\mathbb{D}}
\newcommand{\Tbb}{\mathbb{T}}
\newcommand{\Ld}{\mathrm{L}}
\newcommand{\YM}{\mathrm{YM}}
\newcommand{\Tr}{\mathrm{Tr}}
\newcommand{\tr}{\mathrm{tr}}
\newcommand{\build}[3]{\mathrel{\mathop{\kern 0pt#1}\limits_{#2}^{#3}}}
\def\br{\begin{color}{blue}}
\def\bb{\begin{color}{blue}}
\def\bg{\begin{color}{blue}}
\def\er{\end{color}}
\def\eg{\end{color}}
\def\eb{\end{color}}
\def\a{\alpha}
\def\b{\beta}
\def\g{\gamma}
\def\G{\Gamma}
\def\l{\lambda}
\def\P{\mathbb{P}}
\def\Id{\operatorname{Id}}
\def\mfk{\mathfrak}
\def\mfB{\mathfrak{B}}
\def\mfC{\mathfrak{C}}
\def\mfP{\mathfrak{P}}
\def\mfh{\mathfrak{h}}
\def\mfm{\mathfrak{m}}
\def\cA{\mathcal{A}}
\def\cB{\mathcal{B}}
\def\cH{\mathcal{H}}
\def\cC{\mathcal{C}}
\def\vol{\mathrm{vol}}
\def\scV{\mathscr{V}}
\def\<{\langle}
\def\>{\rangle}
\def\Hom{\mathrm{Hom}}
\def\Scr{\mathscr{S}}
\def\pl{\partial}
\def\lto{\longrightarrow}
\def\bth{\begin{thm}}
\def\eth{\end{thm}}
\def\ov{\overline}
\def\und{\underline}
\author{Antoine Dahlqvist\thanks{University of Sussex, School of Mathematical and Physical Sciences, Pevensey 3 Building, Brighton, UK} \and Thibaut Lemoine\thanks{Universit\'e de Lille, CNRS, UMR 9189 - CRIStAL, 59651 Villeneuve d'Ascq, France}}
\title{Large N limit of the Yang--Mills measure\\ on compact surfaces II: \\Makeenko--Migdal equations and planar master field}
\begin{document}
\bibliographystyle{plain}
\maketitle
\abstract{This paper  considers the large $N$ limit of  Wilson loops for the two-dimensional Euclidean Yang--Mills measure on all  orientable compact surfaces of genus larger or equal to $1$, with a   structure group given by a classical compact matrix Lie group. Our main theorem shows the convergence of  all Wilson loops in probability, given that it holds true on a restricted class of loops, obtained as a modification of geodesic paths. Combined with the result of \cite{DL}, a corollary is the convergence of all Wilson loops on the torus. Unlike the sphere case, we show that the limiting object is remarkably expressed thanks to the master field on the plane defined in \cite{AS,Lev3} and we conjecture that this phenomenon is also valid for all surfaces of higher genus. We prove that this conjecture holds true whenever it does for the restricted class of loops of the main theorem. Our result on the torus justifies the introduction of an interpolation between free and classical convolution of probability measures, defined with the free unitary Brownian motion but differing from $t$-freeness of \cite{BenaychLev} that was defined in terms the liberation process of Voiculescu \cite{Voi}. In contrast to \cite{DL}, our main tool is a fine use of Makeenko--Migdal equations, proving  uniqueness of their solution under suitable assumptions, generalising the arguments of \cite{DN,Hal2}.}

\section{Introduction}

The two-dimensional Yang--Mills measure  is a probability model originating from Euclidean quantum field theory in the setting of pure gauge theory. It describes a generalised random connection on a principle bundle over a two dimensional manifold, with a compact Lie group as structure group, making rigorous the path integral over connections for the so-called Yang--Mills action.  Different equivalent  mathematical definitions  have been given in two dimensions and are due to \cite{Gro,Dri,Sen0,GKS,AlbeverioI,AlbeverioII,Lev2,Chev}. The work of \cite{Wit} brought to light many special features of the Yang--Mills measure in two dimensions, including its partial integrability, used as a way to perform exact volume computations for the Atiyah--Bott--Goldman measure \cite{AB,Goldman} on the space of flat connections  \cite{Liu,BismutLab,Sen3}.

\vspace{0.5em}

When a compact Lie group $G$ and a surface $\Sigma$ are given, the Yang--Mills measure can be mathematically understood as a random matrix model   which assigns to any loop\footnote{with enough regularity.} of the surface a random matrix so that concatenation and reversion of loops are compatible with the group operations. In \cite{Lev3}, it is shown that it gives rise to a random homomorphism from the group of rectifiable reduced loops of the surface to the chosen group $G.$

\vspace{0.5em}

We consider here a compact, connected, orientable surface $\Sigma$ of genus $g\ge 1$ and a group $G$ belonging to a series of classical compact matrix groups. We are primarily interested in the traces of these matrices, called \emph{Wilson loops,}  when the rank of $G$ goes to infinity. We ask whether Wilson loops converge in probability under the Yang--Mills measure, towards a deterministic function.

\vspace{0.5em}

Let us try to give a brief historical account of this problem. In physics, a motivation for the focus on Wilson loops is due to K. Wilson's work \cite{Wil} related to quarks confinement. The idea of studying the large rank regime in gauge theories, known as large $N$ limit, was first initiated by t'Hooft \cite{Hoo} on QCD. This lead to many articles in theoretical physics in the 80's studying the question in two dimensions, a partial list being \cite{KazPlan,KK,MM,Mig,Wit2,GrossMat,GG,GrossTaylor}.  In mathematics, this problem was advertised by I. Singer in  \cite{Sin} where the candidate limit of Wilson loops was called \emph{master field}, following the physics literature. The case of the plane and the sphere have been respectively proved in \cite{Xu,AS,Lev} and\footnote{See also \cite{Hal2} where a conditional result was obtained implying the case of the sphere, given the convergence for simple loops.} \cite{DN}. The case of general compact surfaces has been first investigated by \cite{Hal2} where loops contained in topological disc can be considered whenever the convergence holds for simple loops. The study of similar questions in the plane for analogs of the Yang--Mills measure has been treated in \cite{CDG}. In higher dimension, an analog\footnote{though in this case, there is  at the time of writing, no construction of the continuous Yang--Mills measure in dimension 3 and higher is available.  } of this question for a lattice model has also been considered \cite{Chatt}. Very recently and independently from the current work, it was shown in  \cite{MageeII,MageeI} that under the Atiyah--Bott--Goldman measure, which can be understood as the weak limit of the Yang--Mills measure when the area of the surface vanishes, the expectation of Wilson loops converges and has a $\tfrac{1}{N}$ expansion when the group belongs to the series of special unitary 
matrices and the surface is closed, orientable and of genus $g\ge 2.$ For further details and  references on the motivations  of this problem, we refer to \cite[Sec. 1]{DL} and \cite[Sec. 2.5.]{Lev7}.

\vspace{0.5em}

In this article,  we give a complete answer in the case of the torus and a conjecture and a partial result for all surfaces with  genus $g\ge 2$. It is the sequel of \cite{DL} where we have shown the convergence for a large\footnote{informally described as all simple loops or iteration of simple loops, and all loops which do not visit one handle of the surface.} but incomplete class of loops. Let us recall that in the case of the plane, the master field can  be described thanks to free probability and more specifically in terms of free unitary Brownian motion \cite{AS,Lev3}. The case of the sphere involves a different non-commutative stochastic process called the free unitary Brownian bridge \cite{DN}. In contrast, for the torus, we show that after lifting loops to the universal cover, the master field is also described by the planar master field and we conjecture that the same holds true for any surface of higher genus. In the torus case, the master field provides an interpolation parametrised by the total area of the torus, between the free and the classical convolution of two Haar unitaries built with the free unitary Brownian motion, which differs from the $t$-freeness introduced by \cite{BenaychLev} using the liberation process of \cite{Voi}.

\vspace{0.5em}

The aim of the current paper is to investigate the stability of Wilson loops convergence under homotopy equivalences. To do so, we will use a set of recursive equations named after Makeenko and Migdal \cite{MM}. When a loop is deformed in a specific way -- that we  
call a Makeenko--Migdal deformation -- these equations relate the differential of the expected Wilson loops  with the expectation of a product of Wilson loops having a smaller number of intersection points. These equations can be understood as a remarkable analog of Schwinger--Dyson equations used in random matrix theory and were first inferred heuristically in \cite{MM} as an integration by part for the path integral over the space of connections. A first rigorous proof was given in the case of the plane in\footnote{See also \cite[sect. 7]{Dah2} for a variation of this proof and \cite[section 0]{Lev3} for the heuristics of the original proof of \cite{MM} based on an integration by part in infinite dimension. See also \cite{DriverMM} for a proof closer in spirits to the original argument of \cite{MM}.} \cite{Lev3} and was later tremendously simplified and generalised in \cite{DHK,DGHK} in a local way that applies to any surface. Makeenko--Migdal equations were crucial to \cite{DN,Hal2} leading to an induction argument  on the number of intersection points that reduced  the convergence of all Wilson loops on the sphere to the case of simple loops. In the case of other surfaces, the very same strategy fails a priori, as some loops cannot be deformed to simpler loops without raising the number of intersection points, while some homotopy classes do not contain any loop for which the convergence is known to hold. We show here that the first hurdle can be overcome, allowing to reduce the problem, completely in the torus case and partially when $g\ge 2$, to the class of loops considered in \cite{DL}. We leave the completion of this program for all compact surfaces to a future work.

\vspace{0.5em}

The paper is organised as follows. The first four following sections of the introduction give respectively an informal definition of the Yang--Mills measure and of the main results, a discussion on the relation with the Atiyah--Bott--Goldman measure and the work \cite{MageeI,MageeII}, 
a consequence of the result on the torus in non-commutative probability, and lastly, a sketch of the strategy of the main 
proofs. Section 2 recalls and adapts some combinatorial notions of discrete homotopy and homology of loops in embedded graph instrumental to the proof. Section 3 gives the definition of the Yang--Mills measure, a statement of the Makeenko--Migdal equations 
and states the main results of the article. Section 4 consists in the proof of our main technical result, which is Proposition \ref{-->Prop: MM  Gen Non Zero Homol}. Section 5 describes the behaviour of Wilson loops when one performs surgery on the underlying 
surface. Section 6 is finally discussing how the master field on the torus  yields an interpolation between classical and free convolution, different from Voiculescu's liberation process. In an appendix, for the sake of completeness, we recall and 
prove several results on Makeenko--Migdal equations, that are quite standard in the literature for unitary groups but not necessarily for all classical groups.

\tableofcontents

\subsection{Yang--Mills measure and master field, statement of results}
\label{--------sec: informal Statement}

We shall first give a heuristic definition of the Yang--Mills measure in its geometric setting and state informally the main results of the current 
article. Proper definitions and statements are respectively given in sections \ref{------sec:YM area continuity}  and \ref{-----sec:WL MF}.

\vspace{0.5em}

Let $\Sigma$ be either a compact, connected, closed orientable surface of  genus $g\ge 1$ endowed with a Riemannian metric -- we shall call it a \emph{compact surface of genus} $g$ in the sequel --, or the Euclidean plane $\R^2$ with its standard inner product. Let $G_N$ be a classical compact matrix Lie group of size $N$, i.e. viewed compact subgroup of $\GL_N(\C)$. We assume that the Lie algebra $\mathfrak{g}_N$ of $G_N$ is endowed with an $\mathrm{Ad}$-invariant inner product $\langle\cdot,\cdot\rangle$, as in section \ref{----sec:HK}. Given a $G_N$-principal bundle $(P,\pi,\Sigma)$, a connection is a 1-form $\omega$ on $M$ valued in adjoint fibre bundle  $\mathrm{ad}(P)$, its curvature  is the $\mathrm{ad}(P)$-valued 2-form $\Omega=d\omega+\frac12[\omega\wedge\omega]$. The \emph{Yang--Mills action} of a connection $\omega$ on a $G_N$-principal bundle $(P,\pi,\Sigma)$ is defined by
\begin{equation}\label{eq:YM_action}
S_{\YM}(\omega)=\frac12 \int_\Sigma \langle\Omega\wedge\star\Omega\rangle,
\end{equation}
where $\star$ denotes the Hodge operator. An important feature of dimension $2$ is that whenever $\Psi$ is a diffeomorphism of $\Sigma$ preserving its 
volume form,    
\begin{equation}
S_\YM({\Psi_*\omega})=S_{\YM}(\omega).\label{eq:InvArea}
\end{equation}
The \emph{Euclidean Yang--Mills measure} is the formal Gibbs measure 
\begin{equation}\label{eq:YM_informal}
d\mu_{\YM}(\omega) ``=" \frac{1}{Z}e^{-S_{\YM}(\omega)}\mathcal{D}\omega,
\end{equation}
where $\mathcal{D}\omega$  plays the role of a formal Lebesgue measure on the space of connections over an arbitrary principal bundle\footnote{There 
is here an apparent additional issue with this vague definition. A slightly less dubious state space could be obtained by fixing a representant of each principal bundle equivalence class over $\Sigma$ and by considering instead the set of pairs of a principal bundle belonging to this family together with a 
connection on it. When $\Sigma$ is a contractible space or if $G_N$ is simply connected, there is only one equivalence class of $G_N$-principal bundles over $\Sigma$ and this issue disappears. We shall not discuss further the question of the type of the principal bundle under the Yang--Mills in this text. For more details and rigorous results we refer to \cite{Lev8}.} and $Z$ is a normalisation constant supposed to ensure the total mass to be $1$. We choose here not to include a parameter in front of the action, as it can be included in the volume form of $\Sigma.$

\vspace{0.5em}

The space $\mathcal{A}(P)$ being infinite-dimensional, the latter equation has no mathematical meaning. Though at first stance, as the Yang--Mills 
action of $\omega$  can be seen as the $L^2$-norm of the curvature $\Omega$, an analogy with Gaussian measures can be hoped.  However, when $G_N$ is 
not abelian, $\Omega$ depends  non-linearly on 
$\omega$ which prevents any direct construction of $\mu_\YM$ using a Gaussian measure. In two dimensions, this non-linearity can be compensated by the 
so-called gauge symmetry of $S_{\YM}$ which allows to bypass this problem. This enabled the constructions of \cite{Gro,Dri,Sen0} based on stochastic 
calculus. See also \cite{Chev} for a recent approach defining further a random, distribution valued, connection on trivial bundles over the two-dimensional torus.  We  follow here instead the approach of \cite{Lev2} which focuses on the 
holonomy of a connection, whose law can be directly defined 
using the heat kernel on $G_N.$ The  definition we are using is recalled in section \ref{------sec:YM area continuity}, it agrees with  the 
construction of \cite{Gro,Dri,Sen0} thanks to the so-called Driver--Sengupta formula.  An important feature of this measure is suggested by  \eqref{eq:InvArea}. For any two-dimensional Riemannian manifold  $\Sigma'$ 
diffeomorphic to $\Sigma,$ and for any diffeomorphism $\Psi: \Sigma\to \Sigma'$, there is an induced measure $\Psi_*(\YM_{\Sigma})$ on connections of $(P,\Psi\circ\pi,\Sigma').$ If $\Psi$ preserves the area, then 
$$\Psi_*(\YM_{\Sigma})=\YM_{\Sigma'}.$$
We shall call this property the \emph{area-invariance} of the Yang--Mills measure. Moreover,  for any relatively compact, contractible, open subset $U$ of $\Sigma$, the restriction to $U$ induces a measure  
$\mathcal{R}^U_*(\YM_{\Sigma})$ 
on connections of $(\pi^{-1}(U),\pi,U)$. When $\Sigma $ is the Euclidean plane $\R^2$ or the Poincar\'e disc $\Dbb_\mfh$, with its usual (hyperbolic) metric, it 
satisfies\footnote{Compact surfaces do 
not have this 
property but there is still absolute continuity in place of continuity. This was instrumental in \cite{DL}. }  $\mathcal{R}_*^U(\YM_{\Sigma})=\YM_{U},$ where $U$ is endowed with the metric of $\Sigma$.

\vspace{0.5em}

Let $\omega$ be a connection on a $G_N$-principal bundle $(P,\pi,\Sigma)$, and $U$ be an open subset of $\Sigma$ where  $\pi:\pi^{-1}(U)\to U$ can be\footnote{the tubular neighbourhood of a smooth loop or  of an embedded graph could be such an open set.} trivialised. When such a trivialisation has been fixed, its \emph{holonomy} is a function $\gamma\mapsto\mathrm{hol}(\omega,\gamma)$ mapping paths\footnote{In this section the space of paths is not specified and could be taken as the space of piecewise smooth paths with constant speed and transverse intersections. A loop is a path with starting point equal to its endpoint.} $\gamma:[0,1]\to U$ to elements of the group $G_N$ such that
\[
\mathrm{hol}(\omega,\gamma_1\gamma_2)=\mathrm{hol}(\omega,\gamma_2)\mathrm{hol}(\omega,\gamma_1)
\]
for any paths $\gamma_1$ and $\gamma_2$ such that the endpoint of $\gamma_1$ coincides with the starting point of $\gamma_2$, while for any path 
$\gamma$,
\[
\mathrm{hol}(\omega,\gamma^{-1})=\mathrm{hol}(\omega,\gamma)^{-1},
\]
where $\g_1\g_2$ and $\g^{-1}$ denote the concatenation and reversion of the paths.

\vspace{0.5em}

When $G_N$ is a group of matrices of size $N$ and $\ell$ is a loop of $U$, the  \emph{Wilson loop}   associated to $\ell$ is the function
$$W_\ell(\omega)=\tr(\mathrm{hol}(\omega,\ell)),$$
where $\tr=\frac 1 N \Tr,$ with $\Tr$ the usual trace of matrices. This  function can be shown to be independent of the choice of local trivialisation of $(P,\pi,\Sigma)$
and is therefore only a function of $\omega$ and $\ell.$

Our  primary source of interest is the study of the random variables $W_{\ell}:=W_{\ell}(\omega)$, for  loops of $\Sigma,$ when $\omega$ is sampled 
according to $\YM_\Sigma.$ We are interested in the large $N$ limit of $W_\ell,$ when the scalar product $\<\cdot,\cdot\>$ is  chosen as in section \ref{----sec:HK} and  the volume form of the surface is fixed.  The paper \cite{Sin} seems to be the first mathematical article addressing this question, and it motivates the following conjecture, also suggested by \cite{Lev,DGHK,Hal2}. 

\begin{conj}\label{conj_Sing}Let $G_N$ be a classical compact matrix Lie group of size $N$, endowed with the metric of section \ref{----sec:HK} and denote by 
$\Sigma$ a compact surface of genus $g\geq 0$, the Euclidean plane $\R^2$ or the Poincar\'e disc $\mathbb{D}_\mathfrak{h}$. For any loop $\ell$ of $\Sigma$, there is a constant $\Phi_\Sigma(\ell)$ such that under $\mathrm{YM}_\Sigma$
\begin{equation}
W_\ell\to\Phi_\Sigma(\ell) \ \text{in probability as} \ N\to\infty.\label{eq:CV in P ConjSin}
\end{equation}
The functional $\Phi_\Sigma$ is called the \emph{master field on} $\Sigma$.
\end{conj}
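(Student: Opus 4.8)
The plan is to proceed in two stages: first isolate a manageable reference class of loops on which convergence can be established directly, and then propagate convergence to all loops using the Makeenko--Migdal (MM) equations. Concretely, for a compact surface $\Sigma$ of genus $g$, I would fix a cell decomposition adapted to the standard $4g$-gon presentation of $\pi_1(\Sigma)$, and take as reference class the loops built from the generating simple loops of the handles together with their iterates and disjoint unions --- exactly the class handled in \cite{DL} when $g=1$, and its analogue for $g\ge 2$. On this class, convergence in probability of $W_\ell$ should follow by combining the Driver--Sengupta heat-kernel description of $\YM_\Sigma$ with concentration estimates for the heat kernel on $G_N$ and the known large $N$ behaviour of the relevant marginals (products of independent Haar-distributed and Brownian-type matrices, converging to free unitary Brownian motions and Haar unitaries). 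This is precisely the input supplied by \cite{DL} for the torus, and is the genuinely open ingredient for $g\ge 2$.

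The second stage is the core. Given the reference class, I would run an induction on the number $n$ of self-intersection points of a loop $\ell$. The MM equations, in the local form of \cite{DHK,DGHK}, express a suitable alternating sum of area-derivatives of $\E[W_\ell]$ --- and, after passing to the deterministic limit, of $\Phi_\Sigma(\ell)$ --- in terms of Wilson-loop functionals of loops with strictly fewer intersection points, obtained by smoothing a crossing. Coupled with the area-invariance \eqref{eq:InvArea} and the continuity of Wilson loops in the areas of the faces, one obtains a closed system of ODEs for the candidate limit along paths of MM deformations. The crucial new ingredient --- which is the content of Proposition \ref{-->Prop: MM  Gen Non Zero Homol} in the present paper --- is a uniqueness statement: the solution of this system with the prescribed boundary behaviour (when a face area degenerates, the loop simplifies homotopically) is unique, generalising the arguments of \cite{DN,Hal2} to loops of nonzero homology. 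One then checks that any loop can be connected, through a finite chain of MM deformations that never leaves the region where uniqueness applies, to a configuration whose Wilson loops are controlled by the reference class. Running the same recursion on $\R^2$ with the same boundary data determines the planar master field of \cite{AS,Lev3}, so that after lifting loops to the universal cover one simultaneously obtains the existence of $\Phi_\Sigma$ and its identification with the planar object.

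The main obstacle is the final step of the induction: showing that MM deformations suffice to reduce every homotopy class to the reference class without an uncontrolled increase in intersection number and without drifting outside the uniqueness regime. On the sphere and the plane this is automatic, since every loop is homotopic to a simple loop; on a surface of genus $g\ge 1$ there are loops --- for instance commutators of handle generators --- that cannot be simplified to simple loops by MM moves, and some homotopy classes contain no loop for which convergence is independently known. For the torus the abelian fundamental group makes the combinatorics tractable and the reduction goes through, which is the main theorem here combined with \cite{DL}; for $g\ge 2$ one would need a substitute for the missing base cases, such as a direct large $N$ analysis of Wilson loops lifted to the hyperbolic universal cover, or a strengthening of the MM uniqueness argument that dispenses with a base case altogether. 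Supplying either of these is, at present, the crux of the conjecture and is deliberately left open beyond the partial result proved in this paper.
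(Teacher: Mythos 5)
You are addressing a statement that is a \emph{conjecture}: the paper itself does not prove it in this generality (it proves the torus case, relies on \cite{AS,Lev,Xu} and \cite{DN} for the plane and the sphere, and only gives conditional results for $g\ge 2$), and neither do you --- your last paragraph concedes that the base case for $g\ge 2$ (convergence on the reference class, e.g.\ geodesic loops of non-zero homology or the delayed-geodesic loops of Theorem \ref{-->THM: Intro Delayed Geodesic}) is open. So what you have written is a programme rather than a proof, and as a programme it does track the paper's two-stage strategy: boundary data supplied by \cite{DL}, propagation by Makeenko--Migdal equations together with a uniqueness statement generalising \cite{DN,Hal2}.

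The genuine gap is in your propagation step, and it is present even in the cases the paper does settle. An induction ``on the number of self-intersection points'' is precisely what fails on surfaces of genus $g\ge 1$, including the torus, and not only because base cases are missing. First, not every area deformation is an admissible MM deformation: when $[\ell]=0$ the Makeenko--Migdal vectors span only the subspace preserving both the total area and the algebraic area $\langle n_\ell,a\rangle$ (Lemma \ref{__Lem: Charac MM}), so a face realising a strict maximum of the winding function --- the central face in Figure \ref{----Fig:RaqMax} --- cannot be deflated without creating new crossings; no chain of MM moves that never increases the intersection number connects such a loop to your reference class. The paper gets around this not through ``tractable abelian combinatorics'' but by changing the induction variable: loops are marked with a nested part, modified by pull and twist moves so that the algebraic-area constraint remains satisfiable, the intersection number is allowed to grow, and control is recovered via a Gr\"onwall inequality (Lemma \ref{__Lem:MM Gronwall}) run along shortening homotopy sequences towards geodesics, with a complexity built from tiling length and contractible intersection points; non-zero homology loops are handled by a separate, simpler induction (Proposition \ref{-->Prop: MM  Gen Non Zero Homol}), and the passage from regular loops in maps to arbitrary finite-length loops needs the continuity estimate \eqref{eq:Bound area YM Bd} together with Proposition \ref{-->:Prop Extension Piecewise Geo}. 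None of this appears in your proposal, so even the torus case, which you assert ``goes through'', is not established by your argument; and for $g\ge 2$ you would in addition have to supply the missing boundary condition, which is exactly why the statement remains a conjecture.
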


The case of plane was first proved in \cite{Xu,AS} for $G_N=\U(N)$. In \cite{Lev}, the above statement was proved simultaneously to \cite{AS} 
for all groups mentioned, and for a large family of loops given by loops of finite length. Moreover, motivated by the physics articles \cite{MM,Mig,KK}, L\'evy proved in \cite{Lev} recursion relations giving a way to compute explicitly $\Phi_{\R^2}$ for all loops with finitely many intersections.

\vspace{0.5em}

By area invariance and restriction property, the result on the hyperbolic plane can be deduced directly from these latter works as follows. According to a theorem of Moser \cite{Moser}, any relatively compact open disc  $U$ of $\Dbb_\mfh$ with hyperbolic volume $t$ can be mapped to the open Euclidean disc  $D_t$ of $\R^2$ centered at $0$ and of area $t$, by a diffeomorphism $\Psi:U\to D_t$ sending the restriction of the hyperbolic volume form on $U$ to the restriction of Euclidean volume form on $D_t.$  By area-invariance, $\mathcal{R}^{U}_*(\YM_{\Dbb_\mfh})=\YM_U=\Psi^{-1}_*(\YM_{D_t})$, so that the conjecture holds true for $\Dbb_\mfh$ with 
$$\Phi_{\Dbb_\mfh}(\ell)=\Phi_{\R^2}(\Psi\circ\ell)$$ 
for any loop $\ell$ with range included in $U.$ 

\vspace{0.5em}

For  $\Sigma=\Sbb^2$, the conjecture was proved in  \cite{DN} for  all loops of finite length and $G_N=\U(N),$ while \cite{Hal2} 
gave a conditional result on $\Sbb^2$ based on an argument similar to \cite{DN}, as well as a 
conditional result for other surfaces for loops included in a topological disc, given  convergence of for simple loops. In \cite{DL} we gave an alternative  
argument proving  a generalisation of the results of \cite{Hal2} on compact surfaces without using the conditions \cite{Hal2}, see section \ref{-----sec:Strategy}. The current article was written with the aim to strengthen the  argument common to \cite{DN} and \cite{Hal2} in order to address the conjecture on all compact manifolds. This led to the following theorem and conjecture. 

\begin{thm}\label{-->THM: Torus Intro} When $\Tb_T$ is a torus of  volume $T>0$,   conjecture \ref{conj_Sing} is valid. Moreover, considering $\Tb_T$ as the quotient of  the Euclidean plane $\R^2$  by $\sqrt{T}. \Z^2,$ 
$$\Phi_{\Tb_T}(\ell)=\left\{\begin{array}{ll} \Phi_{\R^2}(\tilde \ell) & \text{if }\ell\text{ is contractible,} \\ &\\  0 &\text{ otherwise,} \end{array}\right. $$
where for any  continuous loop $\ell$ in $\Tb_T,$ $\tilde\ell$ is a lift of $\ell$ to $\R^2,$ that is a smooth loop of $\R^2$, whose projection on $\R^2/\sqrt{T}.\Z^2$ is $\ell.$ 
\end{thm}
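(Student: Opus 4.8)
The plan is to split the statement into two parts: (i) the contractible case, where $\Phi_{\Tb_T}(\ell)=\Phi_{\R^2}(\tilde\ell)$, and (ii) the non-contractible case, where $\Phi_{\Tb_T}(\ell)=0$; in both cases the first task is to establish convergence in probability of $W_\ell$ to the claimed constant. The overall strategy, as announced in the introduction, is to reduce an arbitrary loop to the restricted class of loops treated in \cite{DL} by a sequence of Makeenko--Migdal deformations, and then invoke the uniqueness of solutions to the Makeenko--Migdal equations (Proposition \ref{-->Prop: MM Gen Non Zero Homol}) together with the convergence already known on that restricted class. Concretely, I would first fix a loop $\ell$ on $\Tb_T$ in general position (finitely many transverse double points), lift it to $\tilde\ell$ on $\R^2$, and observe that the homotopy/homology data of $\ell$ in the embedded graph it traces out is exactly the combinatorial input that Section 2 is set up to handle.

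For the contractible case, I would argue that the Yang--Mills holonomy of $\ell$ on $\Tb_T$, restricted to a tubular neighbourhood of the loop, is governed by the same local combinatorial data (faces, their areas, intersection signs) as the planar holonomy of $\tilde\ell$, up to the global constraint coming from the nontrivial topology of the torus. The key point is that the Makeenko--Migdal equations are \emph{local} — by \cite{DHK,DGHK} they take the same form on any surface — so the deformation calculus relating $W_\ell$ to Wilson loops with fewer intersection points is formally identical on $\Tb_T$ and on $\R^2$. One then uses Proposition \ref{-->Prop: MM Gen Non Zero Homol} to propagate convergence: any contractible $\ell$ can be deformed, through Makeenko--Migdal moves that do not increase the number of intersections (this is precisely the obstacle \cite{DN,Hal2} face on higher-genus surfaces but which can be overcome on the torus), down to a loop in the \cite{DL} class, where $W_\ell\to\Phi_{\R^2}(\tilde\ell)$ is already known; uniqueness of the MM solution then forces the limiting functional on $\Tb_T$ to coincide with $\ell\mapsto\Phi_{\R^2}(\tilde\ell)$, which is well-defined because a contractible loop's lift is a genuine loop in $\R^2$ and the planar master field is homotopy-invariant within a fixed area data.

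For the non-contractible case, the limit being $0$ should follow from the same machinery once one checks the base case: a loop winding nontrivially around a handle cannot be reduced to a contractible loop, but it \emph{can} be reduced via MM deformations to one of the loops handled in \cite{DL} (the ones ``which do not visit one handle''), and for those the result of \cite{DL} already gives the limit $0$. An alternative, more structural justification is the interpolation picture mentioned in the introduction: the holonomy along a non-contractible cycle is a free unitary Brownian motion conjugated against an independent Haar-distributed unitary coming from the global monodromy of the bundle, whose expected normalised trace vanishes; but for a clean proof I would lean on the MM-uniqueness reduction rather than this heuristic. Finally I would assemble: combining (i) and (ii) with Section 2's combinatorial reduction and Proposition \ref{-->Prop: MM Gen Non Zero Homol} yields \eqref{eq:CV in P ConjSin} for every loop $\ell$ on $\Tb_T$, and the explicit formula for $\Phi_{\Tb_T}$ drops out of the reduction.

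The main obstacle I anticipate is the reduction step — showing that \emph{every} loop on the torus, in particular a non-contractible one or a contractible loop of complicated combinatorial type, can be brought into the \cite{DL} class by a finite sequence of Makeenko--Migdal deformations without ever increasing the number of self-intersections, and that the hypotheses of Proposition \ref{-->Prop: MM Gen Non Zero Homol} (the ``suitable assumptions'' under which the MM solution is unique) are genuinely met along the whole deformation. This is where the topology of the torus is used essentially: one needs that the genus-$1$ surface has no ``trapped'' homotopy class, unlike genus $\ge 2$, and the careful bookkeeping of homology classes of the faces cut out by the loop (the content of Section 2) is what makes the induction on intersection number close.
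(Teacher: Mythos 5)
There is a genuine gap, and it sits exactly at the step you yourself flag as the main obstacle. You claim that on the torus every contractible loop can be brought into the \cite{DL} class ``through Makeenko--Migdal moves that do not increase the number of intersections'', and that the intersection-increasing obstruction is special to genus $\ge 2$. Both claims are false. The obstruction of Figure \ref{----Fig:RaqMax} already occurs on the torus: for a loop of vanishing homology, Lemma \ref{__Lem: Charac MM} shows that the admissible Makeenko--Migdal deformations are precisely those preserving the algebraic area $\langle n_\ell,a\rangle$, so a face which is a strict extremum of the winding function cannot be deflated without first creating new intersection points. Hence the number of self-intersections genuinely increases along the needed homotopies, the induction on intersection number of \cite{DN,Hal2} does not close, and the paper has to replace it by the marked-loop machinery (nested parts, pull and twist moves, the complexity $\mathcal{C}^{\mathfrak m}$) and the Gr\"onwall-type estimate of Lemma \ref{__Lem:MM Gronwall}, leading to Proposition \ref{-->Prop: MM  Gen}. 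Your sketch contains none of this. Moreover, the uniqueness result you invoke for the contractible case, Proposition \ref{-->Prop: MM  Gen Non Zero Homol}, is a boundary condition \emph{in homology} only: it says nothing about loops with $[\ell]=0$, which is exactly the contractible case. What actually singles out the torus is not that intersections can be kept from increasing, but that the boundary class of Proposition \ref{-->Prop: MM  Gen} (via Corollary \ref{-->Prop: MM  Torus}: loops contained in a fundamental domain, and geodesics, which in genus $1$ are powers of simple loops) is already covered by \cite{DL}; in genus $\ge 2$ it is not, which is why only a conditional statement survives there.

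Your non-contractible case is closer to the paper's route: on the torus non-contractible means non-zero homology, Proposition \ref{-->Prop: MM  Gen Non Zero Homol} reduces to geodesic loops of non-zero homology, these are powers of simple loops, and \cite{DL} gives the limit $0$ (the relevant input is the statement about iterated simple non-contractible loops, not the ``loops avoiding a handle'' class you cite). Two further points you leave out, though they are comparatively routine: the candidate limit (planar master field evaluated on lifts, $0$ on non-null-homotopic loops) must be checked to be an \emph{exact} solution of the Makeenko--Migdal equations on the torus, as in Lemma \ref{__Lem:Existence}, before any uniqueness statement identifies it with the limit of the approximate solutions; and passing from loops in general position to arbitrary loops of finite length requires the uniform area bound \eqref{eq:Bound area YM Bd} together with Proposition \ref{-->:Prop Extension Piecewise Geo} and Lemma \ref{__Lem:Back to regular path}, a continuity step absent from your outline.
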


We discuss an interpretation of this result in terms of non-commutative probability in section \ref{-----sec:Free P}. For compact surfaces of higher genus, a natural candidate is given as follows. Recall that for any compact surface $\Sigma$ of volume $T>0$ and genus $g\geq 2$, there is a covering map $p: \Dbb_\mfh\to \Sigma$  mapping the hyperbolic metric of $\Dbb_\mfh$ to the metric of $\Sigma$. 

\begin{conj}\label{conj_Lift}For any compact surface $\Sigma$ of genus $g\ge 2,$ with universal cover
$p: D_\mfh\to \Sigma,$ the conjecture \ref{conj_Sing} is valid with 
\begin{equation}
\Phi_{\Sigma}(\ell)=\left\{\begin{array}{ll} \Phi_{\Dbb_\mfh}(\tilde \ell) & \text{if }\ell\text{ is contractible,} \\ &\\  0 &\text{ otherwise.} \end{array}\right. \label{eq:Conj Lift}
\end{equation}
\end{conj}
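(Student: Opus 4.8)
The plan is to reduce Conjecture \ref{conj_Lift} to the statement on the restricted class of loops studied in \cite{DL}, exactly as Theorem \ref{-->THM: Torus Intro} does in the torus case, the only essential difference being that the universal cover is the Poincar\'e disc $\Dbb_\mfh$ rather than $\R^2$. First I would reduce the problem, using the absolute continuity of the restriction $\mathcal{R}^U_*(\YM_\Sigma)$ with respect to $\YM_U$ on contractible subsets $U$ (the replacement for the exact restriction property of the plane, as used in \cite{DL}), together with the area-invariance of the Yang--Mills measure and the covering map $p:\Dbb_\mfh\to\Sigma$. The point is that a contractible loop $\ell$ in $\Sigma$ lifts to a genuine loop $\tilde\ell$ in $\Dbb_\mfh$, and on small scales the holonomy along $\ell$ is the holonomy along $\tilde\ell$ for the pulled-back connection; so the convergence of $W_\ell$ to $\Phi_{\Dbb_\mfh}(\tilde\ell)$ is plausible, and by the discussion preceding Conjecture \ref{conj_Sing} we have $\Phi_{\Dbb_\mfh}(\tilde\ell)=\Phi_{\R^2}(\Psi\circ\tilde\ell)$ via Moser's theorem whenever $\tilde\ell$ fits in a relatively compact disc.

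Next I would handle the non-contractible case, where the claimed limit is $0$. Here the strategy is the Makeenko--Migdal machinery: Proposition \ref{-->Prop: MM  Gen Non Zero Homol} (the main technical result, whose hypotheses I would verify hold for $g\ge2$) should show that the family of limiting Wilson-loop functionals on all loops is determined by its values on the restricted class of \cite{DL} — that is, uniqueness of the solution of the Makeenko--Migdal equations under the stated assumptions, generalising \cite{DN,Hal2}. Granting the convergence on the restricted class (which is precisely the standing hypothesis of the partial result), one then propagates convergence to arbitrary loops by induction on the number of self-intersection points, using Makeenko--Migdal deformations together with the surgery formulas of Section 5 to keep track of how Wilson loops behave when one cuts the surface along a loop. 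For non-contractible $\ell$, the same surgery and the vanishing already established on the \cite{DL} class (where non-contractible simple loops have limit $0$) force $\Phi_\Sigma(\ell)=0$. Finally, one checks that the candidate functional defined by \eqref{eq:Conj Lift} does solve the Makeenko--Migdal equations and matches on the restricted class, so by the uniqueness statement it is \emph{the} master field on $\Sigma$.

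The main obstacle I expect is the same one flagged in the introduction: unlike the sphere, a loop on a higher-genus surface generally cannot be simplified by Makeenko--Migdal deformations down to a loop in the restricted class \emph{without} passing through configurations with more intersection points or, worse, through homotopy classes in which no convergence is known a priori. So the induction on intersection numbers does not close by itself — one needs the finer combinatorial control over discrete homotopy and homology classes of loops in embedded graphs (Section 2) to guarantee that the relevant solution space of the Makeenko--Migdal equations is pinned down by the values on the \cite{DL} loops. Making that reduction unconditional for every surface is exactly what is left open; this is why the statement is phrased as a conjecture and the theorem below it is only a conditional (partial) result. A secondary technical point is that the covering $\Dbb_\mfh\to\Sigma$ is by an infinite, non-amenable group of deck transformations, so one must be careful that the lift $\tilde\ell$ and the relevant compact pieces of $\Dbb_\mfh$ interact well with the hyperbolic area-preserving diffeomorphisms to $\R^2$-discs; but this is handled by the Moser-type argument already used for $\Dbb_\mfh$ in the discussion above.
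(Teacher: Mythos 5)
You have not proved the statement, and you could not have with the tools in this paper: \ref{conj_Lift} is genuinely open here, and your text in effect reproduces the paper's \emph{conditional} programme rather than closing it. The crux is the sentence where you say you would ``verify'' the hypotheses of Proposition \ref{--->Prop: Intro Non-Nul homology} (equivalently of Proposition \ref{-->Prop: MM  Gen Non Zero Homol} and Theorem \ref{-->THM: Intro Delayed Geodesic}) for $g\ge 2$, and then ``grant the convergence on the restricted class.'' Those hypotheses are exactly the unproven input: for $g\ge 2$ the boundary class consists of geodesic loops with non-zero homology and of loops of the form (nested path)$\cdot$(geodesic), and these are \emph{not} covered by the results of \cite{DL}. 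The torus case closes precisely because every closed geodesic there is a power of a simple loop, so the \cite{DL} convergence supplies the boundary condition; for $g\ge 2$ most geodesics have self-intersections and their homotopy classes contain no simple loop or iterate of one \cite{BiS}, so no convergence is currently known for the boundary class. The Makeenko--Migdal uniqueness machinery (Propositions \ref{-->Prop: MM  Gen Non Zero Homol} and \ref{-->Prop: MM  Gen}) only transfers convergence \emph{from} that class to all loops; it cannot create it. That is why the paper states Theorem \ref{-->THM Wilson Loops Higher Genus} conditionally and leaves \eqref{eq:Conj Lift} as a conjecture.

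Two secondary points. First, your treatment of the contractible case is too quick: the restriction/absolute-continuity plus Moser argument only handles loops contained in a topological disc of volume strictly less than $\vol(\Sigma)$ (this is Theorem \ref{--->THM: Simple loops Intro} and Corollary \ref{coro---off handle}); a contractible loop that wraps around handles before closing up is not of this form, and its convergence again requires the full induction through marked loops and shortening homotopy sequences -- ``the holonomy of the lift for the pulled-back connection'' is a heuristic, not an argument, since the Yang--Mills measure on $\Sigma$ does not restrict exactly (only absolutely continuously) and the lifted field on $\Dbb_\mfh$ is not the hyperbolic Yang--Mills field. Second, checking that the candidate \eqref{eq:Conj Lift} solves the Makeenko--Migdal equations and defines a state is indeed done in the paper (Lemma \ref{__Lem:Existence} and Lemma \ref{lem-----positivity Projected MF}), but existence of a solution with the right values on the restricted class does not substitute for the missing convergence on that class. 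So the honest conclusion of your proposal -- that the reduction is conditional -- is correct, but as written it is a strategy outline matching the paper's, not a proof of the conjecture.
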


In Lemma \ref{lem-----positivity Projected MF},  we check directly\footnote{without  using a matrix approximation such as \eqref{eq:CV in P ConjSin}} that the map considered in Theorem \ref{-->THM: Torus Intro} and  \eqref{eq:Conj Lift} is associated to a state.    The conjecture \ref{conj_Lift} is also justified by the main result of \cite{DL} which leads to the following. Recall that a simple loop $\ell$ of $\Sigma$ is separating, if the set $\Sigma\setminus \ell$, where $\ell$ also denotes the range of the loop, has two connected components $\Sigma_{1,\ell},\Sigma_{2,\ell}$.

\begin{coro} \label{coro---off handle} If  $\ell$ is a  separating loop of compact surface $\Sigma$ of genus $g\geq 1$ and $\Sigma_{2,\ell}$ is not a disc, then under $\YM_{\Sigma},$ the convergence \eqref{eq:CV in P ConjSin} holds true with the limit  
\eqref{eq:Conj Lift}, for all loops $\ell$  in $\Sigma_{1,g}.$
\end{coro}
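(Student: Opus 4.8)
The plan is to deduce the statement from the main theorem of \cite{DL}, after checking that the loops considered fall within its scope, and then to match the resulting limit with the right-hand side of \eqref{eq:Conj Lift}; to avoid a clash with the separating loop, I denote by $\ell'$ a loop whose range is contained in $\Sigma_{1,\ell}$. First I would carry out a purely topological reduction. Cutting the closed surface $\Sigma$ along the simple separating loop $\ell$ exhibits $\Sigma_{1,\ell}$ and $\Sigma_{2,\ell}$ as compact surfaces with a single boundary circle, of respective genera $g_1$ and $g_2$ with $g_1+g_2=g$; the hypothesis that $\Sigma_{2,\ell}$ is not a disc means exactly that $g_2\ge 1$. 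Hence one can fix an embedded handle $H\subseteq\Sigma_{2,\ell}$ disjoint from $\ell$, and any loop $\ell'$ with range in $\Sigma_{1,\ell}$ avoids $H$, so that $\ell'$ does not visit one handle of $\Sigma$ in the sense of \cite{DL}. Moreover, since $\ell$ bounds the non-disc subsurface $\Sigma_{2,\ell}$ it is not null-homotopic in $\Sigma$, so the inclusion $\Sigma_{1,\ell}\hookrightarrow\Sigma$ is $\pi_1$-injective (trivially so if $\Sigma_{1,\ell}$ is a disc); consequently $\ell'$ is contractible in $\Sigma$ if and only if it is contractible inside $\Sigma_{1,\ell}$, and one may choose a lift $\tilde{\ell'}$ of $\ell'$ to the universal cover of $\Sigma$ whose range lies in a connected component of the preimage of $\Sigma_{1,\ell}$.

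With this in hand, the convergence \eqref{eq:CV in P ConjSin} for $W_{\ell'}$ under $\YM_\Sigma$ follows at once from the main theorem of \cite{DL}, which establishes it for every loop not visiting a handle; it then remains to identify the limit $\Phi_\Sigma(\ell')$ with \eqref{eq:Conj Lift}. If $\ell'$ is non-contractible it is also non-contractible in $\Sigma_{1,\ell}$, and the limit provided by \cite{DL} is $0$, as required. If $\ell'$ is contractible, I would use the description of the limit in \cite{DL} together with the area-invariance and restriction properties of $\YM_\Sigma$ and, if needed, the surgery results of Section~5: successively excising the handles of $\Sigma$ that $\ell'$ does not meet reduces the joint law of the Wilson loops of loops drawn in $\Sigma_{1,\ell}$ to the case in which $\ell'$ sits inside a topological disc, whereupon the Moser-type argument spelled out after Conjecture~\ref{conj_Sing} gives $\Phi_\Sigma(\ell')=\Phi_{\R^2}(\psi\circ\ell')$ for some area-preserving diffeomorphism $\psi$ onto a Euclidean disc. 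Since the planar master field is invariant under area-preserving diffeomorphisms and, by the first step, the lift $\tilde{\ell'}$ can be taken over $\Sigma_{1,\ell}$, this value coincides with $\Phi_{\Dbb_\mfh}(\tilde{\ell'})$ when $g\ge 2$ and with $\Phi_{\R^2}(\tilde{\ell'})$ when $g=1$ --- the latter case being in any event contained in Theorem~\ref{-->THM: Torus Intro}, as $\Sigma$ is then a torus --- which is precisely \eqref{eq:Conj Lift}.

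The step I expect to be the main obstacle is this last identification of the limit: one must be sure that the functional produced by \cite{DL} for handle-avoiding loops is genuinely the planar master field evaluated on the universal-cover lift, i.e. that passing through the universal cover of $\Sigma$ and working inside the subsurface $\Sigma_{1,\ell}$ yield planar configurations that agree up to an area-preserving diffeomorphism; one also has to keep track of the fact that, on a compact surface, the restriction of $\YM_\Sigma$ to a subsurface is only absolutely continuous with respect to, rather than equal to, the corresponding Yang--Mills measure --- harmless for convergence in probability but to be invoked explicitly.
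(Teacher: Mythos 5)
Your topological reduction (cutting along $\ell$, noting $g_2\ge 1$, $\pi_1$-injectivity of $\Sigma_{1,\ell}\hookrightarrow\Sigma$, and the choice of lift) is fine, and the convergence itself is indeed obtained in the paper along the lines you indicate: the absolute continuity result of \cite{DL} (Proposition \ref{prop:abs_cont_gluing}) transfers the problem to the cut surface, where Lemma \ref{__Lem: Convergence MF One BD} gives convergence of the Wilson loop for \emph{every} reduced loop of $\Sigma_{1,\ell}$. The genuine gap is in the step you yourself single out, the identification of the limit with \eqref{eq:Conj Lift}, and your proposed way around it does not work. Excising the handles that the loop does not meet only removes handles lying in $\Sigma_{2,\ell}$; it cannot place the loop inside a topological disc, because the loops for which the corollary has new content are precisely those winding around the handles of $\Sigma_{1,\ell}$ itself --- for instance a contractible loop obtained by concatenating a non-contractible simple loop with a reversed perturbed copy of itself, or more generally contractible concatenations of loops of non-trivial homology; such loops are contained in no disc, so the Moser/restriction argument and point 1 of Theorem \ref{--->THM: Simple loops Intro} do not apply to them. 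Similarly, for non-contractible loops your assertion that ``the limit provided by \cite{DL} is $0$'' is only available for powers of simple loops (point 2 of Theorem \ref{--->THM: Simple loops Intro}); a general non-contractible word in the generators of $\pi_1(\Sigma_{1,\ell})$ is not covered by anything you invoke.

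What is actually needed, and what Section \ref{-------sec: Convergence after surgery} supplies through Theorem \ref{-->THM: CUT Conv}, is an identification of two states on $\C[\mathrm{RL}_v(\Gbb_1)]$: the limit produced by Lemma \ref{__Lem: Convergence MF One BD} is characterized by the lassos being free with free-unitary-Brownian-motion marginals and the handle holonomies $a_1,b_1,\ldots,a_{g_1},b_{g_1}$ being Haar unitaries free from the lassos; one must then verify that the functional defined by \eqref{eq:Conj Lift}, i.e.\ the planar (hyperbolic) master field evaluated on lifts to the universal cover, satisfies the same characterization. This is done via the free basis of lassos of the universal cover (Lemma \ref{__Lem:Basis Reduced Loops lift}), the characterization of the planar master field on maps (Lemma \ref{__Lem: Conv Planar MF Maps}), the free independence of the conjugated lasso subalgebras, and the group-theoretic fact that alternating words in the handle generators remain non-contractible and hence are sent to $0$. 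None of this appears in your proposal; without it (or an equivalent argument) your proof only covers loops contained in a disc, which is already Theorem \ref{--->THM: Simple loops Intro}, and the general non-contractible case is not treated at all.
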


We obtained here two conditional results proving stability of the claimed convergence.

\begin{prop}\label{--->Prop: Intro Non-Nul homology} For any compact surface of genus $g\ge 2,$ when $G_N$ is a classical compact matrix group of size $N$, assume that for any geodesic loop $\ell$ of $\Sigma$ with non-zero homology, under $\YM_{\Sigma},$
\begin{equation}
W_\ell\to 0\ \text{in probability as} \ N\to\infty.\label{eq: Vanishing Loops Non-zero Homology}
\end{equation}
Then \eqref{eq: Vanishing Loops Non-zero Homology} also holds true for all loops with non-zero homology.
\end{prop}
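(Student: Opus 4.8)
The plan is to use the Makeenko--Migdal (MM) equations as a tool to deform an arbitrary loop $\ell$ with non-zero homology, through a sequence of MM deformations, down to a geodesic loop with non-zero homology, while controlling how the number of self-intersection points changes. The key structural input is that MM equations relate the area-derivatives of $\E[W_\ell]$ (and of the relevant products of Wilson loops coming from the two loops obtained by splitting $\ell$ at a crossing) to Wilson loops with strictly fewer self-intersections. First I would set up the induction on the number $n$ of self-intersection points of $\ell$: for $n=0$, $\ell$ is a simple loop, hence either contractible (excluded, since we assume non-zero homology) or homotopic to a geodesic simple loop of non-zero homology, and the hypothesis \eqref{eq: Vanishing Loops Non-zero Homology} applies directly (together with the area-continuity/absolute-continuity of the Yang--Mills measure recalled in the introduction, to pass from the geodesic representative to $\ell$ itself). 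For the inductive step, I would invoke the main technical result, Proposition \ref{-->Prop: MM Gen Non Zero Homol}, which is precisely the statement that MM equations have a unique solution under the homological and combinatorial assumptions at hand; this is where the generalisation of the arguments of \cite{DN,Hal2} enters.

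The central mechanism is the following: given $\ell$ with $n\ge 1$ crossings, at a self-intersection point $v$ the loop splits into two sub-loops $\ell_1,\ell_2$, and the MM equation expresses an alternating sum of area-derivatives $\bigl(\partial_{t_1}-\partial_{t_2}+\partial_{t_3}-\partial_{t_4}\bigr)\E[W_\ell]$ in terms of $\E[W_{\ell_1}W_{\ell_2}]$, a quantity involving loops with $n-1$ crossings. To run the induction I need three things: (i) that the factorisation property holds in the limit, so that $\E[W_{\ell_1}W_{\ell_2}]\to \Phi(\ell_1)\Phi(\ell_2)$ along the subsequences where convergence is known — this requires concentration/variance estimates of the type already established in \cite{DL}; (ii) that the homology classes of the pieces produced in the deformation are controlled, so that whenever a piece has \emph{non-zero} homology I may apply the inductive hypothesis on crossing number, and whenever it has \emph{zero} homology I am in the regime covered by \cite{DL}/Corollary \ref{coro---off handle}; (iii) that the ODE obtained by integrating the MM relations along a path of areas, with boundary/initial conditions supplied at the degenerate configurations (where some face area tends to $0$ or where the loop simplifies), pins down $\Phi_\Sigma(\ell)$ uniquely — and the candidate value $0$ for non-zero homology is consistent with these relations. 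The uniqueness statement of Proposition \ref{-->Prop: MM Gen Non Zero Homol} is exactly what packages (iii).

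The main obstacle, as the introduction itself flags, is that \emph{not every} loop of non-zero homology can be simplified to a geodesic loop without at some stage \emph{increasing} the number of intersection points; the naive induction on $n$ therefore does not close. The way I would handle this is to not insist on monotone decrease of $n$ but rather to set up the right partial order / potential function on loop configurations (refining ``number of crossings'' by some secondary combinatorial invariant — e.g. total winding complexity, or the data of faces and their homology classes in the associated embedded graph, as developed in Section 2), and to show that MM deformations, suitably chosen, decrease this potential. This is the heart of Proposition \ref{-->Prop: MM Gen Non Zero Homol} and where the combinatorial notions of discrete homotopy/homology of loops in embedded graphs are used: one shows that the linear system formed by all MM equations associated to all crossings of all loops in a given finite combinatorial class, together with the known vanishing on simple non-contractible geodesic loops and the known values on zero-homology loops, has a unique solution, and that $\ell\mapsto 0$ solves it. Finally I would verify the two passages that are routine but necessary: that area-invariance plus the absolute-continuity property let me reduce any smooth loop to a piecewise-geodesic representative in its homotopy class without changing the limit, and that the convergence in probability (not merely of expectations) follows once the first two moments converge, which is standard given the uniform variance bounds. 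The remaining work is bookkeeping of homology classes through the deformation, which I would not grind through here.
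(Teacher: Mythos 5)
Your overall scheme (deform via Makeenko--Migdal directions, integrate the area ODE from a degenerate configuration, control variances so that convergence in probability survives) is the same general approach as the paper, but the proposal breaks down at the two places where the paper's proof actually has content. First, your inductive step requires knowing the limit of \emph{both} desingularised pieces: you ask for $\E[W_{\ell_1}W_{\ell_2}]\to\Phi(\ell_1)\Phi(\ell_2)$ and, when a piece has zero homology, you place it ``in the regime covered by \cite{DL}/Corollary \ref{coro---off handle}''. That is false for $g\ge 2$: an arbitrary null-homologous loop produced by splitting is not contained in a disc and does not avoid a handle, and its convergence is exactly the open (conditional) part of the paper, so your induction does not close. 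The paper's Lemma \ref{__Lem:MMStrict} avoids this entirely: since $[\ell]=[\ell_1]+[\ell_2]\neq 0$, at least one piece has non-zero homology; by induction that piece has $\Psi^N\to 0$ \emph{and} its exact limit is $0$, so in the bound \eqref{eq:MM Psi CS} on $X.\Psi^N_\ell$ (where $\Psi^N_\ell=\scV_{\phi^N,\ell}+|\phi^N_\ell-\phi^\infty_\ell|^2$) the corresponding factor vanishes while the other factor need only be \emph{bounded}; no convergence whatsoever is required of the null-homologous piece. Working with $\Psi^N$ also removes your need for ``subsequences where convergence is known''.

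Second, the complexity you gesture at is the wrong one, and the one admissibility fact that makes the argument specific to non-zero homology is never invoked. ``Total winding complexity'' cannot serve here: the winding function $n_\ell$ only exists when $[\ell]_R=0$, and it is the tool for the \emph{other} proposition. The paper's potential is $\mathcal{C}(\ell)=|\ell|_D+\#V_{c,\ell}$, the tiling length of the lift to the universal cover plus the number of contractible self-intersections; tiling length is additive under desingularisation ($|\ell_1|_D+|\ell_2|_D=|\ell|_D$), which is what makes the induction strictly decreasing, and the reduction of a proper loop to a conjugate of a geodesic is done by explicit combinatorial moves (backtrack erasures and vertex switches \`a la Birman--Series, Proposition \ref{__Prop: Shortening Homotopies}), each implemented by contracting a proper subset $K$ of faces and integrating along the segment from an area vector supported off $K$. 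Finally, the step ``integrate the MM relations along a path of areas'' is only legitimate because, for $[\ell]_R\neq 0$, Lemma \ref{__Lem: Charac MM} says $\mfm_\ell$ is the whole hyperplane $\{\<\a,\mu_*\>=0\}$, so the segment $a-a'$ is an admissible MM direction; this is the sole point where non-zero homology enters, it fails for null-homologous loops, and without citing it your ODE step is unjustified. (Your base case is also off as stated: area-continuity does not give homotopy invariance, though for simple loops it can be repaired by quoting point 2 of Theorem \ref{--->THM: Simple loops Intro}.)
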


Assume  $g\ge 2$ and  $\Gamma_g$ is a discrete subgroup of isometry acting freely, properly on   $\Dbb_\mfh$ and that $\Dbb_\mfh/\Gamma_g$ is a 
compact 
surface of genus $g$ with finite total volume $T>0.$ There is a fundamental domain for this action given by a $4g$ hyperbolic polygon $D$ of 
volume $T,$ centred at $0.$

\begin{thm} \label{-->THM: Intro Delayed Geodesic} The conjecture \ref{conj_Lift} holds true if \eqref{eq: Vanishing Loops Non-zero Homology} is true for 
every   non contractible,  loop $\ell$ of $\Sigma$ such that its lift $\tilde\ell$ to $\Dbb_\mfh$ can be written   $\tilde \ell=\g_1\g_2,$ where $\g_2$ is a geodesic, and $\g_1$ is smooth, included in $\overline D$ and intersecting $\pl D$ at most once, transversally at its endpoint.
\end{thm}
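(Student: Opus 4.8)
The plan is to reduce Conjecture \ref{conj_Lift} for an arbitrary loop to the class of ``delayed geodesic'' loops described in the statement, using Proposition \ref{--->Prop: Intro Non-Nul homology} (or rather its proof, Proposition \ref{-->Prop: MM Gen Non Zero Homol}) to handle the non-contractible case and the already-established plane master field together with the restriction/area-invariance properties to handle the contractible case. Concretely, I would argue as follows. First, for a contractible loop $\ell$ of $\Sigma$, its lift $\tilde\ell$ is an honest loop in $\Dbb_\mfh$; since the universal cover $p:\Dbb_\mfh\to\Sigma$ is a local isometry and $\Dbb_\mfh$ has the restriction property $\mathcal{R}^U_*(\YM_{\Dbb_\mfh})=\YM_U$ on contractible open sets, one reduces the convergence $W_\ell\to\Phi_{\Dbb_\mfh}(\tilde\ell)$ to the already-known hyperbolic (hence planar, via Moser's theorem) master field --- this part requires showing the lift can be taken with only finitely many self-intersections and finite length, which is routine. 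The substantive content is the non-contractible case: one must show $W_\ell\to 0$ for every non-contractible $\ell$, and the hypothesis only gives this for the restricted ``delayed geodesic'' class.

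The key step is therefore a deformation argument: given an arbitrary non-contractible loop $\ell$, I would use Makeenko--Migdal equations to deform it, within its free homotopy class (or at least keeping its homology class non-zero), towards a loop whose lift $\tilde\ell$ decomposes as $\g_1\g_2$ with $\g_2$ geodesic and $\g_1$ a short smooth ``correction'' contained in the closed fundamental polygon $\overline D$ meeting $\partial D$ at most once. The idea is to lift $\ell$ to $\Dbb_\mfh$, replace most of $\tilde\ell$ by the geodesic between suitable endpoints (two points in the orbit of a basepoint under $\Gamma_g$), and absorb the topological discrepancy into the bounded piece $\g_1$. Makeenko--Migdal deformations let one move from one such representative to another while controlling the limiting Wilson loop, precisely as in the induction on the number of intersection points used in \cite{DN,Hal2}; here the point of Proposition \ref{-->Prop: MM Gen Non Zero Homol} is that the recursion can be solved even though the number of intersection points need not decrease, provided one stays in the non-zero-homology sector where the limit is forced to vanish by the uniqueness statement for solutions of the MM equations. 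So the argument is: (i) reduce to loops with finitely many transverse self-intersections; (ii) build a homotopy through such loops from $\ell$ to a delayed-geodesic representative, realised by a finite sequence of Makeenko--Migdal moves plus area-continuity; (iii) apply the uniqueness of solutions to the MM system (Proposition \ref{-->Prop: MM Gen Non Zero Homol}) with the boundary data supplied by the hypothesis on the restricted class, to conclude $W_\ell\to 0$.

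The main obstacle I expect is step (ii): producing an \emph{explicit} homotopy realised by admissible Makeenko--Migdal deformations that lands exactly in the ``$\tilde\ell=\g_1\g_2$, $\g_1\subset\overline D$ meeting $\partial D$ at most once'' normal form. One must be careful that intermediate loops keep non-zero homology (so that the vanishing prediction is available at each stage), that the deformation stays compatible with the covering --- i.e.\ descends to a genuine homotopy of loops on $\Sigma$ --- and that the combinatorial bookkeeping of faces and intersection points (the discrete homotopy/homology machinery of Section 2) is consistent throughout. A secondary technical point is checking that the state associated to the candidate functional \eqref{eq:Conj Lift} is well-defined and positive, but Lemma \ref{lem-----positivity Projected MF} is quoted for exactly this, so it can be invoked directly. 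Once the normal form is reached, the conclusion follows formally from the hypothesis and the uniqueness of the MM solution, so the entire weight of the proof rests on the geometric deformation lemma reducing a general loop to a delayed geodesic.
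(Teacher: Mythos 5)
Your proposal has two genuine gaps, and they sit exactly where the paper's real work is. First, the contractible case is not routine. Compact surfaces do \emph{not} satisfy the restriction identity $\mathcal{R}^U_*(\YM_{\Sigma})=\YM_U$ (the paper points out that only absolute continuity survives, which is what \cite{DL} exploits for loops inside a disc), and, more fundamentally, a contractible loop of $\Sigma$ need not be contained in any contractible open set: it may cross many fundamental domains before closing up, so no restriction/Moser argument reduces it to the plane. Since such a loop has zero homology, Lemma \ref{__Lem: Charac MM} constrains the admissible Makeenko--Migdal deformations to those preserving the algebraic area $\<n_\ell,\cdot\>$, which obstructs any naive deformation to simpler loops (this is the point of Figure \ref{----Fig:RaqMax}). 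The paper's resolution --- marked loops whose nested part serves as an algebraic-area reservoir, the pull and twist moves, the complexity $\mathcal{C}^{\mathfrak m}$ that tolerates an increasing number of intersection points, and the Gr\"onwall argument of Lemma \ref{__Lem:MM Gronwall} leading to Proposition \ref{-->Prop: MM  Gen} --- is precisely why the hypothesis class of the theorem consists of ``delayed geodesics'' $\g_1\g_2$ with a non-geodesic correction $\g_1\subset\overline D$ rather than plain geodesics: the boundary data must contain the perturbed loops produced by an algebraic-area-preserving homotopy. Your outline never engages with this constraint, and the part you declare routine is the part the whole of Section 4.2 is built to handle.

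Second, your treatment of non-contractible loops relies on ``staying in the non-zero-homology sector'' so that the uniqueness statement of Proposition \ref{-->Prop: MM  Gen Non Zero Homol} forces the limit to vanish. But for $g\ge 2$ there are non-contractible loops with vanishing homology (any separating simple closed curve, and everything freely homotopic to such), and for these your steps (ii)--(iii) are unavailable: they cannot be deformed while keeping non-zero homology, their admissible deformations are again constrained by the winding-number function, and their vanishing limit must come from the general good-boundary-condition statement (Proposition \ref{-->Prop: MM  Gen} combined with the exact solution of Lemma \ref{__Lem:Existence}), i.e.\ from the same marked-loop machinery as the contractible case. The genuinely easier induction you sketch --- shortening homotopy sequences towards geodesics, with desingularisation strictly decreasing the complexity and at least one piece retaining non-zero homology --- is correct in spirit but only covers the non-zero-homology sector (Section 4.1); it does not prove the theorem as stated. (Convergence for loops inside a fundamental domain is indeed supplied by \cite{DL} via Theorem \ref{-->THM: CUT Conv}, and positivity by Lemma \ref{lem-----positivity Projected MF}, so those invocations are fine; also note the uniqueness scheme requires controlling the two-point functions $\scV_{\phi^N,\ell}$ as well, which your outline omits.)
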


A more precise statement is given in Theorem \ref{-->THM Wilson Loops Higher Genus}. Besides, the recent results of  \cite{MageeII} are furthermore coherent with the above statement as discussed in the next sub-section. 

\subsection{Atiyah--Bott--Goldman measure}

Another measure on connections is due to Atiyah, Bott and Goldman \cite{AB,Goldman} when $g\ge 2$. Recently, the limit of Wilson loops under this measure has been investigated by \cite{MageeI,MageeII}, we discuss the relation with our result.

\vspace{0.5em}

Let $G$ be a compact connected semisimple\footnote{Mind that this excludes $\U(N)$.} Lie group $G$, $\mathfrak{g}$ its Lie algebra, endowed with an invariant inner product, and $Z(G)$ its center. For any $g\geq 2$, let $K_g:G^{2g}\to G$ be the product of commutators:
\[
K_g(a_1,b_1,\ldots,a_g,b_g) = [a_1,b_1]\cdots[a_g,b_g].
\]
{The space
\[
\mathcal{M}_g=K_g^{-1}(e)/G
\]
is called the \emph{moduli space of flat} $G$-\emph{connections} over a compact surface of genus $g\geq 2$, where $G$ acts by diagonal conjugation, as 
$$h.(z_1,\ldots, z_{2g})=(hz_1h^{-1},\ldots , hz_{2g}h^{-1}),\ \forall z\in G^{2g},g\in G.$$}
For any $z\in G^{2g}$, its isotropy group is $Z_z=\{h\in G, h.z=z\}.$  The set $\mathcal{M}_g^0=\{z\in G^{2g}: Z_{z}=Z(G)\}$ can be shown to be a  
manifold \cite{Goldman,Sen6} of dimension $2g-2,$ endowed with a symplectic form $\omega_A$ with finite total volume.  Besides, using the holonomy map along a suitable $2g-$tuple $\ell_1,\ldots,\ell_{2g}$ of loops, $\mathcal{M}_g^0$ can be identified with a subset of smooth connections  $\omega$ on a $G$-principal bundle over $\Sigma$ such that $S_{\YM}(\omega)=0.$ This subset is  a manifold with a symplectic structure \cite{AB}, equal to the push-forward of $\omega_A$. The Atiyah--Bott--Goldman measure  is the volume form on $\mathcal{M}_g^0$ associated to $\omega_A$, given by
\begin{equation}
\vol_{g} = \frac{\omega_A^{\frac12\dim\mathcal{M}_g^0}}{(\frac 1 2\dim\mathcal{M}_g^0)!}.
\end{equation}
Let us denote by $\mu_{ABG,g}$ the probability measure on $\mathcal{M}^0_g$ obtained by normalising $\vol_g$. 
It appeared in \cite{Wit}, that integrating against the Yang--Mills measure on a compact surface of total area $T$ and letting $T$ tend to 0, allows to obtain formulas for  integrals against $\mu_{ABG,g}$. This convergence was proved rigorously by Sengupta in \cite{Sen6}. 
Using the holonomy mapping of the Yang--Mills measure, the convergence can be understood as follows. Consider a heat kernel $(p_t)_{t>0}$ on $G$, when its Lie algebra $\mathfrak{g}$ is endowed with its Killing form 
$\<\cdot,\cdot\>$. 

\begin{thm}[Symplectic limit of Yang--Mills measure]
Let $f:G^{2g}\to\C$ be a continuous $G$-invariant function, and $\tilde{f}:\mathcal{M}_g^0\to\C$ be the induced function on the moduli space. Then
\begin{equation}
\lim_{T\downarrow 0}\int_{G^{2g}} f(x) p_T(K_g(x))dx=\frac{\vol(G)^{2-2g}}{|Z|}\int_{\mathcal{M}_g^0}\tilde{f}d\vol_g.
\end{equation}
\end{thm}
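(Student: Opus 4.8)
The plan is to identify the left-hand side as a Gaussian-type integral against the heat kernel and to perform the $T\downarrow 0$ limit by localising near the zero set $K_g^{-1}(e)$, where the heat kernel concentrates. First I would recall the standard short-time asymptotics of the heat kernel on a compact Lie group: as $T\downarrow 0$, $p_T$ behaves like a Gaussian of width $\sqrt{T}$ centred at $e$, with the Killing-form volume normalisation providing the correct constant $\vol(G)^{-1}$ factors. Concretely, for a continuous $G$-invariant test function $f$, one writes $\int_{G^{2g}}f(x)p_T(K_g(x))\,dx$ and uses the co-area formula relative to the submersion $K_g:G^{2g}\to G$: away from the critical points of $K_g$, the map is a submersion, and the pushforward of Lebesgue (Haar) measure under $K_g$ has a smooth density with respect to Haar on $G$. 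The mass of $p_T$ escaping any neighbourhood of $e$ is exponentially small, so the limit is governed entirely by a neighbourhood of $K_g^{-1}(e)$.

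Next I would handle the geometry of the fibre. Over the regular part $\mathcal{M}_g^0$, the fibre $K_g^{-1}(e)$ is a smooth manifold, and the key computation is to show that the Jacobian factor arising from the co-area formula, when evaluated along $K_g^{-1}(e)$, reproduces exactly the Liouville volume $\vol_g=\omega_A^{\frac12\dim\mathcal{M}_g^0}/(\tfrac12\dim\mathcal{M}_g^0)!$ up to the universal constant $\vol(G)^{2-2g}/|Z|$. This is essentially the content of Goldman's and Witten's computations: the differential $dK_g$ at a point of the fibre, composed with the quotient by the $G$-action, is governed by the cup-product pairing on group cohomology $H^1(\Sigma,\mathrm{ad})$, whose Pfaffian is precisely the symplectic volume. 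The factor $\vol(G)^{2-2g}$ comes from comparing the $2g$ copies of Haar measure on $G$ in the domain with the $G$ used to quotient and the $G$ appearing as the target of $K_g$ (an Euler-characteristic bookkeeping, $2g-2 = -\chi(\Sigma)$ with signs), and $|Z|$ accounts for the generic isotropy being the centre rather than trivial, so that the $G$-orbits in $K_g^{-1}(e)$ have volume $\vol(G)/|Z|$.

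The argument I would then assemble is: (1) split the integral as $\int_{\{d(K_g(x),e)<\delta\}} + \int_{\{d(K_g(x),e)\ge\delta\}}$; the second term is $O(e^{-c/T})$ by Gaussian tails of $p_T$; (2) on the first term, use a tubular neighbourhood of $K_g^{-1}(e)$ and Fubini/co-area to write the integral as $\int_{\mathcal{M}_g^0}\big(\int_{\text{normal fibre}} p_T\big)\,\big(\text{density}\big)$; (3) the inner integral tends, as $T\downarrow 0$, to $p_T$ integrating to mass $1$ against its Gaussian profile times the inverse Jacobian of $K_g$; (4) identify the resulting density on $\mathcal{M}_g^0$ with $\vol_g$ times the constant. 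One must also check the contribution of the non-regular locus $K_g^{-1}(e)\setminus\mathcal{M}_g^0$ is negligible, which follows because it has strictly smaller dimension (it is a closed set of measure zero for $\vol_g$) and $f$ is bounded.

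The main obstacle I expect is step (2)–(4): making the co-area/Laplace-type argument rigorous uniformly near the boundary of the regular locus, where $dK_g$ degenerates and the tubular neighbourhood description breaks down. One needs a careful stratification of $K_g^{-1}(e)$ and a quantitative estimate showing that the heat-kernel mass concentrating near the singular strata contributes $o(1)$ after normalisation — essentially controlling how the Jacobian of $K_g$ blows up against how the singular strata shrink. This is exactly the delicate point in Sengupta's proof \cite{Sen6}, and I would either invoke that result directly or reproduce the estimate by bounding $\|(dK_g)^{-1}\|$ on $\delta$-neighbourhoods of the regular part and controlling the remaining volume by the codimension of the singular set. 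The identification of the Jacobian with the symplectic volume (the ``Pfaffian of the cup product'' computation) is classical \cite{AB,Goldman,Wit} and I would cite it rather than re-derive it.
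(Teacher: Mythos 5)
The paper does not prove this statement at all: it is recalled as a known result, observed by Witten \cite{Wit} and proved rigorously by Sengupta \cite{Sen6}, and the text cites \cite{Sen6} in place of a proof. So the only meaningful comparison is between your outline and the cited literature, and there your sketch is indeed the standard strategy (heat-kernel localisation near $K_g^{-1}(e)$, co-area/disintegration along $K_g$, identification of the Jacobian along the regular fibre with the Liouville density via the cup-product symplectic form of \cite{AB,Goldman}, and Euler-characteristic bookkeeping for the factor $\vol(G)^{2-2g}/|Z|$). Since you explicitly defer the two hard ingredients — the symplectic identification and the control of the singular strata — to \cite{AB,Goldman,Wit} and \cite{Sen6}, your proposal is, like the paper, essentially a citation; as such it is consistent with what the paper does.

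If the proposal is meant to stand as a proof rather than an outline, the genuine gap is precisely the point you flag but do not resolve, and it is worth naming why it is not a routine Laplace-method issue: $e$ is a \emph{singular value} of $K_g$, because the fibre $K_g^{-1}(e)$ contains reducible homomorphisms at which $dK_g$ fails to be surjective. Hence the statement "the pushforward of Haar measure under $K_g$ has a smooth density" is only valid over regular values and does not by itself give continuity of that density at $e$, which is what the naive argument $\int f(x)p_T(K_g(x))\,dx \to (\text{density at }e)\cdot(\dots)$ requires. This is exactly where the hypotheses $g\ge 2$ and $G$ semisimple enter (e.g. the normalising constant $Z_{g,T}=\sum_\lambda(\dim\lambda)^{2-2g}e^{-c_\lambda T/2}$ only converges as $T\downarrow 0$ because $2g-2\ge 2$), and where Sengupta's quantitative estimates near the reducible locus — how fast $\|(dK_g)^{-1}\|$ blows up against the codimension of the singular strata — are genuinely needed. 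Invoking \cite{Sen6} for this is legitimate and matches the paper; reproducing the estimate, as you suggest as an alternative, is the nontrivial part and is not supplied by the tubular-neighbourhood picture alone, since that picture breaks down exactly on the strata that must be shown to be negligible.
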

For any  word $w$ in the variables $a_1,\ldots,b_g$ and their inverses, setting
$$W_{w}(z)=\frac{1}{N}\Tr(w(z_1,z_1^{-1},\ldots,z_{2g},z_{2g}^{-1} )),\ \forall z \in G^{2g}$$
defines also a function on $\mathcal{M}^0_g$. Denoting it also by $W_w$ and considering the loop $\ell_w$ obtained by the concatenation  $w(\ell_1,\ell_1^{-1},\ldots, \ell_{2g},\ell_{2g}^{-1})$, the last statement can be reformulated as 
$$\lim_{T\downarrow0} \E_{\YM_{\Sigma_T}}[W_{\ell_w}]= \int_{\mathcal{M}_g^0}W_{w}d\mu_{ABG,g},$$

Given the surface group
$$\Gamma_g=\<a_1,b_1,\ldots,a_g,b_g|[a_1,b_1]\ldots [a_g,b_g]\>,$$
consider the equivalence relation $\sim$ on the set of words with $2g$ letters and their inverses, such that $w\sim w'$ iff $w(a_1,\ldots,b_g)$ and 
$ w'(a_1,\ldots,b_g)$ are equal in $\G_g.$  Thanks to the defining relation of $\mathcal{M}_g,$ for any word $w$, the function $W_w$ depends only on 
the equivalence class of $w$.  When $\g\in \G_g$  is the evaluation of $w$ in $\G_g$, denote this function by $W_\g$.  In \cite{MageeII}, Magee obtained 
the 
following 
analog of asymptotic freeness of Haar  unitary random matrices.

\begin{thm}[\cite{MageeII} Cor. 1.2] Consider the group $G=\SU(N).$ For any $\gamma\in\Gamma_g$, 
$$\lim_{N\to \infty} \E_{\mu_{ABG,g}}[W_\g]= \left\{\begin{array}{ll} 1& \text{ if }\g=1,\\ &\\ 0 &\text{ otherwise.}\end{array}\right.$$
\end{thm}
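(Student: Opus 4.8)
\emph{Proof idea.} Write $\gamma = w(a_1,b_1,\dots,a_g,b_g)$. If $\gamma = 1$ in $\Gamma_g$ then $\ell_w$ is null-homotopic, $\rho(\gamma)=e$ for every flat connection $\rho$, so $W_\gamma\equiv 1$ and $\E_{\mu_{ABG,g}}[W_1]=1$ for all $N$; so assume from now on $\gamma\neq 1$, and let us show $\E_{\mu_{ABG,g}}[W_\gamma]\to 0$. By the symplectic-limit reformulation recalled above, $\E_{\mu_{ABG,g}}[W_\gamma]=\lim_{T\downarrow0}\E_{\YM_{\Sigma_T}}[W_{\ell_w}]$. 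Expanding the heat kernel in irreducible characters, $p_T(h)=\sum_\lambda \dim\lambda\,\chi_\lambda(h)\,e^{-Tc_2(\lambda)/2}$, substituting $h=K_g(x)$, integrating over $x\in\SU(N)^{2g}$ term by term, and using the Driver--Sengupta/Migdal heat-kernel calculus — repeated application of $\int_{\SU(N)}\chi_\lambda(aga^{-1}h)\,da=\chi_\lambda(g)\chi_\lambda(h)/\dim\lambda$ for the generators not touched by $w$, and ordinary Weingarten calculus on $\SU(N)$ for the ones that are — one arrives, after letting $T\downarrow0$ (legitimate since $g\ge2$), at
\[
\E_{\mu_{ABG,g}}[W_\gamma]\;=\;\frac{\mathcal N_\gamma(N)}{Z^{(g)}(N)},\qquad Z^{(g)}(N):=\sum_\lambda(\dim\lambda)^{2-2g}.
\]
Here $Z^{(g)}(N)$ is the $T\downarrow0$ partition function (Witten's volume series), and $\mathcal N_\gamma(N)$ is an analogous absolutely convergent series indexed by tuples of irreducibles attached to the complementary faces of a cellulation of $\Sigma_g$ whose $1$-skeleton contains $\ell_w$, with the standard representation $V=\C^N$ carried along $\ell_w$ and coupling the face labels through Littlewood--Richardson coefficients.

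The denominator is easy: $\dim\lambda\ge1$ with equality only for the trivial representation, and $\SU(N)$ has only boundedly many irreducibles of dimension $O(N^k)$ for each fixed $k$ (Weyl dimension formula), so $Z^{(g)}(N)=1+O(N^{2-2g})\to1$. It therefore suffices to prove $\mathcal N_\gamma(N)\to0$. Two observations do this. First, the term of $\mathcal N_\gamma(N)$ in which \emph{every} face is labelled by the trivial representation vanishes: then the holonomy along $\ell_w$ is forced to be $e$ and the standard character contributes $\langle\chi_V,\chi_{\mathrm{triv}}\rangle=0$ — and this is exactly where the hypothesis $\gamma\neq1$ (that is, $\ell_w$ not null-homotopic) enters. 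Second, every surviving term of $\mathcal N_\gamma(N)$ thus carries at least one non-trivial irreducible label $\mu$; bookkeeping the exponents — a face of Euler characteristic $\chi_F$ contributes $(\dim\lambda_F)^{\chi_F}$, there is an overall $\tfrac1N$ from $\tfrac1N\Tr$, and forcing a non-trivial $\mu$ costs a genuine ``genus/area defect'' — shows every such term is $O(N^{-\delta(\gamma)})$ with $\delta(\gamma)>0$, uniformly over the finitely many leading configurations, while the whole series is dominated by a convergent Witten-type series independent of $N$. Dominated convergence then gives $\mathcal N_\gamma(N)\to0$. If $\gamma$ is a proper power, $\gamma=\delta^k$, one first rewrites $\tfrac1N\Tr_V(\rho(\delta)^k)$ by Newton's identity as a signed sum of hook-Schur characters $\chi_{(k-j,1^j)}(\rho(\delta))$, $0\le j<k$, and runs the same argument for each summand (using that $W_\gamma$ depends only on the conjugacy class of $\gamma$, one may first take $\gamma$ cyclically reduced).

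The real work is in the last step: making the face-labelled series $\mathcal N_\gamma(N)$ explicit and, above all, \emph{proving} that $\delta(\gamma)>0$ for \emph{every} non-trivial $\gamma$ — including curves with many self-intersections, where cutting $\Sigma_g$ open along $\ell_w$ no longer produces a disjoint union of simple subsurfaces and the Weingarten combinatorics becomes heavy. This is precisely Magee's analysis, and parallels the Magee--Puder theory of word measures on compact groups: one expresses $\E[\tfrac1N\Tr_V\rho(\gamma)]$ through Euler characteristics of surfaces mapping to $\Sigma_g$ with boundary representing $\gamma$, and must check these Euler characteristics can never be large enough to compensate the $\tfrac1N$ and the $(2-2g)$ carried by the ambient surface. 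Everything else reduces to the Weyl dimension estimate. As a non-rigorous consistency check, the same conclusion is predicted by Conjecture~\ref{conj_Lift} after interchanging $N\to\infty$ and $T\downarrow0$: the lift $\widetilde{\ell_w}$ of $\ell_w$ to $\Dbb_\mfh$ bounds hyperbolic area $O(T)\to0$ when $\gamma=1$, so $\Phi_{\Dbb_\mfh}(\widetilde{\ell_w})\to1$, whereas $\ell_w$ is non-contractible when $\gamma\ne1$, so $\Phi_\Sigma(\ell_w)=0$.
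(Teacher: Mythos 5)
This statement is quoted in the paper as background from \cite{MageeII}; the paper itself gives no proof, so the only thing to compare against is the citation. Your sketch is a sensible heuristic reduction (Sengupta's symplectic limit, the character expansion of the heat kernel, and the convergence $Z^{(g)}(N)\to 1$ of the Witten series for $g\ge 2$), but as a proof it has a genuine gap at exactly the point you flag yourself: the assertion that every surviving face-labelled term of $\mathcal N_\gamma(N)$ is $O(N^{-\delta(\gamma)})$ with $\delta(\gamma)>0$ for every non-trivial $\gamma$. That assertion \emph{is} the theorem. For a word $\ell_w$ with self-intersections, cutting the surface along $\ell_w$ produces faces whose dimension factors and branching multiplicities can carry positive powers of $N$ that the single $1/N$ from the normalised trace does not visibly dominate; deciding the sign of the net exponent requires the full asymptotic-expansion machinery of \cite{MageeI,MageeII} (equivalently, the Magee--Puder surface/Euler-characteristic analysis of word measures), which you invoke rather than prove. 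So the argument is circular as a standalone proof: it reduces Corollary 1.2 of \cite{MageeII} to the main analysis of \cite{MageeII}.

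There are also secondary points that would need repair even granting the structure. The ``dominated convergence'' step is applied to a series indexed by irreducible representations of $\SU(N)$, a set that changes with $N$; there is no single dominating series ``independent of $N$'' without a uniform-in-$N$ Witten-zeta type bound together with control of the number of irreducibles in each dimension range, and the vanishing of the all-trivial-label term only kills one term, not the possibly growing number of low-dimensional configurations. Likewise the interchange of $T\downarrow 0$ with the term-by-term integration, and then with $N\to\infty$, is asserted rather than justified. Within this paper nothing beyond the citation is needed; if you want a self-contained proof you must reproduce Magee's estimates, not only the reduction to them.
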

Since for any word with evaluation $\g\in \G_g,$ it can be shown that $\g=1$ if and only if the loop $\ell_w$ is contractible, the above statement can be 
understood as  the $T=0$ case of the conjecture \ref{conj_Lift}, with a weaker 
convergence given in expectation instead of in probability. In \cite{MageeI}, it is also shown that 
$\E_{\mu_{ABG,g}}[W_\ell]$ admits an asymptotic expansion in powers of $\frac{1}{N}$.

\vspace{0.5em}

Let us discuss the main differences between the approach of \cite{MageeI,MageeII} and ours:

\begin{itemize}
\item Although both approaches use the convergence of the partition function of the model, we use in \cite{DL} the Markov property of the Yang--Mills holonomy field in order to prove the convergence for simple loops, then we use the Makeenko--Migdal equations to induce the convergence on a larger class of loops; the latter is actually not needed in the zero volume case.
\item We only consider the limit of Wilson loops, whereas \cite{MageeI}  proves a $\frac1N$ expansion.
\item We prove a convergence in probability whereas \cite{MageeII}  gets a convergence in expectation.
\item We also consider a larger family of matrix groups, whereas he only treats the unitary case.
\item In the case $g=1$, the Atiyah--Bott--Goldman measure is ill-defined, hence Magee's paper cannot handle it, but we still find a result when $T>0$, which gives a matrix approximation of an interpolation between classical and free convolution of Haar unitaries.
\end{itemize}

\subsection{Non-commutative distribution and master field on the torus: an interpolation between free and classical convolution}

\label{-----sec:Free P}

We discuss here the non-commutative distribution associated to the master field on the torus, leading to  the corollary \ref{-->CORO: Interpol Class <--> Free} below, obtained by specialising  Theorem \ref{-->THM: Torus Intro} 
to projection of loops  restrained to the lattice $\sqrt{T}.\Z^2.$

\subsubsection{Non-commutative probability and free independence}

Let us give an extremely brief account of these notions. We refer to \cite{VoiLect,MingoSpeicher}  for more details. A \emph{non-commutative probability space}\footnote{sometimes denoted NCPS} is the data of a tuple $(\mathcal{A},*,1,\tau)$ where $(\mathcal{A},*,1)$ is a unital $*$-algebra over $\C,$ and $\tau$ is a positive, tracial state, that is a linear map $\tau:\mathcal{A}\to\C$  with 
$$\tau(aa^*)\ge 0\text{ and }\tau(ab)=\tau(ba),\ \forall a,b\in \mathcal{A},$$
with furthermore $\tau(1)=1$ and $\tau(a^*)=\overline{\tau(a)},\ \forall a\in\cA.$ We shall often leave as implicit the choice of unit and $*$, and denote a non-commutative probability space simply as a pair $(\cA,\tau).$ 

\begin{ex} For $N\ge 1,$ the tuple $(M_N(\C),*, \mathrm{Id}_N,\tr)$, where $\tr=\frac 1 N\Tr,$ gives such a space. Consider  the group  $\U(N)$ of unitary 
complex matrices of size $N $ and a group $\G$ with unit element $1$. Let $(\C[\G],*)$ be the group algebra of $\G$ endowed with the skew-linear idempotent defined by $\g^*=\g^{-1},\ \forall \g\in \G.$   Then, whenever  $\rho:\G\to  \U_N(\C)$ is a 
unitary representation of $\G,$ setting $\tau_\rho=\tr\circ\rho$, the tuple 
$(\C[\G],*,1,\tau_\rho)$ is a non-commutative probability space.
\end{ex}

Let $(\mathcal{A}_1,\mathcal{A}_2)$ be unital sub-algebras of a non-commutative probability space $\mathcal{A}_1$.
\begin{itemize}
\item They are \emph{classically independent} if $\forall a_1,\ldots,a_n\in\mathcal{A}_1, b_1,\ldots,b_n\in\mathcal{A}_2,$
\[
\tau(a_1b_1a_2\ldots a_nb_n)=\tau(a_1\ldots a_n)\tau(b_1\ldots b_n).
\]
\item They are \emph{freely independent}    if for any $n\in\N$, for any $\{i_1,\ldots,i_n\}\in\{1,2\}^n$ such that $i_1\neq i_2,\ldots,i_{n-1}\neq i_n$ and for any $a_k\in\mathcal{A}_{i_k}$,
\[
\tau(a_k)=0,\ \forall 1\leq k\leq n \Longrightarrow \tau(a_1\cdots a_n)=0.
\]
\end{itemize}

These definitions can be generalised to any number of sub-algebras, and a family of elements $(a_i)_{i\in I}$ of a non-commutative probability space $(\mathcal{A},\tau)$ is said to be independent (resp. free) if the family $(\mathcal{A}_i)_{i\in I}$ is independent (resp. free), where for all $i\in I$, $\mathcal{A}_i$ is the subalgebra generated by $a_i$ and $a_i^*$. We shall then say that $(a_i)_{i\in I}$ are resp. independent and free under $\tau.$

\vspace{0.5em}

When $I$ is an arbitrary set, let us denote by $\C\<X_i,X_i^*,i\in I\>$ the unital $*$-algebra of non-commutative polynomials in the variables $X_i,X_i^*,\in I,$ with $*$ mapping $X_i$ to $X_i^*$ for all $i\in I.$  When 
$(\mathcal{A},*,1,\tau)$ is a non-commutative probability space and $\mathbf{a}=(a_i)_{i\in I}$ is a  family of elements of $\mathcal{A}$, its \emph{non-commutative distribution} is the positive, tracial, state on $\C\<X_i,X_i^*,i\in I\>$ given by
$$\tau_\mathbf{a}(P)=\tau(P(a_i,i\in I)),\ \forall P\in \C\<X_i,X_i^*,i\in I\>,$$
where $P(a_i,i\in I)\in\cA$ denotes the evaluation  of $P$ replacing $X_i$ and $X_i^*$ by $a_i$ and $a_i^*.$  Likewise, when $\cA$ and $\cB$ are sub-algebras of a same non-commutative probability space $(\cC,\tau)$, 
we call the state $\tau_{\<\cA,\cB\>}$ on $\C\<X_a,Y_b,a\in \cA,b\in \cB \>$ given by 
$$\tau_{\<\cA,\cB\>}(P(X_a,Y_b;a\in \cA,b\in \cB))=\tau(P(a,b;a\in \cA,b\in\cB)),$$
the joint distribution of $(\cA,\cB)$ in $(\cC,\tau).$

\vspace{0.5em}

When $a,b$ are two elements of non-commutative probability spaces with respective non-commutative distribution $\tau_a$ and $\tau_b$,  there are 
unique states $\tau_a\star\tau_b$ and $\tau_a*_c\tau_b$ on 
$\C\<X,Y,X^*,Y^*\>$ such that   $\tau_X=\tau_a$ and $\tau_Y=\tau_b$ both under and $\tau_a*_c\tau_b$ and $\tau_a\star\tau_b$, while the joint 
distribution $(X,Y)$ under   $\tau_a\star\tau_b$ and $\tau_a*_c\tau_b,$  are  
respectively freely and  classically independent. The states $\tau_a\star\tau_b$ and $\tau_a*_c\tau_b$ are resp. called the \emph{free} and the classical  
\emph{convolution} of $\tau_a$ and $\tau_b.$  We define 
likewise the free and classical convolution of two states on $\tau_\cA,\tau_\cB$ of NCPS $(\cA,\tau_\cA),(\cB,\tau_\cB)$ as states 
$\tau_\cA\star\tau_\cB$ and 
$\tau_\cA*_c\tau_\cB$  on $\C\<X_a,Y_b,a\in\cA,b\in\cB\>.$ 

\vspace{0.5em}

Let us recall the following result of  \emph{asymptotic freeness} due to Voiculescu \cite{VoiFreeRM}, and for the considered group series by \cite{CollinsSnia}, see also \cite[Sect. I-3]{Lev}.

\begin{thm}[\cite{VoiFreeRM,CollinsSnia,Lev}] Let $A$ and $B$ be two deterministic matrices of size $N$ with respective non-commutative distribution satisfying for all fixed $P\in \C\<X,X^*\>,$
$$\tau_A(P)\to \tau_a(P),\tau_B(P)\to \tau_b(P),  \ \text{ as} \ N\to\infty,$$
for some state $\tau_a,\tau_b$ on $\C\<X,X^*\>.$ 
Consider  $U$ and $V$ two independent  Haar unitary matrices on a group $G_N$ and $\rho_N: \C[\Fbb_2]\to G_N$ the associated unitary representation of the free group of rank $2.$ 

Then for any $\g\in \Fbb_2$ and $P\in \C\<X,Y,X^*,Y^*\>,$ the following limit holds in probability as $N\to \infty$,
\begin{equation}
\tau_{\rho_N}(\g)\to \left\{\begin{array}{ll} 1& \text{ if }\g=1,\\ &\\ 0 &\text{ if }\g\in \Fbb_2\setminus\{1\}\end{array}\right.\label{eq: Conv FreeGp}
\end{equation}
and 
\begin{equation}
\tau_{A,UBU^*}(P)\to \tau_a\star\tau_b(P).\label{eq: Conv Free Indep}
\end{equation}
\end{thm}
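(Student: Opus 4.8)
The statement to prove is the classical asymptotic freeness theorem, combining two conclusions: (i) the representation $\tau_{\rho_N}$ of $\Fbb_2$ built from two independent Haar unitaries converges to the regular-representation trace (value $1$ at the identity, $0$ elsewhere), and (ii) $A$ and $UBU^*$ become asymptotically free, i.e. their joint distribution converges to the free convolution $\tau_a\star\tau_b$.

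\paragraph{Plan.}
The plan is to reduce everything to a single concentration-plus-first-moment estimate for words in Haar-distributed matrices and fixed matrices. First I would recall the standard two-step scheme: (a) prove convergence \emph{in expectation}, and (b) upgrade to convergence \emph{in probability} by a variance bound. For step (a), write any test element as a trace of an alternating word $\tau(c_1 u_{i_1} d_1 u_{i_2} \cdots)$ where the $u_{i_k}\in\{U,U^*,V,V^*\}$ and the $c_j,d_j$ are (images of) the fixed matrices $A,B$; centering the fixed-matrix blocks by subtracting their normalized traces reduces the problem to showing $\E[\tau(\text{alternating centered word})]\to 0$. The key computational input is the Weingarten calculus for Haar integration over $\U(N)$ (and over $\SO(N),\Sp(N)$ for the other series, as handled in \cite{CollinsSnia}): one integrates out $U$ first, obtains a sum over pairs of permutations weighted by Weingarten functions, and uses the asymptotics $\mathrm{Wg}(N,\sigma)=N^{-n-|\sigma|}(\mu(\sigma)+O(N^{-2}))$ to see that only the "non-crossing, genus-zero" pairings survive at order $N^0$; for an alternating \emph{centered} word these leading terms cancel, giving $O(N^{-2})$, hence $\to 0$. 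This simultaneously yields \eqref{eq: Conv FreeGp} (take all $c_j=d_j=\mathrm{Id}$) and, after unwinding the definition of $\star$ as the unique state making the marginals correct with free joint law, \eqref{eq: Conv Free Indep}.

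\paragraph{Variance bound.}
For step (b), I would show $\var(\tau_{A,UBU^*}(P))=O(N^{-2})$ for each fixed $P$, and likewise for $\tau_{\rho_N}(\g)$. This follows from the same Weingarten expansion applied to $\E[\tau(w)\overline{\tau(w)}]-|\E[\tau(w)]|^2$: the product of two normalized traces carries an extra factor $N^{-1}$ relative to a single trace of the doubled word, and the permutation-sum bookkeeping (counting cycles of the relevant permutations, a Euler-characteristic/genus argument) shows the difference is $O(N^{-2})$. Then Borel--Cantelli along $N$, or simply Chebyshev for convergence in probability, upgrades the expectation convergence to convergence in probability, which is what the statement asserts. (For the orthogonal and symplectic series one uses the corresponding real/quaternionic Weingarten estimates; the structure of the argument is identical, only the leading combinatorics — pair partitions instead of permutations — changes, and this is exactly the content imported from \cite{CollinsSnia}.)

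\paragraph{Main obstacle.}
The routine but delicate part is the genus/cycle-counting bookkeeping that produces the crucial cancellation for \emph{centered alternating} words and the $O(N^{-2})$ error, together with getting the normalization powers of $N$ exactly right across the two series of groups; this is the heart of the Weingarten method and where one must be careful rather than clever. I would therefore present this as a citation-backed reduction: state the Weingarten asymptotics, reduce to the centered alternating case, invoke the known genus-expansion estimates (as in \cite{CollinsSnia} and \cite[Sect. I-3]{Lev}), and note that the same estimates bound the variance. The conceptual steps — centering, the two-step expectation-then-probability scheme, and identifying the limit with $\tau_a\star\tau_b$ via the uniqueness characterization of free convolution already recalled above — are straightforward; no genuinely new difficulty arises here since this theorem is quoted from the literature and used only as a black box in the sequel.
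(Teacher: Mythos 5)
This theorem is quoted in the paper as background and not reproved there; it is imported from \cite{VoiFreeRM,CollinsSnia,Lev}, and your Weingarten-calculus sketch (expectation convergence via centering of alternating words plus the $O(N^{-2})$ variance bound, with the real/quaternionic Weingarten estimates covering the orthogonal and symplectic series) is precisely the route taken in the cited references \cite{CollinsSnia} and \cite[Sect. I-3]{Lev}. So your proposal is correct and essentially the same as the argument the paper relies on by citation.
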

On the one hand, the first convergence \eqref{eq: Conv FreeGp} can be proved to be  a special case of \eqref{eq: Conv Free Indep} when $A$ and $B$ 
are themselves independent Haar unitary random variables. On the other hand, 
when $A$ and $B$ are unitary or  Hermitian with uniformly bounded spectrum,
\eqref{eq: Conv Free Indep} can be deduced from \eqref{eq: Conv FreeGp} by functional calculus.

One of the motivations of the current article was to understand an analog of \eqref{eq: Conv FreeGp}, when $(U,V)$ are sampled according to a different law with 
correlation, as
discussed in section \ref{-----sec:Matrix Approx Area Interpol }.

\subsubsection{Free Unitary Brownian motion and $t$-freeness} 
We refer here to \cite{Bia,Voi,BenaychLev} for more details. Consider a non-commutative probability space $(\mathcal{A},\tau,*,1)$. An element $u\in \cA$ is called unitary when $uu^*=u^*u=1.$ It is Haar unitary if for any integer $n>0,$ $\tau(u^n)=\tau((u^{*})^n)=0.$ The \emph{free unitary Brownian motion} on a $*$-probability space $(\mathcal{A},\tau,*,1)$ is a family $(u_t)_{t\geq 0}$ of unitary elements of $\mathcal{A}$ such that the increments $u_{t_1}u_0^*,\ldots,u_{t_n}u_{t_{n-1}}^*$ are 
free for all $0\leq t_1\leq\cdots\leq t_n$, and for any $k\in \Z^*$ and $0<s<t,$
\[\tau((u_tu_s^*)^k )=\tau(u_{t-s}^k )\]
while $\tau(u_{t}^k)=\nu_t(|k|)$ is $C^1$ with for all $m\ge 0,$
\begin{equation}
\frac{d}{dt}\nu_t(m)=-\frac{m}{2}\nu_t(m)-\frac{m}{2}\sum_{l=1}^{m}\nu_t(l)\nu_t(m-l),\ \forall t\ge 0, \ \nu_0(m)=1.\label{eq:ODE UBM}
\end{equation}
Let us set  $\nu_t=\tau_{u_t}$. It follows from the above expression that as $t$ tends respectively to $0$ and $+\infty,$ the distribution $\nu_t$ converges pointwise to the one of  respectively $1$ and a Haar unitary. In view of 
\eqref{eq: Conv Free Indep}, it  is also natural to introduce the following deformation of free convolution.

\begin{dfn}[\cite{Voi}]
Let $(\mathcal{A},\tau_\mathcal{A})$ and $(\mathcal{B},\tau_\mathcal{B})$  be two non-commutative probability spaces. Then there is a non-commutative probability space $(\mathcal{C}^{(t)},\tau_{\mathcal{C}^{(t)}})$ such that 

\begin{enumerate}
\item $\cA$ and $\cB$ can be identified with two \emph{independent} sub-algebras of  $(\mathcal{C}^{(t)},\tau_{\mathcal{C}^{(t)}})$ with  
$${\tau_{\mathcal{C}^{(t)}}}(a)=\tau_\cA(a) \text{ and }{\tau_{\mathcal{C}^{(t)}}}(b)=\tau_\cB(b),\ \forall (a,b)\in \cA\times\cB.$$   
\item  There is a unitary element $u_t\in \mathcal{C}^{(t)}$ free with the sub-algebra of $\mathcal{C}^{(t)}$ generated by $\cA$ and $\cB$, such that  
$u_t$ has distribution $\nu_t.$
\end{enumerate}
The $t$\emph{-free convolution product} of $\tau_\cA$ and $\tau_\cB$ is then the joint distribution $\tau_\cA\star_t\tau_\cB$ of  
$(\mathcal{A}, u_t\mathcal{B}u_t^*)$ 
in the non-commutative probability space $(\mathcal{C}^{(t)},\tau_{\mathcal{C}^{(t)}}).$ It does not depend on the choice of 
$(\mathcal{C}^{(t)},\tau_{\mathcal{C}^{(t)}})$ satisfying 1) and 2).  
\end{dfn}

The above construction was introduced more generally
\footnote{not necessarily with the assumption of classical independence for the initial state.} by Voiculescu \cite{Voi} in his study of free entropy and free Fisher information via the liberation process. For any $t>0,$  two sub-algebras $\cA$ and $\cB$ of a same non-commutative probability space $(\cC,\tau)$ with respective distribution $\tau_\cA$ and 
$\tau_\cB$ are said to be \emph{$t$-free}, if their joint distribution under $\tau$ is 
given by 
$\tau_\cA\star_t\tau_\cB.$   It can be shown (\cite{Voi,BenaychLev}) that the following limits hold pointwise,  
\[\lim_{t\downarrow0}\tau_\cA\star_t\tau_\cB =\tau_{\cA}*_c\tau_{\cB} \text{ and }\lim_{t\to +\infty}\tau_\cA\star_t\tau_\cB =\tau_{\cA}\star\tau_{\cB}.\]

\subsubsection{A matrix approximation for another interpolation from classical to free convolution}
\label{-----sec:Matrix Approx Area Interpol }

Let us present an application of Theorem \ref{-->THM: Torus Intro}. Consider a  heat kernel $(p_t)_{t>0}$ on a classical compact matrix Lie group  $G_N$ endowed with the metric  considered in section \ref{----sec:HK} and for any 
$T>0$, define a  probability 
measure setting
\begin{equation}
d\mu_{N,T}(A,B)=Z_T^{-1}p_T([A,B])dAdB\label{eq:Law GenLoops}
\end{equation}
on $G_N^2 $ where $dAdB$ denotes the Haar measure on $G_N^2$ and $Z_T=\int_{G_N^2} p_T([A,B])dAdB.$  As the limits  
$\lim_{T\downarrow 0}p_T(U)dU= \delta_{\Id_N}$ and $\lim_{T\to \infty}p_T(U)dU= dU$ hold weakly,  we can think about $\mu_{T}$ as a model of 
random 
matrices interpolating between  commuting and non-commuting settings. In \cite[Thm 2.15]{DL}, we have proved that though $A$ and $B$ are not Haar distributed for 
$N$ fixed, as $N\to \infty, $ they converge individually  
to Haar unitaries. Moreover,  we also saw that under $\mu_{N,T},$ $[A,B]$ converges in non-commutative distribution, 
with limit given 
by $\nu_T,$ a free unitary Brownian motion at time $T.$  In view of \eqref{eq: Conv FreeGp},  it is then natural to investigate the possible limit of the joint 
law, hoping for a non-trivial coupling of Haar unitaries.    Note that analog models with 
potentials\footnote{Though the class of potentials considered in \cite{ColGuionnetMS} do not 
cover the heat kernel. }  have been investigated in \cite{ColGuionnetMS}. A challenge appearing in the setting of \cite{ColGuionnetMS} is that these general results  are 
limited to  
weak 
coupling regimes.\footnote{meaning that the parameter of the potential responsible for the non-independence of $A$ and $B$ needs to be small enough.}  A consequence of
our work is that  $\mu_{N,T}$ has a non-commutative limit for all $T>0$, leading to an interpolation between independent and free Haar unitaries. Denote 
by $\tau_u$ the distribution of a Haar unitary.

\begin{coro}  \label{-->CORO: Interpol Class <--> Free}For any $T>0$, there is  a state $\Phi_T$ on $ \cA= \C\<X,X^*,Y,Y^*\>,$  such that for any $P\in \cA,$ under $\mu_{N,T}$,
$$\tr ( P(A,B)) \to \Phi_T(P) \text{ in probability as }N\to\infty$$
with 
$$\lim_{T\downarrow 0}\Phi_T(P)=\tau_u *_c \tau_u(P) \text{ and }\lim_{T\to +\infty}\Phi_T(P)=\tau_u \star \tau_u(P).$$
Besides, for all $T,t>0,$ 
\begin{equation}
\Phi_T\not=\tau_u\star_t\tau_u,\label{eq:Free}
\end{equation}
while $$\Phi_T((XYX^*Y^*)^n)=\nu_T(n)=\tau_u\star_{\frac T 4}\tau_u((XYX^*Y^*)^n),\ \forall n \in \Z^*.$$
\end{coro}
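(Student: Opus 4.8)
The plan is to derive Corollary \ref{-->CORO: Interpol Class <--> Free} by specialising Theorem \ref{-->THM: Torus Intro} to a concrete family of loops on the torus $\Tb_T = \R^2/\sqrt{T}.\Z^2$, so that the random matrices $A$ and $B$ with law $\mu_{N,T}$ of \eqref{eq:Law GenLoops} are realised as Yang--Mills holonomies. First I would fix a one-vertex embedded graph on $\Tb_T$ whose two edges $a,b$ are the standard generators of $\pi_1(\Tb_T)$ bounding a single face of area $T$; by the Driver--Sengupta formula the holonomies $(\mathrm{hol}(\omega,a),\mathrm{hol}(\omega,b))$ under $\YM_{\Tb_T}$ have exactly the law $Z_T^{-1} p_T([A,B])\,dA\,dB$, i.e. $\mu_{N,T}$. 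For any word $w$ in $X,X^*,Y,Y^*$, the loop $\ell_w = w(a,b,a^{-1},b^{-1})$ satisfies $W_{\ell_w}(\omega) = \tr(w(A,B,A^*,B^*))$, and since $A,B$ are unitary this is $\tr(w(A,B))$ after rewriting inverses as adjoints; so $\tr(P(A,B))$ for a general $P\in\cA$ is a finite linear combination of such Wilson loops. Theorem \ref{-->THM: Torus Intro} then gives convergence in probability of each $W_{\ell_w}$ to $\Phi_{\Tb_T}(\ell_w)$, which equals $\Phi_{\R^2}(\tilde\ell_w)$ when $\ell_w$ is contractible (equivalently when $w$ is trivial in $\pi_1(\Tb_T)=\Z^2$) and $0$ otherwise. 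Defining $\Phi_T(P)$ as the corresponding linear combination of these limits yields a state (positivity and traciality pass to the limit, or one invokes Lemma \ref{lem-----positivity Projected MF}), establishing the first assertion.

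Next I would identify the two limiting regimes. As $T\downarrow 0$, the single face has vanishing area, and the Yang--Mills holonomy on the torus degenerates: $[A,B]\to\Id_N$ is forced while each of $A,B$ becomes free-Haar in a trivial way — more precisely, on the plane side $\Phi_{\R^2}(\tilde\ell_w)$ for the lift evaluates in the $T\to 0$ limit of the planar master field to the flat-connection value, which for a reduced word in two generators is $1$ iff $w=1$ in the \emph{free} group and $0$ otherwise — but here the constraint $[A,B]=1$ collapses $\Z^2$, giving exactly $\tau_u *_c \tau_u$. It is cleaner to argue directly: for $T\downarrow 0$, $p_T([A,B])\,dA\,dB \Rightarrow$ the uniform measure on the commuting pairs, under which $A,B$ are classically independent Haar unitaries, so $\tr(P(A,B))\to \tau_u*_c\tau_u(P)$; since convergence in probability to a constant and weak convergence of the matrix model are compatible, $\Phi_T(P)\to\tau_u*_c\tau_u(P)$. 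As $T\to\infty$, $p_T([A,B])\,dA\,dB\Rightarrow dA\,dB$, so $(A,B)$ become genuinely independent Haar unitaries for each $N$, whence by the asymptotic freeness theorem (\cite{VoiFreeRM,CollinsSnia,Lev}) $\tr(P(A,B))\to\tau_u\star\tau_u(P)$; combined with the uniform-in-$T$ control from Theorem \ref{-->THM: Torus Intro} (or a direct interchange-of-limits argument using continuity of $T\mapsto\Phi_T$) this gives $\Phi_T(P)\to\tau_u\star\tau_u(P)$.

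For the last two displayed claims I would compute $\Phi_T$ on the specific word $(XYX^*Y^*)^n$: the loop $\ell = (ab a^{-1} b^{-1})^n$ is the $n$-th power of the boundary of the fundamental face, hence contractible, and its lift $\tilde\ell$ to $\R^2$ is a loop winding $n$ times around a single square of area $T$. Therefore $\Phi_T((XYX^*Y^*)^n) = \Phi_{\R^2}(\tilde\ell)$, and by the known value of the planar master field on a simple loop of area $T$ traversed $n$ times this is $\nu_T(n)$, the $n$-th moment of the free unitary Brownian motion at time $T$ (this is the content of the computation in \cite{Lev3}; note the area here is $T$, matching the normalisation of section \ref{----sec:HK}). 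On the other hand, for $t$-freeness one has $\tau_u\star_t\tau_u((XYX^*Y^*)^n) = \tau(u_t u u_t^* v u_t u^* u_t^* v^*)^n$ with $u,v$ free Haar and $u_t$ free from them; expanding and using freeness this evaluates to $\nu_{t/4}(n)$ (the factor $4$ coming from the four occurrences of $u_t^{\pm1}$ per commutator), giving $\Phi_T((XYX^*Y^*)^n) = \nu_T(n) = \tau_u\star_{T/4}\tau_u((XYX^*Y^*)^n)$. Finally, to establish $\Phi_T\neq\tau_u\star_t\tau_u$ for \emph{all} $t$, it suffices to exhibit one word on which they disagree for every $t$: the matching on commutator-powers forces $t = T/4$, so I would test a second word — e.g. $X Y X Y^* X^* Y^* X^*$ or $XY^2X^*Y^{-2}$ — and show that $\Phi_T$ (computed via the planar master field on the corresponding lifted loop) differs from $\tau_u\star_{T/4}\tau_u$ on it; since $t$-freeness of two Haar unitaries is \emph{not} the correlation structure produced by the commutator constraint, such a discrepancy must exist.

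The main obstacle I anticipate is the last point — proving $\Phi_T\neq\tau_u\star_t\tau_u$ for every $t>0$ rather than merely for $t\neq T/4$. Pinning $t=T/4$ from the commutator moments is immediate, but then one needs a genuinely different word whose $\Phi_T$-value (a planar-master-field computation on a lifted loop with a nontrivial winding pattern) can be compared in closed form against the $t$-free value; the $t$-free side involves expanding a product with several independent copies-of-position insertions of $u_t$ and is combinatorially delicate, and one must make sure the chosen word's lift is contractible (so that Theorem \ref{-->THM: Torus Intro} gives the planar value rather than $0$) while still being complicated enough to separate the two states. A secondary technical point is justifying the interchange of the $N\to\infty$ and $T\to 0,\infty$ limits for $\Phi_T$; this should follow from the explicit (plane-side) formula for $\Phi_T$ together with continuity of the planar master field in the areas of the faces, but it needs to be stated carefully.
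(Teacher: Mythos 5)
Your first step (realising $\mu_{N,T}$ as the joint law of the holonomies of the two generating loops of the torus, invoking Theorem \ref{-->THM: Torus Intro} for each word, and reading off the limit through the planar master field of the lift) is exactly the paper's route, and your evaluation $\Phi_T((XYX^*Y^*)^n)=\nu_T(|n|)$ is correct. The genuine gap is the claim \eqref{eq:Free}: after pinning $t=T/4$ by the commutator moments you only assert that a discrepancy on a second word ``must exist'' because $t$-freeness ``is not the correlation structure produced by the commutator constraint'' --- but that is precisely what has to be proved, so as written the key inequality is unproven. The paper closes it by an explicit two-line computation on the very word you name: the liberation ODE of \cite{BenaychLev} gives $\tau_u\star_t\tau_u\bigl(XY^nX^*(Y^*)^n\bigr)=e^{-2t}$ for all $n\ge 1$ (this is \eqref{eq: Example tFree}), whereas $\Phi_T(XY^2X^*Y^{-2})$ is the planar master field of a simple rectangular loop enclosing area $2T$, i.e. $e^{-T}$; at $t=T/4$ this is $e^{-2t}>e^{-4t}$, so the states differ, and for $t\neq T/4$ they already differ on $XYX^*Y^*$. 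You need to carry out this (easy) computation rather than appeal to a structural intuition. Note also an internal inconsistency: you state that the $t$-free commutator moments equal $\nu_{t/4}(n)$, which would force $t=4T$ rather than $t=T/4$; the correct value is $\nu_{4t}(|n|)$ (the ODE for $\tau_t([a,b]^n)$ runs four times as fast as \eqref{eq:ODE UBM}), which is what makes $T/4$ the matching time stated in the corollary.

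A second, lesser issue concerns the two limiting regimes. Your primary arguments take $T\to 0$ or $T\to\infty$ in the matrix model at fixed $N$ and then let $N\to\infty$, which requires an interchange of limits you do not justify; moreover the $T\downarrow 0$ claim that the limit measure is supported on commuting pairs ``under which $A,B$ are classically independent Haar unitaries'' is false as stated (independent Haar unitaries almost surely do not commute --- only the limiting values of traces of words coincide, which is the thing to be proved). The fallback you mention in your last paragraph is the paper's actual argument and avoids any interchange: $\Phi_T$ is already the $N\to\infty$ limit and is expressed through $\Phi_{\R^2}$ evaluated on the lifted loop, so as $T\downarrow 0$ all enclosed areas vanish and continuity of the planar master field gives the value $1$ on every word with $[w]=0$, matching $\tau_u *_c\tau_u$; as $T\to\infty$ the lasso generators of the lift are free unitary Brownian motions at times proportional to $T$, which converge to freely independent Haar unitaries, so the limit is $1$ exactly on words trivial in the free group, matching $\tau_u\star\tau_u$. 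Running the limits on the plane-side formula in this way is what should replace your matrix-model degeneration argument.
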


We prove in section \ref{-----sec:Liberation}  the above corollary together with a few other properties of $\Phi_T.$ Let us mention that the interpolation provided by Corollary \ref{-->CORO: Interpol Class <--> Free} is not the only possible interpolation, even if we exclude the $t$-free convolution; for instance another interpolation was proposed in  \cite{MerPot}  using rank one Harish--Chandra--Itzykson--Zuber integrals.  Let us also mention that there are variations of freeness for family of algebras which are partly commuting \cite{MR2044286,MR3552222}. 
\subsection{Strategy of  proof via Makeenko--Migdal deformations}

\label{-----sec:Strategy}

An important property formally inferred by integration by part from \eqref{eq:YM_informal} in \cite{MM} and rigorously proved in \cite{Lev} based on the 
Driver--Sengupta formula, are a family of equations almost characterising the function $\Phi_\Sigma$ when $\Sigma$ is the plane. Other proofs have been given in \cite{Dah2,DHK}. The proofs of \cite{DHK} were much shorter and local, and it was possible to adapt them to all compact surfaces \cite{DGHK}. See also \cite{DriverMM} for a different approach based on the construction of the Yang--Mills measure via white noise and \cite{PPSY} for a proof based on the representation of Wilson loop expectations as surface sums.

\vspace{0.5em}

These equations can be described informally as follows. Consider a smooth loop $\ell$ with a transverse intersection at a point $v.$  Assume that $(\ell_\varepsilon)_{\varepsilon }$ is a deformation of $\ell$ in a neighborhood of $v$ such that the areas of the four corners adjacent to $v$ are modified as in Figure \ref{Fig----MMDef}.
\begin{figure}[!h]
\centering
\includegraphics[scale=0.6]{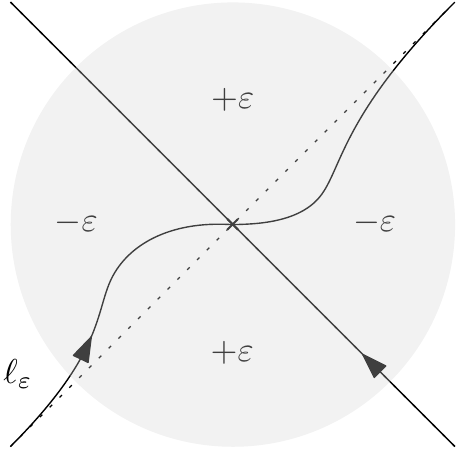}
\caption{\small Makeenko--Migdal deformation near an intersection point.}\label{Fig----MMDef}
\end{figure}
Then the Makeenko--Migdal equation at $v$ for a master field $\Phi_\Sigma$ is given by
\begin{equation}
\left.\frac{d}{d\varepsilon}\right|_{\varepsilon=0}\Phi_\Sigma( {\ell_\varepsilon})=\Phi_\Sigma(\ell_1) \Phi_\Sigma(\ell_2)\label{eq:MM Inform}
\end{equation}
where $\ell_1,\ell_2$ are two loops obtained by de-singularising $\ell$ at $v$ as on figure \ref{Fig----MMDeSing}.

\begin{figure}[!h]
\centering
\includegraphics[scale=0.4]{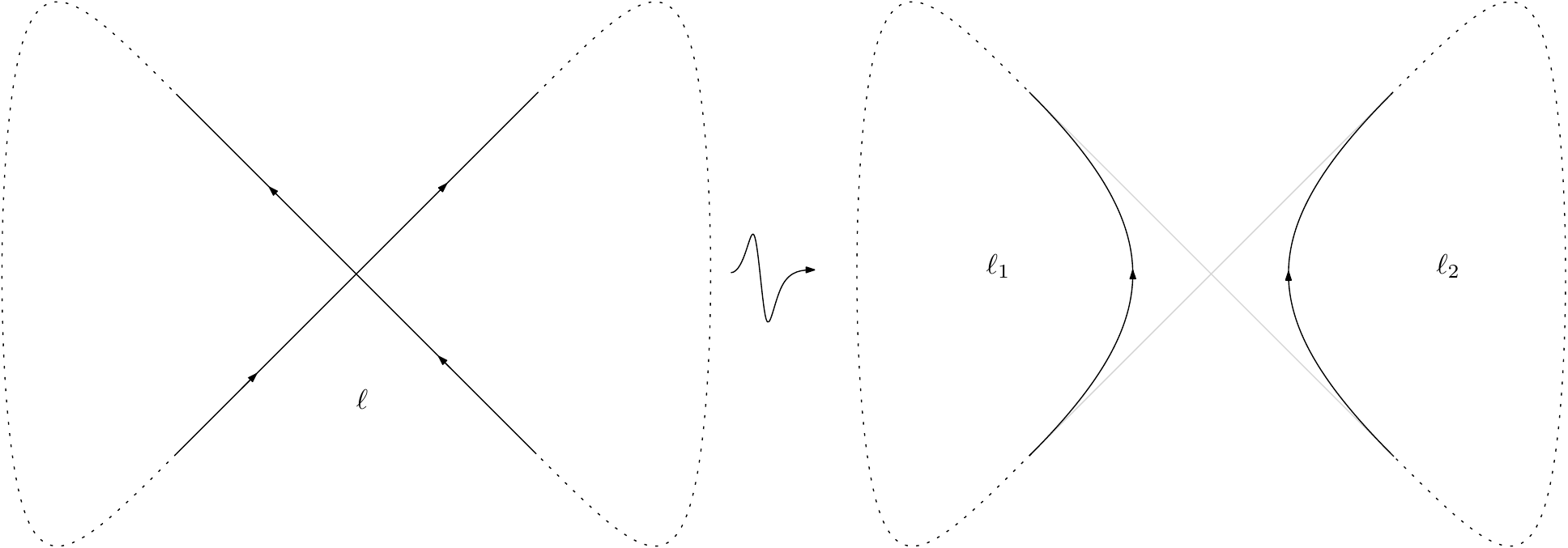}
\caption{\small De-singularisation at a simple intersection point.}\label{Fig----MMDeSing}
\end{figure}

The works \cite{Lev,DN,Hal2} can be understood as a study of existence and uniqueness of variants of the equation \eqref{eq:MM Inform}.  Our strategy 
here is to extend these results to all compact surfaces of genus $g\ge 1$.

\vspace{0.5em}

A motivation  of \cite{Lev} for proving these relations was to compute explicitly the planar master field by induction on the number of intersections and to characterise it through differential equations. It was realised there that for the plane there is no uniqueness for the Makeenko--Migdal equations alone, but there is if they are completed by an additional family of equations\footnote{associated to each face adjacent to an infinite face.}. In \cite{DN,Hal2}, the authors are interested in a perturbation of \eqref{eq:MM Inform} arising from finite $N$ analogs of \eqref{eq:MM Inform} in view of proving the convergence of Wilson loops. The same lack of uniqueness occurs but is dealt with differently, adding in some sense boundary conditions, specifying the value of the master field\footnote{or the convergence of Wilson loops} for simple loops. With this boundary condition, both \cite{DN,Hal2} are able to deduce the convergence of Wilson loops\footnote{This argument is valid for loops with finitely many transverse intersections. An additional step which is not considered in \cite{Hal2} is to extend it to loops with finite length.} on the sphere by induction on the number of intersection points. To complete the proof of Wilson loops convergence, it is then necessary to prove the convergence for boundary conditions via other means: this was done in \cite{DN} using a representation through a discrete\footnote{as suggested in \cite{Hal2}, another route here could be to relate Wilson loops for simple loops 
on the sphere to the Dyson Brownian bridge on the unit circle, which has been studied recently at another scale in \cite{LiechtyWang}. } $\beta$-ensemble. 

\vspace{0.5em}

In \cite{Hal2}, the author applied the same argument on all compact surfaces with a boundary condition given by simple loops within a disc and a 
uniquess or convergence result for loops within a disc. See the introduction of \cite{DL} for a more detailed discussion. In \cite{DL} we were 
able, using an independent argument, to prove the same result but without any boundary condition and making a relation with the planar master field.

\begin{thm}\label{--->THM: Simple loops Intro}Let $\ell$ be a loop in a compact connected orientable Riemann surface $\Sigma$ of genus $g\geq 1$ with area measure $\vol$.
\begin{enumerate}
\item If $\ell$ is topologically trivial and included in a disc $U$ such that $\vol(U)<\vol(\Sigma)$, then as $N$ tends to infinity, under $\YM_\Sigma$,
\[
W_\ell\to\tilde\Phi(\psi\circ\ell) \ \text{in probability}	,
\]
where $\tilde\Phi$ denotes the master field in the planar disc $\psi(U)$ where $\psi:U\to \psi(U)\subset\R^2$ is an area-preserving diffeomorphism.
\item If $\ell$ is simple and non-contractible, then for any $n\in\Z^*$, as $N$ tends to infinity,
\[
W_{\ell^n}\to 0 \ \text{in probability}.
\]
\end{enumerate}
\end{thm}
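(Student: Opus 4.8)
The plan is to reduce the two statements to known results about the Yang--Mills measure together with the structure theory of handles on a surface, using the Markov property of the holonomy field. First I would treat part (1). Since $\ell$ is topologically trivial and contained in a disc $U$ with $\vol(U)<\vol(\Sigma)$, the restriction property of the Yang--Mills measure gives that $\mathcal{R}^U_*(\YM_\Sigma)$ is absolutely continuous with respect to $\YM_U$; the essential point (as in \cite{DL}) is that the Radon--Nikodym derivative is controlled uniformly in $N$ by the ratio of partition functions, whose large-$N$ behaviour is known. Transporting through an area-preserving diffeomorphism $\psi:U\to\psi(U)\subset\R^2$ and invoking the convergence of Wilson loops on the plane (\cite{Xu,AS,Lev}), one obtains $W_\ell\to\tilde\Phi(\psi\circ\ell)$ in $\YM_U$-probability, and the absolute continuity with bounded-in-$N$ density upgrades this to convergence in $\YM_\Sigma$-probability. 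The one subtlety is that convergence in probability is not automatically preserved under a change of measure with merely $L^1$-bounded density; one needs the density to be bounded in $L^p$ for some $p>1$ (or bounded a.s.), which is exactly the kind of partition-function estimate established in \cite{DL}.

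For part (2), let $\ell$ be simple and non-contractible. The key geometric input is that a simple non-contractible loop on a genus-$g$ surface ($g\ge 1$) can be completed to a handle: there is a simple loop $\ell'$ meeting $\ell$ once transversally, and a neighbourhood of $\ell\cup\ell'$ which is a once-punctured torus, so that cutting $\Sigma$ along $\ell$ yields a surface $\Sigma'$ (of genus $g-1$ with boundary, or a sphere with two boundary circles when $g=1$) of strictly smaller ``complexity''. Using the Driver--Sengupta / heat-kernel description of $\YM_\Sigma$, the Wilson loop expectations $\E[W_{\ell^n}]$ and, more importantly, the moments $\E[|W_{\ell^n}|^2]$ can be written as finite sums over irreducible characters of $G_N$ involving the heat kernel at the area of the annular neighbourhood of $\ell$ and a partition-function factor for $\Sigma'$. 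One then shows, as $N\to\infty$, that $\E[W_{\ell^n}]\to 0$ and $\E[|W_{\ell^n}|^2]\to 0$, whence $W_{\ell^n}\to 0$ in probability; concretely $W_{\ell^n}$ behaves like $\tr(H^n)$ for $H$ a $G_N$-valued holonomy whose marginal law, after integrating out the rest of the surface, is a conjugation-invariant measure converging to the Haar measure (the variable ``$[A,B]$-type'' handle variable becomes Haar in the limit), and $\tr(U^n)\to 0$ for Haar $U$. Equivalently one can cite \cite[Thm 2.15]{DL}: the handle variable converges to a free unitary Brownian motion element, whose trace of any nonzero power vanishes; combined with a variance bound this gives convergence in probability.

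The main obstacle I expect is part (2): one must rule out the possibility that conditioning on the global topology of $\Sigma$ (through the commutator-product constraint $K_g$ and the finiteness of the area) keeps $\ell^n$ from equidistributing. This is where the explicit partition-function asymptotics for compact surfaces of genus $g\ge 1$ are indispensable --- one needs that the ratio $Z_{\Sigma'}/Z_\Sigma$ of the relevant (heat-kernel twisted) partition functions stays bounded and that the leading character contributions organise into the moments of a Haar unitary. In the genus-$1$ case the relevant identity is precisely $\E_{\mu_{N,T}}[\tr([A,B]^n)]\to\nu_T(|n|)$ from \cite{DL}, adapted to the marginal of a single simple loop rather than the commutator; for $g\ge 2$ one peels off one handle and uses that the remaining surface still carries a well-behaved Yang--Mills partition function. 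Assembling the variance estimate uniformly in $N$ is the technical heart, but all the needed partition-function bounds are available from \cite{DL}, so the argument is a matter of bookkeeping with characters and the surgery formula rather than a genuinely new difficulty.
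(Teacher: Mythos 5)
A preliminary remark: the present paper never proves this statement — it is the main theorem of the companion work \cite{DL}, quoted here and used as the boundary condition for the Makeenko--Migdal induction. Judged as a reconstruction of that proof, your part (1) is essentially the route the paper itself follows for the closely related Theorem \ref{-->THM: CUT Conv}: cut the surface, use the absolute continuity of Proposition \ref{prop:abs_cont_gluing} (\cite[Cor.~4.3]{DL}), whose Radon--Nikodym density is bounded in sup-norm by a ratio of partition functions converging to a nonzero limit and hence uniformly in $N$, then transport the planar (disc) convergence through an area-preserving diffeomorphism; your $L^p$ worry is settled precisely by this uniform a.s.\ bound, since convergence in probability to a constant then transfers directly.

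Part (2), however, contains a genuine gap. Your key geometric claim — that a simple non-contractible loop can always be completed to a handle by a simple loop meeting it exactly once, so that cutting along $\ell$ produces a single surface of lower genus — is true only for \emph{non-separating} loops. For $g\ge 2$ there exist separating simple loops that are non-contractible (e.g.\ the curve splitting a genus-$2$ surface into two one-holed tori); such a curve has even algebraic and geometric intersection number with every closed curve, so no loop crosses it once, and cutting along it yields two components, each of positive genus with one boundary circle. For these loops the marginal law of $H_\ell$ is governed by a product of two boundary partition functions, and none of your "handle variable" reasoning applies; this is exactly the situation treated via Corollary \ref{coro---off handle} / Theorem \ref{-->THM: CUT Conv} (equivalently, the surgery estimates of \cite{DL}), and your sketch simply omits it. A secondary but real error: you invoke \cite[Thm 2.15]{DL} as saying the handle variable converges to a free unitary Brownian motion "whose trace of any nonzero power vanishes". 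In fact it is the commutator $[A,B]$ that converges to the free unitary Brownian motion marginal $\nu_T$, whose moments do \emph{not} vanish; what vanishes are the moments of the individual holonomies $A,B$, which converge to Haar unitaries — and that is the statement you actually need for a simple non-contractible loop on the torus. As written, part (2) covers only the non-separating case and misidentifies the limiting object in the torus case, so the argument is incomplete.
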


A first remark is that evaluating the planar master field at lift of contractible loops to   the universal cover of $\Sigma$, as in the conjecture \ref{conj_Lift}, gives  a solution to Makeenko--Migdal equations. Our main focus will therefore be to study uniqueness of the Makeenko--Migdal equations or its deformation arising for finite $N$.

\vspace{0.5em}

The general strategy of this article is to use  
Theorem \ref{--->THM: Simple loops Intro} as boundary condition to prove  Proposition \ref{--->Prop: Intro Non-Nul homology} and  
Theorem \ref{-->THM: Intro Delayed Geodesic}. 
For the torus, any non-trivial closed geodesic is whether simple or the iteration of a simple closed loop, Proposition  \ref{--->Prop: Intro Non-Nul homology} together with Theorem
\ref{-->THM: Intro Delayed Geodesic} yield Theorem \ref{-->THM: Torus Intro}. For surfaces of genus  $g\ge2$, the result of \cite{DL} do not  the loops in the assumption of Theorem \ref{-->THM: Intro Delayed Geodesic}
and there are then loops whose 
homotopy class does not include any simple loop, or any loop obtained by iterating a simple loop \cite{BiS} (moreover most geodesics have intersection points).

\vspace{0.5em}

Let us now discuss how this strategy is implemented here. When applying the  argument of \cite{DN} or \cite{Hal2}, it is difficult to prove a result better than Theorem \ref{--->THM: Simple loops Intro}, 
which, given point 1. of Theorem \ref{--->THM: Simple loops Intro}, makes the use of Makeenko--Migdal equations pointless. A first obstacle being for instance a loop like in figure \ref{----Fig:RaqMax}, where it does not seem 
possible to apply Makeenko--Migdal equations at any vertex to deform the loop into a simpler loop. 

\begin{figure}[!h]
\centering
\includegraphics[scale=0.6]{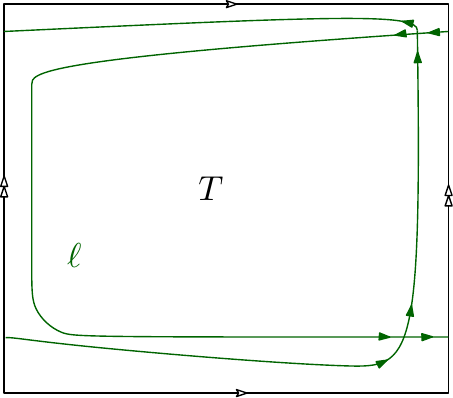}
\caption{\small In this example, it is impossible to change the area  around any intersection point, respecting the constraint of  Makeenko--Migdal given in figure \ref{Fig----MMDef}, without raising the number of intersection points. }\label{----Fig:RaqMax}
\end{figure}

To improve on \cite{Hal2}, a first step is to  characterise for surfaces of genus $g\ge 1,$ the allowed deformations  in Makeenko--Migdal equations. Viewing the evaluation at a regular loop of the master field as a function of faces area, we wonder along which deformation of loops, the derivative of the master field is a linear combination of area derivatives such as the one  involved in the left-hand-side of \eqref{eq:MM Inform}. This was understood first in the plane by \cite{Lev}. This is achieved here for surfaces in section \ref{----sec: Makeenko--Migdal Vectors} with the following conclusions:
\begin{itemize}
\item When a loop has non-zero homology, then any reasonable deformation is allowed;
\item When a loop  has zero homology, then it is possible to define the winding number and algebraic area of the 
loop and a deformation is allowed if and only if it preserves the algebraic area.
\end{itemize}

This observation allows to consider the simpler case of loops with non-zero homology separately.  In this case, it is possible to argue as follows by induction, showing at each step that the derivative along a suitable deformation is bounded by induction assumption. First, considering  the lift of a loop with non-zero homology to the universal cover, by induction on the number of intersections, it is  
possible to reduce the problem to loops with non-zero homology such that each strand of the lift  going through a fundamental region has\footnote{We shall call below these loops proper loops} no intersection point. Then Proposition \ref{--->Prop: Intro Non-Nul homology} can be proved by induction on the number of 
fundamental domains visited. A key  remark in this case is that at each intersection point, the two loops obtained by de-singularisation have both non-zero homology and visit strictly less fundamental domains.  This programme is 
carried out in section \ref{-----sec:Section Non-null Homology Proofs}. 

\vspace{0.5em}

A second step is to overcome the difficulty met in Figure \ref{----Fig:RaqMax}. This loop has vanishing homology.  The cause of the obstruction becomes clearer thanks to the first step:  it is not possible to decrease the area of the central face as it is a strict maximum of the winding number function. An idea is  to deform the loop in a face that we want to ``inflate'' so that the algebraic area remains preserved, as suggested on the following figure.   

\begin{figure}[!h]
\centering
\includegraphics[scale=0.6]{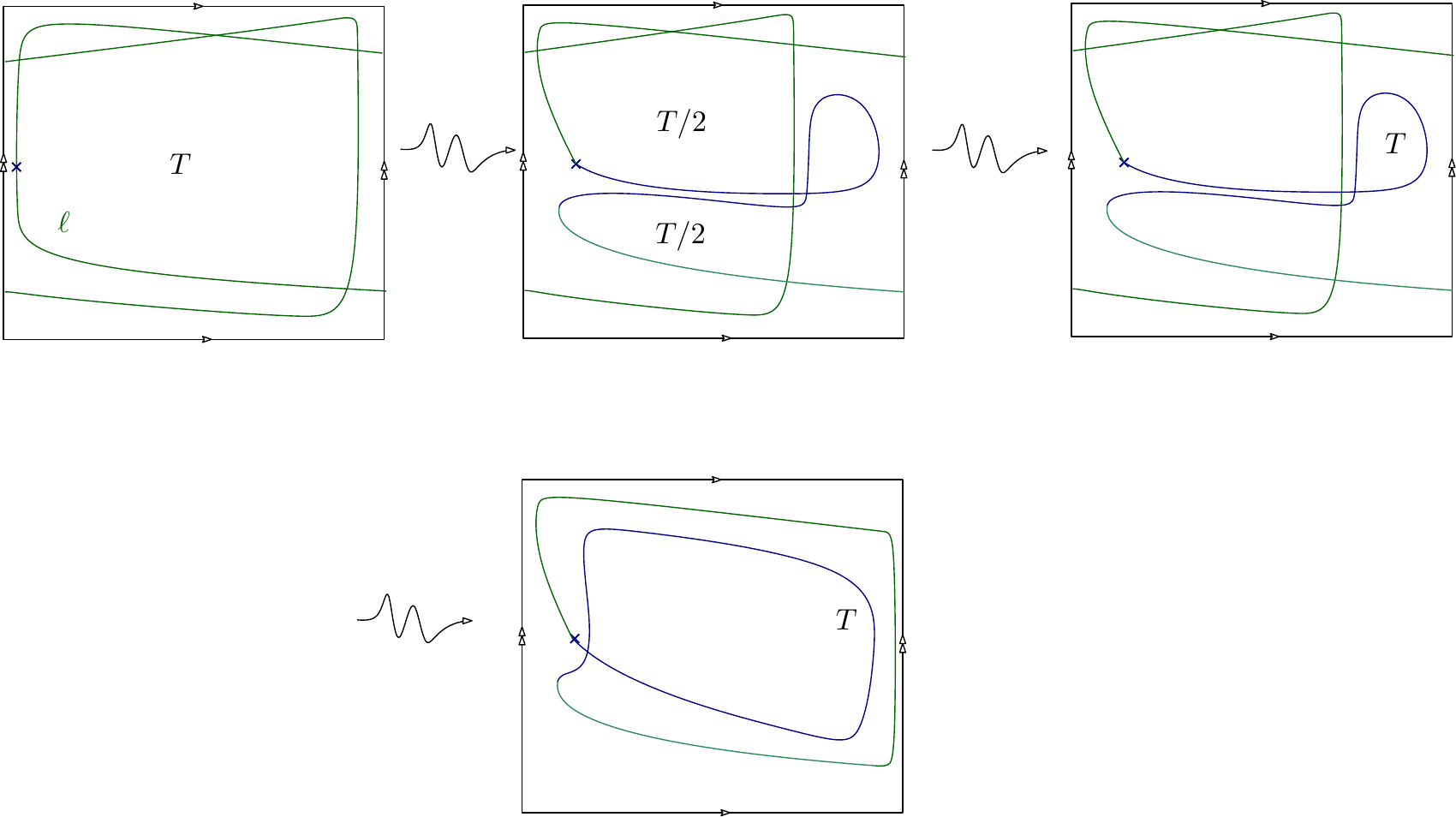}
\caption{\small Discrete homotopy towards a loop included in a disc preserving the algebraic area. Faces are labeled by their area. Faces without label have area $0$.}\label{Fig----Homotopie Raquette 2}
\end{figure}

An apparent issue with this argument is that the number of intersections of the loops involved in the different steps may  increase, preventing a direct    induction on the number of intersections  as in \cite{Lev,DN,Hal2}. 
To solve this issue we consider a family of ``marked'' loops, consisting of two paths whose concatenation  $\ell$ is a loop, where the second path is generic,  while the first one has a specific 
form\footnote{we call this part \emph{nested},  see section \ref{-----sec:Nested Marked Loops}.}. In particular we require that a loop obtained by  de-singularisation at an intersection point of the first part  is whether in a 
fundamental domain,  or  is the contraction of the  initial loop $\ell$
along  some  faces bounded by the perturbed part. Because of the nested structure of the perturbation part, it becomes  possible\footnote{see p. \pageref{example------Raq} where this idea is illustrated to prove the uniqueness of Makeenko--Migdal equations for the example of Figure \ref{Fig----Homotopie Raquette 2}.} to argue by induction  determining a complexity function on marked loops adapted\footnote{We believe there is a lot of flexibility here in the argument. We choose here a combinatorial approach, but it would be 
interesting to use instead a continuous functional on loops.} to the boundary conditions considered. The choice of complexity is done in  section \ref{====Sec: Shortening Homotopy sequence}, the induction is then 
performed in  Theorem \ref{-->TH:  Homotopy for null homology}.

\vspace{0.5em}

Lastly it remains to extend our convergence result to a wider family of loops.  This is first done using the property of  continuity and compatibility on closed simplices of areas  for loops with finitely many transverse intersections\footnote{see Lemma \ref{__Lem:YM Faces simplex}.}. Then a more general argument introduced in \cite{CDG,DN}, building on the construction of \cite{Lev2},  allows to consider all loops with finite length.\footnote{This second step is  not needed to consider projection of loops on a lattice.} 

\section{Homology and homotopy on embedded graphs}

\subsection{Four equivalence relations on paths and loops on maps}
\label{----sec:Maps}

We recall briefly here standard notions and define some notations of topological discretisation of a surface.  

\vspace{0.5em}

\begin{dfn}
A \emph{graph}\index{Graph} $\Gbb=(V,E,I)$ is a triple consisting of two sets $V$ and $E$ and an incidence relation $I\subset V\times E$ such that for any $e\in E$, the cardinal of $\{v\in V: (v,e)\in I\}$ is 1 or 2. The elements of $V$ (resp. $E$) are called \emph{vertices} (resp. \emph{edges}).
\end{dfn}

This definition might seem very abstract at first sight, but it is actually simple: it merely says that a graph is made of edges and vertices, and that each edge is incident to either 1 vertex (the edge is then called a loop) or 2 vertices. Let $\Gbb=(V,E,I)$ be a graph, and $e_1,e_2\in E$ be two distinct edges.
\begin{enumerate}
\item If there is $v\in V$ such that $(v,e_1)\in I$ and $(v,e_2)\in I$, then $e_1$ and $e_2$ are called \emph{adjacent}.
\item If there are $v_1,v_2\in V$ such that $(v_i,e_j)\in I$ for all $1\leq i,j\leq 2$, then $e_1$ and $e_2$ form a \emph{double edge}.
\end{enumerate}
More generally, if $n$ edges share the same incidence vertices, they form a multiple edge, and $\Gbb$ is called a \emph{multigraph}. A \emph{topological map} $M$ on a surface $\Sigma$ 
is a multigraph $\Gbb=(V,E,I)$ together with an embedding $\theta:\Gbb\to\Sigma$ such that:
\begin{itemize}
\item The images of two distinct vertices $v_1,v_2\in V$ by $\theta$ are distinct points of $\Sigma$,
\item The images of edges $e\in E$ are continuous curves $\theta_e:[0,1]\to\Sigma$ with endpoints $\und e = \theta_e(0)$ and $\ov e = \theta_e(1)$ such that $\theta_e$ and $\theta_{e'}$ can only intersect at their endpoints,
\item For any edge $e\in E$, there is an edge $e^{-1}\in E$ such that $\theta_{e^{-1}}=\theta_e^{-1}$,
\item The complement $F$ of the skeleton $\mathrm{Sk}(\Gbb)=\bigcup_{e\in E}\theta_e$ of $\Gbb$ in $\Sigma$ is split in one or several connected components that are all homeomorphic to discs, and represent the faces of the map.
\end{itemize}
An \emph{orientation} of the map is the choice of a subset $E_+\subset E$ such that for any $e\in E$, $\vert \{e,e^{-1}\}\cap E_+\vert =1$. The orientation of $\Sigma$ also induces an orientation of the faces as follows: a face $f$ is \emph{positively oriented} if its boundary $\partial f$ is represented by $e_1\cdots e_n$, where $e_1,\ldots,e_n$ are the edges constituting $\partial f$ in positive order. It is \emph{negatively oriented} if its boundary is represented by $e_n^{-1}\cdots e_1^{-1}$. We denote by $F$ (resp. $F_+$) the set of all faces with both orientations (resp. the positively oriented faces), and for any $f\in F_+$ we denote by $f^{-1}\in F$ the same face with reverse orientation.

\begin{rmk}
With our conventions, each unoriented edge and unoriented face is counted twice, therefore Euler's formula reads
\[
\vert V\vert -\tfrac12\vert E\vert + \tfrac12\vert F\vert = \vert V\vert - \vert E_+\vert + \vert F_+\vert = 2-2g,
\]
if $\Gbb$ is embedded in a surface of genus $g$. See Fig. \ref{Fig----Oriented faces} for an illustration.
\end{rmk}

\begin{figure}[!h]
\centering
\includegraphics[scale=0.8]{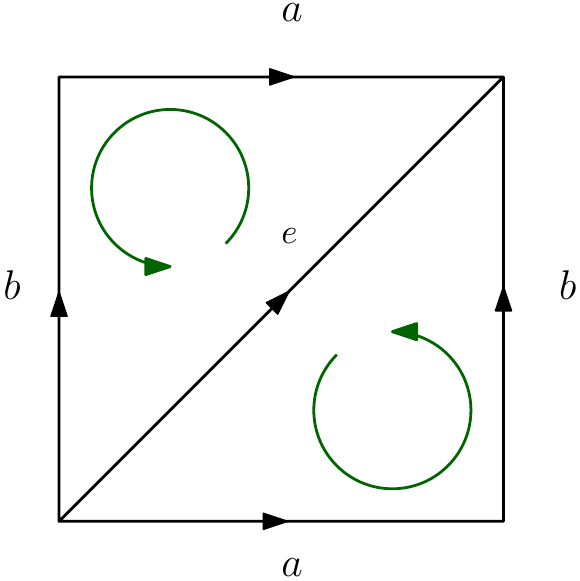}
\caption{\small A map embedded in the torus, with $\vert V\vert = 1$, $\vert E_+\vert = 3$ and $\vert F_+\vert = 2$. The edges named $a$ are glued together, and same for the edges named $b$. There are two positively-oriented faces, with respective boundaries $ea^{-1}b^{-1}$ and $abe^{-1}$; the orientations are represented by the counterclockwise green arrows. }\label{Fig----Oriented faces}
\end{figure}

From now on we will denote by $\Gbb=(V,E,F)$ a topological map, and $\theta$ and $\Sigma$ will be implied. Given $\Gbb=(V,E,F)$, one can describe a CW-complex corresponding to the map: its vertices are 0-cells, its edges 1-cells and its faces 2-cells. Besides, the skeleton of the map is exactly the skeleton of the complex. We will describe the corresponding chain and cochain complexes in the next section, as well as their (co)homology. In this section, we rather focus on topological features of maps, and algebraic properties of loops in a map.

\vspace{0.5em}

A map with boundary is a map $(V,E,F)$ together with a proper subset $B$ of $F$ such that the closures of 2-cells associated to any pair of distinct elements of $B$ do not intersect. A \emph{path} in $\Gbb$ is whether a single vertex or a finite string of edges $e_1\ldots e_n$ with $n\ge 1$ such that for all $k\in\{1,\ldots,n-2\},$ $\und{e}_{k+1}=\ov{e}_k.$ We say it is constant in the first case and set $|\g|=0$, while in the second, we denote by $\ov \g= \ov e_n$ and $\und \g= \und e_1$ its endpoint and starting point, and by $|\g|=n$ its \emph{length}. A loop of $\Gbb$ is a path $\g$ with $\und \g=\ov \g.$  A loop $\ell$ is \emph{based} at a vertex $v$ when $\und{\ell}=v.$ We say it is \emph{simple} when all vertices of $\ell$ occur only once $\ell$ but $\und{\ell}$ which occurs exactly twice. We write respectively $\mathrm{P}(\Gbb)$ and $\mathrm{L}(\Gbb)$ for the set of paths  and loops of $\Gbb.$ The respective sets of paths starting from a vertex $v\in V$ are denoted by $\mathrm{P}_v(\Gbb)$ and $\mathrm{L}_v(\Gbb).$  Whenever $\a$ and $\b$ are two paths with $\ov \a=\und \beta,$ $\a\b$ denotes their \emph{concatenation}, while $\a^{-1}$ is the path run in reverse direction, with the convention that $\g_1\a\g_2=\a$ when $\g_1$ and $\g_2$ are constant paths at $\und \a$ and $\ov \a$. We say that $\b$ is a \emph{subpath} of $\delta\in \mathrm{P}(\Gbb)$ and write $\b\prec \delta,$ if there are paths $\a$ and $\g$ with $\delta=\a\b \g.$

\vspace{0.5em}

\emph{Homeomorphic loops:} When two maps $\Gbb,\Gbb'$ yields  homeomorphic CW complexes, it induces a bijection between cells of same dimension. Denote by 
$\Phi: E\to E'$ the associated bijection between edges of $\Gbb$ and $\Gbb'$ and the associated bijection between $\mathrm{P}(\Gbb)$ and $\mathrm{P}(\Gbb').$ Consider two paths $\a$ and $\b$ within maps $\Gbb_\a$ and $\Gbb_\b$. We say that $\a$ and $\b$ are \emph{homeomorphic} and write $$\a\sim_\Sigma\b$$ if there are maps $\Gbb$ and $\Gbb'$ finer than respectively $\Gbb_\a$ and $\Gbb_\b$ such that  $\Gbb$ and $\Gbb'$ are homeomorphic, with induced bijection $\Phi:\mathrm{P}(\Gbb)\to \mathrm{P}(\Gbb')$ such that $$\Phi(\a)=\b.$$

\vspace{0.5em}

\emph{Cyclically equivalent loops:} We say that two loops  are \emph{cyclically equivalent} when one can be obtained from the other by cyclically permuting its 
edges. By convention, two constant loops are cyclically equivalent if they have equal base-point. This defines an equivalence relation $\sim_c$ on $\mathrm{L}(\Gbb)$. An element of the quotient $\mathrm{L}_c(\Gbb)=\mathrm{L}(\Gbb)/\sim_c$ is called an \emph{unrooted loop}.

\vspace{0.5em}

\emph{Reduced loops:}  A path $\g'$ is obtained by insertion of an edge in a path $\g$, if  $\g=\g_1\g_2$ and $\g'=\g_1ee^{-1}\g_2$ with  $\g_1,\g_2$ two subpaths of $\g$ and $e$ an edge such that $\ov {\g_1}=\und e=\und{\g_2}.$ Vice-versa, we say in this situation that $\g$ is obtained by erasing of an edge of $\g'$. Two paths are said to have the same \emph{reduction}  if  a finite sequence of erasures and insertions of edges transforms one into the other. This defines an equivalence relation $\sim_r$ on $\mathrm{P}(\Gbb)$ and we write $\mathrm{RP}(\Gbb)=\mathrm{P}(\Gbb)/\sim_r,$  $\mathrm{RP}_v(\Gbb)=\mathrm{P}_v(\Gbb)/\sim_r$ and $\mathrm{RL}_v(\Gbb)=\mathrm{L}_v(\Gbb)/\sim_r$ for any $v\in V$. The reduction of a path $\g\in\mathrm{P}(\Gbb)$ is the unique path of minimal length in its $\sim_r$-equivalence class. We say that two loops are $\sim_{r,c}$-equivalent if one can be obtained from the other by iterated cyclic permutations, insertions and erasures of edges.

\vspace{0.5em}

\emph{Lassos and discrete homotopy:}  For any face $f\in F$, its boundary can be identified with an unrooted loop  $\pl f.$ When $r\in V$ is a vertex of 
$\pl f,$ we write $\pl_r f$ for the loop in the $\sim_c$-class of $\pl f$ with $\und{\pl_r f}= r.$  When $F_*$ is a subset of $F,$ a $F_*$-\emph{lasso} is a loop 
of the form $\a \pl_r f \a^{-1},$ where $f$ is an oriented face belonging up to orientation to $F_*$ and  $\a\in \mathrm{P}(\Gbb)$ is a path such that $r=\ov \a$ is a vertex of $\pl f.$   When 
$\g\in \mathrm{P}(\Gbb)$, $\g'$ is obtained by \emph{lasso insertion}  from $\g$ if $\g=\g_1\g_2$ for some paths $\g_1,\g_2\in \mathrm{P}(\Gbb)$ and $\g'= \g_1 \ell \g_2,$ where $\ell$ is a lasso with 
$\ov{\g}_1=\und{\ell}=\und{\g}_2.$ 
Conversely, $\g'$ is said to be obtained from $\g$ by \emph{lasso erasure}. We say that two paths are \emph{discrete homotopic} if there is a finite sequence of lassos or edge erasures and insertions transforming one into the other. This defines an equivalence relation $\sim_h$ on $\mathrm{P}(\Gbb)$ which is also well defined on $\mathrm{RP}(\Gbb).$ Moreover, two paths of $\Gbb$ are discrete homotopic if and only if their image in $\Sigma_\Gbb$ are homotopic with fixed endpoints.  For any $v\in V,$ we denote  the quotient  $\mathrm{P}_v(\Gbb)/\sim_h$  and $\mathrm{L}_v(\Gbb)/\sim_h$ by $\tilde V_v$ and $\pi_{1,v}(\Gbb).$ When $F_*\subset F,$ we say that two paths of $\Gbb$ are \emph{$F_*$-homotopic} if there is a finite sequence of $F_*$-lassos or edge erasures and insertions transforming one into the other.  This defines an equivalence relation on $\mathrm{P}(\Gbb)$ denoted by $\sim_{F_*}.$ When $K$ is a closed, compact, contractible subset of $\Sigma_{\Gbb}$ given by the closure of the union of images of $F_*$, for any pair of paths $\g_1,\g_2\in\mathrm{P}(\Gbb)$ whose image in $\Sigma_{\Gbb}$ is included in $K$ and with  same endpoints, $\g_1\sim_{F_*}\g_2.$ 

\vspace{0.5em}

\emph{The group of reduced loops and the fundamental group:}  For any vertex $v\in V$, we define a group by endowing $\mathrm{RL}_v(\Gbb)$ with the multiplication given by concatenation and the inverse map given by  reversing the orientation of loops. The group $\pi_{1,v}(\Gbb)$ is the quotient of $\mathrm{RL}_v(\Gbb)$ by the normal subgroup generated by lassos based at $v.$ Since two loops of $\Gbb$ are discrete homotopic if and only if their image in $\Sigma_\Gbb$ are homotopic, the group $\pi_{1,v}(\Gbb)$ is isomorphic to the fundamental group of the surface $\Sigma_\Gbb$. For any group $G$, let us write $[a,b]=aba^{-1}b^{-1},\ \forall a,b\in G.$  Then  
$\pi_{1,v}(\Gbb)$ is isomorphic to the surface group
$$\Gamma_g=\< x_1,y_1,\ldots, x_g,y_g| [x_1,y_1]\ldots [x_g,y_g]\>.$$
We will also consider, for $r\geq 1$, the group
\[
\Gamma_{r,g}=\<z_1,\ldots,z_{r}, x_1,y_1,\ldots, x_g,y_g\ |\ z_1\ldots z_{r} = [x_1,y_1]\ldots [x_g,y_g]\>.
\]

\begin{lem}[\cite{Lev2}]\label{__Lem:Basis Reduced Loops}For any map $\Gbb$, the following assertions hold:
\begin{enumerate}
\item The group $\mathrm{RL}_v(\Gbb)$ is free of rank $\vert E_+\vert-\vert V\vert+1=\vert F_+\vert +2g-1.$ 
\item Assume that $g\ge 0$ and $\vert F_+\vert=r$. For any $v\in V,$ there are   lassos $(\ell_i,1\le i\le r)$ based at $v$, with faces in bijection with $F$, and  loops  $a_1,b_1,\ldots, a_g,b_g\in \mathrm{L}_v(\Gbb)$ such that the application
$$\Theta:\Gamma_{r,g}\to\mathrm{RL}_v(\Gbb)$$
that maps $z_i$ to $\ell_i$ for all $1\le i\le r$, $x_m$ (resp. $y_m$) to $a_m$ (resp. $b_m$) for all $1\le m\le g$ is an isomorphism\footnote{denoting here abusively the $\sim_r$ class of a loop by the same symbol as the loop.}. The diagram 
$$\begin{array}{cccc}1&\to \Gamma_{r,g}&\to \mathrm{RL}_v(\Gbb)&\to 1\\
& \downarrow&\downarrow&\\
1 &\hspace{-0.2 cm}\to \Gamma_{g} &\to \pi_{1,v}(\Gbb)&\to 1
\end{array}$$
is then commutative, where the left downwards morphism is the group morphism mapping $z$ to $1\in\Gamma_{r,g}$ for any $z\in \{z_1,\ldots,z_r\}$, and $t\in \Gamma_{r,g}$ to $t\in\Gamma_g$ for any $t\in\{x_1,y_1,\ldots, x_g,y_{g}\}.$
\end{enumerate}
\end{lem}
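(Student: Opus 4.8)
The plan is to treat the two assertions separately: the first is the classical fact that the fundamental group of a graph is free, and the second will be proved by first contracting a spanning tree and then inducting on the number of faces.

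\emph{Assertion 1.} Since $\sim_r$ is generated by insertions and erasures of backtracks $ee^{-1}$, the group $\mathrm{RL}_v(\Gbb)$ is canonically the fundamental group of the connected graph $\mathrm{Sk}(\Gbb)$ based at $v$ (the reduced representative being the unique minimal-length element of a class). I would fix a spanning tree $T\subset E_+$ and, for each $e\in E_+\setminus T$, let $\lambda_e$ be the loop that runs from $v$ to $\underline e$ in $T$, traverses $e$, and returns from $\overline e$ to $v$ in $T$; collapsing $T$ identifies $\mathrm{Sk}(\Gbb)$ with a wedge of $|E_+\setminus T|$ circles, so $(\lambda_e)_{e\in E_+\setminus T}$ is a free generating family and $\mathrm{RL}_v(\Gbb)$ is free of rank $|E_+|-(|V|-1)$. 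Euler's formula $|V|-|E_+|+|F_+|=2-2g$ then rewrites this rank as $|F_+|+2g-1$.

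\emph{Assertion 2: reduction to one vertex and base case.} Contracting a spanning tree $T$ collapses an embedded tree, whose regular neighbourhood is a disc, to a point, producing a homeomorphic surface carrying a map $\Gbb'$ with a single vertex $v'$, $|E_+|-|V|+1$ loop-edges, and a set of faces in canonical bijection with that of $\Gbb$; the induced isomorphism $\mathrm{RL}_v(\Gbb)\to\mathrm{RL}_{v'}(\Gbb')$ sends each $\lambda_e$ to the corresponding loop-edge and each lasso around a face $f$ to a lasso around the corresponding face. So one may assume $|V|=1$, in which case $\mathrm{RL}_v(\Gbb)$ is freely generated by the loop-edges and the normal subgroup generated by $v$-based lassos is the normal closure of $\{\partial_v f:f\in F_+\}$ (every lasso around $f$ being conjugate in $\mathrm{RL}_v(\Gbb)$ to a cyclic representative of $\partial_v f$). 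Then I would induct on $r=|F_+|$. If $r=1$, there are $2g$ loop-edges and a single face whose boundary word $w=\partial_v f_1$ has length $\sum_{f\in F_+}|\partial_v f|=2|E_+|=4g$, each loop-edge occurring once with each sign; this is the datum of a fundamental polygon for $\Sigma$, so the classification of compact orientable surfaces provides a free basis $a_1,b_1,\dots,a_g,b_g$ of $\mathrm{RL}_v(\Gbb)$ with $[a_1,b_1]\cdots[a_g,b_g]=w$. Taking $\ell_1=\partial_v f_1$, the assignment $z_1\mapsto\ell_1$, $x_m\mapsto a_m$, $y_m\mapsto b_m$ defines a morphism $\Theta\colon\Gamma_{1,g}\to\mathrm{RL}_v(\Gbb)$ which, after eliminating $z_1$ through the defining relation of $\Gamma_{1,g}$, identifies the rank-$2g$ free group $\Gamma_{1,g}$ with $\mathrm{RL}_v(\Gbb)$.

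\emph{Assertion 2: inductive step and the diagram.} For $r\ge 2$, connectedness of the dual graph yields an edge $e$ whose two sides belong to distinct faces $f,f'$; deleting $e$ merges $f\cup e\cup f'$ into a single disc $f''$ and gives a one-vertex map $\Gbb''$ with $r-1$ faces and the same genus, to which the induction hypothesis applies, say with lassos $\ell''_1,\dots,\ell''_{r-1}$, the last around $f''$, and handle loops $a_m,b_m$. Viewing $\mathrm{RL}_v(\Gbb'')$ as the subgroup of $\mathrm{RL}_v(\Gbb)$ generated by the loop-edges of $\Gbb''$, the set $\{\ell''_1,\dots,\ell''_{r-2},a_m,b_m\}$ is a free basis of it. Cutting the disc $f''$ back along the arc $e$ shows that, for a suitable conjugating path, one can choose a lasso $\widehat\ell_f$ around $f$ and a lasso $\widehat\ell_{f'}$ around $f'$ with $\widehat\ell_f\widehat\ell_{f'}=\ell''_{r-1}$ and with $\widehat\ell_f$ a reduced word containing $e$ exactly once; hence $\{\ell''_1,\dots,\ell''_{r-2},\widehat\ell_f,a_m,b_m\}$ is a free basis of $\mathrm{RL}_v(\Gbb)$. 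Enumerating the faces of $\Gbb$ as $f_1=f''_1,\dots,f_{r-2}=f''_{r-2},f_{r-1}=f,f_r=f'$, with lassos $\ell_j=\ell''_j$ for $j\le r-2$, $\ell_{r-1}=\widehat\ell_f$, $\ell_r=\widehat\ell_{f'}$, the relation $\ell''_1\cdots\ell''_{r-1}=[a_1,b_1]\cdots[a_g,b_g]$ becomes $\ell_1\cdots\ell_r=[a_1,b_1]\cdots[a_g,b_g]$, so $z_j\mapsto\ell_j$, $x_m\mapsto a_m$, $y_m\mapsto b_m$ defines $\Theta\colon\Gamma_{r,g}\to\mathrm{RL}_v(\Gbb)$, which after eliminating $z_r$ carries a free basis onto the free basis just exhibited, hence is an isomorphism. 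Finally $\Theta$ sends $\ker(\Gamma_{r,g}\to\Gamma_g)=\langle\langle z_1,\dots,z_r\rangle\rangle$ onto $\langle\langle\ell_1,\dots,\ell_r\rangle\rangle$, which equals the normal closure of all $v$-based lassos (each lasso around $f_i$ being conjugate to $\ell_i^{\pm1}$), i.e. $\ker(\mathrm{RL}_v(\Gbb)\to\pi_{1,v}(\Gbb))$; so $\Theta$ descends to an isomorphism $\Gamma_g\to\pi_{1,v}(\Gbb)$ and the square commutes.

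The genuine topological content sits in the base case $r=1$: that the boundary word of a one-vertex, one-face CW structure on $\Sigma$ becomes, after an automorphism of the free group on the edges, the standard relator $[x_1,y_1]\cdots[x_g,y_g]$ — this is the normal-form part of the classification of compact surfaces. Everything else is bookkeeping; the delicate point I expect is, in the inductive step, choosing the conjugating paths and the ordering of the faces so that the single defining relation comes out exactly as $z_1\cdots z_r=[x_1,y_1]\cdots[x_g,y_g]$, together with the repeated use of the fact that any two lassos around the same face are conjugate in $\mathrm{RL}_v(\Gbb)$.
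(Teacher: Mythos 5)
The paper does not actually prove this lemma---it is quoted from L\'evy \cite{Lev2}---and your argument follows essentially the same standard route as that reference: reduce to a one-vertex map by contracting a spanning tree, invoke the polygonal normal form from the classification of compact orientable surfaces in the one-face base case, and induct on the number of faces by deleting an edge separating two distinct faces. Your proof is correct; the only points worth spelling out are routine: conjugate the $a_i,b_i$ so that the commutator product equals $w$ on the nose rather than merely up to conjugacy (and fix orientation), justify the ``last lasso around $f''$'' normalisation by replacing lassos with conjugate lassos so as to reorder the product $z_1\cdots z_{r-1}$, and use that in a free group a generating family of cardinality equal to the rank is a free basis (Hopfian property) in the basis-replacement step.
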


\vspace{0.5em}

\emph{Refining maps:} When $\Gbb'=(V',E',F')$ and $\Gbb=(V,E,F)$ are two maps, $\Gbb'$ is \emph{finer} than $\Gbb$ if  $(V,E)$ is a subgraph of $(V',E')$ and $\Sigma_{\Gbb'}=\Sigma_{\Gbb}$, so that we can identify $V$ and $E$ with subsets of respectively $V'$ and $\mathrm{P}(\Gbb),$ while  any face of $\Gbb$ is the union of faces of $\Gbb'.$  Conversely, we say that $\Gbb$ is \emph{coarser} then $\Gbb'$.

\vspace{0.5em}

\emph{Dual map:} When $\Gbb=(V,E,F)$ is a map of genus $g$ with surface $\Sigma_\Gbb,$ we define its \emph{dual map} as follows: we put a vertex $f^*$ inside each face $f\in F$, and for each edge $e\in E$ that separates two faces $f_1$ and $f_2$ we draw a new edge $e^*$ that intersects it in its midpoint and connects the vertices $f_1^*$ and $f_2^*$. There is a bijection $V^*\simeq F,$ $E^*\simeq E$ and $F^*\simeq V$ and a dual edge inherits the orientation from the edges it crosses as follows: if $e^*$ crosses $e\in E_+$ from the right\footnote{Formally, it means that the dual edge $(f_1,f_2)$ with $f_1,f_2\in F_+$ is in $E_+^*$ if the edge $e\in E_+$ it crosses satisfies $e\in\partial f_2$ and $e^{-1}\in\partial f_1$.}, then $e^*\in E_+^*$. See Fig. \ref{Fig----Dualmap} for an illustration. In particular, we see that if $e=(\und e,\ov e)$ is an edge and $e^*=(\und{e^*},\ov{e^*})$ is the dual edge, then we have the following facts:
\begin{equation}\label{eq:dual_edges}
e^*\in\partial \und{e},\ (e_*^{-1})\in\partial\ov{e},\ e\in\partial\ov{e^*},\ e^{-1}\in\partial\und{e^*}.
\end{equation}

\begin{figure}[!h]
\centering
\includegraphics[scale=0.8]{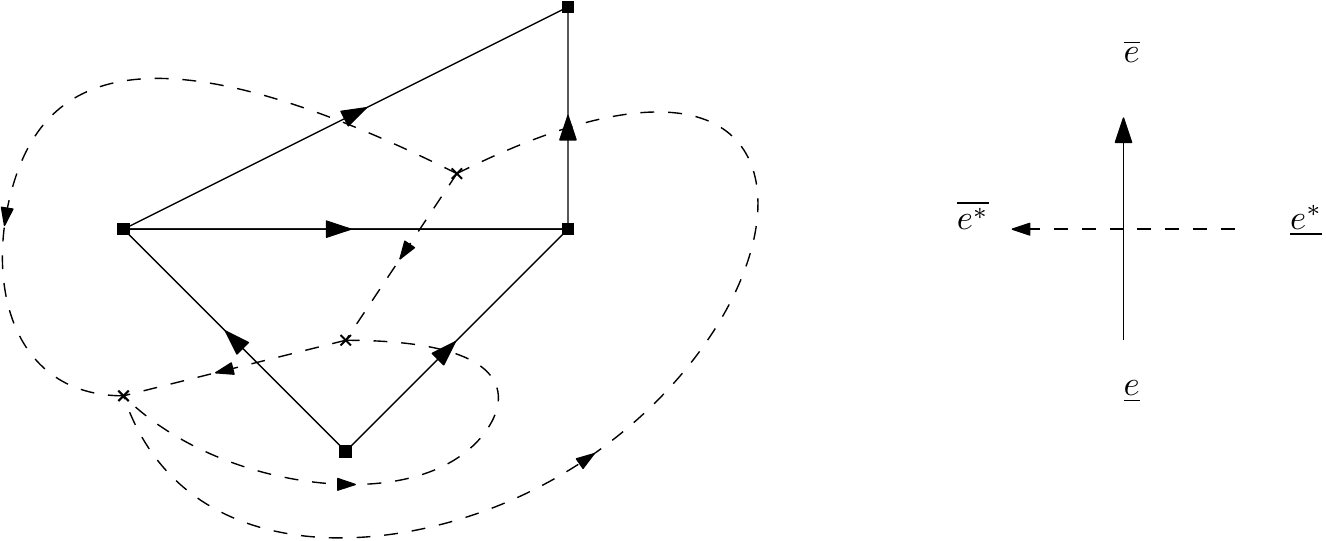}
\caption{\small On the left: a map embedded in the sphere (in plain lines), and its dual map (in dashed lines). Each dual edges are oriented such that it crosses the corresponding edge ``from the right''. On the right: the orientation convention of an edge and its dual. We have $\und e,\ov e\in V=F^*$ and $\und{e^*},\ov{e^*}\in F=V^*$.}\label{Fig----Dualmap}
\end{figure}

\vspace{0.5em}

{\emph{Cut of a map:} When $\Gbb=(V,E,F)$ is a map and $\ell$ is a simple loop of $\Gbb,$ with dual edges $E_\ell^*$, we say that $\ell$ is separating if the graph $(F,E^*\setminus E_\ell^*)$ has exactly two connected components $(F_1,E_1^*)$ and $(F_2,E_2^*)$. Consider $i\in\{1,2\}.$ Denote by  $E_i$ the union of $E_\ell$ with the set of edges of $\Gbb$ dual to  $E_i^*$, and by  $V_i$ the vertices of $\Gbb$ endpoints of edges in $E_i$.  We then define a map with one boundary component by setting $\Gbb_i=(V_i,E_i,F_i\sqcup\{f_{i,\infty}\})$ where $\{f_i,\infty\}$ is the label of a boundary face with boundary $\ell.$   We say that the pair of maps with boundary $(\Gbb_1,\{f_{1,\infty}\}),(\Gbb_2,\{f_{2,\infty}\})$ is the \emph{cut} of $\Gbb$ along $\ell.$ We say that the cut is \emph{essential} if $\ell$ is not contractible. A cut is essential if and only if  the maps $\Gbb_1$ 
and $\Gbb_2$ have genus larger or equal  to $1.$  When a map is cut, the lemma \ref{__Lem:Basis Reduced Loops} can  be specified as follows. 

\begin{lem}\label{__Lem:Basis Reduced Loops Cut} Assume that $(\Gbb_1,\{f_{1,\infty}\}),(\Gbb_2,\{f_{2,\infty}\})$ is the cut of a map $\Gbb=(V,E,F)$ of genus $g\ge 0,$ along a simple loop $\ell\in\mathrm{L}_v(\Gbb).$ 
Denote by $g_1$ and $g_2$ the genus of $\Gbb_1$ and $\Gbb_2$ and by  $r_1$ and $r_2$ their number of non-boundary faces, so that $\Gbb$ has genus $g=g_1+g_2$ and $r=r_1+r_2$ faces.  Then the following holds true.
\begin{enumerate}
\item The group $\mathrm{RL}_v(\Gbb)$ is isomorphic to the free product $\mathrm{RL}_v(\Gbb_1)*\mathrm{RL}_v(\Gbb_2).$
\item  There are    lassos $(\ell_i,1\le i\le r)$ based at $v$, and  loops  $a_1,b_1,\ldots, a_g,b_g\in \mathrm{L}_v(\Gbb)$ as in Lemma \ref{__Lem:Basis Reduced Loops Cut} with the additional property that 
$\ell_1,\ldots,\ell_{r_1},a_1,\ldots,b_{g_1}\in \mathrm{L}_v(\Gbb_1)$  and  $\ell_{r_1+1},\ldots,\ell_{r},a_{g_1+1},\ldots,b_{g}\in \mathrm{L}_v(\Gbb_2).$  The group $\mathrm{RL}_v(\Gbb_1)$ is then free over the basis $\ell_1,\ldots,\ell_{r_1},a_1,\ldots,b_{g_1}.$
\end{enumerate}

\end{lem}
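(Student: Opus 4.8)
My plan is to realise the cut at the level of skeletons and extract the group-theoretic picture from van Kampen's theorem, then to obtain the compatible bases by applying Lemma~\ref{__Lem:Basis Reduced Loops} inside $\Gbb_1$ and $\Gbb_2$ separately. For item~1, I would first record the two elementary features of the cut: an edge $e$ of $\Gbb$ separates a face of $F_1$ from a face of $F_2$ if and only if $e\in E_\ell$ (its dual lies in $E_\ell^*$), whence $E=E_1\cup E_2$ with $E_1\cap E_2=E_\ell$; and a vertex incident both to a side-$1$ edge and to a side-$2$ edge is necessarily incident to an edge of $E_\ell$, so that $V_1\cap V_2=V_\ell$. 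Therefore $\mathrm{Sk}(\Gbb)=\mathrm{Sk}(\Gbb_1)\cup\mathrm{Sk}(\Gbb_2)$, the two pieces being connected, and $\mathrm{Sk}(\Gbb_1)\cap\mathrm{Sk}(\Gbb_2)$ is exactly the embedded circle $\mathrm{Sk}(\ell)$, which is connected, contains $v$, and whose reduced loops based at $v$ are the powers of $\ell$. Van Kampen's theorem for the $1$-complex $\mathrm{Sk}(\Gbb)$ — equivalently, extending a spanning path of $\mathrm{Sk}(\ell)$ to a spanning tree of $\mathrm{Sk}(\Gbb_1)$, to one of $\mathrm{Sk}(\Gbb_2)$, and to one of $\mathrm{Sk}(\Gbb)$, and comparing the associated free bases — then shows that the inclusions $\mathrm{RL}_v(\Gbb_i)\hookrightarrow\mathrm{RL}_v(\Gbb)$ are injective and exhibit $\mathrm{RL}_v(\Gbb)$ as the free product of $\mathrm{RL}_v(\Gbb_1)$ and $\mathrm{RL}_v(\Gbb_2)$ amalgamated over the common infinite cyclic subgroup $\langle\ell\rangle$, which is item~1.

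For item~2 I would apply Lemma~\ref{__Lem:Basis Reduced Loops}(2) to the map $\Gbb_1$, of genus $g_1$ with $r_1+1$ faces (those of $F_1$ together with $f_{1,\infty}$). Since $v$ lies on $\partial f_{1,\infty}=\ell$, the lasso attached to $f_{1,\infty}$ can be taken with constant handle, so that it equals $\ell$, and — after conjugating and reshuffling the other lassos if needed, operations that do not affect the conclusion of that lemma — it can be made to occupy the last slot of the defining relation. One thus gets lassos $\ell_1,\dots,\ell_{r_1}$ around the faces of $F_1$, loops $a_1,b_1,\dots,a_{g_1},b_{g_1}\in\mathrm{L}_v(\Gbb_1)$, and the identity $\ell_1\cdots\ell_{r_1}\,\ell=[a_1,b_1]\cdots[a_{g_1},b_{g_1}]$ in $\mathrm{RL}_v(\Gbb_1)$; since it expresses $\ell$ through the remaining generators, $\mathrm{RL}_v(\Gbb_1)$ is free over $\{\ell_1,\dots,\ell_{r_1},a_1,b_1,\dots,a_{g_1},b_{g_1}\}$, which is the last assertion. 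Doing the same with $\Gbb_2$, whose boundary face $f_{2,\infty}$ carries $\ell$ with the opposite induced orientation and which I place first, I obtain lassos $\ell_{r_1+1},\dots,\ell_r$ around the faces of $F_2$, loops $a_{g_1+1},b_{g_1+1},\dots,a_g,b_g\in\mathrm{L}_v(\Gbb_2)$, the identity $\ell^{-1}\ell_{r_1+1}\cdots\ell_r=[a_{g_1+1},b_{g_1+1}]\cdots[a_g,b_g]$, and $\mathrm{RL}_v(\Gbb_2)$ free over $\{\ell_{r_1+1},\dots,\ell_r,a_{g_1+1},\dots,b_g\}$.

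Multiplying the relation of $\Gbb_1$ on the right by the relation of $\Gbb_2$, the factor $\ell\,\ell^{-1}$ at the junction cancels — this is exactly where item~1 is used, the copy of $\ell$ coming from $\mathrm{RL}_v(\Gbb_1)$ being identified with the one coming from $\mathrm{RL}_v(\Gbb_2)$ inside $\mathrm{RL}_v(\Gbb)$ — and one gets $\ell_1\cdots\ell_r=[a_1,b_1]\cdots[a_g,b_g]$. Hence $z_i\mapsto\ell_i$, $x_m\mapsto a_m$, $y_m\mapsto b_m$ defines a homomorphism $\Theta\colon\Gamma_{r,g}\to\mathrm{RL}_v(\Gbb)$; it is onto because the two sub-collections generate $\mathrm{RL}_v(\Gbb_1)$ and $\mathrm{RL}_v(\Gbb_2)$, which together generate $\mathrm{RL}_v(\Gbb)$ by item~1. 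Since $\Gamma_{r,g}$ is free of rank $r+2g-1$ (eliminate $z_r$ from its relation) and $\mathrm{RL}_v(\Gbb)$ is free of the same rank by Lemma~\ref{__Lem:Basis Reduced Loops}(1), the epimorphism $\Theta$ sends a free basis of $\Gamma_{r,g}$ to a generating set of $\mathrm{RL}_v(\Gbb)$ of cardinality equal to its rank, hence to a free basis, so $\Theta$ is an isomorphism. The required memberships ($\ell_i,a_m,b_m\in\mathrm{L}_v(\Gbb_1)$ for $i\le r_1$, $m\le g_1$, and likewise for $\Gbb_2$) hold by construction, so item~2 follows.

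The group theory here is routine once item~1 is in place; I expect the genuine work to lie in two bookkeeping points. First, one must justify the topological facts $\mathrm{Sk}(\Gbb)=\mathrm{Sk}(\Gbb_1)\cup\mathrm{Sk}(\Gbb_2)$, $\mathrm{Sk}(\Gbb_1)\cap\mathrm{Sk}(\Gbb_2)=\mathrm{Sk}(\ell)$ and the connectedness of the $\mathrm{Sk}(\Gbb_i)$, which rely on the embedding — specifically, the two sides being the connected components of the dual graph minus $E_\ell^*$, only edges of $\ell$ can lie between them. Second, and more delicate to write down, one must make sure the lasso of $f_{i,\infty}$ can be taken literally equal to $\ell^{\pm1}$ and placed in the slot of the product-of-commutators relation (last for $\Gbb_1$, first for $\Gbb_2$) for which the two relations compose to the defining relation of $\Gamma_{r,g}$; this uses the flexibility of Lemma~\ref{__Lem:Basis Reduced Loops} — replacing lassos by conjugates and reordering them — and should be spelled out with care.
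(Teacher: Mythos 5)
The paper states this lemma without proof, so I am judging your argument on its own merits; in substance it is sound, but two points deserve comment.

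First, what you actually prove for item 1 is that $\mathrm{RL}_v(\Gbb)$ is the product of $\mathrm{RL}_v(\Gbb_1)$ and $\mathrm{RL}_v(\Gbb_2)$ \emph{amalgamated} over the infinite cyclic subgroup $\langle\ell\rangle$, and you then write ``which is item~1'' although item~1 asserts the plain free product. These are not interchangeable, and the plain free product cannot hold: by definition of the cut both $E_1$ and $E_2$ contain $E_\ell$, so $\ell$ is a nontrivial element of both subgroups and the two copies are identified inside $\mathrm{RL}_v(\Gbb)$; numerically, $\mathrm{RL}_v(\Gbb_i)$ is free of rank $r_i+2g_i$, so the free product is free of rank $r+2g$, whereas $\mathrm{RL}_v(\Gbb)$ has rank $r+2g-1$ (for the equator of the sphere drawn with one vertex and one edge, $\mathrm{RL}_v(\Gbb)\cong\Z$ while the free product is free of rank $2$). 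Your amalgamated version is the correct statement, and it is the one that item~2 and the application in Theorem \ref{-->THM: CUT Conv} actually require; but you should say explicitly that you are reading item~1 in this corrected form rather than silently equating the two. Your spanning-tree implementation of van Kampen (extend a spanning path of the circle to spanning trees of the two skeletons, take the union) is exactly the right elementary way to get both the injectivity of the inclusions and the amalgamation, since the unique non-tree edge of $\ell$ contributes the same basis element to both factors.

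Second, the step you defer --- normalising the lasso of $f_{1,\infty}$ to be literally $\ell^{\pm1}$ with constant handle, placed in the last (resp.\ first) slot --- does close along the lines you indicate, and it is worth recording the mechanism: starting from any family provided by Lemma \ref{__Lem:Basis Reduced Loops} for $\Gbb_1$, the lasso of $f_{1,\infty}$ equals $\gamma\,\partial_v f_{1,\infty}\,\gamma^{-1}$ for some loop $\gamma\in\mathrm{RL}_v(\Gbb_1)$; conjugating the \emph{entire} family by $\gamma^{-1}$ keeps lassos lassos and turns the commutator product into the commutator product of the conjugated loops, and successive rewritings $z_iz_j=(z_iz_jz_i^{-1})z_i$ move the boundary lasso to the desired slot. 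The resulting family still generates $\mathrm{RL}_v(\Gbb_1)$ and still satisfies the $\Gamma_{r_1+1,g_1}$ relation, so the same rank-plus-Hopficity argument you invoke for $\Theta$ shows it is again a family ``as in Lemma \ref{__Lem:Basis Reduced Loops}''. With that spelled out, the cancellation of $\ell\,\ell^{-1}$ when the two relations are concatenated (which uses the opposite induced orientations of $f_{1,\infty}$ and $f_{2,\infty}$, itself a consequence of your observation that every edge of $\ell$ separates a face of $F_1$ from a face of $F_2$), the surjectivity of $\Theta$, and the identification of $\{\ell_1,\ldots,\ell_{r_1},a_1,\ldots,b_{g_1}\}$ as a free basis of $\mathrm{RL}_v(\Gbb_1)$ as a generating set of cardinality equal to the rank, are all correct.
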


}

\subsection{Discrete homology, winding function and Makeenko--Migdal vectors}

\label{----sec: Makeenko--Migdal Vectors}

We recall here some elementary results about the homology of topological maps and discuss their relation to Makeenko--Migdal vectors introduced in \cite{Lev,DN,Hal2}. It will lead us to a construction of the winding function, as well as a characterisation of the Makeenko--Migdal vectors, which, as we recall, encode the deformations that are allowed by Makeenko--Migdal equations. In the sequel, $R$ will denote a ring that is either $\R$ or $\Z$, unless specified otherwise. We shall start with a general property of finitely-generated free modules.

\begin{prop}\label{__prop:dual_mod}
Let $A$ be a finitely-generated free $R$-module, and $(e_1,\ldots,e_n)$ be a free basis of $A$.
\begin{enumerate}
\item There exists a nondegenerate bilinear form $\langle \cdot,\cdot\rangle$ on $A$ such that $(e_1,\ldots,e_n)$ is an orthonormal basis.
\item There is a canonical isomorphism $A\cong\Hom(A,R)$ expressed through the bilinear form $\langle\cdot,\cdot\rangle$.
\end{enumerate}
\end{prop}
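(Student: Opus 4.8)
The plan is to prove both parts by hand, using that $A$ is free with a distinguished basis $(e_1,\ldots,e_n)$. First I would define the bilinear form $\langle\cdot,\cdot\rangle$ on $A$ to be the unique $R$-bilinear map satisfying $\langle e_i,e_j\rangle=\delta_{ij}$ for all $1\le i,j\le n$. Existence and well-definedness are immediate: since $(e_1,\ldots,e_n)$ is a free basis, any choice of values $\langle e_i,e_j\rangle$ extends uniquely by bilinearity, so we may set $\langle \sum_i a_ie_i,\sum_j b_je_j\rangle=\sum_i a_ib_i$. By construction $(e_1,\ldots,e_n)$ is orthonormal. For nondegeneracy, suppose $x=\sum_i a_ie_i$ satisfies $\langle x,y\rangle=0$ for all $y\in A$; taking $y=e_j$ gives $a_j=0$ for every $j$, hence $x=0$. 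This proves part 1.

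For part 2, consider the $R$-linear map $\iota\colon A\to\Hom(A,R)$ defined by $\iota(x)=\langle x,\cdot\rangle$, i.e. $\iota(x)(y)=\langle x,y\rangle$. This is $R$-linear in $x$ by bilinearity of the form. Injectivity of $\iota$ is exactly the nondegeneracy established in part 1. For surjectivity, note that $\Hom(A,R)$ is itself free with dual basis $(e_1^\vee,\ldots,e_n^\vee)$ characterised by $e_i^\vee(e_j)=\delta_{ij}$; given $\varphi\in\Hom(A,R)$, set $x=\sum_i\varphi(e_i)e_i\in A$, and then $\iota(x)(e_j)=\langle x,e_j\rangle=\varphi(e_j)$ for all $j$, so $\iota(x)=\varphi$ since both agree on a basis. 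Hence $\iota$ is an isomorphism. One should also remark why it is "canonical": although the form $\langle\cdot,\cdot\rangle$ depends on the chosen basis, any two bases are related by an element of $\GL_n(R)$, and the point to record (or simply to leave as the intended meaning) is that $\iota$ is expressed through the form as in the statement; the naturality one actually uses later is just functoriality of this construction with respect to module maps respecting the chosen bases.

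The only subtlety, and the place I would be most careful, is the word \emph{canonical}: over a general ring $R$ (here $R=\Z$ or $\R$) there is no basis-free isomorphism $A\cong\Hom(A,R)$, so the claim must be read as asserting the existence of an isomorphism \emph{given} the data of the form from part 1, equivalently given the basis. I would phrase the proof so that this is transparent — constructing $\iota$ from $\langle\cdot,\cdot\rangle$ — rather than claiming basis-independence. Everything else is routine linear algebra over a commutative ring, valid verbatim for both $R=\Z$ and $R=\R$ since no division is used. Thus I do not anticipate a genuine obstacle; the main point is precision in the statement's interpretation and making sure surjectivity is argued using the freeness of $A$ (so that $x=\sum_i\varphi(e_i)e_i$ makes sense and the two functionals are compared on a generating set).
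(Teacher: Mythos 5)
Your proposal is correct and follows the same route as the paper: the paper likewise defines $\langle e_i,e_j\rangle=\delta_{ij}$ extended by bilinearity and takes the map $x\mapsto\langle x,\cdot\rangle$, merely asserting it is an isomorphism, whereas you spell out nondegeneracy, injectivity, and surjectivity via the dual basis. Your remark on the meaning of \emph{canonical} is a sensible clarification but does not change the argument.
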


\begin{proof}
The first point is obvious: we set $\langle e_i,e_j\rangle=\delta_{ij}$ and we extend the form by bilinearity. For the second point, set
\[
\Phi:\left\lbrace\begin{array}{ccc}
A & \longrightarrow & \Hom(A,R)\\
x & \longmapsto & (y\mapsto \langle x,y\rangle),
\end{array}\right.
\]
and notice that it is indeed an isomorphism.
\end{proof}

We can present the homology of a topological map from several equivalent ways, and we will present three of them.

\begin{dfn}\label{def:chain_cpx}
Let $\Gbb=(V,E,F)$ be a topological map. We define its associated \emph{chain complex} by the sequence
\[
0\longrightarrow C_2(\Gbb;R)\build{\longrightarrow}{}{\partial} C_1(\Gbb;R)\build{\longrightarrow}{}{\partial} C_0(\Gbb;R)\longrightarrow 0,
\]
where $C_0(\Gbb;R)$ (resp. $C_1(\Gbb;R)$, $C_2(\Gbb;R)$) is the free $R$-module generated\footnote{Remark that $E$ and $F$ define generating families but not free families. A free basis of $C_1(\Gbb,R)$ (resp. $C_2(\Gbb;R)$) is given by $E_+$ (resp. $F_+$).} by $V$ (resp. $E$, $F$). The boundary operator is defined by linear extension of the boundary operator in the underlying surface, defined by
\begin{align*}
\partial e = & \ov e - \und e,\ \forall e\in E,\\
\partial f = & \sum_{e\in\partial f} e,\ \forall f\in F.
\end{align*}
\end{dfn}

Let $\Gbb=(V,E,F)$ be a topological map, and let $\Gbb^*=(V^*,E^*,F^*)$ be its dual map. For any $v\in V=F^*$ we define its \emph{boundary} $\partial v$ as the cycle $e_1^*\cdots e_n^*$ of dual edges constituting the positively-oriented boundary of the face $v$.

\begin{dfn}
Let $\Gbb=(V,E,F)$ be a topological map. Its associated \emph{cochain complex} is defined by the sequence
\[
0\longleftarrow \Omega^2(\Gbb,R)\build{\longleftarrow}{}{d} \Omega^1(\Gbb,R)\build{\longleftarrow}{}{d} \Omega^0(\Gbb,R)\longleftarrow 0,
\]
where $\Omega^k(\Gbb,R)=\Hom(C_k(\Gbb;R),R)$ for any $0\leq k\leq 2$, and $d$ is the dual of the boundary operator:
\begin{align*}
df(e) = & f(\partial e) = f(\ov e) - f(\und e), \ \forall e\in E, \ \forall f\in\Omega^0(\Gbb,R),\\
d\omega(f) = & \omega(\partial f) = \sum_{e\in\partial f} \omega(e),\ \forall f\in F,\ \forall \omega\in\Omega^1(\Gbb,R).
\end{align*}
The elements of $\Omega^k(\Gbb,R)$ are called $R$-valued $k$-\emph{forms} on $\Gbb$.
\end{dfn}

\begin{prop}\label{prop:dual_complex}
For any topological map $\Gbb$, its cochain complex is isomorphic to the chain complex of the dual map $\Gbb^*$.
\end{prop}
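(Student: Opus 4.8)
The plan is to build, degree by degree, $R$-module isomorphisms between the two complexes and to check that they intertwine $d$ with $\partial$; the only ingredients are Proposition~\ref{__prop:dual_mod}, the cell-bijections $V\simeq F^{*}$, $E\simeq E^{*}$, $F\simeq V^{*}$ attached to the dual map, and the incidence relations \eqref{eq:dual_edges}.

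First, apply Proposition~\ref{__prop:dual_mod} with the orthonormal bilinear forms associated to the free bases $V$, $E_+$, $F_+$ of $C_0(\Gbb;R)$, $C_1(\Gbb;R)$, $C_2(\Gbb;R)$, so as to identify $\Omega^{k}(\Gbb,R)=\Hom(C_k(\Gbb;R),R)$ with $C_k(\Gbb;R)$; under this identification the coboundary $d$ is the transpose of the boundary operator $\partial$ of $\Gbb$, since $df(e)=f(\partial e)$ and $d\omega(f)=\omega(\partial f)$. Thus the cochain complex of $\Gbb$ is isomorphic to $C_0(\Gbb;R)\xrightarrow{\,\partial^{T}}C_1(\Gbb;R)\xrightarrow{\,\partial^{T}}C_2(\Gbb;R)$. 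I would then transport this along the bijections between cells of $\Gbb$ and $\Gbb^{*}$: $V\leftrightarrow F_+^{*}$, $E_+\leftrightarrow E_+^{*}$ and $F_+\leftrightarrow V^{*}$ yield $R$-module isomorphisms $C_0(\Gbb;R)\cong C_2(\Gbb^{*};R)$, $C_1(\Gbb;R)\cong C_1(\Gbb^{*};R)$, $C_2(\Gbb;R)\cong C_0(\Gbb^{*};R)$, each mapping a basis onto a basis. It remains to verify that under these identifications $\partial^{T}$ on the $\Gbb$-side corresponds to $\partial$ on the $\Gbb^{*}$-side, i.e.\ that the vertex--edge (resp.\ edge--face) incidence data of $\Gbb$ becomes the face--edge (resp.\ edge--vertex) incidence data of $\Gbb^{*}$.

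This last point is exactly \eqref{eq:dual_edges}. The relations $e^{*}\in\partial\und e$ and $(e^{*})^{-1}\in\partial\ov e$ say that, for $e\in E_+$, the dual edge $e^{*}$ occurs in $\partial(v^{*})$ with coefficient $+1$ if $\und e=v$, with coefficient $-1$ if $\ov e=v$, and with coefficient $0$ otherwise, which is precisely the transpose of the edge--vertex incidence of $\Gbb$. The relations $e\in\partial\ov{e^{*}}$ and $e^{-1}\in\partial\und{e^{*}}$ say that $\partial(e^{*})=\ov{e^{*}}-\und{e^{*}}$, where $\ov{e^{*}}$ (resp.\ $\und{e^{*}}$) is the unique positively oriented face of $\Gbb$ carrying $e$ (resp.\ $e^{-1}$) on its boundary; summing over $e\in E_+$ and using the antisymmetry $\omega(e^{-1})=-\omega(e)$ of $1$-forms recovers $\sum_{e\in\partial f}\omega(e)=d\omega(f)$, the transpose of the face--edge incidence of $\Gbb$. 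One subtlety: the first of these two comparisons produces a global sign, the identification $C_0(\Gbb;R)\cong C_2(\Gbb^{*};R)$ intertwining $\partial^{T}$ with $-\partial$ rather than $\partial$; this is an artefact of the conventions fixing $E_+^{*}$ and the orientations of the faces of $\Gbb^{*}$, and it is removed by post-composing that isomorphism with $-\id$ (equivalently, reversing the orientation of every dual face), which repairs the first square and leaves the second unchanged. The resulting triple of isomorphisms is then an isomorphism of complexes. I expect the orientation and sign bookkeeping — consistent choices of $E_+^{*}$ and of the positive faces of $\Gbb^{*}$, together with the antisymmetry conventions for $1$- and $2$-forms — to be the only genuinely delicate step; once \eqref{eq:dual_edges} is in place the algebra is routine.
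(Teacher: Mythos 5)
Your argument is correct and takes essentially the paper's route: identify $C_k(\Gbb^*;R)$ with $\Omega^{2-k}(\Gbb,R)$ through the cell bijections (your preliminary step via Proposition \ref{__prop:dual_mod}, turning $d$ into the transpose of $\partial$, is only a repackaging of this) and check the two squares using \eqref{eq:dual_edges}. The sign you flag is genuine under the stated conventions --- \eqref{eq:dual_edges} puts $e^*$ in $\partial v$ with coefficient $+1$ when $\und e=v$ and $-1$ when $\ov e=v$, whereas $df_v(e)=\delta_{v\,\ov e}-\delta_{v\,\und e}$, so the square involving $\Omega^0(\Gbb,R)$ anticommutes until one negates that single identification (equivalently reverses the orientation of the dual faces), which is exactly the adjustment the paper's ``one can easily find that the diagram commutes'' glosses over; your fix is the right one, and only your passing phrase ``precisely the transpose'' (rather than minus the transpose) sits in tension with it.
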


\begin{proof}
Let us first note that, as free modules, we have indeed canonical isomorphisms $\Phi:C_k(\Gbb^*;R)\cong\Omega^{2-k}(\Gbb,R)$ for $0\leq k\leq 2$. They are explicitely given as follows:
\[
f_v:v'\in V\mapsto \delta_{vv'},\ \forall v\in F_+^*\simeq V,
\]
\[
\omega_e:e'\in E_+\mapsto \delta_{ee'},\ \forall e^*\in E_+^*\simeq E_+,
\]
\[
\mu_f:f'\in F_+\mapsto \delta_{ff'},\ \forall f\in V^*\simeq F_+.
\]
Using \eqref{eq:dual_edges} in conjunction with the definitions of $\partial$ and $d$, one can easily find that the diagram
\[
\begin{tikzcd}
0 \arrow[r] & C_2(\Gbb^*;R) \arrow[r, "\partial"] \arrow[d, equal] & C_1(\Gbb^*;R) \arrow[r, "\partial"] \arrow[d, equal] & C_0(\Gbb^*;R) \arrow[r] \arrow[d, equal] & 0\\
0 \arrow[r] & \Omega^0(\Gbb,R) \arrow[r, "d"] & \Omega^1(\Gbb,R) \arrow[r, "d"] & \Omega^2(\Gbb,R) \arrow[r] & 0
\end{tikzcd}
\]
commutes, which proves the isomorphism.
\end{proof}

We shall denote by $\mu_*$ the constant 2-form defined by $\mu_*(f)=1$ for all $f\in F_+$. Thanks to Proposition \ref{__prop:dual_mod}, there is for any $0\leq k\leq 2$ a canonical isomorphism $\Phi:C_k(\Gbb;R)\build{\longrightarrow}{}{\cong}\Omega^k(\Gbb;R)$, represented by the applications $v\mapsto f_v$, $e\mapsto\omega_e$ and $f\mapsto\mu_f$ used in the proof of Proposition \ref{prop:dual_complex}.

\vspace{0.5em}

We define $d^*$ as the adjoint of $d$ on the cochain complex of $\Gbb$, meaning that
\begin{align*}
d^*\omega = & \sum_{e\in E_+}\omega(e)f_{\partial e}=\sum_{e\in E_+}\sum_{v\in \partial e} \omega(e)f_v,\ \forall \omega\in\Omega^1(\Gbb,R),\\
d^*\mu = & \sum_{f\in F_+} \mu(f)\omega_{\partial f} = \sum_{f\in F_+}\sum_{e\in\partial f}\mu(f)\omega_e,\ \forall \mu\in\Omega^2(\Gbb,R)\cong\Omega.
\end{align*}
In particular, $d^*:\Omega^1(\Gbb,R)\to\Omega^0(\Gbb,R)$ is the \emph{divergence operator} in R. Kenyon's terminology \cite{Ken}. Let $e\in E_+$ be an oriented edge, and $e^*\in E_+^*$ be the dual edge, \emph{i.e.} the faces $f,f'\in F_+$ such that $e\in\partial f$ and $e^{-1}\in\partial f$ satisfy $f'=\und{e^*}$ and $f=\ov{e^*}$. Then we have, for any $\mu\in\Omega^2(\Gbb,R)\cong C_0(\Gbb^*;R)$:
\[
d^*\mu(e) = \mu(f)-\mu(f') = \langle \mu,\partial e^*\rangle.
\]

We obtain an isomorphism of chain complexes given by the following commutative diagram:

\[
\begin{tikzcd}
0 \arrow[r] & C_2(\Gbb;R) \arrow[r, "\partial"] \arrow[d, equal] & C_1(\Gbb;R) \arrow[r, "\partial"] \arrow[d, equal] & C_0(\Gbb;R) \arrow[r] \arrow[d, equal] & 0\\
0 \arrow[r] & \Omega^2(\Gbb,R) \arrow[r, "d^*"] & \Omega^1(\Gbb,R) \arrow[r, "d^*"] & \Omega^0(\Gbb,R) \arrow[r] & 0
\end{tikzcd}
\]
Equipped with these chain complexes, we can then do some (basic) homology. Let us introduce a few notations: we set:
\begin{itemize}
\item $\diamondsuit_1=\ker(d^*:\Omega^1(\Gbb,R))\simeq\ker(\partial:C_1(\Gbb;R)\to C_2(\Gbb;R)$ the module of cycles,
\item $\bigstar_1^*=d^*(\Omega^2(\Gbb,R))\simeq\partial(C_2(\Gbb;R))$ the module of boundaries,
\item $\bigstar_1=d(\Omega^0(\Gbb,R))\simeq\partial(C_2(\Gbb^*;R))$ the module of coboundaries.
\end{itemize}

\begin{dfn}
Let $\Gbb$ be a topological map. Its \emph{first homology module} is defined as the $R$-module
\begin{align*}
H_1(\Gbb;R) = \diamondsuit_1/\bigstar_1^*.
\end{align*}
When $\ell$ is a loop of $\Gbb$,  its $R$-homology $[\ell]_R$  is the  image of the element $\omega_\ell$ in $H_1(\Gbb;R).$ For any $n\ge 2, $  its $\Z_n$-homology $[\ell]_{\Z_n}$ is the  element $1\otimes [\ell]_\Z\in H_1(\Gbb;\Z_n)=\Z_n\otimes_\Z H_1(\Gbb;\Z).$
\end{dfn}

Note that by the universal coefficient theorem for homology, the change of ring commutes with the homology: 
\[
H_1(\Gbb;R)=R\otimes_\Z H_1(\Gbb;\Z),
\]
even if we take $R=\Z_n$.

\begin{prop}\label{prop:decomp_forms}
Denote by $\diamondsuit_0$ the $R$-module spanned by $\omega_\ell$ for all loops $\ell$ in $\Gbb$.
\begin{enumerate}
\item We have the following equality of $R$-modules:
\[
\diamondsuit_0 =\diamondsuit_1=\bigstar_1^\perp.
\]
\item We have the following direct sum decomposition into orthogonal subspaces:
\begin{equation}
\Omega^1(\Gbb,R) = \bigstar_1\oplus\diamondsuit_1.
\end{equation}
\end{enumerate}
\end{prop}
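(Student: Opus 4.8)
The plan is to treat the two assertions in sequence, using the machinery of Propositions \ref{__prop:dual_mod} and \ref{prop:dual_complex} together with basic linear algebra over $R$ (note that $R$ is $\R$ or $\Z$, so $\Omega^1(\Gbb,R)$ is a finitely-generated free module with orthonormal basis $(\omega_e)_{e\in E_+}$, and orthogonality arguments make sense). First I would record the two ``integration by parts'' adjunctions: $\langle d f,\omega\rangle = \langle f, d^*\omega\rangle$ for $f\in\Omega^0$, $\omega\in\Omega^1$, and $\langle d^*\mu,\omega\rangle=\langle\mu, d\omega\rangle$ for $\mu\in\Omega^2$, $\omega\in\Omega^1$; these are immediate from the explicit formulas for $d$ and $d^*$ given above. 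The chain-complex identity $d\circ d = 0$ (equivalently $d^*\circ d^* = 0$), which holds because $\partial\circ\partial = 0$ in the underlying CW-complex, then gives at once that $\bigstar_1^* = d^*(\Omega^2)\subseteq\ker(d^*) = \diamondsuit_1$ and, dually, $\bigstar_1 = d(\Omega^0)\subseteq\ker(d) \perp \diamondsuit_1$.

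For point 1, I would prove the chain of equalities $\diamondsuit_0 = \diamondsuit_1 = \bigstar_1^\perp$. The inclusion $\diamondsuit_0\subseteq\diamondsuit_1$ is the computation $d^*\omega_\ell = 0$ for any loop $\ell$: writing $\ell = e_1\cdots e_n$ with $\ov{e}_k = \und{e}_{k+1}$ cyclically, $d^*\omega_\ell = \sum_k (\ov{e}_k - \und{e}_k) = \sum_k \ov{e}_k - \sum_k\und{e}_k = 0$ by telescoping around the loop (care is needed with orientation conventions and with edges traversed in both directions, but the cancellation is purely formal). For $\diamondsuit_1\subseteq\diamondsuit_0$, I would argue that any $1$-cycle $\omega$ is an $R$-combination of loop classes: pick a spanning tree $T$ of the graph $(V,E)$; for each edge $e\notin T$ there is a fundamental loop $\ell_e$ using $e$ and a path in $T$, and $\{\omega_{\ell_e}\}$ together with the coboundary directions spans $\Omega^1$; a cycle has zero component along the tree-edge ``coboundary'' part, hence lies in the span of the $\omega_{\ell_e}$. (Alternatively, over $R = \R$ or $\Z$ one can invoke that $\ker\partial$ on $C_1$ of a graph is freely generated by fundamental cycles, a standard fact; but since the paper is being careful, I would spell out the tree argument.) Finally $\diamondsuit_1 = \bigstar_1^\perp$: by the adjunction, $\omega\perp\bigstar_1 = d(\Omega^0)$ iff $\langle\omega, df\rangle = 0$ for all $f$ iff $\langle d^*\omega, f\rangle = 0$ for all $f$ iff $d^*\omega = 0$ (nondegeneracy of $\langle\cdot,\cdot\rangle$ on $\Omega^0$), i.e.\ iff $\omega\in\diamondsuit_1$.

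For point 2, the direct sum $\Omega^1(\Gbb,R) = \bigstar_1\oplus\diamondsuit_1$ with orthogonal summands: orthogonality is exactly $\bigstar_1 = \diamondsuit_1^\perp$ restated (from point 1, $\diamondsuit_1 = \bigstar_1^\perp$, and over a free module with nondegenerate form $\bigstar_1$ is a direct summand so $\bigstar_1^{\perp\perp} = \bigstar_1$ — this is where a little care is required over $R = \Z$, since $\perp\perp$ can be strictly larger than the original submodule for non-saturated submodules; I would need to check that $\bigstar_1$ is saturated, i.e.\ $\Omega^1/\bigstar_1$ is torsion-free, which follows because $\bigstar_1 \cong \Omega^0/\ker d \cong \mathrm{im}(\partial\colon C_0(\Gbb^*)\to C_1(\Gbb^*))$ and the cokernel injects into $C_2(\Gbb^*)$... actually cleaner: $\bigstar_1 = \{\omega : \langle\omega,\zeta\rangle = 0\ \forall\zeta\in\diamondsuit_1^{?}\}$ — I would instead deduce saturation of $\bigstar_1$ from the fact that a coboundary $df$ is divisible by $n$ in $\Omega^1$ iff $f$ is constant-mod-$n$ on components, iff $f/n$-type argument shows $df = d(f')$...). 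The cleanest route, which I expect to be the one the authors take and the one I would ultimately write: rank-count. We have $\dim_R\bigstar_1 = |V| - (\#\text{components})$, $\dim_R\diamondsuit_1 = |E_+| - |V| + (\#\text{components})$ (by rank-nullity for $d^*$ on $\Omega^1$, using $\mathrm{im}(d^*\colon\Omega^1\to\Omega^0)$ has the same rank as $\bigstar_1$ by self-adjointness/symmetry of the situation), so the ranks add up to $|E_+| = \dim_R\Omega^1$; combined with orthogonality (hence $\bigstar_1\cap\diamondsuit_1 = 0$ by nondegeneracy) and, over $\Z$, the saturation check, we get the internal direct sum. The main obstacle is precisely this integrality/saturation point: over $\R$ everything is immediate Hilbert-space orthogonal-complement theory, but over $\Z$ one must verify that the orthogonal complement of $\bigstar_1$ is no larger than $\diamondsuit_1$ and that the two genuinely span (not merely span a finite-index subgroup); I would handle it by exhibiting, via the spanning tree $T$, an explicit $\Z$-basis of $\Omega^1$ adapted to the decomposition — tree edges $e\in T$ giving (after a unimodular change) a basis of a complement mapping isomorphically under $d^*$ onto a finite-index-free... no: tree coboundaries $\{d f_v\}$ minus one relation per component give a $\Z$-basis of $\bigstar_1$, and the fundamental cycles $\{\omega_{\ell_e}\}_{e\notin T}$ give a $\Z$-basis of $\diamondsuit_1$, and together these $|E_+|$ elements form a $\Z$-basis of $\Omega^1$ (a triangular/unimodular change of basis from $(\omega_e)_{e\in E_+}$), which simultaneously yields the direct sum, the rank count, and point 1's spanning claim.
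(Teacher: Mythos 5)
Your point 1 is correct and only mildly different from the paper's argument: for the inclusion $\diamondsuit_1\subseteq\diamondsuit_0$ the paper does not use a spanning tree, but shows instead that any $\omega\in\diamondsuit_0^\perp$ is a coboundary (integrating $\omega$ along paths from a base vertex, well defined because $\omega\perp\omega_\ell$ for every loop $\ell$), and then concludes from $\diamondsuit_0^\perp\subseteq\bigstar_1=\diamondsuit_1^\perp$. Your fundamental-cycle route is at least as good, and over $\Z$ it is actually the more robust one, since it writes every element of $\ker d^*$ directly as an $R$-combination of loop forms, whereas deducing $\diamondsuit_1\subseteq\diamondsuit_0$ from an inclusion of orthogonal complements is really a field-coefficient (double-perp) step. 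One caveat on your phrasing: the correct reduction is ``subtract $\sum_{e\notin T}z(e)\,\omega_{\ell_e}$ from the cycle $z$ and note that a cycle supported on tree edges must vanish'', not ``a cycle has zero component along the coboundary part'' --- the latter presupposes exactly the decomposition of point 2.

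Point 2 is where there is a genuine gap, and it is not one you can close the way you propose. The final claim of your plan --- that the tree coboundaries together with the fundamental cycles are obtained from $(\omega_e)_{e\in E_+}$ by a unimodular change of basis --- is false, and no choice of bases can repair it: the sublattice $\bigstar_1\oplus\diamondsuit_1$ of $\Omega^1(\Gbb,\Z)$ (cut lattice plus flow lattice) has index equal to the number of spanning trees of the $1$-skeleton, so it is proper for essentially every map. Concretely, subdivide the edge $a$ of the one-vertex torus map into $a_1a_2$: then $\bigstar_1=\Z(\omega_{a_1}-\omega_{a_2})$ and $\diamondsuit_1=\Z(\omega_{a_1}+\omega_{a_2})\oplus\Z\,\omega_b$, whose sum has index $2$ in $\Omega^1(\Gbb,\Z)\cong\Z^3$ (a genus-$0$ example is the bigon map with two vertices and a double edge). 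So your unease about saturation over $\Z$ was well founded, but the correct conclusion is that point 2 simply fails for $R=\Z$ as stated; your orthogonality-plus-rank-count argument is complete only for $R=\R$. For what it is worth, the paper's own one-line justification (``nondegenerate on this submodule'') has the same blind spot: over $\Z$ nondegeneracy of the restricted form does not give an orthogonal direct summand, one would need the restricted Gram matrix to be unimodular. Since all later uses of the decomposition (Makeenko--Migdal vectors, Lemma \ref{__Lem: Charac MM}) are with real coefficients, and since existence of the winding function for $[\ell]_\Z=0$ already follows from the definition of $H_1$ (null homology means $\omega_\ell\in\bigstar_1^*=d^*\Omega^2(\Gbb,\Z)$), the honest fix is to prove point 2 for $R=\R$ (or after tensoring with $\R$), not to look for an adapted integral basis.
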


\begin{proof}
Let us start by showing that $\diamondsuit_1=\bigstar_1^\perp$. If $\omega\in\diamondsuit_1$, then for any $f\in\Omega^0(\Gbb,R)$ we have
\[
\langle \omega, df\rangle = \langle d^*\omega, f\rangle = 0
\]
and $\omega\in\bigstar_1^\perp$. Conversely, remark that a free basis of $\bigstar_1$ is given by $(df_v,v\in V)$, so that for any $\omega\in\bigstar_1^\perp$ we have
\[
\langle d^*\omega, f_v\rangle = \langle \omega,df_v\rangle = 0,
\]
and $\omega\in\diamondsuit_1$.

Now let us prove that $\diamondsuit_0=\diamondsuit_1$. If $\ell=e_1\cdots e_n$ is a loop in $\Gbb$, then
\[
d^*\omega_\ell = \sum_{i=1}^n\sum_{e\in E_+}\omega_{e_i}(e)f_{\partial e} =\sum_{i=1}^nf_{\partial e_i} = 0,
\]
so that $\diamondsuit_0\subset\diamondsuit_1$. Let $\omega\in\diamondsuit_0^\perp$ be a 1-form, we define a 0-form $f_\omega\in\Omega^0(\Gbb,R)$ by setting
\[
f_\omega(v) = \sum_{i=1}^n \omega(e_i),
\]
where $e_1\cdots e_n$ is a path in $\Gbb$ starting from a given reference vertex $v_0$ and ending at $v$. The fact that it does not depend on the path follows from the fact that $\omega\perp\omega_\ell$ for any loop $\ell$. We see that for any $e\in E$, $df_\omega(e)=\omega(e)$, therefore we have the inclusion $\diamondsuit_0^\perp\subset \bigstar_1=\diamondsuit_1^\perp$. We finally get that $\diamondsuit_0=\diamondsuit_1$. The direct sum decomposition follows from the standard decomposition of a module into a submodule and its orthogonal, provided that the bilinear form is not degenerate on this submodule, which is trivially the case here.
\end{proof}

\begin{prop}\label{__prop:isom_homology}
The $R$-module
\[
\mathcal{H}_1=(\bigstar_1^*)^\perp\cap\diamondsuit_1
\]
is isomorphic to $H_1(\Gbb;R)$.
\end{prop}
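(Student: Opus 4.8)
The plan is to exhibit an explicit isomorphism $\mathcal{H}_1 \to H_1(\Gbb;R) = \diamondsuit_1/\bigstar_1^*$ by restricting the canonical projection $\pi:\diamondsuit_1\to\diamondsuit_1/\bigstar_1^*$ to the submodule $\mathcal{H}_1 = (\bigstar_1^*)^\perp\cap\diamondsuit_1$. So first I would note that $\mathcal{H}_1\subseteq\diamondsuit_1$ by definition, so $\pi|_{\mathcal{H}_1}$ makes sense. The two things to check are then surjectivity and injectivity of $\pi|_{\mathcal{H}_1}$.

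For surjectivity: since $\bigstar_1^*\subseteq\diamondsuit_1$ (a boundary is a cycle — this is the statement $d^*\circ d^* = 0$, equivalently $\partial\circ\partial=0$, used implicitly above), the orthogonal decomposition I want is $\diamondsuit_1 = \bigstar_1^* \oplus \big((\bigstar_1^*)^\perp\cap\diamondsuit_1\big) = \bigstar_1^*\oplus\mathcal{H}_1$. This is again the standard splitting of a free module into a submodule and its orthogonal complement with respect to the nondegenerate form $\langle\cdot,\cdot\rangle$ of Proposition \ref{__prop:dual_mod}, exactly as invoked at the end of the proof of Proposition \ref{prop:decomp_forms}; the only point needing a word is that $\langle\cdot,\cdot\rangle$ restricted to $\bigstar_1^*$ is nondegenerate, which holds because $\bigstar_1^*$ is a direct summand of $\Omega^1$ sitting inside $\diamondsuit_1$ and the form is nondegenerate there. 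Given this decomposition, every cycle is congruent mod $\bigstar_1^*$ to its $\mathcal{H}_1$-component, so $\pi|_{\mathcal{H}_1}$ is onto.

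For injectivity: if $\omega\in\mathcal{H}_1$ and $\pi(\omega)=0$, then $\omega\in\bigstar_1^*$; but also $\omega\in(\bigstar_1^*)^\perp$, so $\langle\omega,\omega\rangle=0$, and since $\bigstar_1^*$ is a direct summand on which the form is nondegenerate, $\omega=0$. Hence $\ker(\pi|_{\mathcal{H}_1}) = \mathcal{H}_1\cap\bigstar_1^* = 0$, using the direct sum $\diamondsuit_1 = \bigstar_1^*\oplus\mathcal{H}_1$ once more. Combining the two, $\pi|_{\mathcal{H}_1}$ is the desired isomorphism.

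The one genuine subtlety — the step I expect to be the main obstacle — is the case $R=\Z$ (or $R=\Z_n$), where "orthogonal complement of a submodule gives a direct sum decomposition" can fail: over $\Z$ a submodule need not be a direct summand, and even when $A = B\oplus B^\perp$ holds over $\Q$ it may fail over $\Z$. So I would be careful to justify the splitting $\diamondsuit_1 = \bigstar_1^*\oplus\mathcal{H}_1$ integrally, e.g. by checking that $\bigstar_1^* = \partial(C_2(\Gbb;R))$ is a \emph{pure} (saturated) submodule of the free module $\diamondsuit_1$ — its quotient $H_1(\Gbb;\Z)$ is free for a closed orientable surface, so $\bigstar_1^*$ is a direct summand of $\diamondsuit_1$ as an abstract module — and then that the orthogonal complement realizes a complementary summand because the induced form on the free quotient $\diamondsuit_1/\bigstar_1^*$ stays nondegenerate. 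Alternatively one can sidestep this by transporting everything through the isomorphism of $\diamondsuit_1$ with a homology group of a closed surface, where $H_1$ is free and the intersection form is unimodular, so all orthogonal complements behave as over a field; I would likely add a remark to that effect, and for $R=\Z_n$ appeal to the universal coefficient statement $H_1(\Gbb;\Z_n)=\Z_n\otimes_\Z H_1(\Gbb;\Z)$ already recorded above to reduce to the case $R=\Z$.
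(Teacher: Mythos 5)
Your route is essentially the paper's: both arguments reduce the proposition to the orthogonal splitting $\diamondsuit_1=\bigstar_1^*\oplus\mathcal{H}_1$ and then identify $\mathcal{H}_1$ with the quotient (you restrict the projection $\diamondsuit_1\to\diamondsuit_1/\bigstar_1^*$ to $\mathcal{H}_1$; the paper sends $[\omega]$ to the $\mathcal{H}_1$-component $\omega_0$ — these maps are inverse to each other). Over $R=\R$, where the edge form is positive definite, your surjectivity and injectivity arguments are correct and complete, and you are right that the only delicate point is $R=\Z$ (and $\Z_n$), a point the paper's two-line proof passes over in silence.

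The trouble is that the integral patch you sketch cannot work, because the statement you are trying to justify is false over $\Z$: the splitting $\diamondsuit_1=\bigstar_1^*\oplus\mathcal{H}_1$ genuinely fails. Take the one-vertex map on the torus of Figure \ref{Fig----Oriented faces}, with edges $a,b,e$ and faces bounded by $ea^{-1}b^{-1}$ and $abe^{-1}$. Here $d^*$ vanishes on $\Omega^1$, so $\diamondsuit_1=\Z\omega_a\oplus\Z\omega_b\oplus\Z\omega_e$, while $\bigstar_1^*=\Z\,(\omega_e-\omega_a-\omega_b)$ and $\mathcal{H}_1$ has basis $\omega_a+\omega_e,\ \omega_b+\omega_e$; these three vectors span a sublattice of index $3$, so $\bigstar_1^*\oplus\mathcal{H}_1\subsetneq\diamondsuit_1$ and the restricted projection $\mathcal{H}_1\to H_1(\Gbb;\Z)\cong\Z^2$ is injective with cokernel of order $3$. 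Saturation of $\bigstar_1^*$ (freeness of the quotient) is not the relevant hypothesis: to split off a submodule by its orthogonal one needs the restricted Gram form to be unimodular, and here its determinant is $3$ (in general it is the number of spanning trees of the dual graph, by the matrix--tree theorem), so nondegeneracy of the restricted or induced form is not enough; your alternative via the intersection form on the surface does not apply either, since the form in play is the ``orthonormal edge'' form on $\Omega^1$, not the intersection pairing on $H_1$. Over $\Z_3$ the situation is worse: $\omega_e-\omega_a-\omega_b$ is isotropic, lies in $\mathcal{H}_1\cap\bigstar_1^*$, and even injectivity fails. What survives over $\Z$ is only the weak statement that both modules are free of rank $2g$, hence abstractly isomorphic — but neither your argument nor the paper's produces that isomorphism canonically. (The same caveat applies to the paper's own proof; for the later applications to Makeenko--Migdal vectors only $R=\R$ is used, where your argument is fine.)
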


\begin{proof}
Recall that $\bigstar_1^*\subset\diamondsuit_1$ thanks to the property of the chain complex. It follows from the direct sum decomposition
\[
\diamondsuit_1 = \bigstar_1^*\oplus\mathcal{H}_1
\]
that for any $\omega\in\diamondsuit_1$ there is a unique couple $(\omega_0,\mu)\in\mathcal{H}_1\times \Omega^2(\Gbb,R)$ such that $\omega=\omega_1+d^*\mu$. It is then straightforward to check that the map $[\omega]\mapsto \omega_0$ is the isomorphism that we were looking for.
\end{proof}

The winding number of a loop $\ell=e_1\cdots e_n$ around a point is an integer that counts how many times the loop cycles around the point; in particular we see that in the case of a topological map it defines a function $n_\ell\in\Omega^2(\Gbb,\Z)$ that counts how many times the loop cycles around each face. One can see that it is equivalent to require that $d^*n_\ell (e_i) = 1$ for any $i$ such that $e_i\in E_+$, and $-1$ for any $i$ such that $e_i^{-1}\in E_+$. It sums up as
\[
d^*n_\ell = \omega_\ell.
\]
Is it possible to get such a construction for compact orientable surfaces? The general answer is \emph{not exactly}, because ``bad'' things can happen when $\ell$ has a nontrivial homology, but it is still possible when $[\ell]=0$, as stated by the following lemma.

\begin{lem}\label{__Lem: Homology basis choice} Assume that $\Gbb$ is  embedded in an orientable surface of genus $g.$
\begin{enumerate}
\item  $H_1(\Gbb;R)$ is free of rank $2g$ and there are   $2g$ simple loops $a_1,b_1,\ldots, a_g,
b_g$ of $\Gbb$ such that $[a_1]_R,[b_1]_R,\ldots,[a_g]_R,[b_g]_R$ is a free basis of $H_1(\Gbb;R)$. Equivalently, $\omega_{a_1},\omega_{b_1},\ldots,\omega_{a_g},\omega_{b_g}$ is a free basis of $\mathcal{H}_1$.
\item When  $g\ge 1$  and   $v\in V,$ $(\ell_i,1\le i\le r)$ and  $a_1,b_1,\ldots, a_g,b_g\in \mathrm{L}_v(\Gbb)$ are as in Lemma \ref{__Lem:Basis Reduced Loops}, the map 
\[
\Gamma_{g}=\<x_1,y_1,\ldots, x_g,y_g\ | \ [x_1,y_1]\ldots [x_g,y_g]\> \to H_1(\Gbb;\Z)
\]
that maps $x_m$ to $[a_m]_\Z$ and $y_m$ to $[b_m]_\Z$ is a well defined, onto morphism, with kernel given by the commutator group $[\Gamma_g,\Gamma_g]$. 
\item For any loop $\ell$ of $\Gbb$ such that $[\ell]_R=0$, there is a unique $n_\ell\in \Omega^2(\Gbb,R)$ such that
\[
\omega_\ell= d^*n_\ell.
\]
We call the 2-form $n_\ell$ the winding function of $\ell$, and we shall identify it to an element of  $\{\mu_*\}^\bot.$  
\end{enumerate}
\end{lem}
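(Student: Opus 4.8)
The plan is to deduce the three assertions from the cellular (co)homology of the closed, connected, orientable surface $\Sigma_{\Gbb}$, equipped with the CW structure given by $\Gbb$, together with the linear algebra of forms set up in Section~\ref{----sec: Makeenko--Migdal Vectors}; I would treat the points in the order (1), (2), (3), each using the previous. For \textbf{point (1)}: by the universal coefficient theorem recalled above, $H_1(\Gbb;R)=R\otimes_\Z H_1(\Gbb;\Z)$, so it suffices to take $R=\Z$. Since $\Sigma_{\Gbb}$ is closed, connected and orientable one has $H_0(\Gbb;\Z)\cong\Z$ and $H_2(\Gbb;\Z)\cong\Z$, the latter generated by the fundamental class, which under $C_2(\Gbb;\Z)\cong\Omega^2(\Gbb,\Z)$ corresponds to $\mu_*$; Euler's formula gives $\chi(\Gbb)=|V|-|E_+|+|F_+|=2-2g$, whence $\mathrm{rank}\,H_1(\Gbb;\Z)=1+1-(2-2g)=2g$, and $H_1(\Gbb;\Z)$ is torsion free (standard for surfaces), hence free of rank $2g$. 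For the \emph{simple}-loop basis I would take the loops $a_1,b_1,\dots,a_g,b_g$ furnished by Lemma~\ref{__Lem:Basis Reduced Loops}(2): together with the lassos $\ell_i$ they generate $\mathrm{RL}_v(\Gbb)$, hence their homology classes generate $H_1(\Gbb;\Z)$; but a lasso $\alpha\,\partial_r f\,\alpha^{-1}$ has $\omega_{\alpha\,\partial_r f\,\alpha^{-1}}=\partial f\in\bigstar_1^*$, so $[\ell_i]_\Z=0$ and already $[a_m]_\Z,[b_m]_\Z$ generate the free rank-$2g$ module $H_1(\Gbb;\Z)$, hence form a basis of it; to arrange that the $a_m,b_m$ are moreover simple loops of $\Gbb$ one first refines $\Gbb$ (which changes neither $\Sigma_{\Gbb}$ nor $H_1$) so that the standard symplectic system of simple closed curves on $\Sigma_{\Gbb}$ becomes a system of loops of the refined map. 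The ``equivalently'' clause is then the translation of this basis through the isomorphism $H_1(\Gbb;R)\cong\mathcal{H}_1$ of Proposition~\ref{__prop:isom_homology}, which sends $[\ell]_R$ to the $\mathcal{H}_1$-component of $\omega_\ell$ in the decomposition $\diamondsuit_1=\bigstar_1^*\oplus\mathcal{H}_1$.

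For \textbf{point (2)}: the assignment $x_m\mapsto[a_m]_\Z$, $y_m\mapsto[b_m]_\Z$ defines a homomorphism from the free group on $x_1,y_1,\dots,x_g,y_g$ that sends the relator $[x_1,y_1]\cdots[x_g,y_g]$ to the identity, $H_1(\Gbb;\Z)$ being abelian; hence it descends to a morphism $\Gamma_g\to H_1(\Gbb;\Z)$, which is onto by the generation argument already established in point (1). Since the target is abelian, this morphism factors through the abelianization $\Gamma_g^{\mathrm{ab}}\cong\Z^{2g}$, and the induced surjection $\Z^{2g}\to H_1(\Gbb;\Z)\cong\Z^{2g}$ between free modules of the same finite rank is an isomorphism; therefore $\ker\big(\Gamma_g\to H_1(\Gbb;\Z)\big)=\ker\big(\Gamma_g\to\Gamma_g^{\mathrm{ab}}\big)=[\Gamma_g,\Gamma_g]$.

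For \textbf{point (3)}: if $[\ell]_R=0$ then $\omega_\ell$ maps to $0$ in $H_1(\Gbb;R)=\diamondsuit_1/\bigstar_1^*$, that is $\omega_\ell\in\bigstar_1^*=d^*\big(\Omega^2(\Gbb,R)\big)$, which immediately yields some $n\in\Omega^2(\Gbb,R)$ with $d^*n=\omega_\ell$. Any two such $n$ differ by an element of $\ker\big(d^*:\Omega^2(\Gbb,R)\to\Omega^1(\Gbb,R)\big)$, which through the identification $\Omega^\bullet(\Gbb,R)\cong C_\bullet(\Gbb;R)$ equals $\ker(\partial:C_2\to C_1)=H_2(\Gbb;R)=R\,\mu_*$; here $d^*\mu_*=0$ is checked directly, each edge of $E_+$ occurring once in $\partial f$ and once with the opposite sign in $\partial f'$ for the two faces $f,f'$ it separates. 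Hence there is a unique representative $n_\ell$ lying in $\{\mu_*\}^\perp$ (over $R=\R$; for $R=\Z$ one records instead the class of $n$ modulo $\Z\mu_*$, identified after extension of scalars with an element of $\{\mu_*\}^\perp$), and this is the announced winding function satisfying $\omega_\ell=d^*n_\ell$.

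As for where the actual work lies: none of the three points is deep — they amount to cellular homology of a surface plus the bookkeeping of Section~\ref{----sec: Makeenko--Migdal Vectors}. The only mildly delicate steps are exhibiting a basis consisting of \emph{simple} loops, handled by refining $\Gbb$ and invoking the standard symplectic curves on $\Sigma_{\Gbb}$, and the normalization in point (3), where one must correctly identify the one-dimensional indeterminacy $\ker d^*|_{\Omega^2}=R\mu_*$ (equivalently $H_2(\Gbb;R)$) and note that $\{\mu_*\}^\perp$ is a complement of it over $\R$ — which is precisely why the statement phrases the winding function as being ``identified with'' such an element rather than literally equal to one over $\Z$.
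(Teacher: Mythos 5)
The paper itself only proves point 3, deferring points 1 and 2 to Chapter 2 of \cite{Sti}; for point 3 your argument is essentially the paper's: both rest on the decomposition $\diamondsuit_1=\bigstar_1^*\oplus\mathcal{H}_1$ of Propositions \ref{prop:decomp_forms} and \ref{__prop:isom_homology} (i.e.\ $[\ell]_R=0$ forces $\omega_\ell\in\bigstar_1^*=d^*\Omega^2(\Gbb,R)$), and you are in fact more careful than the text about the one-dimensional indeterminacy $\ker(d^*|_{\Omega^2})=R\,\mu_*$ and the normalisation in $\{\mu_*\}^\perp$. Your point 2 (factor through the abelianisation $\Gamma_g^{\mathrm{ab}}\cong\Z^{2g}$ and use that a surjection between free modules of the same finite rank is an isomorphism) is correct, as are the rank and torsion-freeness computation in point 1 and the observation that lassos have vanishing homology, so that the classes of the $a_m,b_m$ of Lemma \ref{__Lem:Basis Reduced Loops} already form a $\Z$-basis; this is exactly what point 2 needs.

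The one genuine gap is the \emph{simplicity} requirement in point 1. The statement asks for $2g$ simple loops \emph{of $\Gbb$} whose classes form a basis; your fix, refining $\Gbb$ until the standard symplectic curves of $\Sigma_\Gbb$ become loops of the refined map, produces simple loops of the refinement $\Gbb'$, which use new edges and vertices and hence are not paths of $\Gbb$ at all, so it does not prove the statement as written. Nor can one simply select such a basis among the images of given generators: a generating family of a free $\Z$-module of the right cardinality whose members are simple need not exist by pure algebra (and a quotient basis cannot in general be extracted from images of arbitrary generators), so some surface-topology input is required. A correct route staying inside $\Gbb$ is, for instance, a tree--cotree decomposition: choosing a spanning tree $T$ of $\Gbb$ and a spanning tree of the dual avoiding $T$, the $2g$ leftover edges have fundamental cycles with respect to $T$ which are simple cycles of $\Gbb$ and form a free basis of $H_1(\Gbb;\Z)$; alternatively one invokes the canonical polygon/cut-system argument of \cite{Sti}, which is precisely what the paper does.
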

\begin{proof}
The points $1.$ and $2.$ are standard results, and their proof can be found in Chapter 2 of \cite{Sti}. We shall prove the last point. Recall that
\[
\diamondsuit_1 = \bigstar_1^*\oplus\mathcal{H}_1,
\]
and that for any loop $\ell$ the 1-form $\omega_\ell$ is in $\diamondsuit_1$. Hence, there is a unique pair $(h_\ell,n_\ell)$ with $h_\ell\in\mathcal{H}_1$ and $n_\ell\in\Omega^2(\Gbb,R)$ such that $\omega_\ell=h_\ell+d^*n_\ell$. If $[\ell]_R=0$, it means that $h_\ell=0$ and $\omega_\ell=d^*n_\ell$ as expected.
\end{proof}

\begin{figure}[!h]
\centering
\includegraphics[scale=0.8]{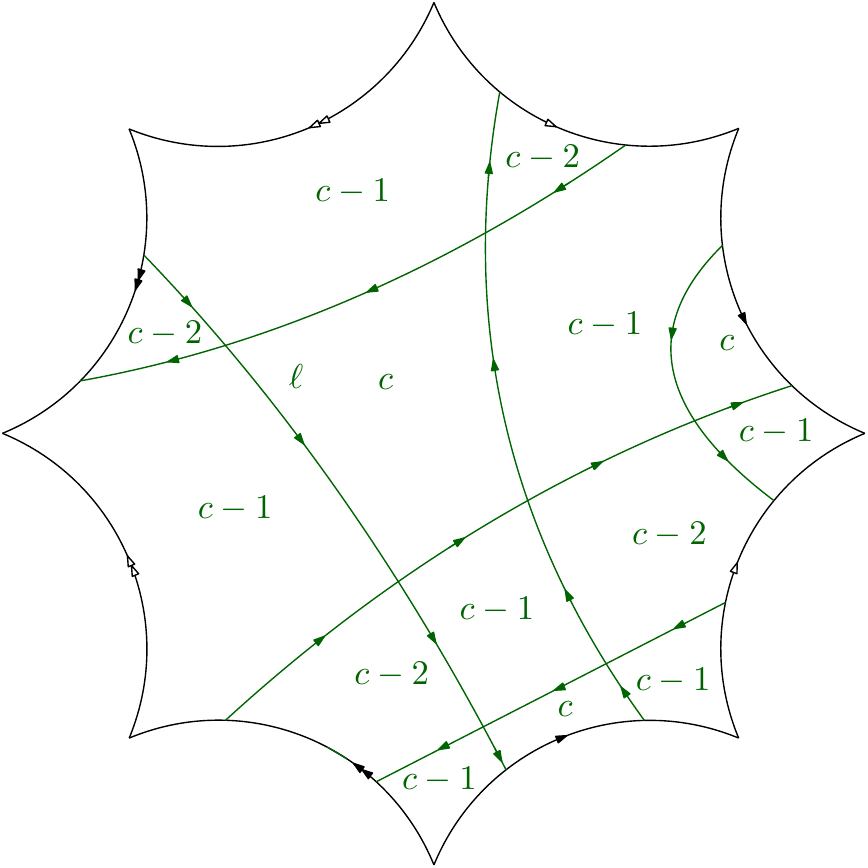}
\caption{\small A representant of the winding number function with $c\in R$, for a loop $\ell$ of null homology, on a map of genus $2$. The loop is drawn 
in green and the value on each 
positively oriented face is displayed on each $2$-cell.}\label{Fig----Winding Number Def}
\end{figure}

Let $\ell$ be a loop of a topological map $\Gbb=(V,E,F)$ which uses each non-oriented edge at most once and each vertex at most twice. We denote by $E_\ell$ the subset of edges $e\in E$ such that $\ell$ runs through $e$ or $e^{-1}$.

\vspace{0.5em}

Whenever a vertex $v$ is visited twice, the four outgoing edges at $v$ visited by $\ell$ can be ordered $e_1,e_2,e_3,e_4$ respecting the counterclockwise, cyclic ordering of the orientation of the map, so that $\ell$ is cyclically equivalent to a loop of the form $\a e_1^{-1}e_3\b e_2^{-1}e_4 \g$, $\a e_1^{-1}e_4\b e_3^{-1}e_2 \g$, $\a e_1^{-1}e_4^{-1}\b e_2^{-1}e_3 \g$ or $\a e_1^{-1}e_4\b e_2^{-1}e_3 \g$, these four cases being exclusive. See Figure \ref{Fig----Intersection Types}. We say that $\ell$ is a \emph{tame loop} if only the first case occurs. 
The set $V_\ell$  of  vertices  visited twice by $\ell$ are then called the \emph{(transverse) intersection points} of $\ell.$

\begin{figure}[!h]
\centering 
\includegraphics[scale=0.6]{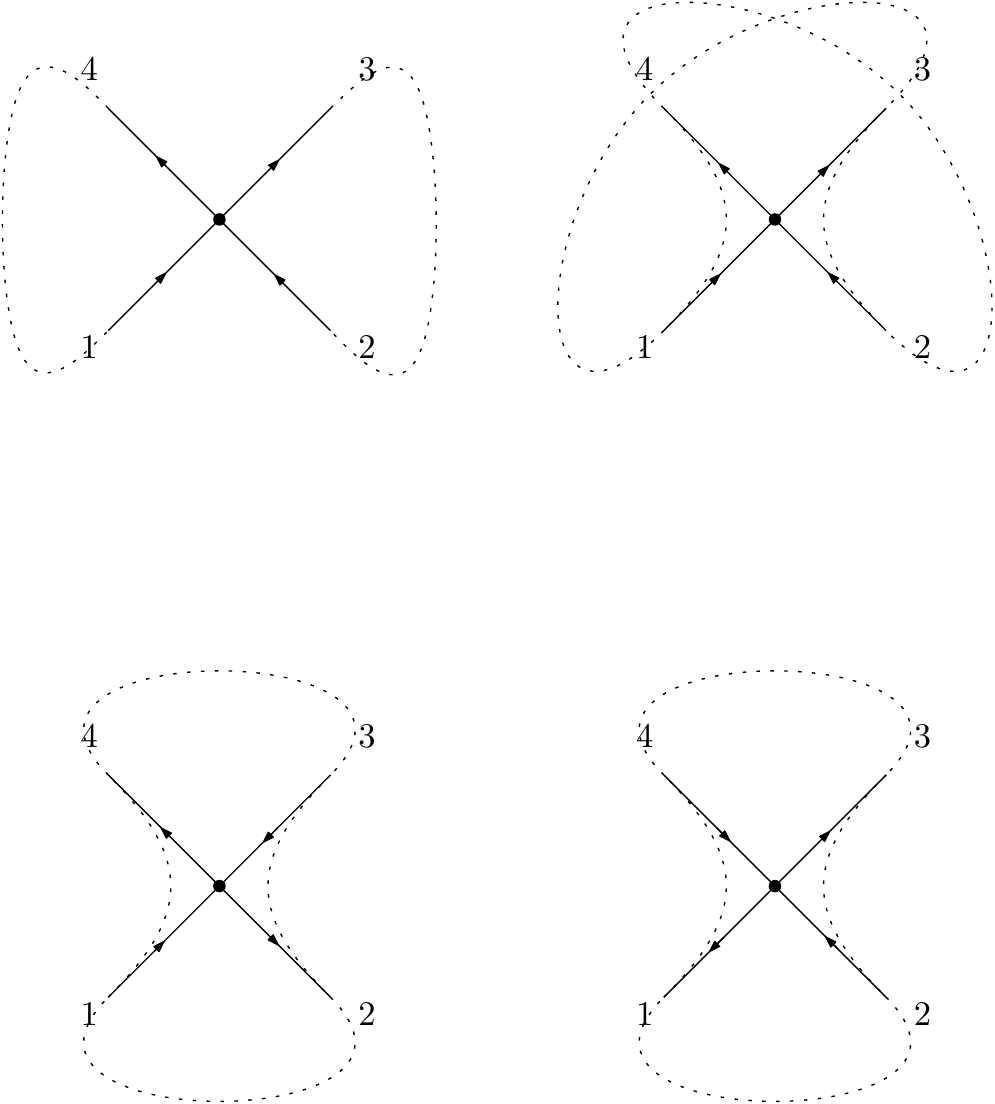}
\caption{\small The four types of transverse simple intersections. }\label{Fig----Intersection Types}
\end{figure}

\begin{dfn}
Let $\ell$ be a tame loop in a map $\Gbb$. The Makeenko--Migdal vector at $v\in V_\ell$ is the 2-form
\begin{equation}
\mu_v=d(\omega_{e_1} )+ d(\omega_{e_3})=-d(\omega_{e_2})-d(\omega_{e_4}). \label{eq:MM vec}
\end{equation}
We denote by $\mfm_\ell$ the $\R$-vector space generated by $\{\mu_v, v\in V_\ell\}$ and $\{d\omega_e, e\notin E_\ell\}$.
\end{dfn}
The Makeenko--Migdal vectors are an algebraic representation of the Makeenko--Migdal deformations described in Section \ref{-----sec:Strategy}, see in particular Fig. \ref{Fig----MMDef}.

\begin{figure}[!h]
\centering 
\includegraphics[scale=0.6]{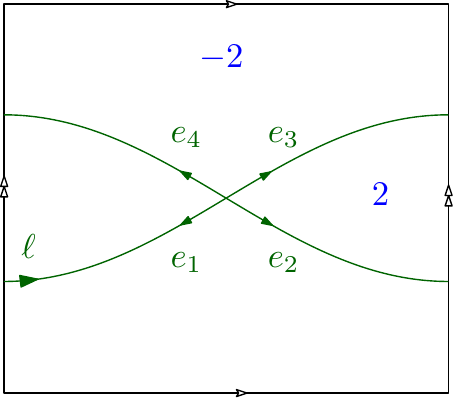}
\caption{\small  A  tame loop in a graph with one vertex and $2$ faces. The value of $\mu_v$ is displayed on each face in blue. }\label{Fig----MM Two 
Faces}
\end{figure}

\label{--------sec: Regular Graph}

\begin{lem} \label{__Lem: Charac MM}Let $\ell$  be a tame loop of a  map $\Gbb$. Then
$$\mfm_\ell=\left\{\begin{array}{ll}  \{\a\in \Omega^2(\Gbb,R): \<\a,\mu_*\> =0 \}& \text{ if }[\ell]_R\not=0, \\&\\\{\a\in \Omega^2(\Gbb,R): \<\a,\mu_*\> = \<\a, n_\ell 
\>=0 \}& \text{ if }[\ell]_R=0.\end{array}\right.$$
\end{lem}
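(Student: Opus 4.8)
The statement describes the space $\mfm_\ell$ generated by the Makeenko--Migdal vectors $\{\mu_v : v\in V_\ell\}$ together with the coboundaries $\{d\omega_e : e\notin E_\ell\}$. The plan is to prove both inclusions. Since every $\mu_v$ and every $d\omega_e$ lies in $\bigstar_1=d(\Omega^0(\Gbb,R))$, the inclusion $\mfm_\ell\subseteq\bigstar_1$ is immediate; and $\bigstar_1$ is exactly the orthogonal complement of $\diamondsuit_1$, which contains $\mu_*$ (as $\mu_*\in\Omega^2$, but more to the point one checks $\<\a,\mu_*\>=0$ for $\a\in\bigstar_1$ because $\mu_*$ pairs with a coboundary $df$ as $\sum_f df(\partial f)=\sum_f f(\partial\partial f)=0$, using $\partial^2=0$; equivalently $\mu_*=d^*(\text{nothing})$-type argument via Proposition~\ref{prop:decomp_forms}). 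Thus in both cases $\mfm_\ell\subseteq\{\a:\<\a,\mu_*\>=0\}$. When $[\ell]_R=0$, one additionally needs $\<\mu_v,n_\ell\>=0$ for each $v\in V_\ell$ and $\<d\omega_e,n_\ell\>=0$ for $e\notin E_\ell$; the second is clear since $d^*n_\ell=\omega_\ell$ and $e\notin E_\ell$ means $\omega_\ell(e)=0$, so $\<d\omega_e,n_\ell\>=\<\omega_e,d^*n_\ell\>=\<\omega_e,\omega_\ell\>=0$; the first is the local computation $\<\mu_v,n_\ell\>=\<d\omega_{e_1}+d\omega_{e_3},n_\ell\>=\omega_\ell(e_1)+\omega_\ell(e_3)$, and because $\ell$ is tame with local picture $\a e_1^{-1}e_3\b e_2^{-1}e_4\g$, the edges $e_1,e_3$ are traversed with opposite signs relative to $E_+$-orientation, giving cancellation — this is the content I would spell out carefully using Figure~\ref{Fig----Intersection Types}.

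\textbf{The reverse inclusion} is the substantive part. I would first compute the dimension of the right-hand side: it is $|F_+|-1$ when $[\ell]_R\neq0$ and $|F_+|-2$ when $[\ell]_R=0$ (provided $n_\ell$ is not proportional to $\mu_*$, which holds because $n_\ell\in\{\mu_*\}^\perp$ by the normalisation in Lemma~\ref{__Lem: Homology basis choice} and $n_\ell\neq0$ for a genuinely winding loop; the degenerate case $n_\ell=0$ must be handled, but then $\ell$ is null-homotopic-looking and one can absorb it). So I must show $\dim\mfm_\ell\geq|F_+|-1$ (resp.\ $|F_+|-2$). The strategy is a counting/rank argument: consider the linear map from $\R^{V_\ell}\oplus\R^{E\setminus E_\ell}\to\Omega^2(\Gbb,R)$ sending generators to $\mu_v$ and $d\omega_e$, and identify its kernel and image. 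A cleaner route: recall $\bigstar_1=d(\Omega^0)$ has dimension $|V|-(\text{number of components})=|V|-1$ as a subspace of $\Omega^1$, but here we live in $\Omega^2$ — better to use the map $\partial: C_2\to C_1$, whose image is $\bigstar_1^*$ of dimension $|F_+|-1$ (one relation $\sum_f\partial f=0$ from $\partial^2=0$ on the fundamental class). Then dualize: $\{\a\in\Omega^2:\<\a,\mu_*\>=0\}$ has dimension $|F_+|-1$, matching. So for the non-zero-homology case it suffices to prove $\mfm_\ell$ is \emph{not} contained in any smaller subspace, i.e.\ that the only linear functional on $\Omega^2$ vanishing on all $\mu_v$ and all $d\omega_e$ ($e\notin E_\ell$) is (a multiple of) pairing with $\mu_*$.

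\textbf{Carrying out the reverse inclusion.} Suppose $\psi\in\Omega^2(\Gbb,R)^*\cong C_2(\Gbb;R)\cong\Omega^2$ (via the bilinear form) satisfies $\<\psi,\mu_v\>=0$ for all $v\in V_\ell$ and $\<\psi,d\omega_e\>=0$ for all $e\notin E_\ell$. Writing $\psi$ as a function on faces, $\<\psi,d\omega_e\>=\psi(\ov{e^*})-\psi(\und{e^*})$ using \eqref{eq:dual_edges}, so $\psi$ is constant on the two faces adjacent to every edge not in $E_\ell$; since the dual graph $(F, E^*\setminus E_\ell^*)$ is connected (because $\ell$ uses each edge at most once, so removing $E_\ell^*$ leaves a connected dual graph whenever $\ell$ doesn't disconnect — and a tame loop on a surface, non-separating or separating, the complement of a single loop in the dual is connected or has exactly two components, handled case by case), $\psi$ takes at most two values, say $c$ on one side of $\ell$ and $c'$ on the other (or globally constant). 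The condition $\<\psi,\mu_v\>=\psi(f_{12})-\psi(f_{34})=0$-type relations at each intersection point $v$ (reading off which faces are the "NE/SW vs NW/SE" corners from Figure~\ref{Fig----MM Two Faces}) then force, via connectedness along $\ell$ itself, that $c=c'$, so $\psi$ is globally constant, i.e.\ $\psi\in\R\mu_*$. This proves $\mfm_\ell^\perp\subseteq\R\mu_*$, hence $\mfm_\ell\supseteq\{\mu_*\}^\perp$. For the null-homology case $[\ell]_R=0$, the dual graph complement $(F,E^*\setminus E_\ell^*)$ may genuinely disconnect into two pieces $F^+,F^-$ (the "inside" and "outside" relative to winding), and the $\mu_v$ relations no longer glue $c$ to $c'$; instead $\psi$ lives in the $2$-dimensional space spanned by $\mu_*$ and the indicator difference $\ind_{F^+}-\ind_{F^-}$, and one checks this latter is (a nonzero multiple of) $n_\ell$ up to adding $\mu_*$ — precisely because $n_\ell$ jumps by $\pm1$ across each edge of $\ell$ and is locally constant off $\ell$. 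So $\mfm_\ell^\perp=\spann\{\mu_*,n_\ell\}$, giving the claimed description.

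\textbf{Main obstacle.} The delicate point is the connectivity bookkeeping for the dual graph with $E_\ell^*$ removed, and correctly matching the two-valued solution to $n_\ell$ in the null-homology case — in particular verifying that $n_\ell$ really is (up to $\mu_*$ and scaling) the indicator function $\ind_{F^+}-\ind_{F^-}$, which requires knowing that $\ell$ of null homology separates its regular neighbourhood consistently into "winding number $k$" and "winding number $k-1$" faces across each edge. I would lean on Lemma~\ref{__Lem: Homology basis choice}(3) (existence and uniqueness of $n_\ell$ with $d^*n_\ell=\omega_\ell$) to make this rigorous: any $\psi$ locally constant off $\ell$ with the right jump behaviour differs from $n_\ell$ by something in $\ker d^*\cap\Omega^2=\R\mu_*$, closing the argument. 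The intersection-point local analysis (reading off corners from Figures~\ref{Fig----Intersection Types} and~\ref{Fig----MM Two Faces}) is routine but must be done with care about orientations.
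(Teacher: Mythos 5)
Your easy inclusion $\mfm_\ell\subseteq\{\mu_*\}^\perp$ (and $\subseteq\{n_\ell\}^\perp$ when $[\ell]_R=0$) is essentially fine, up to some garbled bookkeeping about which space $\bigstar_1$ lives in. The reverse inclusion, however, rests on a false structural claim: you assert that the dual graph $(F,E^*\setminus E_\ell^*)$ is connected or has exactly two components, so that any $\psi\in\mfm_\ell^\perp$ takes at most two values and, in the null-homology case, $n_\ell$ is (up to $\mu_*$ and scaling) an indicator difference $\ind_{F^+}-\ind_{F^-}$. This is wrong precisely in the situation the lemma is about: a tame loop with several transverse self-intersections typically separates the surface into many complementary regions, and the winding function $n_\ell$ then takes many distinct values (see Figure~\ref{Fig----Winding Number Def}, where it ranges over $c-1,c,c+1,c+2$). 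Since $n_\ell$ itself lies in $\mfm_\ell^\perp\cap\{\mu_*\}^\perp$, your conclusion that elements of this space are two-valued is contradicted by the very object you need to produce. The kink in the argument is that the conditions $\<\psi,d\omega_e\>=0$, $e\notin E_\ell$, only make $\psi$ constant on each complementary region; gluing across the (arbitrarily many) regions must come from the crossing conditions $\<\psi,\mu_v\>=0$, and your "via connectedness along $\ell$ itself" is not an argument that does this.

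The correct way to use the crossing conditions is to show that the jump $d^*\psi$ is constant along the loop: $\<\psi,d\omega_e\>=\<d^*\psi,\omega_e\>$ vanishes off $E_\ell$, and at each crossing $\<\psi,\mu_v\>=0$ (together with the identity $\mu_v=d\omega_{e_1}+d\omega_{e_3}=-(d\omega_{e_2}+d\omega_{e_4})$) propagates the value of the jump through the vertex along each strand, so that $d^*\psi=c\,\omega_\ell$ modulo contributions supported on a homology basis; one then concludes with the decomposition $\diamondsuit_1=\bigstar_1^*\oplus\mathcal{H}_1$ of Proposition~\ref{prop:decomp_forms}: if $[\ell]_R\neq0$ this forces $c=0$ and $\psi\in\ker d^*\cap\Omega^2=\R\mu_*$, while if $[\ell]_R=0$ it gives $\psi-c\,n_\ell\in\R\mu_*$. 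This is exactly what the paper does, in a slightly different packaging: after a subdivision making $\ell$ edge-disjoint from the loops $a_1,b_1,\ldots,a_g,b_g$ of Lemma~\ref{__Lem: Homology basis choice}, it introduces a permutation $\eta$ of oriented edges whose nontrivial cycles are the edge sequences of $\ell$ and of the homology basis, observes that $d\omega_e-d(\eta.\omega_e)\in\mfm_\ell$ for every edge traversed, and deduces that any $\beta\in\mfm_\ell^\perp\cap\{\mu_*\}^\perp$ satisfies $d^*\beta=c\,\omega_\ell+\sum_i(a_i\omega_{a_i}+b_i\omega_{b_i})$, after which orthogonality to the remaining $d\omega_e$ kills the homology terms. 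Your sketch would need to be rebuilt along these lines; as written, the two-region reduction and the identification of $n_\ell$ with an indicator difference do not survive the presence of self-intersections.
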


\begin{rmk}
The signification of the conditions given in the characterisation of $\mfm_\ell$ is the following:
$\langle\a,\mu_*\rangle=0$ means that the deformation corresponding to $\a$ preserves the total area of the graph, whereas $\langle\a,n_\ell\rangle=0$ means that the deformation preserves the algebraic area of the graph, which corresponds to multiplying the area of each face by its winding number (with respect to the loop $\ell$).
\end{rmk}

\begin{proof}[Proof of Lemma \ref{__Lem: Charac MM}] Let us first remark that the above construction is invariant by the following appropriate subdivisions. Let us call subdivision of an oriented face $f_*$, the operation of adding two new vertices on its boundary and adding an edge $e$ connecting them; the new map $\mathbb{G}'$ has $2$ new vertices, $1$ more edge and $1$ more face, with in place of $f_*$, two faces $f_1$ and $f_2$ with the same orientation induced from $f_*$, while any other face is identified with a face of $\Gbb$. The map $\Gbb'$ being finer than $\Gbb$, $\ell$ can be identified with a tame loop of $\Gbb'$ that we denote by the same letter. Consider the map $P:\Omega^2(\Gbb',R)\to\Omega^2(\Gbb,R)$ with $P(\varphi)(f)=\varphi(f')$ whenever a face $f$ of $\Gbb$ is identified with a face of $\Gbb'$ and $\varphi(f_1)+\varphi(f_2)$ when $f=f_*.$ On one hand, $P(d\omega_e)=0$ and $P$ maps all other vectors of the defining generating family of $\mfm_\ell'$ to the generating family of $\mfm_\ell.$ Therefore,  $P(\mfm_\ell')=\mfm_\ell$. As $P: \{d\omega_e\}^\bot\to \Omega^2(\Gbb,R)$ is an isometry, while $d\omega_e\in \mfm_\ell'\cap \ker(P),$ $P({\mfm_\ell'}^\bot)=\mfm_\ell^\bot.$ On the other hand, $P(\mu_*')=\mu_*$ and when $[\ell]_R=0,$ $P(n_{\ell}')=n_\ell.$ We conclude that it is enough to prove the claim for any subdivision of $\Gbb.$

\vspace{0.5em}

We can then w.l.o.g. assume that $\ell$ and the paths $a_1,b_1,\ldots, a_g,b_g$ of Lemma \ref{__Lem: Homology basis choice} do not share any edge in common. Under this assumption, let us set $\Scr=\{\ell,a_1,b_1,\ldots, a_g,b_g\},$ denote by $\mathrm{T}(\Scr)$ the set of oriented edges $e$ such that an element of $\Scr$ runs through $e$ or $e^{-1}.$   Let $\eta$ be the permutation of the edges $E$ such that $
\eta(e^{-1})=\eta(e)^{-1}$ for any edge $e\in E$, with $2+4g$ non-trivial cycles  associated to elements of $\mathscr{S}$ forgetting the base point.  More precisely, for each $\g\in \{\ell,a_1,b_1,\ldots, a_g,b_g\}$ with $\g=e_1\ldots e_n$,  $(e_1,\ldots,e_n)$ and $(e_1^{-1},
\ldots, e_n^{-1})$ are cycles of $\eta,$ whereas $\eta(e)=e$ for any $e\not\in \mathrm{T}(\Scr).$ For any $\omega\in \Omega^1(\Gbb,R),$ setting
$$\eta.\omega=\omega \circ\eta^{-1}$$
defines a 1-form. We claim that for any oriented edge $e\in \mathrm{T}(\Scr)$, 
$$\a_e=d\omega_e- d (\eta.\omega_e) \in \mfm_\ell. $$
Indeed, it is non-zero only when $\g\in \Scr$ runs through $e$ or $e^{-1}$, in which case, it follows from \eqref{eq:MM vec} that $\a_e$ is a Makeenko--Migdal vector at respectively $\ov e$  or $
\und e$.

\vspace{0.5em}

Let us now consider  $\beta\in \mfm_\ell^\bot\cap\{\mu_*\}^\bot.$ Then 
\[\<\beta,\a_e\>= \<\b,(d-d\circ\eta)\omega_e\>=0,\ \forall e\in \mathrm{T}(\Scr),
\]
whereas $\<\beta, d\omega_e\>=0,\ \forall e\not\in \mathrm{T}(\Scr)$, so that
\[
d^*\b= (d\circ\eta)^*(\beta)=\eta^{-1}\circ d^*\beta \text{ and }\<d^*\beta,\omega_e\>=0,\ \forall e\not\in \mathrm{T}(\Scr).
\]
It follows that 
\[
d^*\beta= c\omega_\ell+\sum_{i=1}^g( a_i\omega_{a_i}+b_i\omega_{b_i}), \text{ for some }c,a_1,b_1,\ldots,a_g,b_g\in \R.
\]
Using the decomposition $\diamondsuit_1= \bigstar_1^*\oplus \cH_1,$ we find
$$d^*\beta= cd^* n_\ell  \text{ and }  c h_\ell+\sum_{i=1}^g( a_i\omega_{a_i}+b_i\omega_{b_i})=0.$$

Since $\beta \in \hat{\mfm}_\ell^\bot,$  for any edge  $e$ such that $e,e^{-1}$ do not belong to $\ell,$ $\<d^*\beta,\omega_e \>=0.$ In 
particular, $a_i=b_i=0$ for all $i$ and $d^*\beta = c\omega_\ell.$ Since $\beta\in \mu_*^\bot,$ it follows that whether $[\ell]_R=0$ and $\beta=cn_\ell $ or 
$c=0$ and $\beta =0.$ 
We conclude  that whether $[\ell]_R=0$ and $\mfm_\ell^\bot\cap \{\mu_*\}^\bot =\R. n_\ell$, or $[\ell]_R\not=0$ and $\mfm_\ell^\bot\cap \{\mu_*\}^\bot= \{0\}.$
\end{proof}

\subsection{Regular polygon tilings of the universal cover, tiling-length of a tame loop and geodesic loops}

\label{----sec:DiscreteFundamentalCover}

To simplify the presentation, we shall work only with surfaces of genus $g$ obtained by a standard quotient of $4g$ polygons.  We fix here notations and definitions relative to the universal cover of such maps. We refer to \cite{BiS} for more details. 

\vspace{0.5em}

\emph{Regular maps and regular loops:} A $2g$-\emph{bouquet map} is a map $(V,E,F)$ with $1$ vertex $v$, $1$ face and $2g$ edges, so that for $f\in F$, there are $2g$ oriented edges $a_1,b_1,\ldots, a_g,b_g\in E$ 
corresponding to distinct edges, with $\pl_v f=[a_1,b_1]\ldots [a_g,b_g].$ A $2g$-bouquet map can be obtained by labelling the edges of a $4g$-polygon counterclockwise $e_1,\ldots,e_{4g}$ and gluing  $e_{i+4k}$ to $e_{i+4k+1}$ 
for all $0\le k\le g-1$ $i\in\{1,2\}.$ A \emph{regular map} is a pair given by a map $\Gbb=(V,E,F)$ and  a $2g$-bouquet map $\Gbb_g$, such that $\Gbb$ is finer than $\Gbb_g$. Each edge of $\Gbb_g$ is uniquely decomposed as a 
concatenation of edges of $\Gbb.$ Let $\partial E\subset E $ be the set of edges appearing in these concatenations. We then denote by $\pl V$ the set of endpoints of edges of $\pl E$ and $\mathring{V}=V\setminus \pl V.$ 
When $(\Gbb,\Gbb_g)$ is a regular map, we refine the notion of tame loops defined in the previous section as follows. A \emph{loop} $\ell\in \mathrm{L}(\Gbb)$ is \emph{regular} whenever it  is tame,  none of its edges belong to 
$\pl E$ and $\und{\ell}\in \mathring{V}$. In particular its intersection points satisfy $V_\ell\subset \mathring{V}.$

\vspace{0.5em}

\emph{Universal cover of a regular map:} Let $(\Gbb,\Gbb_g)$  be a regular map with $\Gbb=(V,E,F).$  When $g=1,$ consider the closed 
square $P_1$ with vertices coordinates in $\{-\frac{1}{2},\frac 1 2\}$ and the tiling of $\R^2$ by translation of $P_1$ by $\Z^2.$ When $g\ge 2,$ consider a tiling of the Poincar\'e hyperbolic disc $\mathbb{H}$ by a family of closed regular $4g$-polygons  of $\mathbb{H}$ whose sides do not intersect $0$ and denote by $P_1$ the  polygon among them 
enclosing $0.$  The group $\Gamma_g$ can be identified with $\Z^2$ when $g=1$ and with a subgroup of M\"obius transformations that acts properly by isometry on $\mathbb{H}$ when $g\geq 2$. The group $\Gamma_g$ acts freely on the set of tiles and for each $h\in \Gamma_g,$ there is a unique tile $P_h$ with $h\cdot 0$ belonging to the interior of $P_h.$  Let us define  $\tilde \Sigma_{\Gbb}$  as  $\R^2$  when $g=1$ and $\mathbb{H}$ when $g\ge 2.$  The quotient of $\tilde \Sigma_\Gbb$ by $\Gamma_g$ is homeomorphic to $
\Sigma_\Gbb$ and we denote by $p:\tilde \Sigma_\Gbb\to \Sigma_\Gbb$ the quotient mapping. There is a unique CW decomposition of $\tilde \Sigma_{\Gbb}$  such that the restriction of $p$ to the interior of
each cell of $\tilde \Sigma_\Gbb$ is an homeomorphism onto the interior of a cell of $\Sigma_\Gbb$ labeled by an element of  $V,E$ or $F$.  We denote a labelling of the cells of this CW complex by $\tilde\Gbb=(\tilde V,\tilde E,\tilde F)$ and call $\tilde 
\Gbb$ a \emph{universal cover} of $\Gbb.$ There is a natural map from $\tilde V,\tilde E,\tilde F$ to respectively $V,E$ and $F$ that we also denote by $p.$ The map $\tilde \Gbb$ is finer than the universal cover $\tilde\Gbb_g=(\tilde V_g,\tilde E_g,\tilde F_g)$ of $\Gbb_g,$ where faces  $\tilde F_g$ can be identified with polygons $(P_g)_{g\in\Gamma_g},$ and $\Gamma_g$ acts free transitively  on $\tilde V_g.$
As for maps, the pair $(\tilde V, \tilde E)$ can be identified with a graph, and we denote by $\mathrm{P}
(\tilde \Gbb)$ its set of paths.  For each path $\g=e_1\ldots e_n\in \mathrm{P}(\Gbb)$ and $\tilde v\in p^{-1}(v_0),$ the lift of $\g$ from $\tilde v$ is the unique path $\tilde \g=\tilde e_1\ldots, \tilde e_n\in \mathrm{P}(\tilde\Gbb)$ with 
$\underline{\tilde \g}=\tilde v$ and $p(\tilde e_k)=e_k$ for 
all $1\le k\le n.$ Vice-versa, when $\tilde\gamma=(\tilde e_1,\ldots,\tilde e_n)\in \mathrm{P}(\tilde\Gbb),$ its projection is the path $p(\gamma)=(p(\tilde e_1),\ldots,p(\tilde e_n))\in\mathrm{P}(\Gbb).$  Its image in 
$\mathrm{RP}(\Gbb)$ does not depend on the $\sim_r$ equivalence class $[\gamma]$ of $\gamma;$ we denote it by  $p([\gamma])\in \mathrm{RP}(\Gbb).$  When $\tilde v\in \tilde V$ and $v=p(\tilde v),$ the group $\mathrm{RL}_{\tilde v}(\tilde \Gbb) $ of reduced loop of $(\tilde V,\tilde E)$ based at $\tilde v$ allows to complete the diagram of Lemma \ref{__Lem:Basis Reduced Loops}  in the following way. The proof is standard and left to the reader. 

\begin{lem}\label{__Lem:Basis Reduced Loops lift} Let   $(\Gbb,\Gbb_g)$ be a regular map, the following assertions hold:
\begin{enumerate}
\item The sequence 
$$\begin{array}{cccc}1&\to \mathrm{RL}_{\tilde v}(\tilde \Gbb)&\overset{p}{\to} \mathrm{RL}_v(\Gbb)&\to \pi_{1,v}(\Gbb)\to1
\end{array}$$
is a  short exact sequence.
\item  Denote by $\Gamma_c$ the kernel of the morphism $\Gamma_{r,g}\to\Gamma_g$ considered in Lemma \ref{__Lem:Basis Reduced Loops} and let  $s:\Gamma_g\to\Gamma_{r,g}$ be an injective right-inverse map with 
$s(\Gamma_g)=\Gamma_{top}$, where $\Gamma_{top}$ the sub-group of $\Gamma_{r,g}$ generated  $S_{top}=\{x_1,y_1,\ldots,x_g,y_g\}$, built as follows. Consider a spanning tree $\mathcal{T}$ of the Cayley graph of 
$\Gamma_g$ generated by $x_1,\ldots,y_g.$ Identifying  $\Gamma_{top}$ with paths of the Cayley graph of $\Gamma_g$ starting from $1,$ set for any $\gamma\in \Gamma_g,$ $s(\g)\in\Gamma_{top}$ to be  the unique path of $\mathcal{T}$ from 
$1$ to $\g.$
Then $\Gamma_c$ is free of infinite countable rank with free basis $\{s(\g) z_i s(\g)^{-1}, \g\in \Gamma_{g}\}.$

\item Assume $\tilde v\in \tilde V_g, $ and that  $(\ell_i,1\le i\le r),$  $a_1,b_1,\ldots, a_g,b_g\in \mathrm{L}_v(\Gbb)$ and $\Theta: \Gamma_{r,g}\to\mathrm{RL}_v(\Gbb)$ are as in Lemma \ref{__Lem:Basis Reduced Loops}. Denote by $\mathrm{RL}_{top}(\Gbb)$ the sub-group of $\mathrm{RL}_v(\Gbb)$ generated by $a_1,\ldots,b_g.$  Then the restrictions of  $\Theta$ to $\Gamma_{top}$ and $\Gamma_c$ yield  isomorphisms $\Theta: \Gamma_{top}\to \mathrm{RL}_{top}(\Gbb)$ and $\Theta:\Gamma_c\to p(\mathrm{RL}_{\tilde v} (\tilde\Gbb))$.  Denoting by $\tilde\Theta: \Gamma_c\to \mathrm{RL}_{\tilde v} (\tilde\Gbb) $ the morphism with $p\circ\tilde\Theta=\Theta, $  the  diagram 
$$\begin{array}{cccccccc}
1 &\to&\Gamma_c&\to&  \Gamma_{r,g} & \to& \Gamma_g &\to 1\\
&&\hspace{0.2 cm}\downarrow \tilde\Theta&& \hspace{0.2 cm}\downarrow \Theta&&\downarrow&\\
1&\to &\mathrm{RL}_{\tilde v}(\tilde \Gbb)&\overset{p}{\to}& \mathrm{RL}_v(\Gbb)&\to&\pi_{1,v}(\Gbb)&\to 1
\end{array}$$
is commutative and exact. Consider  a spanning tree  $\mathcal{T}$ of $\tilde V_g$ and for any $x\in \tilde V_g$ denote by $\g_x$  the unique path of $\mathcal{T}$ from $\tilde v$ to $x.$ Then $\mathrm{RL}_{\tilde v}(\tilde \Gbb)$ is free of infinite rank, with free basis  $\left\{ \widetilde{ \g_x \ell_i\g_x^{-1}},x\in \tilde V_g,1\le i\le r\right\}.$
\end{enumerate}
\end{lem}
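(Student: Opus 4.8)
The plan is to read the three statements as the combinatorial incarnation of standard covering-space facts for the covering of CW-complexes $p:\tilde\Gbb\to\Gbb$, and to prove them in order, deducing the third point from the first two by transporting everything through the isomorphism $\Theta$ of Lemma~\ref{__Lem:Basis Reduced Loops}. The first thing to record is the dictionary: on $1$-skeleta, $p$ restricts to a covering of graphs $(\tilde V,\tilde E)\to(V,E)$ that is a bijection between the outgoing edges at every vertex, and $\mathrm{RL}_{\tilde v}(\tilde\Gbb)$ (resp. $\mathrm{RL}_v(\Gbb)$) is the fundamental group of that graph. With this in hand, I expect the only genuine work to lie in the combinatorics of the second point; the rest is formal.

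For the first point, I would begin by checking that $p$ carries reduced loops to reduced loops: a backtrack $ee^{-1}$ in the projection $p(\tilde\ell)$ of a reduced loop would lift, by uniqueness of edge-lifts, to a backtrack $\tilde e\tilde e^{-1}$ in $\tilde\ell$, contradicting reducedness; hence two reduced loops based at $\tilde v$ with the same class in $\mathrm{RL}_v(\Gbb)$ have literally equal projections and so coincide by unique path-lifting, which proves injectivity of $\mathrm{RL}_{\tilde v}(\tilde\Gbb)\to\mathrm{RL}_v(\Gbb)$. Next, a reduced loop $[\ell]\in\mathrm{RL}_v(\Gbb)$ belongs to the image of $p$ precisely when the lift of $\ell$ from $\tilde v$ closes up; since $\tilde\Sigma_\Gbb$ is simply connected, this happens precisely when $\ell$ is null-homotopic in $\Sigma_\Gbb$, that is (using $\pi_{1,v}(\Gbb)\cong\pi_1(\Sigma_\Gbb)$, recalled in Section~\ref{----sec:Maps}) precisely when $[\ell]\in\ker(\mathrm{RL}_v(\Gbb)\to\pi_{1,v}(\Gbb))$. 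Combined with the tautological surjectivity of $\mathrm{RL}_v(\Gbb)\to\pi_{1,v}(\Gbb)$, this yields the short exact sequence.

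For the second point, since $z_1$ occurs exactly once in the defining relator of $\Gamma_{r,g}$ it can be eliminated, so $\Gamma_{r,g}$ is free of rank $r+2g-1$ (as already in Lemma~\ref{__Lem:Basis Reduced Loops}), and since killing $z_1,\dots,z_r$ recovers the presentation of $\Gamma_g$, the kernel $\Gamma_c$ is the normal closure of $\{z_1,\dots,z_r\}$. I would then run Reidemeister--Schreier with the transversal $\{s(\gamma):\gamma\in\Gamma_g\}$, which is Schreier because a prefix of the reduced word spelling the $\mathcal{T}$-path from $1$ to $\gamma$ is again a $\mathcal{T}$-path from $1$, hence of the form $s(\gamma')$; rewriting the single relator produces relations eliminating every Schreier generator attached to a pair $(\gamma,x_m)$ or $(\gamma,y_m)$, leaving $\{s(\gamma)z_is(\gamma)^{-1}:\gamma\in\Gamma_g,\ 1\le i\le r\}$ as a free basis of $\Gamma_c$. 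As a cross-check I would keep the topological picture in mind: $\Gamma_{r,g}$ is the fundamental group of the genus-$g$ surface with $r$ boundary circles, $\Gamma_c$ is the fundamental group of the cover pulled back from the universal cover of the closed surface, and that cover is $\tilde\Sigma_\Gbb$ with one puncture removed for each pair $(\gamma,i)$, hence homotopy equivalent to a wedge of circles indexed by those pairs.

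For the third point, the commutative square in Lemma~\ref{__Lem:Basis Reduced Loops} says that $\Theta$ intertwines $\Gamma_{r,g}\to\Gamma_g$ with $\mathrm{RL}_v(\Gbb)\to\pi_{1,v}(\Gbb)$, so $\Theta$ maps $\Gamma_c$ isomorphically onto $\ker(\mathrm{RL}_v(\Gbb)\to\pi_{1,v}(\Gbb))$, which equals $p(\mathrm{RL}_{\tilde v}(\tilde\Gbb))$ by the first point, and maps $\Gamma_{top}$ isomorphically onto the subgroup generated by the $a_m$ and $b_m$, since $\Theta(x_m)=a_m$ and $\Theta(y_m)=b_m$. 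As $p$ is injective, $\tilde\Theta:=p^{-1}\circ\Theta|_{\Gamma_c}$ is a well-defined isomorphism $\Gamma_c\to\mathrm{RL}_{\tilde v}(\tilde\Gbb)$ with $p\circ\tilde\Theta=\Theta|_{\Gamma_c}$, and commutativity and exactness of the displayed diagram then follow at once from the first point and the definition of $\Gamma_c$. It remains to transport the basis of the second point through $\tilde\Theta$: one has $\Theta(s(\gamma)z_is(\gamma)^{-1})=\Theta(s(\gamma))\,\ell_i\,\Theta(s(\gamma))^{-1}$, and the lift of this loop from $\tilde v$ first follows the lift of $\Theta(s(\gamma))$ to the vertex $x\in\tilde V_g$ corresponding to $\gamma$ under $\tilde V_g\cong\Gamma_g$ — this lift being the $\mathcal{T}$-path $\gamma_x$ — then runs around the lift of $\ell_i$ at $x$, which closes up because $\ell_i$ is a lasso, hence null-homotopic (first point), and finally returns along $\gamma_x^{-1}$; thus the lift is $\widetilde{\gamma_x\ell_i\gamma_x^{-1}}$, and $\tilde\Theta$ carries the free basis of $\Gamma_c$ onto $\{\widetilde{\gamma_x\ell_i\gamma_x^{-1}}:x\in\tilde V_g,\ 1\le i\le r\}$, which is therefore a free basis of $\mathrm{RL}_{\tilde v}(\tilde\Gbb)$. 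The step I would budget the most time for is the Reidemeister--Schreier bookkeeping (or, in the topological variant, verifying that the wedge-of-circles basis realises exactly the conjugacy pattern $s(\gamma)z_is(\gamma)^{-1}$); this is presumably why the authors call the proof standard and leave it to the reader.
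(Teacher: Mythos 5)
The paper gives no argument for this lemma at all (it is declared ``standard and left to the reader''), so the comparison can only be with the standard argument the authors allude to, and that is indeed the route you take. Your point 1 (reduced loops project to reduced loops, unique path lifting, and the lifting criterion in the simply connected cover, combined with the identification of $\ker(\mathrm{RL}_v(\Gbb)\to\pi_{1,v}(\Gbb))$ with the null-homotopic classes) is correct, and your point 3 is the right formal deduction: the commutative square of Lemma \ref{__Lem:Basis Reduced Loops} sends $\Gamma_c$ onto the kernel, which by point 1 is $p(\mathrm{RL}_{\tilde v}(\tilde\Gbb))$, and $\tilde\Theta=p^{-1}\circ\Theta|_{\Gamma_c}$ transports the basis. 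The only caveat there is that the spanning tree of $\tilde V_g$ in point 3 must be taken to correspond, under $\tilde V_g\cong\Gamma_g$, to the tree $\mathcal{T}$ used to define $s$; otherwise the lift of $\Theta(s(\gamma))$ need not be the tree path $\gamma_x$.

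The genuine soft spot is the step you yourself defer in point 2: the assertion that rewriting the conjugated relators ``eliminates every Schreier generator attached to a pair $(\gamma,x_m)$ or $(\gamma,y_m)$'' is the entire content of the lemma, and it is not a bookkeeping formality. There is exactly one rewritten relator per $\gamma\in\Gamma_g$ (one per tile of the $4g$-gon tiling), while each vertex carries $2g$ positively oriented Cayley edges of which only the tree edges give trivial Schreier generators; so the elimination requires an explicit matching between tiles and non-tree edges, together with an ordering (say, exhausting the tiling by combinatorial balls, adding one tile at a time along a connected boundary arc) in which each successive relator contains, exactly once, a non-tree generator not met before, and every non-tree generator is eventually consumed. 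Equivalently, what must be proved is that the face lassos with tree tails freely generate the fundamental group of the $1$-skeleton of the simply connected tiling; this is true for these tilings, but the analogous statement for an arbitrary simply connected $2$-complex is false (two $2$-cells glued along a circle: the two tree-conjugated attaching loops generate $\pi_1$ of the circle but are not a free basis), and your topological cross-check does not settle it either, since puncture loops with arbitrary tails need not form a basis -- the tails matter, which is precisely why the Schreier (prefix-closed) choice of $s$ enters. Once this ordering/matching argument is written down (for $g=1$ it is an explicit triangular chain of Nielsen moves along each branch of the comb tree; for $g\ge2$ a ball exhaustion of the hyperbolic tiling works), your proof is complete and coincides with the standard one.
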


\emph{Tile decomposition:} For all $h\in \Gamma_g,$ we denote by $D_h\subset \tilde V ,  D^*_h\subset \tilde F $ and $ \mathring{D_h}\subset \tilde V$  the subsets of vertices and faces of $\Gbb$, whose image in $\tilde \Sigma_{\Gbb}$ is 
included respectively in $P_h$ and its interior $\mathring{P_h}$. The projection $\mathring{D}$ of $\mathring{D}_h$ does not depend on $h\in \G.$  When $U\subset \tilde F$ and $E_c\subset \tilde E,$ we denote by 
$U\setminus E_c$ the subgraph of  the graph of $\tilde\Gbb^*$ where all faces from $\tilde F\setminus U$ and  all edges dual 
to $E_c$ are removed. Let us consider the oriented graph with vertices $\Gamma_g$ such that there is an edge between $a$ and $b$ if and only if $P_{a}$ and $P_b$ share a side. The action of $\G_g$ on $\mathbb{H}$ induces a 
free, transitive, isometric 
action on this graph and we denote by $|h|_{\Gamma_g}$ the \emph{distance} between any $h\in \G_g$ and $1$. For any non constant regular loop $\ell=(e_1,\ldots,e_n)\in \mathrm{L}(\Gbb),$ we call 
$$|\ell|_D=n-1- \#\left\{1\le i\le n-1:\exists h\in \Gamma_g \text{ with } \{\underline{e_i},\overline{e_i},\overline{e_{i+1}}\} \subset  D_h \right\}$$
the \emph{tiling length} of\footnote{since the loop is regular, it is also understood as the number of pair consecutive  edges  of $\ell$ crossing the boundary of a polygon.} $\ell.$  There is then a unique tuple $\g_0,\ldots, \g_{|\ell|_D}$ of paths of $\Gbb,$ such that for any lift $\tilde \ell$ of $\ell$,  there are lifts $\tilde\g_0,\ldots , \tilde\g_{|\gamma|_D}$ of $\g_0,\ldots, \g_{|\ell|_D}$  such that
\begin{equation}
\tilde\ell=\tilde\g_0\ldots \tilde\g_{|\gamma|_D}\label{eq: Polygon traces}
\end{equation}
and for all $0\le k\le |\ell|_D$, there are 
$h_0,h_1,\ldots, h_{|\ell|_{|D|}}\in \Gamma_g$ such 
that all\footnote{This latter claim does not hold if $\ell$ is not regular.} vertices of $\tilde\g_k$ belong to $D_{h_k},$ while $\ell_\Gamma=(h_0,\ldots, h_{|\ell|_D})$ is a path in $\G_g$. We call $\ell_D=\g_{|\ell|_D}\g_0$ the \emph{initial strand} of $\ell.$ We call $\ell_\Gamma$  the \emph{tiling path} of $\ell$ and set
$$|\ell|_{\G}= |h_{|\ell|_D}|_{\G_g}.$$  
A loop $\ell_1$ of $(\Gbb,\Gbb_g)$ is called an \emph{inner loop} of $\ell$ if $\ell_1$ is regular, included in $\mathring{D}$ and $\ell_1\prec \ell.$ We then say that $\und{\ell_1}$ is a \emph{contractible intersection point} of $\ell$ and 
denote by $V_{c,\ell}$ the set of such points.  A \emph{proper loop} is a regular loop $\ell$ with $\#V_{c,\ell}=0.$

\vspace{0.5em}

A path $\g\in \mathrm{P}(\Gbb)$ is said to be \emph{geodesic} when its embedding in the surface is the restriction of a geodesic of the surface\footnote{Mind that we also consider the power of a geodesic to be a geodesic.}. A \emph{path} in $\G_g$ is \emph{geodesic} if it is the tiling path of a geodesic path of a regular map.

\subsection{Shortening homotopy sequence}
\label{====Sec: Shortening Homotopy sequence}

We  define here operations on regular loops allowing to decrease their tiling length. We say that a sequence $\ell_1,\ldots,\ell_n$ is a \emph{shortening homotopy sequence} from $\ell_1$ to $\ell_n$ if $\ell_1,\ldots,\ell_n$ are regular loops such that $|\ell_1|_D\ge \ldots\ge  |\ell_n|_D$  and for all $1\le l<n,$
$$\#V_{c,l}=\#V_{c,l+1}=0 \text{ or } \#V_{c,l}>\#V_{c,l+1},$$
while there is a regular map $(V,E,F)$ with $\ell_{l},\ell_{l+1}\in \mathrm{P}(\Gbb)$ and a subset of faces  $K_l\varsubsetneq F$, with $$\ell_l\sim_{K_l} \ell_{l+1}.$$ The aim of this section is to prove the following. 

\begin{prop}\label{__Prop: Shortening Homotopies} For any proper loop $\ell,$ there is a shortening homotopy sequence $\ell_1,\ldots,\ell_m$, a geodesic loop $\ell'$ and a path $\eta$ within the same map $\Gbb=(V,E,F)$ as $\ell_m,$ such that  $\ell_m\sim_K\eta \ell' \eta^{-1}$ for some $K\subset F$ with $K\not=F.$  The path $\eta$ can be chosen simple, within a fundamental domain and crossing $\ell_m$ and $\ell'$ only at their endpoints. \end{prop}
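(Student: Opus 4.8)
The plan is to argue by induction on the tiling length $|\ell|_D$, the induction step consisting in \emph{straightening the tiling path} $\ell_\Gamma$. Starting from $\ell_1=\ell$, suppose a proper loop $\ell_j$ has been produced. Its tiling path is a walk in the graph on $\Gamma_g$ that joins two elements whenever the corresponding tiles share a side --- for $g=1$ the grid $\Z^2$, for $g\ge 2$ the Cayley graph of $\Gamma_g$ for the $4g$ generators $x_1^{\pm1},\dots,y_g^{\pm1}$. By Dehn's algorithm description of geodesics in these graphs (see \cite{BiS}), either $\ell_{j,\Gamma}$ is geodesic --- then we stop --- or it contains a backtrack $h_{i-1}=h_{i+1}$, or it runs rotationally through strictly more than $2g$ of the $4g$ tiles incident to some vertex of the tiling. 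In the last two cases, writing $\tilde\ell_j=\tilde\gamma_0\cdots\tilde\gamma_n$ as in \eqref{eq: Polygon traces}, the strands realising the offending portion, together with a sub-arc of a polygon side (backtrack) or a shorter rotational arc around the tiling vertex, bound a topological disc lying in a bounded union of tiles; homotoping those strands across this disc, rel endpoints and after a preliminary refinement of $\Gbb$, produces $\ell_{j+1}$ with $\ell_j\sim_K\ell_{j+1}$ for $K\ne F$ the set of faces met by a tubular neighbourhood of the disc, and $|\ell_{j+1}|_D<|\ell_j|_D$.

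The new strands can be taken to lie in the interior $\mathring D$ of a single tile (perturbing off the polygon sides if necessary), so every self-intersection created lies in $\mathring D$ and $\ell_{j+1}$, though regular, need not be proper; I would then erase inner loops one at a time. Each inner loop of $\ell_{j+1}$ is contained in one tile, hence in the contractible union of the faces it encloses, so by the collapsing of paths inside a contractible union of faces its erasure is a $\sim_{K'}$-equivalence with $K'\ne F$; it deletes only edges internal to a tile, leaving $|\cdot|_D$ unchanged while $\#V_c$ strictly drops, so after finitely many erasures we recover a proper loop of strictly smaller tiling length. The block $\ell_j,\ell_{j+1},\dots$ then meets the definition of a shortening homotopy sequence (tiling lengths non-increasing; whenever one fails to be strictly smaller than the previous, we are in the erasure phase, where $\#V_c$ strictly decreases), so it may be appended and the induction continued.

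Because $|\cdot|_D\ge 0$, this terminates at a proper loop $\ell_m$ with geodesic tiling path. If $[\ell]_\Z=0$, then $\ell_m$ lies in a single tile, hence in the contractible set $\overline D$, so it is $\sim_K$-equivalent to the constant loop at $\und{\ell_m}$ for $K$ the faces it encloses, and the statement holds with $\ell',\eta$ constant. If $[\ell]_\Z\ne 0$, let $\ell'$ be the closed geodesic of $\Sigma$ freely homotopic to $\ell$, realised as a regular loop of a refined map $\Gbb$ also containing $\ell_m$, and let $\eta$ be a simple path inside the tile containing $\und{\ell_m}$ that joins it to $\und{\ell'}$ transversally and meets $\ell_m$ and $\ell'$ only at an endpoint. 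The loops $\ell_m$ and $\eta\ell'\eta^{-1}$ are freely homotopic, hence $\sim_h$-equivalent; realising this equivalence by a homotopy supported in a neighbourhood of $\ell_m\cup\ell'\cup\eta$ and, after one last refinement, placing a face of $\Sigma$ near a point outside that neighbourhood, one obtains $\ell_m\sim_K\eta\ell'\eta^{-1}$ with $K\ne F$.

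The main obstacle is twofold. First, one must check that every intermediate loop stays regular (tame, with no edge in $\partial E$, based in $\mathring V$) and that the self-intersections created by a straightening move genuinely become inner loops that are erasable at constant tiling length --- this is exactly the bookkeeping that Section~\ref{====Sec: Shortening Homotopy sequence} is built for. Second, and more seriously, one must keep $K\ne F$ at every step, in particular in the terminal homotopy: since the fundamental domain $\overline D$ already surjects onto all faces of $\Sigma$, a face is ``free'' only if one arranges the choice of $\ell'$ and the successive refinements so that it is never touched. This free face is, in the Makeenko--Migdal uniqueness argument to come, the analogue of the unbounded face of the plane.
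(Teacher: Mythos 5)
Your overall skeleton (straightening the tiling path via Birman--Series/Dehn moves, interleaved with erasures of the inner loops they create) is essentially the paper's Steps 1--4, but your terminal step contains a genuine gap, and it sits exactly at the crux of the proposition. You argue that since $\ell_m$ and $\eta\ell'\eta^{-1}$ are freely homotopic, the equivalence can be ``realised by a homotopy supported in a neighbourhood of $\ell_m\cup\ell'\cup\eta$'' and, after arranging a face outside that neighbourhood, upgraded to $\ell_m\sim_K\eta\ell'\eta^{-1}$ with $K\ne F$. This does not follow: a $\sim_K$-equivalence amounts to a based homotopy inside the surface with the open faces outside $K$ removed, and homotopy in $\Sigma$ does not imply homotopy in $\Sigma$ minus an open face --- on the torus a loop spelling the commutator $[a,b]$ is null-homotopic in the surface but not in the complement of a disc, since it is nontrivial in the free group $\pi_1$ of the punctured torus. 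Nothing in your construction guarantees that some face can be kept clear of the homotopy, and in general it cannot be chosen after the fact. Securing precisely this is the content of the paper's Step 5: having first arranged (via Lemmas \ref{__Lem: Towards Geodesic Tiling Path} and \ref{__Lem: Towards Geodesics}) that $\ell_m$ has the \emph{same tiling path} as an actual geodesic loop $\ell'$, the passage to $\eta\ell'\eta^{-1}$ is performed one strand at a time; each elementary move contracts a loop $\a_k$ confined to the rim near at most two edges of the polygon, so its bulk misses a rim face bordering a different edge of $\Gbb_g$, which is what yields $K_k\ne F$ at every step. Your closing paragraph names this obstacle but defers it; without it the statement is not proved.

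Two further points. First, your induction only uses strictly length-reducing moves (backtracks and turns through strictly more than $2g$ of the $4g$ tiles), so it terminates at a loop of \emph{minimal} tiling length, which is not the same as having the tiling path of a geodesic; the paper needs the additional length-preserving switches at exact half turns (Lemma \ref{__Lem: Towards Geodesics}, via \cite[Thm 2.8]{BiS}) precisely so that the terminal strand-by-strand comparison with a genuine geodesic is available. Relatedly, your claim that $K\ne F$ for each straightening move because $K$ is ``the set of faces met by a tubular neighbourhood of the disc'' is not automatic: every tile projects onto the whole surface, so one must route the replacement strand along a rim and verify, as the paper does, that a specific face (a rim face meeting a different edge of $\Gbb_g$) is never enclosed. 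Second, the dichotomy ``$[\ell]_\Z=0$ implies $\ell_m$ lies in a single tile'' confuses homology with homotopy: for $g\ge 2$ a homologically trivial but homotopically nontrivial loop (e.g.\ one spelling $[a_1,b_1]$) has a nontrivial geodesic tiling path, so this case must be phrased in terms of null-homotopy, not vanishing homology.
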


We need two additional notions for this proof. 

\vspace{0.5em}

\emph{Bulk of a loop:} Consider  a regular map $(\Gbb,\Gbb_g)$ with $\Gbb=(V,E,F)$, and  a contractible loop $\ell$ of $\Gbb$ whose lift is a  loop $\tilde\ell$ of 
$\tilde \Gbb$. Let $E_c$ be the set of edges used by $\tilde\ell$ and let $O_\ell$ be the unbounded component of 
$\tilde\Gbb^* \setminus E_c.$ The \emph{bulk} of $\ell$ is then $K_{\ell}=p(\tilde F\setminus O_{\ell}).$  Since $E_\ell$ is connected, the image of 
$O_\ell$ in $\tilde\Sigma_{\Gbb}$ is a surface with one boundary and  the image  $\tilde X_\ell$ of $\tilde F\setminus O_\ell$ in $\tilde \Sigma_{\Gbb}$ 
is a contractible set. The image of $\ell$ is then contractible within  $X_\ell=p(\tilde X_\ell)$ and  $$\ell\sim_{K_{\ell}} \ell_*$$
where $\ell_*$ is the constant loop at $\und{\ell}.$   

\vspace{0.5em}

\emph{Adding a rim to a regular map:} When $(\Gbb,\Gbb_g)$ is a regular map, let us define a map $\Gbb_r$ finer than $\Gbb$ in the following way. 
First add exactly one vertex to  each edge\footnote{Recall the definition of $\partial E$ and $\partial V$  for a regular map  in section  \ref{----sec:DiscreteFundamentalCover}.} of $E\setminus \pl E$ with one endpoint in $\pl V$ and exactly two when both endpoints belong to $\pl V$. Each new vertex is paired uniquely with a vertex of $\pl V$ and their set inherit the cyclic order of vertices of $\pl V.$  Second add an edge for each consecutive new vertices. We denote by $\Gbb_r$ 
the new map defined thereby  and call the set  $\pl_rE$ of edges added in the second step the \emph{rim} of $\Gbb$. Each face of the new map, whose boundary has an edge in $\pl E$ has exactly four adjacent edges with exactly one in $\pl E_r$.  We denote this set of faces by $F_{r}$. We denote all other faces of $\Gbb_r$ by $F_i$. For any $f\in F,$ either its boundary has no edge in $\pl E$ and it is 
identified to a face of $F_i$, or it is the union of faces of $\Gbb_r$ with exactly one in $F_i$, that we abusively also denote by $f$. For any oriented edge  
$e$ of $\Gbb_r$ belonging to $\pl E,$  its right retract  is the oriented edge of  $\pl E_r$ belonging to the face of $F_{r}$ on the right of $e.$ When $\g$ is a path with edges in $\pl E$, its \emph{right retraction} is the concatenation of the right retraction of its edges. The left retraction is defined likewise. 

\vspace{0.5em}
We can now prove the existence of shortening homotopy sequence starting from any regular loop, using a $5$ type of 
operations.

\vspace{0.5em}

\emph{\textbf{Step 1--}Deleting contraction points:} Consider a regular loop $\ell$ with $\#V_{c,\ell}>0$ of a regular map  with faces set $F$.  Any lift 
$\tilde \a$ of an inner loop $\a\prec \ell$ is a loop and we can consider its bulk. Denote by $K$ the union of  bulks for all inner loops. Any face bordering 
$\pl E$ does not belong to $K$ so that  $K \varsubsetneq F$ while $\ell$ is $\sim_{K}$-equivalent to the regular loop $\ell'$ with all inner loops 
erased.

\vspace{0.5em}

\emph{\textbf{Step 2--}Backtrack erasure:} Assume that $\ell$ is a regular loop of a regular map $(\Gbb,\Gbb_g)$  such that there is $1< i<|\ell|_D$ with $h_{i-1}=h_{i+1}$, where $(h_1,\ldots, h_{|\ell|_D})$ is the tiling path of 
$\ell.$ Consider the decomposition of $\tilde\ell$ as in \eqref{eq: Polygon traces}. Let $\Gbb'$  be the map $(\Gbb,\Gbb_g)$  with a rim added. Denote by $e_i$ and $e_o$ 
the last and first edge of $\g_{i-1}$ and $\g_{i+1}$. Then $\ov e_i$ and $\und e_o$ belong the same edge $e$ of $\Gbb_g.$  Let $\b'\in \P(\Gbb')$ be the reduced   path using only edges of the rim with 
$\und\b'=\ov e_i$ and $\ov\b'=\und{e_o}$. Denote by $\g_{i-1}'$ and $\g_{i+1}'$ the reduction of $\g_{i-1}e_i^{-1}$ and $e_o^{-1}\g_{i+1}.$ The \emph{backtrack erasure} for the backtracking 
$(h_{i-1},h_i,h_{i+1})$  of $\ell$  is the regular loop
$$\ell'= \g_1\ldots \g_{i-2}\g_{i-1}'\b'\g_{i+1}'\g_{i+2}\ldots \g_{|\ell|_D}. $$
It can be obtained from $\ell$ by the following discrete homotopy. Since a lift of the paths $\b'$ and $e_i\g_ie_o$ starting in $D_{h_{i-1}}$ both ends in 
$D_{h_i},$ the loop of $e_i\g_ie_o{\b'}^{-1} $ is contractible. Denote by $K_{bt}$ its bulk.  Then 
$$\ell\sim_{F_{bt}} \ell'.$$
Since $\g_i$ only intersect the rim of $\Gbb'$ through the edge 
$e$,  any face belonging to the rim whose boundary intersects two different edges of $\Gbb_b$ is not in  $K_{bt}.$ It follows that $K_{bt}\not=F'.$

\vspace{0.5em}

\emph{\textbf{Step 3-}Vertex switch:} Let $\ell$ be a regular loop of a regular map $(\Gbb,\Gbb_g)$ and consider its decomposition as in \eqref{eq: Polygon traces}.  A \emph{half  turn} of $\ell$  is a sequence 
$\g_{l},\ldots,\g_{l+k}$ such that
$2g\le k\le 4g-1,$ and  $D_{h_l},D_{h_{l+1}},\ldots, D_{h_{l+k}}$ runs around a common vertex $v\in \Gbb_g$. Consider such a long turn and let $\Gbb'=(V',E',F')$ be the map obtained from $\Gbb$ by adding twice a rim as described in the last paragraph. See Figure \ref{Fig----TS} for an example. Let $e_i$ and $e_o$ be respectively the last and the first edge of $\g_{l}$ and $\g_{l+k}$ in $\Gbb'.$ Besides, let $\b_p\in \mathrm{P}({\Gbb'}^*)$ be the shortest reduced 
path from a face adjacent of $e_i$ to a face adjacent of $e_o$ that crosses first $e_i$ and uses only faces of $F_{r}$ so that its lift starting from 
$D_{h_{l}}^*$ goes through $D_{h_{l}}^*\cup D_{h_{l+k}}^*$ and ends in $D_{h_{l+k}}^*$. 
Let $\b'\in \mathrm{P}(\Gbb')$ be the reduced
path  from $\und{e_i}$ to 
$\und{e_o}$, such that each edge of $\b'$ 
is bordering a face of $\b_p$.   Denote by $\g_{l}'$ and $\g_{k+l}'$ the reduction of $\g_{l}e_i^{-1}$ and $e_o^{-1}\g_{k+l}$. The  \emph{vertex switch} of $\ell$ for the considered half  turn is  the regular loop 
$$\ell'= \g_0\g_1\ldots \g_{l-1} \g_{l}'\beta' \g_{k+l}'\g_{k+l+1}\ldots \g_{|\ell|_D}.$$
It can be obtained from $\ell$ by the following discrete homotopy.  Consider the loop $e_i\g_{l+1}\ldots \g_{l+k-1}e_o{\b'}^{-1}.$ Since a lift of $\b'$ starting in $D_{h_l}$ ends in $D_{h_{l+k}}$ it follows that $e_i\g_{l+1}\ldots \g_{l+k-1}e_o{\b'}^{-1}$ is contractible. Denote by $K_{sw}$ its bulk.  

Then, 
$$ e_i\g_{l+1}\ldots \g_{l+k-1}e_o\sim_{K_{sw}} \b'$$
and
$$\ell\sim_{K_{sw}} \g_0\ldots\g_{l-1}\g_l' e_i \g_{l+1}\ldots \g_{l+k-1}e_o \g_{k+l+1}\ldots \g_{|\ell|_D} \sim_{K_{sw}} \ell'.$$ 

Besides, $F_{sw}\not= F'.$  Indeed, consider the map $\Gbb_1$ obtained by adding a single rim to $\Gbb, $ so that $\Gbb'$ is finer than $\Gbb_1.$ Let $F_{cr},\tilde F_{cr}$ be the set of of faces of $\Gbb_1$ 
neighbouring respectively $p(v)$ and $v$. The restriction of $p$ to $\tilde F_{cr}$ is a homeomorphism onto $F_{cr}$. Since $k<4g$, there is at least one face $\tilde f_{cr}$ of $\tilde F_{cr}$ that does not belong to $p^{-1}(F_{sw}).$ Since $\b$ uses only faces of $F_{r}'$, any face of $F'\setminus F_{r}$  included in  $f_{cr}=p(\tilde f_{cr})$ does not belong to $K_{sw}.$

\begin{figure}[!h]
\centering
\includegraphics[scale=0.6]{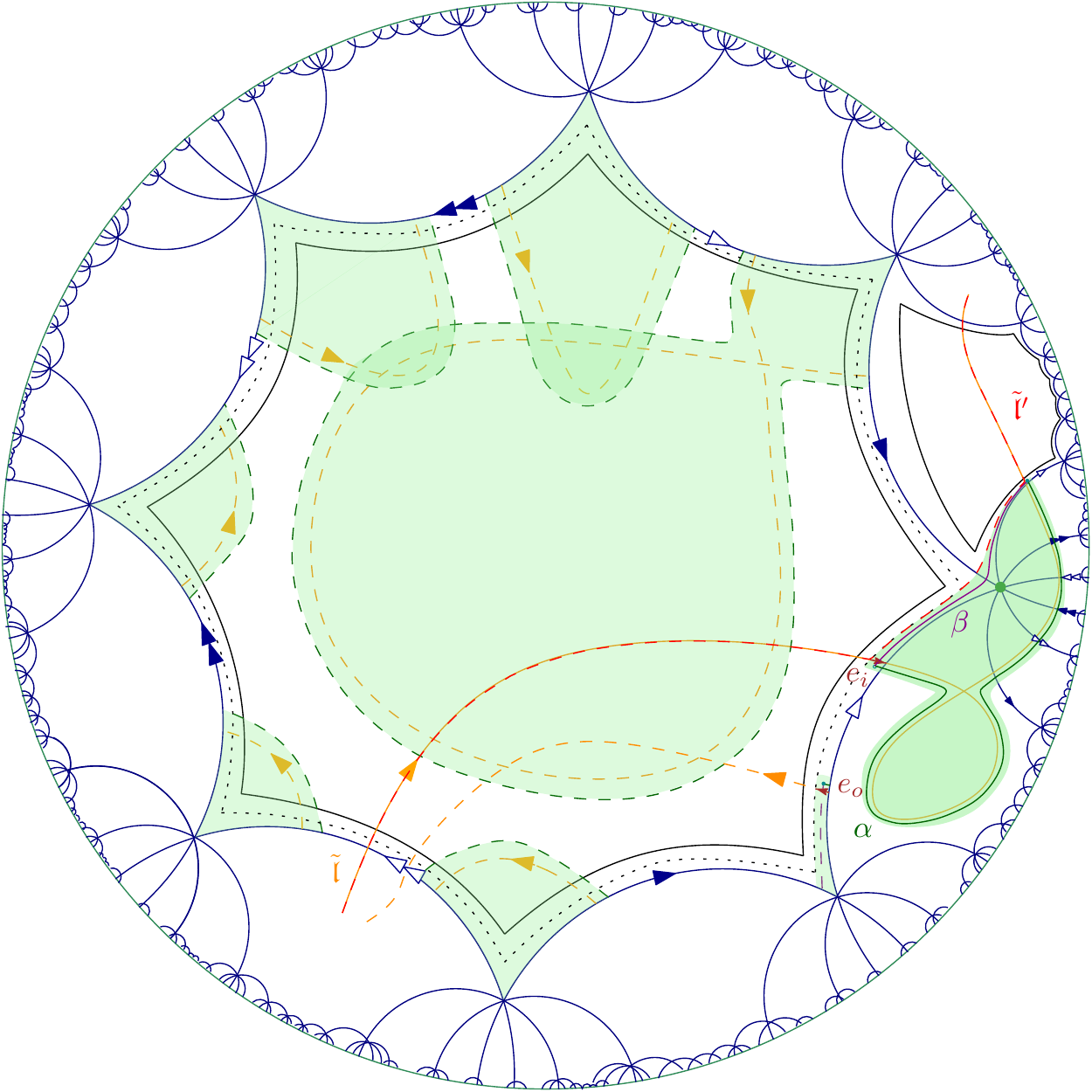}
\caption{\small Discrete homotopy at a left turn of $\ell$ when  $g=2$ and $k=7.$  The latter vertex is shown as a green dot, contracted faces are shown in green. The second rim is displayed with dotted lines. A lift $\tilde\ell$ of the initial loop in displayed in plain orange line, while a lift $\tilde\ell'$ of the terminal  loop is displayed in dashed red line.\label{Fig----TS}}
\end{figure}

\vspace{0.5em}

The following lemma reformulates a result due to \cite{BiS} relating $|\ell|_D$ to long turns of $\ell$ when $g\ge 2.$  

\begin{lem}\label{__Lem: Towards Geodesic Tiling Path} Let  $\ell$ be a  regular loop of a regular map $(\Gbb,\Gbb_b)$. There is a finite sequence $\ell_1,\ldots,\ell_n$  or regular loops obtained by vertex switches or backtracking erasures such that 
$\ell_1=\ell,$ $|\ell_1|_D\ge |\ell_2|_D\ldots \ge  |\ell_n|_D$  and 
$$|\ell_n|_D= |\ell|_{\Gamma_g}.$$
\end{lem}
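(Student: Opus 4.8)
The plan is to show that the tiling length $|\ell|_D$ can be made to decrease, step by step, by the two moves at our disposal (vertex switches and backtrack erasures), until no further decrease is possible, and then to argue that the minimum reached is exactly the word-distance $|\ell|_{\Gamma_g}$. The starting observation is that, by construction, the tiling path $\ell_\Gamma=(h_0,\ldots,h_{|\ell|_D})$ is a path in the Cayley graph of $\Gamma_g$ with $|\ell|_D$ steps (here I am using that the loop is regular, so that the decomposition \eqref{eq: Polygon traces} holds and each $\tilde\g_k$ stays inside a single tile). Hence $|\ell|_D$ is the length of some closed (up to the deck transformation identifying $h_0$ with $h_{|\ell|_D}$, which for a contractible loop is trivial, giving a genuine closed walk) walk in the Cayley graph, and therefore $|\ell|_D\ge |\ell|_{\Gamma_g}$ always, since $|\ell|_{\Gamma_g}=|h_{|\ell|_D}|_{\Gamma_g}$ is by definition the graph distance, which is at most any walk length between the two endpoints. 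So the content of the lemma is the reverse inequality, achieved after finitely many moves.

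The core is the combinatorial fact from \cite{BiS}: a path in the Cayley graph of a surface group $\Gamma_g$ (with $g\ge 2$) of the standard $4g$-gon presentation is geodesic if and only if it contains no backtrack and no \emph{long turn}, i.e.\ no subword in which the path wraps at least half-way ($\ge 2g$ consecutive tiles) around a single vertex of the tiling $\Gbb_g$; and if such a forbidden configuration appears, one can locally replace the offending subword by a strictly shorter one. The translation of this fact to the level of regular loops is precisely what the moves of Steps 2 and 3 accomplish: a backtrack $(h_{i-1},h_i,h_{i+1})$ with $h_{i-1}=h_{i+1}$ in $\ell_\Gamma$ is removed by a backtrack erasure, decreasing $|\ell|_D$ by (at least) $2$; and a half turn $D_{h_l},\ldots,D_{h_{l+k}}$ with $2g\le k\le 4g-1$ around a common vertex $v$ is shortened by the corresponding vertex switch, since the new strand $\b'$ traverses the \emph{complementary} arc of at most $4g-k\le 2g$ tiles around $v$, so the replacement $\g_l e_i\g_{l+1}\cdots e_o\g_{k+l}\rightsquigarrow \g_l'\b'\g_{k+l}'$ lowers the number of polygon crossings. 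In both cases the moves produce regular loops again (this uses the rim construction, which ensures that after retraction the edges of $\b'$ lie in $\mathring V$ and no edge of $\partial E$ is used), so the process can be iterated.

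Thus the algorithm is: while $\ell_\Gamma$ is not a geodesic in the Cayley graph, locate (via \cite{BiS}) either a backtrack or a long turn in it, and apply the corresponding move; this yields $\ell_{j+1}$ with $|\ell_{j+1}|_D<|\ell_j|_D$. Since $|\ell_j|_D$ is a strictly decreasing sequence of nonnegative integers, the procedure terminates after finitely many steps at a loop $\ell_n$ whose tiling path is geodesic; for a geodesic path in the Cayley graph, the walk length equals the distance between its endpoints, i.e.\ $|\ell_n|_D=|h_{|\ell_n|_D}|_{\Gamma_g}$. Finally, one checks that each move preserves the endpoint $h_{|\ell|_D}$ of the tiling path up to the $\Gamma_g$-action — backtrack erasure removes a $(h,h')$-$(h',h)$ detour and vertex switch replaces one arc around $v$ by the complementary one, neither of which changes the net displacement — hence $|\ell_n|_{\Gamma_g}=|\ell|_{\Gamma_g}$, giving $|\ell_n|_D=|\ell|_{\Gamma_g}$ as required. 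The main obstacle is making the invariance-of-endpoint and strict-decrease bookkeeping precise for the vertex switch: one must verify carefully that, after the two rims are added and the retractions taken, the new strand $\b'$ really does cross only the $\le 2g$ tiles of the complementary arc and that no new long turn or backtrack of equal or greater cost is created elsewhere along $\ell$ — this is exactly the place where the detailed geometry of the $4g$-gon tiling from \cite{BiS} is needed, and where one should quote their lemma rather than reprove it.
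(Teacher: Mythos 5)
There is a genuine gap in your termination argument. You claim that a vertex switch at a half turn with $2g\le k\le 4g-1$ \emph{strictly} lowers the number of polygon crossings because the complementary arc has $4g-k\le 2g$ steps. But in the boundary case $k=2g$ (an exact half turn) the complementary arc also has $4g-k=2g$ steps, so the tiling length is unchanged, not decreased. Consequently your ``strictly decreasing sequence of nonnegative integers'' argument does not prove that the procedure terminates: a non-geodesic tiling path may contain only backtrack-free, exactly-half turns, and a single switch at such a turn neither shortens the path nor obviously brings you closer to a configuration that can be shortened. This is precisely why the lemma is stated with non-strict inequalities $|\ell_1|_D\ge\cdots\ge|\ell_n|_D$, and it is the genuinely delicate point: the paper's proof does not rely on a length monovariant alone. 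For $g\ge 2$ it invokes \cite[Lemma 2.5]{BiS}, whose content is exactly that replacing a long \emph{chain} by its complementary chain can be achieved by successive replacements of half cycles by their complements — i.e., the bookkeeping showing that iterated switches (including the length-neutral ones) eventually produce a shortening is delegated to Birman--Series, not rederived from strict decrease. So you should quote BiS for the termination of the reduction, not merely for the characterization of geodesics.

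A second, related omission: your argument is stated only for $g\ge 2$ (where BiS applies), but the lemma also covers $g=1$, where the same issue with exact half turns ($k=2$ in the square tiling of $\R^2$) arises. The paper treats this case by an elementary separate argument: after normalizing by axial symmetries so that the endpoint has non-negative coordinates, it uses a different monovariant — the number of pairs of consecutive increments of $\ell_\Gamma$ whose coordinate signs change — which strictly decreases under the appropriate backtrack erasure or switch, even when the tiling length does not. The parts of your proposal that are fine and consistent with the paper are the easy inequality $|\ell|_D\ge|\ell|_{\Gamma_g}$, the fact that backtrack erasure drops the tiling length by two, and the observation that both moves are discrete homotopies and hence preserve the endpoint of the tiling path; but as written the proof of termination, which is the heart of the lemma, is missing.
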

\begin{proof} The case $g=1$ is elementary. An argument goes as follows.  The path in $\G_1=\Z^2$ associated to $\ell$ can be assumed up to axial symmetries that $h_{\ell}$ has non-negative coordinates. A backtracking of $\ell_\G$ can be erased by a backtracking erasure of $\ell.$  A path is geodesic if and only if all of its increments coordinates are non-negative. There are two consecutive increments with a negative followed by a positive sign.  This pair corresponds to a backtrack or a half turn of $\ell$ if one or two coordinates change.  Applying to a backtrack erasure or a switch at the half turn, the new loop has one less pair of increments with coordinates changing sign.

\vspace{0.5em}

When $g\ge 2,$ the result follows from  \cite[Lemma 2.5]{BiS}. In the setting of \cite{BiS}, a half 
turn of $\ell$ is  a half cycle of the path in $\G_g$ associated to $\ell.$ A switch at a half turn corresponds to a replacement of a half cycle with its complementary. Moreover in the setting of \cite{BiS}, replacing a long chain by its complementary chain can be obtained by successively replacing a long cycle by its complementary cycle.
\end{proof}

\vspace{0.5em}

\emph{\textbf{Step 4--}From minimal tiling length to geodesic tiling paths:} We say that a regular path $\g$ of a regular map  has \emph{minimal tiling length} when $|\g|_D= |\g|_\G.$ When $g\ge 2$, the following is a consequence of \cite[Thm 2.8]{BiS}.

\begin{lem} \label{__Lem: Towards Geodesics} If $\ell$ is a regular loop of a regular map, there is a sequence of regular loops $\ell_1,\ldots,\ell_n$ with minimal tiling length equal to $|\ell_1|_\G$ obtained by switches and backtrack erasure, such that $\ell_1=\ell$ while 
the tiling path of $\ell_n$ is geodesic. 
\end{lem}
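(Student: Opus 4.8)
The plan is to build the sequence in two stages, using only the backtrack erasures of Step~2 and the vertex switches of Step~3, and to reduce the case $g\ge 2$ to the combinatorics of geodesic words in $\Gamma_g$ from \cite{BiS}. First I would apply Lemma~\ref{__Lem: Towards Geodesic Tiling Path} to $\ell$, producing regular loops $\ell=\ell_1,\ldots,\ell_m$, related by vertex switches and backtrack erasures, with $|\ell_1|_D\ge\cdots\ge|\ell_m|_D$ and $|\ell_m|_D=|\ell|_{\Gamma_g}$. Since a vertex switch and a backtrack erasure are each realised (Steps~2 and~3) by a discrete homotopy that fixes the two endpoints of the lift of the loop, the endpoint $h\in\Gamma_g$ of the tiling path---and hence $|\ell_i|_{\Gamma_g}=|h|_{\Gamma_g}$---is unchanged along the sequence; so every $\ell_i$ satisfies $|\ell_i|_{\Gamma_g}=|\ell|_{\Gamma_g}$, and $\ell_m$ has minimal tiling length. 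In particular its tiling path $(\ell_m)_\Gamma$ is a shortest path in the graph on $\Gamma_g$ of Section~\ref{----sec:DiscreteFundamentalCover}, i.e.\ a geodesic word representing $h$.

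For the second stage, suppose $g\ge 2$. I would first observe that every half turn of a regular loop of minimal tiling length must have exactly $2g$ steps: a half turn with $k>2g$ steps could be replaced, by a vertex switch, by its complementary half cycle of length $4g-k<k$ around the same vertex of $\Gbb_g$, contradicting minimality of $|\ell_m|_D$; hence every vertex switch applied to such a loop preserves the tiling length. Now \cite[Thm 2.8]{BiS} identifies, among the minimal-length words of $\Gamma_g$, those that arise as the tiling path of a geodesic path of a regular map, and shows that any minimal-length word can be carried to such a geodesic word by a finite sequence of replacements of a (necessarily length-$2g$) half cycle around a vertex by its complementary half cycle. As observed in the proof of Lemma~\ref{__Lem: Towards Geodesic Tiling Path}, each such replacement on $\ell_\Gamma$ is precisely the effect of a vertex switch of the loop at the corresponding half turn, performed in a finer regular map obtained, as in Step~3, by adding rims. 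Carrying out these switches yields regular loops $\ell_m=\ell_m^{(0)},\ldots,\ell_m^{(p)}=:\ell_n$, all of tiling length $|\ell|_{\Gamma_g}$, with $(\ell_n)_\Gamma$ geodesic; concatenating the two stages and relabelling gives the desired sequence $\ell=\ell_1,\ldots,\ell_n$, non-increasing in $|\cdot|_D$ and ending at a loop of minimal tiling length whose tiling path is geodesic. For $g=1$ the conclusion already follows from Lemma~\ref{__Lem: Towards Geodesic Tiling Path}: a minimal-length word in $\Z^2$ is---by the characterisation of geodesic words in $\Z^2$ recalled in its proof---monotone, hence the tiling path of the straight-line geodesic in the homotopy class of $\ell$.

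I expect the crux to be this translation step: matching the word operations of \cite[Thm 2.8]{BiS} on $\Gamma_g$ with the geometric vertex switches of Step~3 on regular loops, keeping track at each stage of the refined regular map (with rims added) in which the switch is carried out, and verifying that the outcome is again a regular loop whose tiling path is exactly the word produced by \cite{BiS}. One should also confirm that ``geodesic tiling path'' in the present sense agrees with the notion of geodesic word used in \cite{BiS}, so that \cite[Thm 2.8]{BiS} applies without extra work.
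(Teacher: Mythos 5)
For $g\ge 2$ your argument is essentially the paper's: both reduce the statement to point (c) of \cite[Thm 2.8]{BiS} and translate the replacement of a half cycle by its complementary half cycle into a vertex switch performed in a refined map with rims added. Your two supplementary observations -- that switches and backtrack erasures are realised by homotopies with fixed endpoints, so $|\cdot|_{\Gamma_g}$ is conserved along the sequence, and that at minimal tiling length every half turn has exactly $2g$ steps, so these switches preserve $|\cdot|_D$ -- are correct and make the translation step cleaner than the paper's two-line treatment.

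The genuine gap is the torus case, which you declare to follow already from Lemma \ref{__Lem: Towards Geodesic Tiling Path}. Minimal tiling length only tells you that the tiling path is a monotone staircase in $\Z^2$ (after normalising the endpoint to have non-negative coordinates), and a monotone staircase is in general \emph{not} the tiling path of any geodesic path. Take endpoint $(2,2)$ and the monotone path through the squares $(0,0),(1,0),(2,0),(2,1),(2,2)$: a straight segment that crosses the lines $x=1$ and $x=2$ before any horizontal line has slope strictly less than $1$, and such a segment must cross $x=3$ before it can cross both $y=1$ and $y=2$; hence no straight segment has this tiling path, although the path has minimal length $4$. The cutting sequence of the straight-line geodesic is a specific balanced staircase, not an arbitrary monotone one. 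This is precisely why the paper's proof contains a second, genus-one specific step: it introduces corner swaps, replacing $(x,y),(x+1,y),(x+1,y+1)$ by $(x,y),(x,y+1),(x+1,y+1)$ or vice versa, notes that any two monotone paths with the same endpoints are related by corner swaps and backtrack erasures -- in particular one can reach the cutting sequence of the straight geodesic -- and that each corner swap is realised by a vertex switch at a half turn (here of exactly $2g=2$ steps), so the tiling length is unchanged. Without this step your sequence may terminate at a loop of minimal tiling length whose tiling path is not geodesic, and Step 5 of the shortening procedure, which requires a geodesic loop $\ell^{(*)}$ with $\ell^{(*)}_\Gamma=\ell_\Gamma$, cannot be launched; since $g=1$ is exactly the case feeding into Theorem \ref{-->THM: Torus}, the omission is not harmless.
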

\begin{proof} When $g\ge 2,$ in the setting of \cite{BiS},  our condition for a tiling path to be geodesic is equivalent for it to be a shortest path. Since switches at half turns imply switches for half cycles of the tiling path 
in the setting of \cite{BiS}, the result follows from point (c) of \cite[Thm 2.8]{BiS}. 

When $g\ge 1,$ for any regular loop with minimal tiling length, we can assume w.l.o.g. that both coordinates of the endpoint $(a,b) $ of $\ell_\G$  are non-negative. When $\g$ is a path of $\Z^2$ with only positive coordinates, a corner swap of $\g$ is the path obtained by replacing a sequence of the form  $(x,y),(x+1,y),(x+1,y+1)$ with  $(x,y),(x,y+1),(x+1,y+1)$  or vice-versa.  Any other path of $\Z^2$ with same endpoints can be obtained by corner swaps and backtrack erasure. Since a switch at a half turn of $\ell$ implies a corner swap of its tiling path and that tiling paths with positive coordinates have minimal length in $\Z^2$,  the claim follows. 
\end{proof}

\emph{\textbf{Step 5--}From geodesic tiling paths to geodesic paths:}  Assume that $\ell$ is a regular loop  such that $\ell_\G$ is geodesic and set $n=|\ell|_D=|\ell|_\G$. Let $\ell^{(*)}$  be a  geodesic loop   with $\ell^{(*)}_\G=\ell_\G.$ Up to translation of the geodesic associated to $\ell^{(*)}$, we can assume that $\ell^{(0)}$ and $\ell^{(*)}$ are regular paths of a same regular map $(\Gbb^{(0)},\Gbb_g^{(0)})$. Let $\eta\in P(\Gbb^{(0)})$ that does not cross the boundary of the polygon,  while $\und{\eta} = \und{\ell}$ and $\ov{\eta}= \und{\ell}^{(*)},$ without using any edge of $\ell^{(*)}.$  Denote by $(\Gbb,\Gbb_g)$ the regular map obtained by adding a rim to $(\Gbb^{(0)},\Gbb_g^{(0)}).$ Using the same notation as in \eqref{eq: Polygon traces}, consider the tile paths decompositions of $\ell$ and $\ell^{(*)}$ adding an upper-script $(*)$ for the second decomposition. For any $0\le k\le n-1,$  let  $e_k$ and $e_k^{(*)}$ be the last edges of respectively $\g_{k}$ and $\g_{k}^{(*)}$, denote by $\b_{k}$  the reduced path with edges in $\pl E_r $ from  $\und{e_k^{(*)}}$ to $\und {e_k}$ and define $\ell^{(k)}$ as the reduction of
$$\eta\g^{(*)}_0\ldots \g^{(*)}_{k} {e_k^{(*)}}^{-1}\b_k e_k\g_{k+1}\ldots \g_n.$$  
Let us set $\ell^{(n)}= \eta \ell^{(*)}\eta^{-1} $ and   $\ell^{(-1)}=\ell.$ Let $\a_k$ be  the reduction of the loop  $\eta^{-1}\g_0e_0^{-1}\b_0^{-1}e_0^{(*)}{\g_0^{(*)}}^{-1}$ when $k=0,$ ${e_{k-1}^{(*)}}^{-1}\beta_{k-1} e_{k-1} \g_k {e_k}^{-1} \b_k^{-1}{\g_k^{(*)}}^{-1}$ when $0< k<n$ and ${e_{n-1}^{(*)}}^{-1}\beta_{n-1} e_{k-1} \g_n \eta{\g_n^{(*)}}^{-1} $  when $k=n.$  With this notation
$$\ell\sim_r\eta\a_0 \g_0^{*} \a_1\g_1^{*} \ldots \a_k  \g_{k}^{(*)}{e_k^{(*)}}^{-1} \beta_k e_k \g_{k+1} \ldots \g_n \text{ for }0\le k<n $$
and $$\ell\sim_r \eta\a_0 \g_0^{*} \a_1\g_1^{*} \ldots \a_n \g_{n}^{(*)} \eta^{-1}. $$
Therefore, for all $0\le k\le n$
\begin{equation}
\ell^{(k-1)}=\a\b\text{ and }\ell^{(k)}= \a\a_k\b\label{eq:Induction Inner contraction to geodesic}
\end{equation}
for some paths $\a,\b\in \mathrm{P}(\Gbb).$ For all $0\le k\le n,$ $\a_k$ is contractible. Denoting by $K_k$ its associated bulk,  \eqref{eq:Induction Inner contraction to geodesic} yields
$$\ell^{(k)}\sim_{K_k} \ell^{(k-1)} \text{ for all }0\le k\le n.$$
Besides, since $\a_k$ intersects at most two edges of $\Gbb_g$, any face within the rim $f\in F_{r}$, which borders a different edge of $\Gbb_g$, does not belong to $K_k.$ Therefore  $K_k\not=F.$ 

\begin{proof}[Proof of Proposition \ref{__Prop: Shortening Homotopies}]  For any regular loop $\ell$, the claimed shortening homotopy sequence can be obtained by applying first the deletion of contraction points, followed by Lemma \ref{__Lem: Towards Geodesic Tiling Path}, \ref{__Lem: Towards Geodesics} and lastly a shortening homotopy sequence from a loop with  geodesic  tiling path to a loop conjugated to a geodesic loop.
\end{proof}

The following lemma is not necessary for our main argument and can be skipped at first reading. Let us note that it is also possible to do the vertex switch operation (step 3) before deleting contraction points  (Step 1) thanks to the following.

\begin{lem} \label{__Lem:Complement Inner} Consider $\ell$ is a regular loop within a regular map $(\Gbb,\Gbb_g)$ with faces set $F$.  Denote respectively by  $K$ and  $E_{in}$  the union of bulks and the set of edges of its initial strand $\ell_D.$ Then $F\setminus K$ is connected in $\Gbb^*\setminus (\pl E\cup E_{in})$.
\end{lem}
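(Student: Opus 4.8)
The plan is to pass to the universal cover and localise the statement. Recall the notation of Section~\ref{----sec:DiscreteFundamentalCover}: deleting from $\tilde\Gbb^{*}$ all dual edges of $p^{-1}(\pl E)$ splits it into the pieces $(D_{h}^{*})_{h\in\Gamma_{g}}$, each of which is the dual graph of the disc $\overline{P_{h}}$ — hence connected, since two distinct faces contained in $\overline{P_{h}}$ can never share an edge of $\pl P_{h}$ — and $p$ restricts to a bijection $D_{h}^{*}\to F$ because $\Gbb$ refines the $2g$-bouquet $\Gbb_{g}$, whose single face is $\mathring D$. Since every edge of $E_{in}$ lies in the initial strand $\ell_{D}=\g_{|\ell|_{D}}\g_{0}$, a lift $\tilde\ell_{D}$ is contained in the union of the one or two tiles $\overline{P_{h_{0}}}$ and $\overline{P_{h_{|\ell|_{D}}}}$. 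Transporting $K$ to those tiles through $p$ and calling the result $\tilde K$, the lemma becomes equivalent to: the faces of $D_{h_{0}}^{*}\cup D_{h_{|\ell|_{D}}}^{*}$ not lying in $\tilde K$ stay connected in $\tilde\Gbb^{*}$ once the dual edges of $p^{-1}(\pl E)$ and of the edges of $\tilde\ell_{D}$ are removed; for any other tile the claim is just the connectedness of $D_{h}^{*}$ already noted.

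The heart of the matter is then a description of $\tilde K$ relative to $\tilde\ell_{D}$. Each inner loop $\a\prec\ell$ lifts to a loop $\tilde\a$ contained in the interior of a single tile, and its bulk equals $p$ of the set of faces it encloses there, namely $\tilde F$ minus the unbounded component of $\tilde\Gbb^{*}$ cut along the edges of $\tilde\a$. The claim I would aim to prove is that every bounded complementary region of $\tilde\ell_{D}$ inside $\overline{P_{h_{0}}}\cup\overline{P_{h_{|\ell|_{D}}}}$ that is disjoint from the boundary of this set is enclosed by some inner loop of $\ell$, and hence is contained in $\tilde K$: such a region is surrounded by a closed sub-arc of $\tilde\ell_{D}$ lying in one tile interior and avoiding $p^{-1}(\pl E)$, and following $\ell$ along it exhibits a sub-loop of $\ell$ included in $\mathring D$, that is, an inner loop, whose bulk contains the region.

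Granting this, I would conclude as follows. Removing the dual edges of $\tilde\ell_{D}$ cuts $\overline{P_{h_{0}}}\cup\overline{P_{h_{|\ell|_{D}}}}$ into its complementary regions; the bounded ones disjoint from the boundary get absorbed into $\tilde K$, so every remaining face of $(D_{h_{0}}^{*}\cup D_{h_{|\ell|_{D}}}^{*})\setminus\tilde K$ lies in a region meeting $\pl P_{h_{0}}$ or $\pl P_{h_{|\ell|_{D}}}$, and those regions are joined to one another by dual edges running along the tile boundaries that are dual neither to $p^{-1}(\pl E)$ nor to an edge of $\tilde\ell_{D}$ — here one uses that $\tilde\ell_{D}$, being the lift of the initial strand of a \emph{regular} loop, meets $\pl P_{h_{0}}$ and $\pl P_{h_{|\ell|_{D}}}$ only at vertices (transversally only at the crossing vertices $\und{\g_{|\ell|_{D}}}$ and $\ov{\g_{0}}$), so it does not genuinely separate those tiles once one is allowed to route around through boundary faces not in $\tilde K$. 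Pushing everything back down by $p$ gives the statement.

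The hard part will be the middle step: the exact matching of the bounded complementary regions of the initial strand inside a tile with the bulks of the inner loops, and the verification that no such region escapes $\tilde K$ nor fails to be rejoined to the rest. This is a combinatorial-topological bookkeeping that leans on regularity of $\ell$ (all transverse self-intersections at $\mathring V$, no edge on $\pl E$) and on the definition of a bulk via the unbounded component of $\tilde\Gbb^{*}$ cut along a loop; once it is settled, the remaining verifications are routine.
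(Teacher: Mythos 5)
Your reduction to the universal cover is set up incorrectly, and this is a genuine gap rather than a presentational one. Downstairs the lemma asks for connectivity of $F\setminus K$ using only dual edges of $E\setminus(\pl E\cup E_{in})$; since every edge of $E\setminus \pl E$ has exactly one lift interior to a fixed closed tile, $p\colon D_1^*\to F$ identifies $\Gbb^*\setminus \pl E$ with the interior dual graph of that \emph{single} tile, and the whole of $E_{in}$ lifts there as one path: applying the deck transformation $h_{|\ell|_D}^{-1}$ to $\tilde\g_{|\ell|_D}$ places it in $\overline{P_1}$ adjacent to $\tilde\g_0$ — this is precisely why the initial strand is defined by wrapping around the base point. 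Your formulation instead keeps $\tilde\g_0$ and $\tilde\g_{|\ell|_D}$ in the two tiles $\overline{P_{h_0}}$ and $\overline{P_{h_{|\ell|_D}}}$ and declares the lemma equivalent to connectivity of $D^*_{h_0}\cup D^*_{h_{|\ell|_D}}$ minus $\tilde K$ in $\tilde\Gbb^*$ with all duals of $p^{-1}(\pl E)$ removed. That statement is not equivalent to the lemma and is false outright whenever $h_0\neq h_{|\ell|_D}$: any dual edge joining faces of two different tiles is dual to an edge of $p^{-1}(\pl E)$, so after the deletion the two tiles lie in different components, and your closing step of joining regions "by dual edges running along the tile boundaries" has nothing to run on; also each tile's faces are in bijection with all of $F$, so there is no "other tile" part of the claim to dispose of separately. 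This is also not the paper's route: the paper works with the one-tile lift of $E_{in}$, assumes a second component $K'$, argues its dual boundary lies in $E_{in}$, and traps its lift in the bounded component of $\tilde\Gbb^*\setminus E_{in}$, forcing $K'\subset K$.

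Even with the reduction repaired, the two steps you defer as routine are exactly where the content is. First, a bounded complementary region of a self-crossing arc need not be bounded by a contiguous sub-arc of the path, so matching such regions with inner loops (hence with bulks) requires a real argument. Second, and more seriously, the lifted initial strand begins and ends at vertices of $\pl P_1$, so the strand together with the polygon boundary can genuinely separate the boundary-adjacent faces: on the torus take $\Gbb$ the $2\times 2$ grid refining the unit square and $\ell$ the horizontal simple loop at height $\tfrac12$ based at the centre; then $K=\emptyset$, $E_{in}$ consists of both edges of $\ell$, and $\Gbb^*\setminus(\pl E\cup E_{in})$ splits the four faces into the two below $\ell$ and the two above it. So your assertion that the strand "does not genuinely separate" once one routes through boundary faces is false in general; this boundary behaviour is the actual difficulty (and the example indicates the statement itself needs an extra hypothesis or a weaker conclusion — note the authors flag the lemma as skippable and unnecessary for the main argument), so it cannot be left as bookkeeping.
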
 

\begin{proof} Since $\ell$ is regular, any edge crossing $\pl E$ does not belong to $E_{in}$  and faces adjacent to $\pl E$ belong to the same connected component $X$ of $F\setminus K$ in $\Gbb^*\setminus (\pl E\cup E_{in})$. Denote by $\tilde X$ the lift of $X$ in $D^*_1.$ Assume that $F\setminus K$ is not connected in $\Gbb^*\setminus (\pl E\cup E_{in})$ and consider a connected component $K'$ different from $X$.  Then all edges of $\pl K'$ belong to $E_{in}$.  Since the infinite connected component of  $\tilde\Gbb^*\setminus E_{in}$ is  given by $\tilde F\setminus D^*\cup X,$ the lift of  $K'$ in $D^*_1$ is included in the bounded connected component of $\tilde\Gbb^*\setminus E_{in},$  where we identified $E_{in} $ with the set of edges of the lift of $\ell_D$ starting from $D_1.$   It follows that $K'$ is included in $K$, which is a contradiction. \end{proof}

\subsection{Nested and marked loops}

\label{-----sec:Nested Marked Loops}

\emph{Nested loop:}  We say that a  loop $\ell$ of a regular map with $n$ transverse intersection points is \emph{nested} if it is regular and if there  are sub-loops $\ell_1\prec \ell_2\prec \ldots \prec\ell_{n}$ with a strictly increasing 
number of intersection points.
By convention, a constant loop is a nested loop. A regular loop is nested if and only if its transverse intersection points can  be labeled  $v_1,v_2,\ldots, v_n$ so that it visits them in the order $(v_1v_2\ldots v_{n-1}v_n v_nv_{n-1} \ldots v_2 v_1).$ See figure \ref{Fig----Nested loops}.

\begin{figure}[!h]
\centering
\includegraphics[scale=0.4]{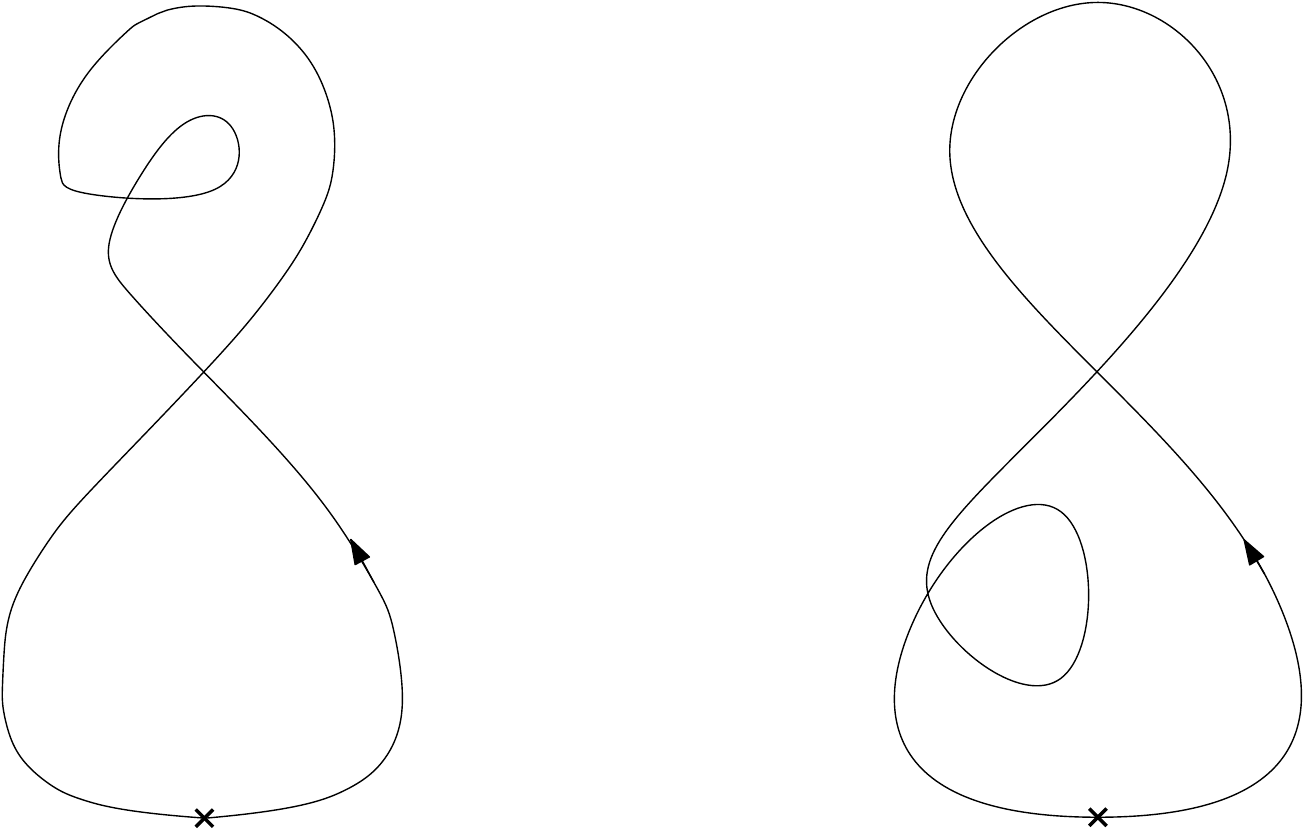}
\caption{Left, a nested loop. Right, this is not a nested loop. \label{Fig----Nested loops}}
\end{figure}

\begin{rmk}  A nested loop is an example of a splittable loop as defined in \cite[Section 6.5]{DN}, originally introduced in \cite{KazPlan} and called therein planar loops. Note that the right example in figure \ref{Fig----Nested loops} is splittable but not nested.

\end{rmk}

\emph{Marked loops:}  A \emph{marked loop} is a couple $(\ell,\g_{nest})$ of a regular loop and a regular path within a regular map $\Gbb$ such that 
\begin{enumerate}
\item When $(\g_0,\ldots,\g_{|\ell|_D})$ denotes  the tiling decomposition of $\ell,$  $\g_0=\g_{nest}\g'$, for some path $\g'.$
\item  The path $\g_{nest}$ is non-constant and of the form $\a \ell_{nest}\b$ where $\ell_{nest}$ is a nested loop and $\a,\b$ are  simple paths,  such that the only intersections between  $\a,\b$  and $\ell_{nest}$ are at $\ov\a$ and $\und \b.$
\item The path $\g_{nest}$ does not intersect transversally the two components of  the initial strand $\ell_D.$
\item The path $\g_{nest}$ does not intersect any inner loop of $\a\b\g'\g_1\ldots\g_{|\ell|_D}.$ 
\item   The bulk  $F_{nest}$ of the contractible loop $\ell_{nest}$ has exactly $\#V_{\ell}$ faces of $\Gbb$ and there is exactly one face $f_o$ of $\Gbb$ adjacent to $F_{nest}$ in $\Gbb^*.$
\end{enumerate}

We call the loop  and the path defined by $(\ell,\g_{nest})^{\wedge}=\a\b\g'\g_1\ldots\g_{|\ell|_D}$ and $(\ell,\g_{nest})^{\wedge_*}=\g'\g_1\ldots\g_{|\ell|_D}$ the \emph{pruning}  and the \emph{cut} of $(\ell,\g_{nest}).$  We shall often denote them abusively simply by $\ell^\wedge$ and $\ell^{\wedge_*}.$ We call $f_o $ the \emph{outer face} of $(\ell,\g_{nest})$ and the simple sub-loop of $\ell$ with length $1$ the \emph{central} loop of $(\ell,\g_{nest}).$ Being a sub-loop of $\ell_{nest}$,  it is contractible, faces belonging to its bulk are called \emph{central}. A \emph{moving edge} is  an edge  $e$ of $\ell$ with the following property: 

\begin{itemize}
\item When $\ell_{nest}$ is constant,  $e$ is any edge of $\g_{nest}.$
\item Otherwise, $e$ bounds a central face of $\ell_{nest}$.
\end{itemize}

\begin{figure}[!h]
\centering
\includegraphics[scale=0.7]{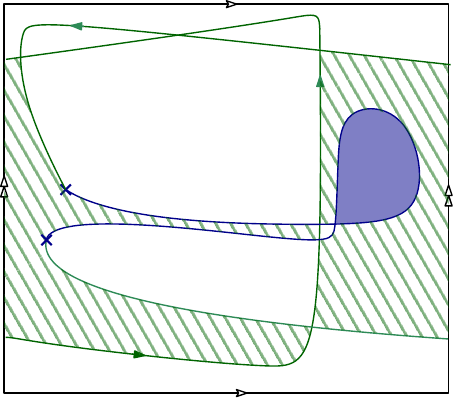}
\caption{A marked loop. Its nested part is drawn in blue. There are exactly one central face coloured in blue and one outer face filled with dashed green lines.   \label{Fig----Marked loop}}
\end{figure}

\begin{rmk}  For any nested loop $\ell$  included  in a fundamental domain, it easily shown by induction on $n=\# V_{\ell},$ that  the dual  graph $\Gbb^* $ with the edges of $\ell$ removed has exactly $n+1$ connected components. The fifth condition above can be 
removed  considering regular maps finer than $\Gbb.$ 
\end{rmk}

The following is then a simple variation of  Proposition \ref{__Prop: Shortening Homotopies}.

\begin{lem}  \label{__Lem: Shortening Homotopies ML}
For any marked  loop $(\ell,\g_{nest})$ with $\ell^\wedge$ proper,  there is a shortening homotopy sequence $\ell_1,\ldots,\ell_{m}$ such that \begin{enumerate}
\item  $\ell_1\sim_c\ell,$
\item  There is a  common nested sub-path $\g_{nest}$ of $\ell_1,\ldots,\ell_n$, such that $(\ell_k,\g_{nest})$  is a marked loop for all $k\ge 1$ and $\ell_k^\wedge$ is proper for $k\ge 2$. 
\item There are proper   subsets $K_1,\ldots, K_m$ of faces,  such that $\ell_k^{\wedge_*} \sim_{K_k}\ell_{k+1}^{\wedge_*}$ for all $1\le k< m.$
\item There is a marked loop $(\ell',\g_{nest}')$ such that  $ \ell_{m}\sim_\Sigma\ell' $ and ${\ell'}^{\wedge}$ is geodesic. 
\end{enumerate}
\end{lem}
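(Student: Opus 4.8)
The plan is to transport the proof of Proposition~\ref{__Prop: Shortening Homotopies}, run on the proper loop $\ell^\wedge$, to the marked loop $(\ell,\g_{nest})$, carrying the nested part along untouched. First I would apply Proposition~\ref{__Prop: Shortening Homotopies} to $\ell^\wedge$: it yields a shortening homotopy sequence $\ell^\wedge=m_1,\ldots,m_m$ inside successive maps, a geodesic loop $\ell''$, and a simple path $\eta$ within a fundamental domain, crossing $m_m$ and $\ell''$ only at their endpoints, with $m_m\sim_K\eta\ell''\eta^{-1}$ for some $K\neq F$. Because $m_1=\ell^\wedge$ is proper, the defining inequality of a shortening homotopy sequence ($\#V_{c,l}=\#V_{c,l+1}=0$ or $\#V_{c,l}>\#V_{c,l+1}$) forces $\#V_{c,m_k}=0$ for all $k$, so every $m_k$ is proper.

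The heart of the argument is that the whole sub-path $\g_{nest}=\a\,\ell_{nest}\,\b$, its attaching vertex $v_0=\ov\a=\und\b$, and the central and outer faces of $(\ell,\g_{nest})$ survive unchanged through the entire sequence $m_1,\ldots,m_m$. Indeed: by condition~(4) in the definition of a marked loop, the deletions of contraction points (Step~1) erase only inner loops of $\ell^\wedge$, which are disjoint from $\g_{nest}$ and whose bulks avoid the central and outer faces; by conditions~(3) and~(5), the vertex switches and backtrack erasures (Steps~2--3) are supported near the edges of $\pl E$ and the polygon vertices, which the regular path $\g_{nest}$ — confined to a single tile — never meets, and the contractible loops whose bulks $K_{bt}$, $K_{sw}$ are contracted can be chosen so that these bulks miss the central and outer faces (Lemma~\ref{__Lem:Complement Inner} also helps route the auxiliary rim paths in the complement of the bulks and the initial strand); the geodesic--approximation moves (Steps~4--5) likewise act near polygon boundaries, and after a refinement of the maps their connecting paths can be routed avoiding $\g_{nest}$. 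For each $k$ I would then define $\ell_k$ to be the loop obtained from $m_k$ by replacing the sub-path $\a\b$ by $\a\,\ell_{nest}\,\b$. Since nothing near $\g_{nest}$ has moved, $(\ell_k,\g_{nest})$ is again a marked loop, $\ell_k^\wedge\sim_c m_k$ is proper, and $\ell_1\sim_c\ell$; this gives~(1) and~(2). The cut $\ell_k^{\wedge_*}$ is $m_k$ with the untouched sub-path $\a\b$ excised, so the face sets realizing the $\sim$-relations between consecutive $m_k$ transport to the cuts, producing $K_1,\ldots,K_m\varsubsetneq F$ with $\ell_k^{\wedge_*}\sim_{K_k}\ell_{k+1}^{\wedge_*}$ for $1\le k<m$; this is~(3).

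Finally, reproducing the last step of the proof of Proposition~\ref{__Prop: Shortening Homotopies} — where the conjugating path $\eta$ is absorbed into the attaching paths of $\ell_{nest}$ — gives a marked loop $(\ell',\g_{nest}')$ with $\ell_m\sim_\Sigma\ell'$ and ${\ell'}^\wedge$ geodesic, which is~(4). The step I expect to be the main obstacle is the verification in the middle paragraph: that each shortening operation can be carried out without disturbing $\g_{nest}$ or the central and outer faces. This is delicate precisely because $\g_{nest}$ lives in the same fundamental tile in which the vertex switches and the geodesic--approximation step also take place, and conditions~(3)--(5) of a marked loop together with Lemma~\ref{__Lem:Complement Inner} are exactly what provide the room needed.
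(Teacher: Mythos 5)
Your overall route is the one the paper intends: the paper gives no separate proof of this lemma, asserting only that it is ``a simple variation'' of Proposition \ref{__Prop: Shortening Homotopies}, and your plan --- rerun the shortening operations on the pruning while carrying $\gamma_{nest}$ along, transport the face sets to the cuts, and absorb the conjugating path into the attaching paths at the end --- is exactly that variation. The observation that properness of $m_1=\ell^\wedge$ propagates through the sequence is also correct.

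However, the justification in your middle paragraph is not sound at the point where the real work sits. First, the claim that $\gamma_{nest}$, being ``confined to a single tile'', never meets the regions affected by Steps 2--5 cannot be right: on the quotient surface there is only one fundamental domain, so every strand $\gamma_j$, every rim path, and every geodesic strand produced by the switches and by Step 5 lives in the same tile as $\gamma_{nest}$ and may cross it. What actually saves the construction is not disjointness but that the marked-loop conditions only forbid transverse intersections of $\gamma_{nest}$ with the initial strand and with inner loops of the pruning (conditions 3--4), that the new connecting paths run along the rims, and that condition 5 can be restored by refining the map (as the remark after the definition notes). Second, Step 5 does not ``act near polygon boundaries'': as written it replaces every strand, including the initial strand $\gamma_0$ (which in $\ell^\wedge$ contains $\alpha\beta$), by strands of an actual geodesic, so from the first move of Step 5 onward the sub-path $\alpha\beta$ is no longer present in $m_k$ and your recipe ``reinsert $\alpha\,\ell_{nest}\,\beta$ into $m_k$'' has nothing to attach to. One must instead run a marked variant of Step 5 that keeps $\gamma_{nest}$ pinned and homotopes only the cut $\gamma'\gamma_1\ldots\gamma_n$ onto the geodesic, feeding the connecting paths $\eta,\beta_k$ into the attaching paths of the new nested part and invoking $\sim_\Sigma$ only at the very end to realise the cut as a genuine geodesic. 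Your final paragraph gestures at exactly this, but it must replace, not supplement, the disjointness argument; with that correction the proof matches the paper's intended argument.
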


\subsection{Pull and twist moves}

We introduce here two operations on loops in order to later modify shortening homotopy sequences  to satisfy the constraint imposed by Makeenko--Migdal equations, namely to keep constant the algebraic area of loops introduced in section  \ref{----sec: Makeenko--Migdal Vectors}. This type of operation shall be required only when considering loops with vanishing homology.

\vspace{0.5em}

\emph{Pull move:} Consider a non-constant marked loop $(\ell,\g_{nest})$ in a regular map $(\Gbb,\Gbb_g)$ with $\ell^\wedge$ has no inner loops that are sub-paths of $\ell_D$.   Then the graph obtained from the dual graph $\Gbb^* $  by removing all  edges crossing $\pl E$ or $\ell_D$  but  edges of $\g_{nest}$ is connected. For any face $f$ of $\Gbb$ and any moving edge $e$ that does not bound $f,$  there is therefore  a  simple path $\g^*=a^*_1\ldots a^*_m$ in the dual graph $\Gbb^* $  with endpoint $f$ and  first edge $a_1^*$ dual to  $e$, that  crosses neither $\pl E$ nor $\ell_D$ but possibly at $\g_{nest}$.  Let us define inductively a new  map  $\Gbb'$ finer than $\Gbb$, a new marked loop $(\ell',\g_{nest}')$, as well as a subset $F_{stem}$ of faces of $\Gbb'$.   An example of the result in displayed in Figure \ref{Fig----Pull}.  Let us first set $F_{stem}=\emptyset.$ Denote by $a_1,\ldots,a_m$ the edges dual to $a_1^*,\ldots,a_m^*.$ Let $k\ge 1$ be the largest $k$ such that $a^*_k$ is dual to an edge of $\g_{nest}$. 
\begin{enumerate}
\item  Add  two new vertices to all edges dual to  $a_{k}^*,\ldots,a^*_m$. For all $l\ge k,$ when $a_l=a_{l,0}a_{l,1}a_{l,2}$ is  the edge decomposition of  $a_l$ in the new map, replace $a_l$ by $a_{l,1}.$
\item  Cut  all faces visited by  $a_{k}^*\ldots a^*_m$ but $f$ into three faces adding two non-crossing 
edges such  that endpoints of a new edge do not belong to the same initial edge. Add to $F_{stem}$ all new faces bounded by $2$ new edges.
\item Cut the face $f$ into two faces, adding an edge connecting the two new vertices on the edge dual to $e^*_m$ introduced in step 2. Add to $F_{stem}$  the new face included in $f$ whose boundary has only two edges. 
\item Denote by $\eta$ the simple path using only edges added in step 2 and 3 such that $\und{\eta}=\und{a_k}$ and $\ov{\eta}=\ov{a_k}.$ Transform $\ell$ and $\g_{nest}$ replacing the occurrence of the edge $a_k$ and $a_k^{-1}$ by respectively $\eta$ and $\eta^{-1}.$ 
\item  When $k=1$ stop the procedure. Otherwise, repeat this operation for the nested loop obtained in step 4 and the path $ a^*_{1}\ldots a^*_m$.
\end{enumerate}
The last marked loop produced is called the \emph{pull of  $(\ell,\g_{nest})$ along}  $\g^*.$

\begin{figure}[!h] 
\centering
\includegraphics[scale=0.9]{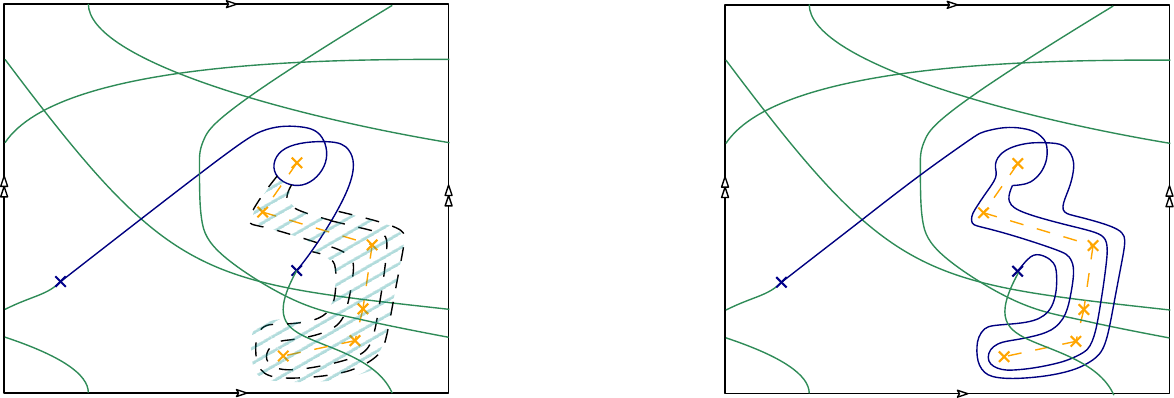}
\caption{ \label{Fig----Pull} Left: A marked loop with the nested part drawn in blue. New edges of the modified regular map are drawn with dashed lines. The union of faces of $F_{stem}$ is stroke with dashed lines. Right: Pull of the left marked loop along  the path of the dual drawn in orange.}
\end{figure}

\begin{figure}[!h] 
\centering
\includegraphics[scale=0.9]{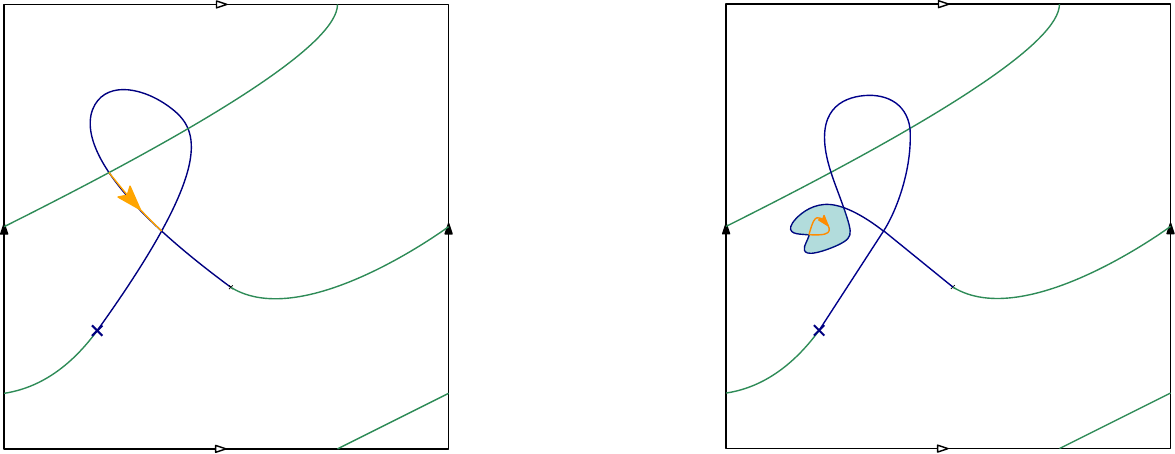}
\caption{\label{Fig----Twist}Left: A marked loop with the nested part drawn in blue. The chosen moving edge is drawn in orange. Right: $n$-twist of the left marked loop, with $n=-2$ and the chosen moving edge.  The new moving edge is displayed in orange.}
\end{figure}

\emph{Twist move:} Consider a marked loop $(\ell,\g_{nest})$  with a moving edge $e$.  

Let us refine a regular and marked loop as follows. Add a vertex to $e$ and cut the face left of $e$ into two faces, adding an oriented edge $e'$ with both endpoints equal to the new vertex, such that $e'$ is the boundary a positively oriented face.  The initial 
moving edge reads $e=e_1e_2$ in the new map. The \emph{left twist} of $(\ell,\g_{nest})$ is the marked loop obtained by replacing the occurrence of $e$ by $e_1e'e_2$ in both $\ell$ and $\g_{nest}.$  The new marked loop has then $e'$ as unique moving 
edge. We denote by $F_{tw}$ the face bounded by $e'.$ The right twist of $(\ell,\g_{nest})$ is defined similarly considering the right face and a negative orientation. When $n$ is respectively positive or negative, the $n$-twist of a marked loop is obtained by 
applying respectively $n$ left twists or $-n$ right-twists. We denote then by $F_{tw}$ the  $|n|$ faces of the new map bounded solely by newly added edges. 

\subsection{Vertex desingularisation and complexity}

\label{----sec:Vertex desingularisation}
Consider a regular map $\Gbb.$ Assume that  $\ell$ is a regular loop and $v\in V_{\ell}$ is an intersection point. We denote  by $\ell_1$ and $\ell_2$ the two sub-loops of $\ell$ based at $v$ such that  $\ell\sim_c\ell_1\ell_2.$ We then set
$$\delta_v \ell=\ell_1\otimes \ell_2 \in \C[\Ld_c(\Gbb)]^{\otimes 2},$$
with the convention that $\ell_1$ is left of $\ell_2$ at $v$ as displayed on Figure \ref{Fig----MMDeSing}.  By definition of Makeenko-Migdal vectors given in section \ref{----sec: Makeenko--Migdal Vectors}, there are\footnote{We fix them arbitrarily, for instance using the pseudo-inverse of the Gram matrix of the spanning family $(\a_v)_{v\in V_\ell}  $ and $(\b_e)_{e\in E\setminus E_\ell}$.} linear forms $(\a_v)_{v\in V_\ell}  $ and $(\b_e)_{e\in E\setminus E_\ell}$ on $\mfm_\ell$ such that
$$X= \sum_{v\in V_\ell} \a_v(X)\mu_v+\sum_{e\in E\setminus E_\ell} \b_e(X)d\omega_e,\ \forall X\in \mfm_\ell.$$
We then set 
$$\delta_X\ell= \sum_{v\in V_\ell} \alpha_{v}(X)\delta_v \ell.$$

Let us define a complexity on loops that strictly decreases after such operations.  Let us set
\begin{equation}
\mathcal{C}(\ell)=|\ell|_D+\# V_{c,\ell}
\end{equation}
when $\ell$ is a regular loop and
\begin{equation}
\mathcal{C}^{\mathfrak{m}}(x)=|\ell|_D+\# V_{c,\ell^\wedge}
\end{equation}
when $x=(\ell,\g_{nest})$ is a marked loop.  

\begin{lem} \label{----lem: desingularisation}\begin{enumerate}
\item For any regular loop $\ell$, $v\in V_\ell,$ if $\delta_v\ell=\ell_1\otimes \ell_2,$ then
\[ \mathcal{C}(\ell_1), \mathcal{C}(\ell_2)<  \mathcal{C}(\ell). \]
Moreover if $[\ell]\not=0$,  then $[\ell_1]$ or $[\ell_2]\not=0.$
\item For any marked loop $x,$ $\mathcal{C}^{\mfk m}(x)$ only depends on $x^{\wedge_*}.$ Moreover, when $y=(\ell,\g_{nest})$ is a marked loop with $y^{\wedge_*}=x^{\wedge_*},$  for  $v\in V_{\ell^\wedge},$ if $\delta_v \ell=\ell_1\otimes \ell_2,$ then there are  $\ell'_1,\ell'_2,$ with $\ell'_i\sim_c\ell_i$ and subpaths $\g_1,\g_2$ of  
$\ell'_1,\ell'_2,$ such that $x_1=(\ell'_1,\g_1),x_2=(\ell'_2,\g_2)$ are marked loops with 
\[ \mathcal{C}^{\mfk m}(x_1), \mathcal{C}^{\mfk m}(x_2)<  \mathcal{C}^{\mfk m}(x). \]
\end{enumerate}
\end{lem}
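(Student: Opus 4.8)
\textbf{Proof plan for Lemma \ref{----lem: desingularisation}.}

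\medskip

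For the first point, the plan is to track the two pieces of the complexity $\mathcal{C}(\ell)=|\ell|_D+\#V_{c,\ell}$ separately under de-singularisation at $v$. First I would observe that if $\delta_v\ell=\ell_1\otimes\ell_2$, then any lift of $\ell$ splits as a concatenation $\tilde\ell=\tilde\ell_1\tilde\ell_2$ at a lift of $v$, so each pair of consecutive edges crossing a polygon boundary in $\tilde\ell_i$ is also such a pair in $\tilde\ell$; hence $|\ell_1|_D+|\ell_2|_D\le|\ell|_D$, the only possible loss being the at most two crossing-pairs straddling the cut point $v$ (which is itself in $\mathring V$, so in fact no crossing-pair is lost there but the two subpaths may each ``open up'' at $v$). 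In any case $|\ell_i|_D\le|\ell|_D$. The strict decrease then comes from the contraction-point count: $v$ was a transverse intersection point of $\ell$ that is resolved, so it is no longer a vertex visited twice in $\ell_1$ or in $\ell_2$; moreover every inner loop of $\ell_i$ is a subloop of $\ell$, so $\#V_{c,\ell_i}\le\#V_{c,\ell}$, and combined with $|\ell_i|_D\le|\ell|_D$ we would get $\mathcal{C}(\ell_i)\le\mathcal{C}(\ell)$ with equality only if $v\notin V_{c,\ell}$ and no crossing-pair or inner loop is lost. To upgrade to strict inequality I would argue that either $v$ is a contractible intersection point (so $\#V_{c,\ell_i}<\#V_{c,\ell}$ after removing it), or $v$ is ``essential'', in which case splitting at $v$ necessarily destroys a boundary-crossing pair of one of the two strands and $|\ell_i|_D<|\ell|_D$; the bookkeeping here is the routine part. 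For the homology statement, since $\omega_\ell=\omega_{\ell_1}+\omega_{\ell_2}$ in $C_1$, passing to $H_1(\Gbb;R)$ gives $[\ell]=[\ell_1]+[\ell_2]$, so $[\ell]\ne0$ forces $[\ell_1]\ne0$ or $[\ell_2]\ne0$.

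\medskip

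For the second point, I would first note that $\mathcal{C}^{\mathfrak m}(x)=|\ell|_D+\#V_{c,\ell^\wedge}$ depends only on $x^{\wedge_*}=\g'\g_1\ldots\g_{|\ell|_D}$: indeed $|\ell|_D$ is read off from the tiling decomposition, whose strands after $\g_0$ are exactly those of $x^{\wedge_*}$, and $\g_0$ contributes a fixed amount; and the inner loops of $\ell^\wedge=\a\b\g'\g_1\ldots\g_{|\ell|_D}$ that lie in $\mathring D$ are, by conditions (3)--(4) in the definition of a marked loop, not created or destroyed by replacing the nested part $\a\ell_{nest}\b$ with a different such part compatible with the same cut $x^{\wedge_*}$. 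So $\mathcal{C}^{\mathfrak m}(x_1),\mathcal{C}^{\mathfrak m}(x_2)$ may be computed from $y=(\ell,\g_{nest})$ with $y^{\wedge_*}=x^{\wedge_*}$. Now take $v\in V_{\ell^\wedge}$ and $\delta_v\ell=\ell_1\otimes\ell_2$. The key case distinction is whether $v$ lies on $\ell_{nest}$ or on $\g'\g_1\ldots\g_{|\ell|_D}$. If $v$ is outside the nested part, then the nested part $\g_{nest}$ lies entirely in (say) $\ell_1$, one can cyclically rotate $\ell_1$ to $\ell_1'$ so that $\g_{nest}$ remains an initial nested sub-path and take $\g_1=\g_{nest}$, while $\ell_2$ inherits no nested part and I would set $\g_2$ to be a constant path (a constant loop is nested by convention, and a single vertex is a legitimate $\g_{nest}$ provided one re-reads the five conditions — here conditions (2)--(5) are vacuous or trivial for the constant nested loop). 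If $v$ lies on $\ell_{nest}$, then because $\ell_{nest}$ is nested its intersection points are visited in the palindromic order $v_1\ldots v_{n-1}v_nv_n v_{n-1}\ldots v_1$, so resolving at $v=v_j$ splits $\ell_{nest}$ into a smaller nested loop $\ell_{nest,j}$ (the one through $v_{j+1},\ldots,v_n$) carried by one piece and the rest carried by the other; I would let $\g_1$ be the appropriate rotation making $\ell_{nest,j}$ an initial nested sub-path of $\ell_1'$, and again take the other marked-loop structure to have constant nested part. Then $\ell_i^\wedge\sim_c$ a subloop of $\ell^\wedge$, so $\#V_{c,\ell_i^\wedge}\le\#V_{c,\ell^\wedge}$ and $|\ell_i|_D\le|\ell|_D$ as in part (1), and strictness follows exactly as before because resolving $v$ removes either a contractible intersection point or a boundary-crossing pair.

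\medskip

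\textbf{Main obstacle.} The genuinely delicate step is the second bullet of part (2): checking that after de-singularising at a vertex $v$ on the nested part, one really can re-rotate the resulting loops $\ell_1,\ell_2$ so that each (or at least one, with the other taking the constant nested structure) again satisfies \emph{all five} defining conditions of a marked loop — in particular condition (5), that the bulk of the new nested loop has exactly $\#V_{\ell_i'^\wedge}$-many faces with a unique adjacent outer face, and condition (3)--(4) about non-transversal intersection with the two components of the initial strand and with inner loops. The nested/palindromic structure of $\ell_{nest}$ is exactly what makes this possible (each desingularisation of a nested loop at one of its vertices yields a nested loop of strictly smaller intersection number, whose bulk is a sub-collection of the central/nested faces), but verifying that the outer face, the initial strand conditions, and the ``at most one adjacent face'' condition survive — possibly after passing to a finer regular map, as the Remark after the definition of marked loops permits — will require a careful but essentially combinatorial argument about how the faces of $\Gbb^*\setminus E_{\ell_{nest}}$ reorganise when one strand is removed. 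I expect the proof to reduce condition (5) to the standard fact (stated in that Remark) that removing the edges of a nested loop with $n$ intersection points from the dual graph leaves exactly $n+1$ components, applied to the smaller nested loop obtained after resolution.
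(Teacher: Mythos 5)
Your route is essentially the paper's: exact additivity of the tiling length together with the dichotomy ``$v$ is a contractible intersection point or not'' for part (1), the identity $\omega_\ell=\omega_{\ell_1}+\omega_{\ell_2}$ for the homology claim, and, for part (2), keeping $\g_{nest}$ unchanged inside the piece that contains it while giving the other piece a trivial marking and reducing to the part (1) dichotomy. Two corrections are needed, though. In part (1), your strictness mechanism in the non-contractible case is misstated: nothing is ``destroyed'', and it is not enough that ``one of the two strands'' loses a boundary-crossing pair, since the bound is needed for \emph{both} $\ell_1$ and $\ell_2$. The correct point is that if $v\notin V_{c,\ell}$ then neither $\ell_1$ nor $\ell_2$ can stay inside a fundamental domain (otherwise it would be an inner loop of $\ell$ and $v$ would be a contractible intersection point), so $|\ell_1|_D,|\ell_2|_D>0$; combined with the additivity $|\ell_1|_D+|\ell_2|_D=|\ell|_D$ (which you essentially note, the cut point lying in $\mathring V$), this forces $|\ell_i|_D<|\ell|_D$ for both loops, and together with $\#V_{c,\ell_i}\le \#V_{c,\ell}$ gives the strict decrease of $\mathcal C$ for both.

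In part (2), your trivial marking cannot be a constant path: condition 2 in the definition of a marked loop explicitly requires $\g_{nest}$ to be non-constant, so $(\ell_2,\text{constant})$ is not a marked loop as written. The paper's (easy) fix is to take the first edge of the unmarked piece, i.e.\ a one-edge $\g_{nest}$ with constant nested loop. More importantly, the case you single out as the ``main obstacle'' --- desingularising at a self-intersection of $\ell_{nest}$ and re-checking the five conditions, condition (5) in particular --- does not arise in the lemma: the statement only concerns $v\in V_{\ell^\wedge}$, and a self-intersection of $\ell_{nest}$ is visited twice by $\ell_{nest}$, hence (by tameness of $\ell$) not at all by the pruning $\ell^\wedge$, so it never lies in $V_{\ell^\wedge}$. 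The paper treats that configuration only in the convention \eqref{eq-------Conv Desing MarkedL} stated after the lemma, with no complexity claim attached. Within the lemma's actual scope your first case is the whole argument, the marked-loop verification is the straightforward one (the nested structure is untouched by the desingularisation), and the strict decrease follows from the same dichotomy as in part (1) applied to $\ell^\wedge$.
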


\begin{proof} Consider a regular loop $\ell.$ When $v\in V_{c,\ell}$,  one  can assume that $\ell_2$ is an inner loop that is $|\ell_2|_D=0$, and 
$$\#V_{c,\ell_1}+\#V_{c,\ell_2}+1= \#V_{c,\ell}. $$
Otherwise, both $\ell_1$ and $\ell_2$ are regular loops both crossing $\pl E$ at least twice so that 
$|\ell_1|_{D},|\ell_2|_{D}>0.$ Moreover  
$$|\ell_1|_{D}+|\ell_2|_{D}=|\ell|_D$$
since both count the number of edges of $\pl E$ crossed by $\ell.$ Therefore, 
\begin{equation}
|\ell_1|_{D},|\ell_2|_{D}<|\ell|_D.\label{eq:TilingL Decre}
\end{equation}
Moreover   $\omega_{\ell}=\omega_{\ell_1}+\omega_{\ell_2},$ $[\ell]=[\ell_1]+[\ell_2].$ In particular, if $[\ell]\not=0,$ $[\ell_1]\not=0$ or $[\ell_2]\not=0.$  This concludes the proof of the first point.
Consider now two marked loops  $x=(\ell',\g'_{nest}),y=(\ell,\g_{nest})$ with  $y^{\wedge_*}=x^{\wedge_*}.$ Then
\[  |\ell^\wedge|_D=|{\ell'}^\wedge|_D\text{ and }\#V_{c,\ell^\wedge}=\# V_{c,{\ell'}^\wedge},\]
so that $\mathcal{C}^{\mfk m}(x)=\mathcal{C}^{\mfk m}(y).$
Assume that $v\in V_{\ell^\wedge}$ and  $\delta_v\ell=\ell_1\otimes \ell_2$ such that $\g_{nest}$ is a subpath  of $\ell_2.$ Consider $e$ the first edge of $\ell_1$ and $\ell'_2\sim_c\ell_2$ with $\underline{\ell_2'}=\underline{\g_{nest}}.$ Then  $(\ell_1,e),(\ell'_2,\g_{nest})$ are marked loops.  If $v\in V_{c,\ell^\wedge},$ 
$$\#V_{c,\ell_1^\wedge}+\#V_{c,{\ell_2'}^\wedge}+1= \#V_{c,\ell}. $$
Otherwise, $|\ell_1^\wedge|_D,|\ell_2^\wedge|_D>0$ and the proof of 2.  follows as for the first point.  \end{proof}
Let us fix a choice for $x_1,x_2$ used in the above lemma. Consider a marked loop $x=(\ell,\g_{nest}),$ $v\in V_\ell$ and  assume $\delta_v \ell=\ell_1\otimes \ell_2$. When $v\in V_{\ell^\wedge},$ exactly one loop say $\ell_1$ has $\g_{nest}$ as sub-path and we set  $x_1=(\ell'_1,\g_{nest})$   and $x_2=(\ell_2,e)$ where $e$ is the first edge of $\ell_2,$ and $\ell'_1\sim_c\ell_1$ with  $\underline{\ell'_1}=\underline{\g_{nest}}.$  When $v\in V_{\ell_{nest}},$   exactly one loop say $\ell_1$ is a sub-loop of $\ell_{nest},$  set  $x_1=(\ell_1,\ell_1)$   and $x_2=(\ell',\g_{nest})$ where $(\ell',\g'_{nest})$ is obtained from  $(\ell,\g_{nest})$  by erasing the edges of $\ell_1$ (then   $\ell'\sim_c\ell_2).$ Otherwise, we set $x_1=(\ell_1,e_1),x_2=(\ell_2,e_2),$ where $e_i$ is the first edge of $\ell_i.$  We then write 

\begin{equation}
\delta_v x=x_1\otimes x_2.\label{eq-------Conv Desing MarkedL}
\end{equation}

\section{Yang--Mills measure and Makeenko--Migdal equations} 

\subsection{Metric and heat kernel on classical groups}

\label{----sec:HK}

We recall here briefly the definition and main properties of the heat kernel on classical groups that will be needed to define the discrete Yang--Mills measure. These results are quite standard, and can also be found for instance in 
\cite[Section 1]{Lev}. In this text, for any $N\ge 1,$ we denote by $G_N$ a compact classical group of rank $N$, that is $\U(N),\SU(N),\SO(N)$ or $\Sp(N),$ following the same conventions as in section 2.1.2 of \cite{DL}.

\vspace{0.5em}

For any compact Lie group $G$, its Lie algebra $\mathfrak{g}$ is endowed with an invariant inner product $\<\cdot,\cdot\>.$  Setting $$\mathcal{L}_X f(g)=\left.{\frac d{dt}}\right|_{t=0} f(ge^{tX}), \ \forall f\in C^{\infty}(G) \text{ and }g\in G, $$
the Laplacian associated to $\<\cdot,\cdot\>$ is the operator defined by
$$\Delta_G f= \sum_{1\le i\le d} \mathcal{L}_{X_i}\circ\mathcal{L}_{X_i}(f),\ \forall f\in C^{\infty}(G),  $$
where $(X_i)_{1\le i \le d}$ is an arbitrary orthonormal basis.    
\begin{dfn} 
The \emph{heat kernel} on $G$ is the solution $p:(0,\infty)\times G\to\R_+,(t,g)\mapsto p_t(g)$ of the heat equation, with $p_t\in C^{\infty}(G)$ for all $t>0$ and 
\begin{equation}\label{eq:heat_kernel}
\left\lbrace\begin{array}{ll}
\partial_t p_t(g)  &= \Delta_G p_t(g),\ \forall g\in G,\ \forall t>0,\\
\lim_{t\downarrow 0}p_t(g)dg  &=  \delta_{I_N},
\end{array}\right.
\end{equation}
where the convergence in the second line holds weakly.
\end{dfn}
It defines a semigroup for the convolution product, that is 
\begin{equation}
p_{t}*p_s=p_{t+s},\ \forall t,s>0.\label{eq: HK convolution}
\end{equation}
It inherits the following properties from the conjugation invariance of the scalar product: for all $g,h\in G$ and $t>0,$
\begin{equation}
p_t(hgh^{-1})=p_t(g)\label{eq:Inv HK Conj}
\end{equation}
and
\begin{equation}
p_t(g^{-1})=p_t(g).\label{eq:Inv HK Inverse}
\end{equation}

When $G_N$ is a compact classical group of rank $N$, we choose the invariant inner product $\<\cdot,\cdot\>$ as  in (1) of \cite[Section 2.1.2]{DL}.

\subsection{Area weighted maps, Yang--Mills measure and area continuity}

\label{------sec:YM area continuity}

We recall here a definition of the discrete and continuous Yang--Mills measure in two dimensions on arbitrary surfaces, with a focus on the former.

\vspace{0.5em}

\emph{Area vectors and area-weighted maps:} 
When $\Gbb=(V,E,F)$ is a topological map, an area vector is a function $a:F\to \R_+.$ We say that $(\Gbb,a)$ is an \emph{area-weighted map} with volume $\sum_{f\in F} a_f. $ When $K$ is a subset of faces of $\Gbb$ we then write $a(K)=\sum_{f\in K}a(f)$ its volume.   When  $m=(\Gbb,a)$ and $m'=(\Gbb',a')$ are area weighted maps with faces set $F$ and $F'$,  $m'$ is finer than $m$ if $\Gbb'$ is finer than $\Gbb$ and $a_f=\sum_{f'\in F': f'\subset F}a'_{f'}.$ When $T>0,$ we denote by 
$$\Delta_{\Gbb}(T)=\{a:F\to\R_+: \sum_{f\in F}a_f=T  \}$$
the closed simplex of area vectors of fixed volume $T$ and its interior by
$$\Delta^{o}_\Gbb(T)=\{a\in \Delta_\Gbb(T): a(f)>0,\ \forall f\in F\}.$$
Its faces are given as follows. For any subset $K\varsubsetneqq F$, we set
$$\Delta_{K,\Gbb}(T)=\{a\in \Delta_\Gbb(T): a(f)=0,\ \forall f\in K\}$$
and 
$$\Delta_{K,\Gbb}^o(T)=\{a\in \Delta_{K,\Gbb}(T): a(f)>0,\ \forall f\in F\setminus K\} .$$
When $(\Gbb,B)$ is a map with boundary faces $B,$ we set
$$\Delta_{\Gbb,B}(T)=\{a:F\setminus B\to\R_+: \sum_{f\in F\setminus B}a_f=T  \}$$
and
$$\Delta^{o}_{\Gbb,B}(T)=\{a\in \Delta_{\Gbb,B}(T): a(f)>0,\ \forall f\in F\}.$$  

When $\Gbb'=(V',E')$ is finer than $\Gbb,$  any face $F$ of $\Gbb$ can be identified with a subset of faces of $\Gbb',$ and for any $a\in \Delta_{\Gbb'}(T),$ we denote $r_\Gbb(a)\in \Delta_{\Gbb}(T)$ the associated area vector of  $\Gbb.$ We then say that the area weighted map $(\Gbb',a)$ is finer than $(\Gbb,r_\Gbb(a)).$

\vspace{0.5em}

\emph{Multiplicative functions and Wilson loops:}
Given a map $\Gbb=(V,E,F)$  and a compact group $G$, we say that a function $h:\mathrm{P}(\Gbb)\to G$ is \emph{multiplicative} if for any pair of paths $\g_1,\g_2$ with $\ov\g_1=\und{\g}_2,$
\begin{equation}
h_{\g_1\g_2}=h_{\g_2}h_{\g_1}.\label{eq:Multip}
\end{equation}
We denote their set by $\mathcal{M}(\mathrm{P}(\Gbb),G).$ Endowing it with pointwise multiplication, it is a compact group and fixing an orientation of the edges, the evaluation on these edges defines an isomorphism
$$\mathcal{M}(\mathrm{P}(\Gbb),G)\simeq G^E.$$
The Haar measure on $\mathcal{M}(\mathrm{P}(\Gbb),G)$ can be identified via this isomorphism to the tensor product of  the Haar measure on $G$, we denote 
it simply by $dh.$

\vspace{0.5em}

When $\Gbb'$ is a map finer than $\Gbb$, the restriction from $\mathrm{P}(\Gbb')$ to $\mathrm{P}(\Gbb)$ defines a map 
$$\mathcal{R}_{\Gbb}^{\Gbb'}:\mathcal{M}(\mathrm{P}(\Gbb'),G)\to \mathcal{M}(\mathrm{P}(\Gbb),G).$$ 

A \emph{Wilson loop} is a function of the form 
$$\begin{array}{cl} \mathcal{M}(\mathrm{P}(\Gbb),G)&\lto \C \\ h&\longmapsto \chi(h_{\ell})\end{array}$$
where $\chi:G\to \C$  is a function invariant by conjugation and $\ell\in \mathrm{L}(\Gbb).$  By centrality, the value $\chi(h_{\ell})$  depends on $\ell$ only through its $\sim_c$-equivalence class $l$ and we denote it by $\chi(h_l)$. When $G_N$ is a compact classical group, for any loop $\ell\in\mathrm{L}(\Gbb),$ we shall focus on the Wilson loop $W_\ell$ obtained considering as central function 
$$\chi=\tr_N,$$
where $\tr_N=d_N^{-1}\Tr$ is the standard trace $\Tr$ in the natural matrix representation normalised by the size $d_N$ of the matrix, that is $2N$ in the 
symplectic case and $N$ otherwise.

\vspace{0.5em}

\emph{Discrete Yang--Mills measure, non-singular case on closed surfaces:} When $T>0,$ $\Gbb$ is a map with boundary faces $B$ and 
$a\in \Delta^{o}_{\Gbb,B}(T), $ the Yang--Mills measure   is the probability measure $\YM_{\Gbb,B,a}$ on  the compact group 
$\mathcal{M}(\mathrm{P}(\Gbb),G)$ with density 
$$Z_{\Gbb,B,a}^{-1}\prod_{f\in F\setminus B} p_{a_f}(h_{\pl f})$$
with respect to the Haar measure on $\mathcal{M}(\mathrm{P}(\Gbb),G),$ where $Z_{\Gbb,B,a}=1$ if $B\not=\emptyset$ and
$$Z_{\Gbb,a}=\int_{\mathcal{M}(\mathrm{P}(\Gbb),G)} \prod_{f\in F} p_{a_f}(h_{\pl f})dh$$
otherwise. In the above formula, $\pl f$ is the boundary  of the face for some arbitrary choice of root and orientation. This does not change the value of
$p_{a_f}(h_{\pl f})$ thanks to \eqref{eq:Inv HK Conj} and \eqref{eq:Inv HK Inverse}. The fact that this density defines a probability measure when 
$B\not=\emptyset$ 
follows for instance from Lemma \ref{lem:integral_reducedloops} below. We denote  $\YM_{\Gbb,\emptyset,a}$  simply by  $\YM_{\Gbb,a}.$

\begin{lem} \begin{enumerate}
\item For any $a\in \Delta_{\Gbb}^o(T),$ the constant $Z_{\Gbb,a}$ depends only on $T$ and the genus $g$ of $\Gbb,$ we denote it by $Z_{g,T}.$
\item When $m'=(\Gbb',a'),m=(\Gbb,a)$ are two area weighted maps with $m'$ finer than $m$ and  $a'\in \Delta^{o}_{\Gbb'}(T),$ then 
$${\mathcal{R}^{\Gbb'}_{\Gbb}}_*(\YM_{\Gbb',a'})=\YM_{\Gbb,a}.$$
\end{enumerate}
\end{lem}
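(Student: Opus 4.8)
The plan is to prove both assertions by invoking the invariance and convolution properties of the heat kernel, combined with the structure of multiplicative functions established in Lemma \ref{__Lem:Basis Reduced Loops}. For the first assertion, I would fix an area vector $a \in \Delta_\Gbb^o(T)$ and argue that $Z_{\Gbb,a}$ can be computed by a sequence of elementary moves on the map that preserve the integral. First, using part 1 of Lemma \ref{__Lem:Basis Reduced Loops}, the group $\mathcal{M}(\mathrm{P}(\Gbb),G) \simeq G^{E_+}$ decomposes so that the loop variables $h_{\pl f}$ for $f \in F_+$ are expressed via the generators; the key point is that one can change variables (a translation-invariant move on $G^{E_+}$) to reduce to a standard ``normal form'' map with one vertex, $r = |F_+|$ faces, and genus $g$. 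The crucial analytic input is that merging two adjacent faces $f_1, f_2$ with areas $a_{f_1}, a_{f_2}$ into a single face of area $a_{f_1}+a_{f_2}$ leaves the integral unchanged: this follows by integrating out the separating edge variable and using the convolution semigroup property \eqref{eq: HK convolution}, i.e. $\int_G p_{a_{f_1}}(xy)p_{a_{f_2}}(y^{-1}z)\,dy = p_{a_{f_1}+a_{f_2}}(xz)$ after a conjugation-invariance \eqref{eq:Inv HK Conj} and inversion \eqref{eq:Inv HK Inverse} adjustment. Iterating, all faces can be merged down to a single face of area $T$ on a genus-$g$ surface, showing $Z_{\Gbb,a}$ depends only on $(g,T)$; I would denote the resulting value $Z_{g,T}$.

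For the second assertion, the strategy is to reduce to the case where $\Gbb'$ is obtained from $\Gbb$ by a single elementary refinement — either subdividing an edge (adding a degree-two vertex) or adding an edge that splits one face $f$ into two faces $f_1, f_2$ with $a'_{f_1} + a'_{f_2} = a_f$. The general case then follows by composing finitely many such moves, since any refinement factors this way. For edge subdivision, the pushforward identity $ {\mathcal{R}^{\Gbb'}_{\Gbb}}_*(\YM_{\Gbb',a'}) = \YM_{\Gbb,a}$ is immediate: the new edge variable is a free Haar factor that the restriction map simply forgets, and the density is unaffected since the boundary words are unchanged up to inserting $e e^{-1}$. For face subdivision, I would write the density of $\YM_{\Gbb',a'}$, integrate out the new edge variable $g_0$ using the convolution property exactly as in the first part — $\int_G p_{a'_{f_1}}(h_{\pl f_1}) p_{a'_{f_2}}(h_{\pl f_2})\, dg_0 = p_{a_f}(h_{\pl f})$ where $h_{\pl f}$ is the boundary word of the merged face — and observe that for every other face the density factor is untouched and that $\mathcal{R}^{\Gbb'}_{\Gbb}$ is precisely the map forgetting $g_0$. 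This yields the claimed pushforward, and also re-confirms the normalization constants match (both are $Z_{g,T}$, or both are $1$ in the bordered case).

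The main obstacle I anticipate is purely bookkeeping rather than conceptual: one must be careful that when integrating out an edge variable, the two adjacent face-boundary words $h_{\pl f_1}$ and $h_{\pl f_2}$ genuinely take the form $x g_0$ and $g_0^{-1} y$ (up to cyclic rotation and orientation of the boundary loops, which is where \eqref{eq:Inv HK Conj} and \eqref{eq:Inv HK Inverse} are needed), so that the convolution identity applies cleanly; and one must check that the resulting word is indeed the boundary word of the merged face in $\Gbb$. A secondary subtlety is handling the degenerate situation where the separating edge $e$ satisfies $e, e^{-1} \in \pl f$ for a single face $f$ (i.e. $e$ does not actually separate two distinct faces) — in that case the relevant move is adding a handle or a sphere bubble, and one uses instead $\int_G p_s(x g_0 y g_0^{-1})\, dg_0$-type identities; but for the refinement statement in part 2 this case does not arise since $\Gbb'$ finer than $\Gbb$ only produces genuine face splits, so I would only need the clean separating-edge computation. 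I would also remark that the independence of the root and orientation of $\pl f$ in all these formulas is exactly guaranteed by \eqref{eq:Inv HK Conj}–\eqref{eq:Inv HK Inverse}, as already noted before the lemma statement.
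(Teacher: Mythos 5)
The paper does not actually prove this lemma: it is stated without proof and treated as standard, being the invariance-under-subdivision property of the discrete Yang--Mills measure imported from L\'evy's construction \cite{Lev2} (it is also what underlies Lemma \ref{lem:integral_reducedloops}). Your proposal reconstructs exactly that standard argument: heat-kernel semigroup plus centrality to merge faces across a separating edge, Haar invariance/change of variables to pass to a normal form, and factorisation of a refinement into elementary moves for the pushforward. In substance this is correct, and the reduction of $Z_{\Gbb,a}$ to $\int_{G^{2g}}p_T([x_1,y_1]\cdots[x_g,y_g])\,dx\,dy$ (equivalently, convolving out the lasso variables in the presentation of Lemma \ref{__Lem:Basis Reduced Loops}) is precisely how one sees the dependence on $(g,T)$ alone; note that to keep merging you only need connectedness of the dual graph, which guarantees an edge bordering two distinct faces as long as $r\ge 2$, and that after reaching a single face you still need the spanning-tree/normal-form step you allude to, so it is worth making that step explicit rather than calling it a translation-invariant move.

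One concrete inaccuracy to repair in part 2: it is not true that every refinement factors into edge subdivisions and genuine face splits, and correspondingly your dismissal of the case $e,e^{-1}\in\pl f$ is wrong. A finer map may contain new vertices in the interior of a face joined to the old skeleton by pendant edges (more generally, trees), and such an edge is traversed by the boundary of a single face in both directions, so it splits nothing. This case does arise in the elementary factorisation, but it is the easiest of all: in the boundary word the pendant edge contributes $e\,e^{-1}$, so $h_{\pl f}$ is unchanged by multiplicativity, the new variable is a free Haar factor forgotten by $\mathcal{R}^{\Gbb'}_{\Gbb}$, and the density is untouched. Adding this third elementary move (and noting that any refinement factors into subdivisions, pendant-tree additions and face splits) closes the gap; the rest of your computation, including the matching of normalisation constants in the closed and bordered cases, is fine.
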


\vspace{0.5em}

\emph{Uniform continuity and compatibility:} The Yang--Mills measure is also well defined on the faces on the simplex of area vectors. For any $r,g\ge 1$ let us consider the set $ \mathrm{Hom}(\G_{g,r},G)$ of group morphisms. When endowed with point-wise multiplication it is a compact group and thanks to the  presentation of Lemma \ref{__Lem:Basis Reduced Loops}, 
$$\mathrm{Hom}(\G_{g,r},G)\simeq G^{r+2g-1}.$$
Moreover, this presentation allows to write the following integration formula.

\begin{lem}[\cite{Lev2}]\label{lem:integral_reducedloops}
Assume  $(\Gbb,a)$ is an area weighted map with $r$ faces, and  $(\ell_i,1\le i\le r)$ and $a_1,b_1,\ldots,a_g,b_g$ are as in Lemma \ref{__Lem:Basis Reduced Loops}. For any $1\le i\le r$, denote by $a_i$ the area of the face of $\ell_i.$ Then for any continuous function 
$\chi: G^{2g+r}\to \C$ and any $a\in \Delta^{o}_\Gbb(T)$ and $1\le k\le r,$
\begin{align*}
\E_{\YM_{\Gbb,a}} &(\chi(h_{\ell_1},\ldots,h_{\ell_r},h_{a_1},\ldots,h_{b_g}))\\ &= Z_{g,T}^{-1}\int_{G^{2g+r-1}} \chi(z_1,\ldots,z_r,x_1,\ldots ,y_g )p_{a_{k}}(z_k) 
\prod_{i=1,i\not=k}^{r} p_{a_i}(z_i)dz_i \prod_{l=1}^g dx_ldy_l,
\end{align*}
where we set $z_{k}=(z_1\ldots z_{k-1})^{-1}[a_1,b_1]\ldots [a_g,b_g](z_{k+1}\ldots z_{r})^{-1}.$ When $B$ is a non-empty subset of faces of $\Gbb$ and  lassos with faces in its complement  have labels $i_1,\ldots,i_{p},$  
\begin{align*}
\E_{\YM_{\Gbb,B,a}} &(\chi(h_{\ell_{i_1}},\ldots, h_{\ell_{i_p}},h_{a_1},\ldots,h_{b_g}))\\ &=\int_{G^{2g+p}} \chi(z_1,\ldots,z_p,x_1,\ldots ,y_g )
\prod_{i=1}^{p} p_{a_i}(z_i)dz_i \prod_{l=1}^g dx_ldy_l.
\end{align*}
\end{lem}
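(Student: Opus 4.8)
The plan is to compute both expectations directly from the definition of the discrete Yang--Mills measure by passing through the isomorphism $\mathcal{M}(\mathrm{P}(\Gbb),G)\simeq G^E$ of section \ref{------sec:YM area continuity}, and performing the group integrations face by face. The key structural input is Lemma \ref{__Lem:Basis Reduced Loops}: there exist lassos $\ell_1,\dots,\ell_r$ (one per face, with prescribed faces) and loops $a_1,b_1,\dots,a_g,b_g$ such that $\Theta:\Gamma_{r,g}\to \mathrm{RL}_v(\Gbb)$ sending $z_i\mapsto\ell_i$, $x_m\mapsto a_m$, $y_m\mapsto b_m$ is an isomorphism; in particular the holonomies $(h_{\ell_1},\dots,h_{\ell_r},h_{a_1},\dots,h_{b_g})$ are, after fixing an orientation of the edges, coordinates on $\mathcal{M}(\mathrm{P}(\Gbb),G)$, subject only to the single relation $h_{\ell_1}\cdots h_{\ell_r} = [h_{a_1},h_{b_1}]\cdots[h_{a_g},h_{b_g}]$ coming from the defining relation of $\Gamma_{r,g}$ (equivalently, from the fact that the product of all face boundaries is trivial). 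So as a first step I would perform the change of variables from $G^E$ to these new free coordinates, under which the Haar measure pushes forward to Haar measure on $G^{r+2g-1}$ realised on the hypersurface cut out by this relation: concretely, $dh = \prod_{i\neq k} dz_i\,\prod_l dx_l dy_l$ with $z_k$ determined by $z_k = (z_1\cdots z_{k-1})^{-1}[x_1,y_1]\cdots[x_g,y_g](z_{k+1}\cdots z_r)^{-1}$, using left-invariance of Haar measure on $G$ to absorb the constraint into a single coordinate.

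Second, I would rewrite the density. For each face $f$ with boundary lasso $\ell_i$ (up to the $\sim_c$-class, which does not affect $p_{a_f}(h_{\partial f})$ by \eqref{eq:Inv HK Conj}), the boundary holonomy is conjugate to $h_{\ell_i}$ since $\ell_i = \alpha \,\partial_r f\,\alpha^{-1}$; hence $p_{a_f}(h_{\partial f}) = p_{a_f}(h_{\ell_i})$ by conjugation invariance \eqref{eq:Inv HK Conj}. Therefore $\prod_{f\in F} p_{a_f}(h_{\partial f}) = \prod_{i=1}^r p_{a_i}(h_{\ell_i}) = \prod_{i=1}^r p_{a_i}(z_i)$ in the new coordinates. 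Substituting $z_k$ by its expression above and integrating against $\prod_{i\neq k} dz_i\prod_l dx_l dy_l$ gives exactly the claimed formula for $\E_{\YM_{\Gbb,a}}$, with the normalisation $Z_{g,T}$ appearing as the same integral evaluated at $\chi\equiv 1$. The value of $z_k$ is symmetric enough (by relabelling / cyclic invariance of the product of face boundaries, together with \eqref{eq:Inv HK Conj}) that the choice of which index $k$ is "solved for" is immaterial, which I would note but not belabour.

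Third, the boundary case. When $B\neq\emptyset$, by Lemma \ref{__Lem:Basis Reduced Loops} applied to the cut/bounded map one gets lassos whose faces are exactly the $r-|B|$ non-boundary faces together with the generators $a_m,b_m$, and now \emph{no} constraint: the relation is absorbed by the boundary face. So the holonomies $(h_{\ell_{i_1}},\dots,h_{\ell_{i_p}},h_{a_1},\dots,h_{b_g})$ are genuinely free coordinates, Haar measure is the full product measure on $G^{2g+p}$, the density is $\prod_{i=1}^p p_{a_i}(z_i)$, and integrating directly yields the second formula; taking $\chi\equiv 1$ and using $\int_G p_t = 1$ shows $Z_{\Gbb,B,a}=1$, i.e. it really is a probability measure. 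The main obstacle, and the only place requiring genuine care rather than bookkeeping, is the first step: justifying cleanly that $(h_{\ell_1},\dots,h_{\ell_r},h_{a_1},\dots,h_{b_g})$ together with the single relation parametrise $\mathcal{M}(\mathrm{P}(\Gbb),G)$ and that Haar measure pushes forward as stated — this is essentially the statement that $\mathrm{Hom}(\Gamma_{r,g},G)\simeq G^{r+2g-1}$ compatibly with Haar measure, and it rests on Lemma \ref{__Lem:Basis Reduced Loops} plus the elementary fact that for Haar-distributed $(g_1,\dots,g_{n})$ on $G^n$ and any continuous $\phi:G\to G^n$-... more simply, that the map $G^{r+2g-1}\to G$, $(z_1,\dots,\widehat{z_k},\dots,z_r,x_\bullet,y_\bullet)\mapsto z_k$ defined by the relation, combined with left-translation invariance, transports product Haar measure correctly; since this is exactly the content attributed to \cite{Lev2}, I would cite it and only sketch the verification.
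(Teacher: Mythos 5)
Your outline is correct and follows the same route as the source this lemma is quoted from: the paper itself gives no proof of Lemma \ref{lem:integral_reducedloops} beyond the citation of \cite{Lev2}, and your plan (pass to the loop coordinates furnished by Lemma \ref{__Lem:Basis Reduced Loops}, push Haar measure forward so that the holonomies of a free basis are i.i.d.\ Haar with $z_k$ solved from the single relation, rewrite each face density as $p_{a_f}(h_{\ell_i})$ by conjugation invariance \eqref{eq:Inv HK Conj}, and treat the boundary case as the constraint-free situation with $Z_{\Gbb,B,a}=1$) is exactly that standard argument. The only imprecision is that the loop holonomies are not coordinates on all of $\mathcal{M}(\mathrm{P}(\Gbb),G)\simeq G^E$ but only determine its restriction to $\Hom(\mathrm{RL}_v(\Gbb),G)$; since both the density and the observable factor through this restriction, this does not affect the argument.
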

The above expression  yields the following continuity in the area parameter. For any vertex $v$ of a map $\Gbb$, the restriction of a multiplicative function to loops based at $v$ depends only on the $\sim_r$-class of a loop and  the restriction operation defines a map $\mathcal{R}_v:\mathcal{M}(\mathrm{P}(\Gbb),G)\to \Hom(\mathrm{RL}_v(\Gbb,G))$. For any $a\in \Delta_{\Gbb}^o(T),$ we set  $\YM_{a,\Gbb,v}={\mathcal{R}_v}_*(\YM_{\Gbb,a}).$ Using the weak convergence of the heat kernel  \eqref{eq:heat_kernel}, we directly deduce the following result.

\begin{lem}   \label{__Lem:YM Faces simplex} The family of measures  $(\YM_{a,\Gbb,v},a\in \Delta_{\Gbb}^o(T))$ on $ \Hom(\mathrm{RL}_v(\Gbb),G)$ has a weakly continuous extension to $\Delta_{\Gbb}(T).$ It has the following properties.
\begin{enumerate}
\item Consider  $K\subset F$ with $K\not= F$, let $S\subset\{1,\ldots ,r\}$ be the labels of the lassos with meander in $F\setminus K$ and set $s=\#S$. 
Then for any $a\in \Delta_K^o(T)$, any continuous function $\chi:G^{2g+r}\to\C$ and $k\in S,$
\begin{align*}
&\E_{\YM_{\Gbb,a,v}} (\chi(h_{\ell_1},\ldots,h_{\ell_r},h_{a_1},\ldots,h_{b_g}))\\ 
&= \frac{1}{Z_{g,T}}\int_{G^{2g+r-1}} \chi(z_1,\ldots,z_r,x_1,\ldots ,y_g )p_{a_{k}}(z_k) \prod_{i\in S,i\not=k}p_{a_i}(z_i)dz_i \prod_{l=1}^g dx_ldy_l,
\end{align*}
where we set $z_{k}=(z_1\ldots z_{k-1})^{-1}[a_1,b_1]\ldots [a_g,b_g](z_{k+1}\ldots z_{r})^{-1}$ and  $z_i=1$ for all $i\not\in S.$
\item Consider a weighted map $(\Gbb',a')$ finer than  $(\Gbb,a)$  and denote the restriction map $\mathcal{R}^{\Gbb'}_{\Gbb}:\Hom(\mathrm{RL}_v(\Gbb',G))\to \Hom(\mathrm{RL}_v(\Gbb,G))$. Then, 
$${\mathcal{R}^{\Gbb'}_{\Gbb}}_*(\YM_{\Gbb',a',v})=\YM_{\Gbb,a,v}.$$
\item Consider $K\subset F$ with $K\not= F$ and  $a\in \Delta_K(T).$ Then for any loops $\ell,\ell'\in \mathrm{RL}_v(\Gbb)$ with $\ell\sim_K\ell',$ 
$h_\ell$ and $h_{\ell'}$ have same law under $\YM_{\Gbb,a,v}.$ 
\end{enumerate}

\end{lem}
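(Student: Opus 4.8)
My plan is to derive all four assertions from the explicit integration formula of Lemma~\ref{lem:integral_reducedloops} together with the weak convergence $p_t(g)\,dg\to\delta_{I_N}$ as $t\downarrow 0$. Fix $v\in V$ and a basis $(\ell_i)_{1\le i\le r}$, $a_1,b_1,\ldots,a_g,b_g$ of $\mathrm{RL}_v(\Gbb)$ as in Lemma~\ref{__Lem:Basis Reduced Loops}. Since Wilson-type functionals $h\mapsto\chi(h_{\ell_1},\ldots,h_{\ell_r},h_{a_1},\ldots,h_{b_g})$ with $\chi$ continuous on $G^{2g+r}$ form, by Stone--Weierstrass, a dense subalgebra of $C(\Hom(\mathrm{RL}_v(\Gbb),G))$, it suffices to understand $a\mapsto\E_{\YM_{\Gbb,a,v}}[\chi(\ldots)]$. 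By Lemma~\ref{lem:integral_reducedloops} this equals $Z_{g,T}^{-1}$ times an integral over the generators against $\prod_{i\ne k}p_{a_i}(z_i)\,dz_i\,\prod_l dx_l\,dy_l$, with $z_k$ fixed by the surface relation and $Z_{g,T}$ a fixed positive constant by the preceding lemma. Given $a^{(n)}\to a_0$ in $\Delta_\Gbb(T)$, I set $K=\{f:a_0(f)=0\}$; since $T>0$ we have $K\ne F$, so I may choose the distinguished face $k$ with $a_0$-area of $\ell_k$ positive, which then lies in $S$ and is $\ne$ every face of $K$, both for $a_0$ and for all large $n$. As $n\to\infty$ each factor $p_{a^{(n)}_i}(z_i)\,dz_i$ with $f_i\in K$ converges weakly to $\delta_{I_N}$, the surviving factors converge in total variation (continuity of $t\mapsto p_t$ on $(0,\infty)$), and the integrand, depending on $a^{(n)}$ only through $p_{a^{(n)}_k}(z_k)$, converges uniformly on the compact group; hence the integral converges to the right-hand side of point~1 of the statement (integrating the variables $z_i$, $i\notin S$, against $\delta_{I_N}$ sets them to $I_N$, i.e. to $1$). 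This simultaneously shows that the limit exists, equals the probability measure described in point~1, is independent of the approximating sequence, and that the extended family is weakly continuous on $\Delta_\Gbb(T)$; since the same computation starts from an arbitrary stratum, continuity holds at every point of $\Delta_\Gbb(T)$.

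For point~2, on $\Delta^o_{\Gbb'}(T)$ the identity ${\mathcal{R}^{\Gbb'}_{\Gbb}}_*(\YM_{\Gbb',a',v})=\YM_{\Gbb,r_\Gbb(a'),v}$ follows from the analogous identity for the full measures $\YM_{\Gbb',a'}$ (the preceding lemma) by pushing forward along $\mathcal{R}_v$ and using that $\mathcal{R}_v$ commutes with ${\mathcal{R}^{\Gbb'}_{\Gbb}}$. Both sides are weakly continuous in $a'\in\Delta_{\Gbb'}(T)$ -- the left by the first part and continuity of push-forward, the right because $r_\Gbb$ is continuous and $a\mapsto\YM_{\Gbb,a,v}$ is weakly continuous by the first part -- so the identity extends from the dense set $\Delta^o_{\Gbb'}(T)$ to all of $\Delta_{\Gbb'}(T)$.

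For point~3, let $a\in\Delta_K(T)$ and $\ell\sim_K\ell'$ in $\mathrm{RL}_v(\Gbb)$, with $S$ the labels of lassos whose face lies in $F\setminus K$. Edge insertions and erasures do not change $h$ of a loop, and a $K$-lasso move multiplies a loop by a conjugate of some $\ell_j^{\pm1}$ with $j\notin S$, because any two lassos around one face are cyclic rotations of each other up to a base path; hence $\ell\sim_K\ell'$ forces $\ell'^{-1}\ell$ to lie in the normal subgroup of $\mathrm{RL}_v(\Gbb)$ generated by $\{\ell_j:j\notin S\}$. Writing $\ell$ and $\ell'$ as words in $\ell_1,\ldots,\ell_r,a_1,\ldots,b_g$, the two words coincide once every $\ell_j$ with $j\notin S$ is replaced by the identity. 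By point~1 the law of $(h_{\ell_1},\ldots,h_{b_g})$ under $\YM_{\Gbb,a,v}$ is carried by the tuples with $z_j=I_N$ for all $j\notin S$; therefore $h_\ell=h_{\ell'}$ $\YM_{\Gbb,a,v}$-almost surely, and in particular they have the same law.

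The only genuinely analytic step is the first: letting some face areas tend to $0$ inside the heat-kernel integral. What must be gotten right is that the density in Lemma~\ref{lem:integral_reducedloops} concentrates each vanishing holonomy at $I_N$ while the surviving heat-kernel factors and the full integrand converge uniformly on the compact group; the rest is bookkeeping with the integration formula and the structure of $\mathrm{RL}_v(\Gbb)$. A minor point to be careful about is that the distinguished face $k$ must always be chosen with positive area, which is possible precisely because $T>0$.
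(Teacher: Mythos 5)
Your proposal is correct and follows essentially the same route as the paper, which deduces this lemma directly from the integration formula of Lemma \ref{lem:integral_reducedloops} together with the weak convergence $p_t(g)\,dg\to\delta_{I_N}$ as $t\downarrow 0$; your write-up simply makes that deduction explicit (choice of a distinguished face of positive limiting area, total-variation convergence of the surviving heat-kernel factors, weak convergence to the Dirac mass for the vanishing ones, and the support/quotient argument for point 3). No gaps beyond the minor bookkeeping you already flag.
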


\emph{Continuous Yang--Mills measure:} \label{section------ continuous YM statements}Thanks to the invariance by subdivision of the discrete Yang--Mills measure, given a Riemannian metric it is possible to take the projective limit of measures defined on graphs embedded in $\Sigma$ whose edges are piecewise geodesic. It allows to define a multiplicative random process $(H_{\g})_{\g}$ indexed by all piecewise geodesic paths, whose marginals are given by the discrete Yang-Mills measure.

\vspace{0.5em}

This was done in  \cite{Lev2}, where the author is furthermore able to show a weak convergence result allowing to define uniquely the distribution of a multiplicative function $(H_{\g})_{\mathrm{P}(\Sigma)}$ indexed by all path of finite length.  Let us recall this result.

\vspace{0.5em}

Denote by $\mathrm{P}(\Sigma)$ the set of Lipschitz functions $\g:[0,1]\to \Sigma$ with speed bounded from above and from below, considered up to bi-Lipshitz re-parametrisations of $[0,1]$.  The set $\mathrm{P}(\Sigma)$ is endowed with the starting and endpoint maps, $\g \mapsto  \und{\g},\ov{\g}$ and of the operations of concatenation and reversion as above. A path of $\Sigma$ is an element of $\g\in\mathrm{P}(\Sigma).$ It is \emph{simple} if for any parametrisation $p:[0,1]\to\Sigma$, $p:[0,1)\to\Sigma$ is injective. We consider then the set 
$$\mathcal{M}(\mathrm{P}(\Sigma),G)$$
of multiplicative functions as in \eqref{eq:Multip}.  It is a compact subset of  $G^{\mathrm{P}(\Sigma)}$ when the latter is endowed with the product topology. A loop is a path $\ell\in \mathrm{P}(\Sigma)$ such that $\und{\ell}=\ov \ell$. We denote their set by $\Ld(\Sigma).$ For any $x,y\in\Sigma $, we endow  
$\mathrm{P}_{x,y}(\Sigma)=\{\g\in \mathrm{P}(\Sigma):\und \g=x,\ov\g=y\}$ with a metric setting for any $\g_1,\g_2\in\mathrm{P}_{x,y}(\Sigma),$
$$d(\g_1,\g_2)= \inf_{ p_1,p_2} \|p_1-p_2\|_\infty+ |\mathscr{L}(\g_1)-\mathscr{L}(\g_2)|$$
where the infimum is taken over all parametrisations $p_1,p_2$ of $\g_1,\g_2$ and for any $\g\in\mathrm{P}(\Sigma)$, $\mathscr{L}(\g)$ denotes the Riemannian length of $\g.$ Endowing $\mathcal{M}(\mathrm{P}(\Sigma),G)$ with the cylindrical sigma field $\mathcal{B}_{\Sigma,G}$, we denote by $(H_{\g})_{\g\in \mathrm{P}(\Sigma)}$ the canonical process. 
When $G=G_N$ is a classical compact matrix  Lie group of size $N,$ we write for any path $\g\in \mathrm{P}(\Sigma)$, 
$$W_\g=\tr_N(H_\g).$$ 
When $(\Gbb,a)$ is an area weighted map of genus $g\ge 0,$ an \emph{embedding} of $(\Gbb,a)$ in a Riemann surface with volume $\vol$, is a collection of simple paths $(\g_e)_{e\in E}$ of $\Sigma$ indexed by edges of $\Gbb$, that do not cross but at their endpoints with the following properties:
\begin{enumerate}
\item The ranges of all paths $(\g_e)_{e\in E}$ form the $1$-cells of a  CW complex isomorphic to the CW complex of $\Gbb.$  
\item Fixing such an isomorphism, each $2$-cell of the complex associated to $(\g_e)_{e\in E}$ is a subset of $\Sigma$ of Riemannian volume $a(f)$, whenever it is identified with a face $f$ of $\Gbb.$ 
\end{enumerate}
When $\Sigma$ is the Euclidean plane or the hyperbolic disc, while $\Gbb$ is a map of genus $0,$ $f_\infty$ is a face of $\Gbb$ and $a\in\Delta_{\Gbb,\{f_\infty\}}(T),$  an embedding in $\Sigma$ of the 
area weighted map $(\Gbb,\{f_\infty\},a)$ with one boundary component  is a collection of simple paths 
$(\g_e)_{e\in E}$ of $\R^2$ indexed by edges of $\Gbb$, that do not cross but at their endpoints with the  following properties: 
\begin{enumerate}
\item The ranges of all paths $(\g_e)_{e\in E}$ form the $1$-cells of a  CW complex isomorphic to the CW complex of $\Gbb,$ such that the unique unbounded $2$-cell is mapped to $f_\infty.$  
\item Fixing such an isomorphism, each bounded $2$-cell of the complex associated to $(\g_e)_{e\in E}$ is a subset of $\Sigma$ of Riemannian volume $a(f)$, whenever it is identified with a face $f$ of $\Gbb.$ 
\end{enumerate}
In each case, we say that $\Gbb$ is embedded in $\Sigma$ if there is an area vector $a$ satisfying the  property 2.

When $\Gbb=(V,E,F)$ is a map, $\ell\in \Ld(\Gbb)$, $\Sigma$ is a two-dimensional Riemannian manifold and $\ell\in \Ld(\Sigma)$, we say that $\ell$ is a \emph{drawing} of $\ell=e_1\ldots e_n$ if there is an embedding  $(\g_e)_{E\in E} $ of $\Gbb$ into $\Sigma,$ such that $\ell$ is the concatenation $\g_{e_1}\ldots \g_{e_n}.$  The next two theorems are due to L\'evy \cite{Lev2}.  

\begin{thm}
Let $\Sigma$ be a compact Riemannian surface with area measure $\vol$, $G$ a  fixed compact Lie group such that $\mathfrak{g}$ is endowed with a $G$-invariant inner product. There  exists a unique measure $\YM_{\Sigma}$ on $(\mathcal{M}(\mathrm{P}(\Sigma),G), \mathcal{B}_{\Sigma},G)$, with  following properties.
\begin{enumerate}
\item  If  $(\g_e)_{e\in E}$ is an embedding in $\Sigma$ of an area-weighted map  $(\Gbb,a)$ with edges  $E$, the distribution of $(H_{\g_e})_{e\in E}$ is the discrete Yang--Mills measure $\YM_{\Gbb,a}$.
\item For any $x,y\in\Sigma,$ if $(\gamma_n)_{n\ge 1}$ is a sequence of paths of $\mathrm{P}_{x,y}(\Sigma) $ with $\lim_{n\to \infty}d(\g_n,\g)=0$ for some $\gamma\in\mathrm{P}(\Sigma),$ then under $\YM_{\Sigma},$ the sequence of random variables $(H_{\gamma_n})_{n\ge 1}$ converges in probability to $H_\gamma$.
\end{enumerate}
The process $(H_\gamma)_{\gamma\in\mathrm{P}(\Sigma)}$ is called the \emph{Yang--Mills holonomy process}. 
\end{thm}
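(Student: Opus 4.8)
The plan is to follow L\'evy's two-step construction \cite{Lev2}: first realise the process on piecewise geodesic paths as a projective limit of the discrete Yang--Mills measures, and then extend it by a uniform continuity estimate to all paths of finite length.

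\textbf{Uniqueness.} The cylindrical $\sigma$-field $\mathcal{B}_{\Sigma,G}$ is generated by the evaluations $h\mapsto h_\g$, $\g\in\mathrm{P}(\Sigma)$, so a measure on $\mathcal{M}(\mathrm{P}(\Sigma),G)$ is determined by the joint laws of finite families $(H_{\g_1},\dots,H_{\g_k})$. Property~2 forces each such law to be the weak limit of the corresponding laws for piecewise geodesic families approximating the $\g_i$ in the length metric $d$; and any finite piecewise geodesic family can be completed to the edge set of a graph embedded in $\Sigma$, so by Property~1 its joint law is a marginal of some discrete $\YM_{\Gbb,a}$ with $a$ the area vector induced by $\vol$. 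Hence all finite-dimensional marginals of $\YM_\Sigma$ are prescribed, which gives uniqueness.

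\textbf{Existence.} Consider the directed family of area-weighted maps $(\Gbb,a)$ coming from graphs embedded in $\Sigma$ with piecewise geodesic edges, $a$ induced by $\vol$, ordered by refinement; between comparable maps the restriction maps $\mathcal{R}^{\Gbb'}_{\Gbb}$ push $\YM_{\Gbb',a'}$ forward to $\YM_{\Gbb,a}$ by the consistency lemma stated above. Since each $\mathcal{M}(\mathrm{P}(\Gbb),G)\simeq G^{E_+}$ is compact, Kolmogorov's extension theorem (equivalently Prokhorov compactness along the projective system) yields a probability measure on $G^{\mathrm{PG}(\Sigma)}$, where $\mathrm{PG}(\Sigma)$ is the set of piecewise geodesic paths, supported on multiplicative functions, whose graph-marginals are exactly the discrete Yang--Mills measures; this gives $(H_\g)_{\g\in\mathrm{PG}(\Sigma)}$ satisfying Property~1. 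To extend to $\mathrm{P}(\Sigma)$, one shows that if $\g_n,\g_m\in\mathrm{PG}_{x,y}(\Sigma)$ satisfy $d(\g_n,\g_m)\le\varepsilon$ then $H_{\g_n}H_{\g_m}^{-1}$ is close to $I$ in probability: realising $\g_n$ and $\g_m$ as edges of a common refined embedded graph, the loop $\g_n\g_m^{-1}$ reduces to a product of lassos around faces of total area $O(\varepsilon)$, and using that the holonomy around a face of area $t$ has law $p_t(\cdot)\,dh$ with $\int_G\|g-I\|^2 p_t(g)\,dg=O(t)$ as $t\downarrow 0$ (from the heat equation) together with the conjugation invariance \eqref{eq:Inv HK Conj}, one bounds $\E\,\dist(H_{\g_n}H_{\g_m}^{-1},I)^2$ by a constant times $\varepsilon$, uniformly in the graph. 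Thus $(H_{\g_n})$ is Cauchy in probability along any $d$-convergent sequence, its limit depends only on $\g$, and one sets $H_\g$ to be this limit; multiplicativity, compatibility with reversion, and Property~1 pass to the limit, while Property~2 holds by construction, a diagonal argument reducing a general convergence $\g_n\to\g$ to the piecewise geodesic case.

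\textbf{Main obstacle.} The delicate point is precisely this uniform small-deformation estimate: it combines a geometric step, writing a thin loop between two nearby piecewise geodesic approximations as a controlled product of small-area lassos, with an analytic step quantifying $p_t\to\delta_I$ in a way that survives summation over arbitrarily many faces, both uniformly in the approximating graph. This is the technical heart of \cite{Lev2}; the remaining ingredients (projective limits, compactness, and checking that limits preserve the algebraic structure) are comparatively soft.
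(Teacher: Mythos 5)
The paper itself contains no proof of this statement: it is quoted as one of the two theorems attributed to L\'evy \cite{Lev2}, so there is nothing internal to compare with, and the right benchmark is L\'evy's construction. Your outline does follow that route (projective limit of the discrete measures over embedded graphs with piecewise geodesic edges, using the refinement-compatibility of $\YM_{\Gbb,a}$, then extension to all finite-length paths via a uniform continuity estimate and a diagonal argument), and your uniqueness paragraph is essentially correct, granting the standard fact that finitely many piecewise geodesic paths can be realised as paths in a common embedded graph and that such paths are $d$-dense in $\mathrm{P}_{x,y}(\Sigma)$.

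There is, however, a concrete gap at exactly the step you call the technical heart. On a \emph{closed} surface the face holonomies $h_{\partial f}$ under $\YM_{\Gbb,a}$ are neither independent nor individually distributed as $p_t(\cdot)\,dh$: the joint density is $Z_{\Gbb,a}^{-1}\prod_f p_{a_f}(h_{\partial f})$ subject to the global relation of Lemma \ref{lem:integral_reducedloops}, and the marginal law of a single face holonomy involves the partition function of the complementary surface. So the bound $\E\,\dist(H_{\g_n}H_{\g_m}^{-1},I)^2=O(\varepsilon)$ cannot be obtained by treating the small faces as independent heat-kernel variables and summing; one needs, for instance, to compare the law of the holonomies inside a disc containing both approximations with a Yang--Mills measure having a boundary face (where the lasso holonomies \emph{are} independent heat-kernel variables and the $O(\text{area})$ estimate does hold), and to control the Radon--Nikodym density uniformly in the graph via ratios of partition functions $Z_{g,T}$ -- this is where the genus and normalisation enter and is the real content of \cite{Lev2}. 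You should also make quantitative the geometric claim that two $d$-close paths of comparable length bound a region of area $O(\varepsilon)$ (this uses the length term in the metric $d$). As written, your proposal is a correct road map, but the only genuinely hard estimate is asserted and delegated to the citation rather than proved.
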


\begin{thm}
Let $\Sigma$ be a Euclidean plane $\R^2$ or the hyperbolic disc $D_\mfh,$ endowed with their area measure $\vol$, $G$ a  fixed compact Lie group such that $\mathfrak{g}$ is endowed with a $G$-invariant inner product. There a  exists a measure $\YM_{\Sigma}$ on $(\mathcal{M}(\mathrm{P}(\Sigma),G), \mathcal{B}_{\Sigma},G)$, with  following properties.
\begin{enumerate}
\item If  $(\g_e)_{e\in E}$ is an embedding in $\Sigma$ of an area-weighted map of genus $0$ with one boundary $(\Gbb,\{f_\infty\},a)$ and edge set $E$, the distribution of $(H_{\g_e})_{e\in E}$ is the discrete Yang--Mills measure 
$\YM_{\Gbb,a}$.
\item For any $x,y\in\Sigma,$ if $(\gamma_n)$ is a sequence of paths of $\mathrm{P}_{x,y}(\Sigma) $ with $d(\g_n,\g)\underset{{n\to \infty}}\to0$ for some $\gamma\in\mathrm{P}(\Sigma),$ then under $\YM_{\Sigma},$ the sequence of random variables $(H_{\gamma_n})_{n\in\N}$ converges in probability to $H_\gamma$.
\end{enumerate}
The process $(H_\gamma)_{\gamma\in\mathrm{P}(\Sigma)}$ is called the \emph{Yang--Mills holonomy process}.
\end{thm}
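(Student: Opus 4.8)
The plan is to construct $\YM_\Sigma$ first on the subclass of piecewise geodesic paths by a Kolmogorov-type projective limit, and then to extend it to all paths of finite length by a continuity argument; throughout, $\Sigma$ is $\R^2$ or $\Dbb_\mfh$. First I would consider the family of all finite graphs embedded in $\Sigma$ whose edges are piecewise geodesic, viewed as maps of genus $0$ with the unbounded face as boundary face $f_\infty$ and weighted by the Riemannian areas of their bounded faces. Each such embedded area-weighted map $(\Gbb,\{f_\infty\},a)$ carries the discrete Yang--Mills measure $\YM_{\Gbb,a}$, i.e.\ the law of the random multiplicative function described by the density in Lemma~\ref{lem:integral_reducedloops}. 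The integration formula of that lemma, together with the semigroup property \eqref{eq: HK convolution} and the invariances \eqref{eq:Inv HK Conj}--\eqref{eq:Inv HK Inverse} of the heat kernel, shows that this family is consistent under refinement, i.e.\ $({\mathcal R}^{\Gbb'}_{\Gbb})_*\YM_{\Gbb',a'}=\YM_{\Gbb,a}$ whenever $(\Gbb',a')$ is finer than $(\Gbb,a)$ (the analogue, for maps with one boundary face, of the consistency property recorded just before Lemma~\ref{lem:integral_reducedloops}), and likewise under adding an edge or subdividing a face. Since any finite set of piecewise geodesic paths lies in a common embedded piecewise geodesic graph, the Kolmogorov extension theorem yields a measure on $G^{\mathrm{P}_{\mathrm{pg}}(\Sigma)}$, where $\mathrm{P}_{\mathrm{pg}}(\Sigma)$ denotes the piecewise geodesic paths; multiplicativity \eqref{eq:Multip} and compatibility with reversion hold by construction, so the measure is carried by $\mathcal{M}(\mathrm{P}_{\mathrm{pg}}(\Sigma),G)$, and property~1 is immediate.

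The substantive step is the extension to $\mathrm{P}(\Sigma)$ together with property~2. The key input is a quantitative form of the statement that holonomy around a loop enclosing a small area is close to the identity: from the short-time behaviour of the heat kernel one gets that if $\ell$ bounds a face of area $\varepsilon$ then $\E_{\YM_\Sigma}[\dist(H_\ell,I)^2]\le C\varepsilon$, with $C$ depending only on $G$. Given two piecewise geodesic paths $\g_1,\g_2\in\mathrm{P}_{x,y}(\Sigma)$ that are close in the metric $d$, I would subdivide the region swept between them into thin faces, write the correction $H_{\g_1}H_{\g_2}^{-1}$ as an ordered product of lasso holonomies around these faces, and bound $\E[\dist(H_{\g_1},H_{\g_2})^2]$ by a constant times the total swept area plus a term controlled by $\|p_1-p_2\|_\infty$ and $|\mathscr{L}(\g_1)-\mathscr{L}(\g_2)|$ (the reparametrisation part); the swept area itself is $O(d(\g_1,\g_2))$ by the local bounded geometry of $\Sigma$. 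Hence for any $\g\in\mathrm{P}_{x,y}(\Sigma)$ and any piecewise geodesic sequence $\g_n$ with $d(\g_n,\g)\to 0$, the variables $H_{\g_n}$ are Cauchy in probability; one sets $H_\g$ to be the limit and uses the same bound to see that it does not depend on the approximating sequence. This produces the canonical process $(H_\g)_{\g\in\mathrm{P}(\Sigma)}$ on $(\mathcal{M}(\mathrm{P}(\Sigma),G),\mathcal{B}_{\Sigma,G})$, and property~2 is a restatement of the estimate.

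For the hyperbolic disc one may alternatively bypass these estimates via area invariance: by Moser's theorem every relatively compact open disc $U\subset\Dbb_\mfh$ of hyperbolic area $t$ is carried to the Euclidean disc $D_t$ by an area-preserving diffeomorphism, so one pulls the Euclidean measure $\YM_{D_t}$ back to $U$; these pullbacks are compatible along an exhaustion $U_1\subset U_2\subset\cdots$ of $\Dbb_\mfh$ by such discs, using the restriction property of the Euclidean Yang--Mills measure recalled in the introduction, and $\YM_{\Dbb_\mfh}$ is their projective limit. Uniqueness is then clear: property~1 pins down the restriction of $\YM_\Sigma$ to piecewise geodesic paths (those marginals are precisely the discrete Yang--Mills measures), and property~2 forces the value of $H_\g$ for every remaining $\g\in\mathrm{P}(\Sigma)$.

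The hard part is the continuity estimate of the second paragraph: proving the heat-kernel bound ``small enclosed area $\Rightarrow$ near-identity holonomy'' with the right power of the area, and organising the geometry so that a $d$-small perturbation of a path can be realised by finitely many thin-face modifications plus a bi-Lipschitz reparametrisation, with all the error terms accumulating controllably. Once this is in hand, the rest of the argument is the Kolmogorov extension theorem and bookkeeping with the group operations; this is exactly the point at which the analysis of \cite{Lev2,Lev} does the work, and I would simply invoke it.
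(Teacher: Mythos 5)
The paper gives no proof of this statement: it is quoted as a result of L\'evy and simply cited to \cite{Lev2} (``The next two theorems are due to L\'evy \cite{Lev2}''). Your outline --- consistency of the discrete measures on embedded piecewise-geodesic area-weighted maps, a Kolmogorov/projective limit to get the process on piecewise-geodesic paths, then extension to all finite-length paths via the small-area holonomy estimate and control of the swept area --- is exactly the structure of L\'evy's construction, and since you explicitly defer the genuinely hard step (the uniform continuity estimate making property~2 work for arbitrary, not necessarily simple, approximating loops) to \cite{Lev2,Lev}, your route is in substance the same as the paper's, which relies on that reference wholesale.
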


The first author showed with C\'ebron, Gabriel and Norris showed in \cite{CDG,DN} that the proof of the above theorem can be adapted to yield the following extension result, when $G$ is allowed to vary. Let us denote by $A(\Sigma)$ the subset of paths of $\mathrm{P}(\Sigma)$ with a piecewise geodesic bi-Lipschitz parametrisation.

\begin{prop}\label{-->:Prop Extension Piecewise Geo} Let  $(G_N)_N$ be a sequence of compact classical groups. Assume the  following two properties.
\begin{enumerate}
\item For any $\g\in A(\Sigma),$  $\Phi(\g)=\lim_{N\to\infty} W_\g$ where the convergence holds in probability under $\YM_\Sigma$ and $\Phi(\g)$ is constant. 
\item There is a constant $K>0$ independent of $N,$ such that for any simple contractible loop $\ell\in \Ld(\Sigma)$ bounding an area $t>0,$ 
$$\E_{\YM_\Sigma}[1-\Re(W_\ell) ] \le K t. $$
\end{enumerate}
Then $\Phi: A(\Sigma)\to \C$ has a unique extension to $\mathrm{P}(\Sigma)$ such that for all $x,y\in\Sigma$,  $\Phi:\mathrm{P}_{x,y}(\Sigma)\to\C$ is continuous and for any $\g\in \Ld(\Sigma),$ $W_\g$ converges in probability towards $\Phi(\g)$ as $N\to\infty.$  
\end{prop}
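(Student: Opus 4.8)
The plan is to extract from hypotheses (1) and (2) a modulus of continuity for Wilson loops that is \emph{uniform in $N$}, and then to extend $\Phi$ from $A(\Sigma)$ to all of $\mathrm{P}(\Sigma)$ by density, using that $A(\Sigma)$ is dense in each $\mathrm{P}_{x,y}(\Sigma)$ for the metric $d$. This reproduces, while keeping track of the dependence on $N$, the extension argument of \cite{Lev2} and its $N$-uniform refinement in \cite{CDG,DN}; hypothesis (2) plays here the role played in the plane by the explicit small-time estimate on the heat kernel.

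\emph{Step 1 (core estimate).} First I would establish that there is a constant $C=C(\Sigma)$, independent of $N$, such that for every topological disc $U\subsetneq\Sigma$ with $\vol(U)<\vol(\Sigma)$ and every piecewise geodesic loop $\ell$ drawn in $U$,
$$
\E_{\YM_\Sigma}\big[\|H_\ell-\Id\|_2^2\big]\ =\ 2\,\E_{\YM_\Sigma}[1-\Re W_\ell]\ \le\ C\int_{U}|n_\ell|\,d\vol ,
$$
where $\|A\|_2:=\tr_N(AA^*)^{1/2}$ and $n_\ell$ is the winding function of $\ell$ inside $U$. To prove it, peel $\ell$ at its self-intersection points into a product $\ell\sim_r\ell_1\cdots\ell_k$ of simple contractible loops with pairwise disjoint interiors in $U$, so that $\sum_i\area(\ell_i)=\int_U|n_\ell|\,d\vol$; write $H_\ell-\Id=\sum_i H_{\ell_1\cdots\ell_{i-1}}(H_{\ell_i}-\Id)$, control the cross-terms via the Markov property of the holonomy field on $U$ (which, by the restriction property, is a planar Yang--Mills field up to conditioning on the boundary holonomy), and bound each diagonal term $\E\|H_{\ell_i}-\Id\|_2^2=2\,\E[1-\Re W_{\ell_i}]\le 2K\,\area(\ell_i)$ by hypothesis (2), which applies to the $\ell_i$ thanks to Theorem \ref{--->THM: Simple loops Intro} and area-invariance.

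\emph{Step 2 (modulus of continuity, extension, identification).} Given $\gamma',\gamma''\in A(\Sigma)\cap\mathrm{P}_{x,y}(\Sigma)$ with $d(\gamma',\gamma'')<\delta$, $\delta$ below the injectivity radius, the geodesic homotopy between suitable parametrisations exhibits $\ell:=\gamma'(\gamma'')^{-1}$ as a piecewise geodesic contractible loop drawn in a tubular neighbourhood $U$ of $\gamma''$, with $\vol(U)<\vol(\Sigma)$ and $\int_U|n_\ell|\,d\vol\le C'\,\delta\,(\mathscr L(\gamma')+\mathscr L(\gamma''))$ for $\delta$ small. Since $H_{\gamma'}=H_{\gamma''}H_\ell$ with $H_{\gamma''}$ unitary, Cauchy--Schwarz for $\tr_N$ gives $|W_{\gamma'}-W_{\gamma''}|\le\|H_\ell-\Id\|_2$, so using hypothesis (1) (which also guarantees each value $\Phi(\gamma')$ is a constant),
$$
\big|\Phi(\gamma')-\Phi(\gamma'')\big|=\lim_{N\to\infty}\big|\E W_{\gamma'}-\E W_{\gamma''}\big|\ \le\ \limsup_{N\to\infty}\big(\E\|H_\ell-\Id\|_2^2\big)^{1/2}\ \le\ \big(CC'\,\delta\,(\mathscr L(\gamma')+\mathscr L(\gamma''))\big)^{1/2}.
$$
By density this extends $\Phi$ uniquely to a continuous, constant-valued map on each $\mathrm{P}_{x,y}(\Sigma)$. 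To check that $W_\gamma\to\Phi(\gamma)$ in probability for a general $\gamma\in\Ld(\Sigma)$, pick $\gamma_n\in A(\Sigma)$ with $d(\gamma_n,\gamma)\to0$ and split $W_\gamma-\Phi(\gamma)=(W_\gamma-W_{\gamma_n})+(W_{\gamma_n}-\Phi(\gamma_n))+(\Phi(\gamma_n)-\Phi(\gamma))$: the first term is small in $L^1$ \emph{uniformly in $N$} by the estimate above (applied with a piecewise geodesic approximation of $\gamma$ and passing to the limit), the third is deterministic and small by continuity of $\Phi$, and the second is small in probability for each fixed $n$ by hypothesis (1).

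I expect Step~1 to be the main obstacle: upgrading hypothesis (2) from \emph{simple} contractible loops to arbitrary loops in a disc, \emph{uniformly in $N$}. The peeling decomposition is elementary, but making the cross-terms in the telescoping sum harmless requires the conditional-independence (Markov) structure of the Yang--Mills holonomy field inside a disc, combined with the restriction property reducing a disc to a planar Yang--Mills field — precisely the technical input borrowed from \cite{Lev2,CDG,DN}, with hypothesis (2) itself being the delicate estimate supplied by \cite{DL}.
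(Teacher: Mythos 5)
The paper itself does not write out a proof of this proposition: it states that the argument of Section 5 of \cite{DN} (building on \cite{Lev2,CDG}) applies verbatim on any compact surface. Your outline follows exactly that intended route — extract from hypothesis (2) a modulus of continuity for Wilson loops that is uniform in $N$, then extend $\Phi$ by density of $A(\Sigma)$ in each $\mathrm{P}_{x,y}(\Sigma)$ and conclude by the three-term splitting together with hypothesis (1) and, for fixed $N$, the stochastic continuity of $\gamma\mapsto H_\gamma$ from L\'evy's theorem. Your Step 2 is essentially the standard argument and is fine modulo routine details.

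The genuine gap is in Step 1, which is the crux. First, a loop in a disc cannot in general be written, up to reduction, as a concatenation of \emph{simple} contractible loops based at a common point with pairwise disjoint interiors whose areas sum to $\int_U|n_\ell|$; what is available is a decomposition into lassos $\alpha_i s_i\alpha_i^{-1}$ whose meanders are nested and counted with multiplicity (this is harmless thanks to the bi-invariance of the normalised Hilbert--Schmidt distance, but it must be said). Second, and more seriously, the cross-terms are not rendered harmless by simply invoking ``the Markov property'': a telescoping estimate with Cauchy--Schwarz only yields $\sum_i\sqrt{K a_i}$, which is \emph{not} dominated by $\sqrt{K\sum_i a_i}$ in the regime of many small pieces that arises when approximating a finite-length path, and one cannot argue at the level of expectations of traces either, since the numerical subadditivity $1-\Re(ab)\le (1-\Re a)+(1-\Re b)$ is false for unimodular $a,b$. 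The mechanism that makes the squared increments add up is an orthogonality of increments, which requires the lasso holonomies to be \emph{independent with conjugation-invariant laws} (then $\E[H_{s_i}]=\E[\tr(H_{s_i})]\,\Id$ by Schur's lemma, and the cross-terms are products $(1-\E W_{s_i})(1-\E W_{s_j})$, hence quadratically small). Under $\YM_\Sigma$ on a closed surface this independence fails: the restriction to a disc is only absolutely continuous with respect to a disc Yang--Mills field (cf. Proposition \ref{prop:abs_cont_gluing}), and conditionally on the boundary holonomy the interior field is a bridge under which the lassos are neither independent nor conjugation-invariant, while hypothesis (2) is stated under $\YM_\Sigma$ and not under this conditional law. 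Converting hypothesis (2) into the claimed uniform bound $\E[1-\Re W_\ell]\le C\int_U|n_\ell|\,d\vol$ therefore requires precisely the quantitative conditioning/density and partition-function estimates that make up Section 5 of \cite{DN}; as written, your Step 1 asserts this core estimate rather than proving it.
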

The argument  given in section 5 of \cite{DN} for the sphere applies verbatim on any compact surface $\Sigma$ to yields the above statement, we will not repeat it in the current version. The same applies for the following lemma.

\begin{lem}  \label{__Lem:Back to regular path}
\begin{enumerate}
\item For any map $\Gbb$ there is a regular graph $\Gbb'$ finer than $\Gbb.$ 
\item For any $\g\in A(\Sigma)$ there is an embedded graph $\Gbb$ such that $\g$ is the drawing of a path of $\Gbb.$
\item For any area weighted map $(\Gbb,a)$ and $\g\in \mathrm{P}(\Gbb)$, there is a regular area weighted map $(\Gbb',a')$ finer than $(\Gbb,a)$, $\g'$ a regular path of $\Gbb'$ and $K$ a subset of faces of $\Gbb'$, such that 
$$ \g\sim_{K}\g'.$$
\item For any compact Lie group $G$ and  any $\g\in A(\Sigma),$ there is a regular path $\mathfrak p$ in a regular graph $\Gbb$ and $a\in \Delta_{\Gbb}(T)$ such that under $\YM_{\Sigma},$ $W_\g$ has same law as $W_{\mathfrak{p}}$ under $\YM_{\Gbb,a}.$
\end{enumerate}
\end{lem}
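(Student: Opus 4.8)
\emph{Plan.} I would establish the four points in order, the last being a corollary of the first three together with the restriction and continuity properties of the discrete Yang--Mills measure recorded in the lemma preceding Lemma~\ref{lem:integral_reducedloops} and in Lemma~\ref{__Lem:YM Faces simplex}. For point~1, fix an embedded $2g$-bouquet map $\Gbb_g$ on $\Sigma_\Gbb$ (it exists because $\Sigma_\Gbb$ has genus $g$), put its $2g$ edges in general position with respect to $\Sk(\Gbb)$ so that the two skeletons meet transversally at finitely many points, add a vertex at each crossing, and finally add finitely many chords to cut the non-simply-connected complementary regions into discs; the resulting map $\Gbb'$ is finer than $\Gbb$ and, each edge of $\Gbb_g$ having been subdivided into a concatenation of edges of $\Gbb'$, the pair $(\Gbb',\Gbb_g)$ is regular. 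For point~2, a piecewise-geodesic parametrisation of $\g$ exhibits its range as a finite union of geodesic arcs; subdividing them at breakpoints and mutual intersections, then cutting the non-disc complementary regions by finitely many geodesic chords and assigning Riemannian areas, produces an area-weighted map embedded in $\Sigma$ of which $\g$ is the drawing of a path by construction (and one may arrange $\und\g,\ov\g$ to avoid $\Sk(\Gbb_g)$ once $\Gbb$ is refined as in point~1).

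\emph{Point 3.} Using point~1 I would first refine $(\Gbb,a)$ into a regular area-weighted map, extending $a$ to the new faces, so that one may assume $(\Gbb,a)$ itself regular. Viewed in this map $\g$ may fail to be regular: it can run along edges of $\pl E$, meet itself non-transversally or with multiplicity, or be based on $\pl V$. Each defect is cured by an arbitrarily small perturbation of the range of $\g$ localised near $\Sk(\Gbb)$ --- push off $\pl E$, separate and transversalise the self-intersections, move the endpoints into $\mathring V$. To make this perturbation combinatorial, I would refine $\Gbb$ once more inside a small neighbourhood of the range of $\g$ so that both $\g$ and the perturbed \emph{regular} path $\g'$ are paths of the new map $\Gbb'$ and $\g\sim_K\g'$, with $K$ the finite set of newly created faces swept by the homotopy. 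Since the perturbation is local, $K\neq F'$; and because every face of $K$ arises from subdividing a face of the coarser map, the refined area vector $a'$ can be chosen to refine $a$ while also satisfying $a'\in\Delta_K(T)$.

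\emph{Point 4.} Given $\g\in A(\Sigma)$, point~2 provides an embedded area-weighted map $(\Gbb_0,a_0)$ with $a_0\in\Delta^o_{\Gbb_0}(T)$ such that $\g$ is the drawing of a path $\hat\g$ of $\Gbb_0$; by L\'evy's holonomy theorem recalled above, $(H_{\g_e})_{e\in E_0}$ is distributed as $\YM_{\Gbb_0,a_0}$ under $\YM_\Sigma$, so $W_\g$ has the same law under $\YM_\Sigma$ as $W_{\hat\g}$ under $\YM_{\Gbb_0,a_0}$. Applying point~3 to $(\Gbb_0,a_0)$ and $\hat\g$ yields a regular area-weighted map $(\Gbb',a')$ finer than $(\Gbb_0,a_0)$ with $a'\in\Delta_K(T)$, a regular path $\mathfrak p$ of $\Gbb'$, and $K\neq F'$ with $\hat\g\sim_K\mathfrak p$, both based at the common vertex $v=\und{\hat\g}\in\mathring V$. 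Then the restriction property (Lemma~\ref{__Lem:YM Faces simplex}, point~2) gives that $W_{\hat\g}$ has the same law under $\YM_{\Gbb',a'}$ as under $\YM_{\Gbb_0,a_0}$, and since $a'\in\Delta_K(T)$ and $\hat\g\sim_K\mathfrak p$, point~3 of Lemma~\ref{__Lem:YM Faces simplex} gives that $W_{\hat\g}$ and $W_{\mathfrak p}$ have the same law under $\YM_{\Gbb',a'}$ (for a loop $\g$ this is immediate from the reduced-loop statement; the general path case follows by the same argument applied to the full discrete measure). Chaining the three identities finishes the proof, with $\Gbb=\Gbb'$ and $a=a'$.

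\textbf{Main difficulty.} The delicate step is point~3: turning the informal ``small perturbation to a regular path'' into an honest finite sequence of $K$-lassos and edge insertions and erasures, while keeping $K$ proper and, crucially, collapsible --- not covering any positive-area face --- so that $a'$ can simultaneously refine $a$ and vanish on $K$. This is a transversality and general-position argument in the same spirit as the invariance-by-subdivision statements used throughout Section~3, but it needs to be carried out with some care; the remaining steps are routine bookkeeping with the already-established restriction and continuity properties of the Yang--Mills measure.
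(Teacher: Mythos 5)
Your sketch is correct and follows essentially the route the paper intends: the paper does not write out a proof of this lemma but delegates it verbatim to the argument of Section 5 of \cite{DN}, which is exactly the overlay-a-bouquet-in-general-position, push-the-path-off-into-newly-created-zero-area-faces construction you describe, with the homotopy between the original and the perturbed path realised face by face through $K$-lassos. The only points requiring care are the ones you already flag, plus two minor ones: choose the bouquet to avoid the vertices of $\Gbb$ so the (fixed) endpoints of $\g$ remain in $\mathring V$, and in point 4 note that for a non-loop path $H_\g$ is Haar distributed under both measures (so the nontrivial case is that of loops, where Lemma \ref{__Lem:YM Faces simplex} applies on the closed simplex).
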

Together with the last proposition, this lemma reduces the study of Wilson loops for all loops of finite length to the case of regular loops.

\subsection{Planar master field, main results and conjecture}

\label{-----sec:WL MF}

\label{----sec: Statements}

In the above setting, the  following was proved in \cite{Lev} and\footnote{In \cite{Lev},  to get uniqueness (b) is replaced by an additional set of differential equations} \cite{Hal2}, see 
\cite{Xu,AS} for a weaker statement with a smaller class of loops and of groups $G_N$.  Recall the definition of the de-singularisation operation in section 
\ref{----sec:Vertex desingularisation}. 

\begin{thm} \label{-->THM Conv Planar Master Field}Assume that $G_N$ is a compact classical group of rank $N$. Assume that $(\Gbb,\{f_\infty\},a)$ is  any area weighted map of genus 
$0$, with one  boundary component and $\ell\in \Ld(\Gbb),$ or that $\ell\in \Ld(\R^2).$ Then the following convergences hold in probability\footnote{It is also shown in \cite{Lev} that the following
convergences are almost sure.}   and the limits are constant and independent 
of the type of series of $G_N$:
$$\Phi_{\ell}^{f_\infty}(a)=\lim_{N\to\infty}W_\ell \text{ under }\YM_{\Gbb,\{f_\infty\},a }$$
and 
$$\Phi_{\R^2}(\ell)=\lim_{N\to\infty}W_\ell \text{ under }\YM_{\R^2}.$$
The function $\Phi_{\R^2}$ is characterised by the following properties:
\begin{enumerate}
\item For any $x\in \R^2,$ $\Phi_{\R^2}: \mathrm{P}_{x,x}(\R^2) \to \C$ is continuous. 
\item Whenever $\ell\in \Ld(\R^2)$ is a drawing of a loop $\ell $ of an area weighted  map of genus $0$ with one boundary component $(\Gbb,\{f_\infty\},a),$
$$\Phi_{\R^2}(\ell)=\Phi_{\ell,f_\infty}(a).$$
\item For any map  of genus $0$  with one boundary component  $(\Gbb, \{f_\infty\})$, $T>0$, and any loop $\ell\in \Ld(\Gbb),$ $\Phi_\ell$  is uniformly continuous on $\Delta_{\Gbb,\{f_\infty\}}(T)$ and differentiable on $\Delta_{\Gbb,\{f_\infty\}}^o(T)$  such that 
\begin{enumerate}
\item if  $\Gbb$ is regular, $\ell$ is a tame loop and $v\in V_\ell$ is a transverse  intersection with $\delta_v \ell=\ell_1\otimes \ell_2,$
$$\mu_v.\Phi_{\ell,f_\infty}=\Phi_{\ell_1,f_\infty}\Phi_{\ell_2,f_\infty} \text{ in }\Delta_{\Gbb,\{f_\infty\}}^o(T).$$
\item Whenever  $\ell$ is the boundary of a topological disc of area $t$,
$$\Phi_{\R^2}(\ell)=e^{-\frac t 2}.$$ 
\end{enumerate}
\end{enumerate}
\end{thm}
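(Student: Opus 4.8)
This is, modulo the reduction tools already set up in this section, the planar master field theorem of L\'evy \cite{Lev} (see also \cite{AS} for $\U(N)$, and \cite{Hal2} for the variant using simple loops as boundary condition), so the plan is to recall that argument. First I would reduce to regular loops in finite maps: by Lemma \ref{__Lem:Back to regular path}, for any $\ell\in\Ld(\R^2)$ there are a regular area-weighted map $(\Gbb,\{f_\infty\},a)$ of genus $0$ with one boundary face and a regular loop $\mathfrak p$ of $\Gbb$ such that $W_\ell$ under $\YM_{\R^2}$ has the same law as $W_{\mathfrak p}$ under $\YM_{\Gbb,\{f_\infty\},a}$. It therefore suffices to prove convergence for regular loops of such maps, to identify the limit $\Phi_{\ell,f_\infty}(a)$, and to prove its uniform continuity in $a$ on $\Delta_{\Gbb,\{f_\infty\}}(T)$; Proposition \ref{-->:Prop Extension Piecewise Geo} then upgrades this to all of $\Ld(\R^2)$ with properties~1 and~2, once its second hypothesis is checked --- but a simple contractible loop enclosing area $t$ has $p_t$-distributed holonomy, so $\E_{\YM_{\R^2}}[1-\Re W_\ell]=1-\Re\tr_N(p_t)\le Kt$ uniformly in $N$ by standard heat-kernel estimates on classical groups.

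For the core convergence I would induct on the number of transverse intersection points, using the finite-$N$ Makeenko--Migdal equations recalled in the appendix: for a regular tame loop $\ell$ of $(\Gbb,\{f_\infty\},a)$ and $v\in V_\ell$ with $\delta_v\ell=\ell_1\otimes\ell_2$,
\[
\mu_v.\E_{\YM_{\Gbb,\{f_\infty\},a}}[W_\ell]=\E_{\YM_{\Gbb,\{f_\infty\},a}}[W_{\ell_1}W_{\ell_2}]+r_N,
\]
where $r_N$ is an explicit remainder of size $O(d_N^{-2})$ (identically zero for $\U(N)$), while $d\omega_e.\E[W_\ell]=0$ for $e\notin E_\ell$ by invariance of the discrete measure under refinement (merge the two faces bordering $e$), and the $\mu_v$ together with the $d\omega_e$ span $\mfm_\ell$ by definition. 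The base case is a loop with no intersection point, i.e.\ a simple loop; it bounds a disc which is a union of faces of total area $t$, its holonomy is $p_t$-distributed, and heat-kernel asymptotics on classical groups --- with the normalisation of Section \ref{----sec:HK}, chosen precisely so the answer is series-independent --- give $\E[W_\ell]\to e^{-\frac t2}$ and vanishing of all centered moments, which is property~3(b).

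For the inductive step I would run the recursion jointly on $\E[W_{\ell_1}\cdots W_{\ell_p}]$ for finite families of regular loops, with complexity the total number of intersection points, which de-singularisation strictly decreases (Lemma \ref{----lem: desingularisation}); the inductive hypothesis is that all such expectations converge to $\prod_i\Phi(\ell_i)$, equivalently that $\E[W_\ell]$ converges and $\var(W_\ell)\to0$. Granting it for complexity $<n$, the right-hand side above converges uniformly on compacta of $\Delta^o_{\Gbb,\{f_\infty\}}(T)$ to $\Phi(\ell_1)\Phi(\ell_2)$, hence so does $\mu_v.\E[W_\ell]$ for each $v\in V_\ell$; since the $\mu_v$ and $d\omega_e$ span $\mfm_\ell$, this determines the directional derivatives of $\E[W_\ell]$ along all of $\mfm_\ell$, whose orthogonal complement is spanned by $\mu_*$ and $n_\ell$ (Lemma \ref{__Lem: Charac MM}). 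Integrating pins $\E[W_\ell]$ down up to the single remaining degree of freedom, deformations changing the algebraic area $\langle n_\ell,a\rangle$; this is removed either by the supplementary ``unbounded-face'' differential equations of \cite{Lev}, or, following \cite{Hal2}, by combining uniform continuity on the closed simplex $\Delta_{\Gbb,\{f_\infty\}}(T)$ (Lemma \ref{__Lem:YM Faces simplex}) with the reductions obtained by shrinking a bounded face adjacent to $f_\infty$ to zero, which turns $\ell$ into a loop of smaller complexity covered by the induction. Passing to $N\to\infty$ then yields a constant limit $\Phi_{\ell,f_\infty}(a)$, uniformly continuous in $a$ and satisfying 3(a)--3(b); reading the same induction in reverse gives the characterisation.

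The hard parts will be the two technical cores of \cite{Lev,Hal2}: controlling $r_N$ and the joint variance so that $\E[W_{\ell_1}W_{\ell_2}]$ genuinely factorises in the limit (handled by carrying the higher moments through the recursion and quantifying the heat-kernel base case), and eliminating the algebraic-area ambiguity left by the Makeenko--Migdal equations alone --- precisely the point where one must invoke the extra differential equations of \cite{Lev} or the simple-loop boundary condition of \cite{Hal2}.
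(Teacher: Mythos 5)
This theorem is not proved in the paper: it is imported verbatim from the literature (the text preceding it states "the following was proved in \cite{Lev} and \cite{Hal2}"), and your sketch is a faithful outline of exactly that argument — reduction to regular loops in maps with one boundary face, finite-$N$ Makeenko--Migdal recursion with induction on intersection points carried jointly with higher moments, the heat-kernel base case for simple loops, and resolution of the residual algebraic-area degree of freedom via L\'evy's extra unbounded-face equations or Hall's simple-loop boundary condition — so you are taking essentially the same route the paper relies on by citation. One small inaccuracy: for the orthogonal and symplectic series the remainder in the one-point Makeenko--Migdal equation contains the term $\frac{2-\beta}{\beta N}\phi_{\ell_{11}\ell_{12}^{-1}\otimes\cdots}$, which is $O(1/N)$ rather than $O(d_N^{-2})$ (it vanishes identically only for $\U(N)$, and is $O(N^{-2})$ for $\SU(N)$); this does not affect your argument, which only needs the remainder to vanish as $N\to\infty$.
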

See the appendix of \cite{Lev} for a table of values of $\Phi_{\R^2}.$ Alternatively, the master field can be characterised using free probability as follows. For any real $t\geq 0$ and any integer $n\geq 0$, set
\[
\nu_t(n) = e^{-\frac{nt}{2}}\sum_{k=0}^{n-1}\frac{(-t)^k}{k!}n^{k-1}{{n}\choose{k+1}}.
\]
It is known since the work of Biane \cite{Bia} that these quantities are related to the limits of the moments of Brownian motions on $\U(N)$, and L\'evy proved in \cite{Lev} that it is still the case for the other compact classical matrix Lie groups.

\begin{lem} \label{__Lem: Conv Planar MF Maps} Consider an area weighted map $(\Gbb,\{f_\infty\},a)$ of genus $0$ with one boundary component. Assume $\Gbb=(V,E,F)$, $\#F=r+1$, $F= \{f_1,\ldots,f_{r},f_\infty\}$    
and  $v\in V.$  
For any $\ell\in \Ld_v(\Gbb)$ depends on $\ell$ only through its $\sim_r$ class. Setting 
$$\tau_v(\ell)=\Phi_{\ell,f_\infty}(a) \text{ and }\ell^*=\ell^{-1},\ \forall  \ell\in \mathrm{RL}_v(\Gbb)$$
and extending these maps linearly and sesquilinearly, defines a non-commutative probability space $(\C[\mathrm{RL}_v(\Gbb)],\tau_v, *)$.  Assume that $\ell_1,\ldots,\ell_r,\ell_\infty$ is 
a family of lassos as in Lemma  \ref{__Lem:Basis Reduced Loops} with $\ell_i$ bounding $f_i$ for $1\le i\le r $ and $\ell_{\infty}$ for 
$f_\infty.$ Then $\tau_v$ is the unique state on 
$(\C[\mathrm{RL}_v(\Gbb)], *)$ such that 
\begin{enumerate}
\item for all $n\in \Z^*,$ $\tau_v(\ell_i^n)=\nu_{a(f_i)}(n),$
\item $\ell_1,\ldots,\ell_r$ are freely independent under $\tau_v.$ 
\end{enumerate}
\end{lem}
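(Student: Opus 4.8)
The plan is to exhibit $\tau_v$ as the pointwise limit of the finite--$N$ Wilson-loop states and then to reduce every assertion to standard facts about Brownian motion on the classical groups and about free products of states.

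\textbf{Step 1: $\tau_v$ is a well-defined state.} First I would check that $\Phi_{\ell,f_\infty}(a)$ descends to $\mathrm{RL}_v(\Gbb)$: under $\YM_{\Gbb,\{f_\infty\},a}$ the holonomy $H_\ell$ depends only on the reduction of $\ell$, hence $W_\ell$ has the same law as $W_{\ell'}$ whenever $\ell\sim_r\ell'$, so $\Phi_{\ell,f_\infty}(a)=\lim_N W_\ell$ (which exists and is a constant by Theorem \ref{-->THM Conv Planar Master Field}) depends only on the $\sim_r$-class of $\ell$. Then I would observe that for each $N$ the functional $x=\sum_j c_j\ell_j\mapsto \tau_v^{(N)}(x):=\E_{\YM_{\Gbb,\{f_\infty\},a}}[\tr_N(\sum_j c_j H_{\ell_j})]$ is a state on $(\C[\mathrm{RL}_v(\Gbb)],*)$: positivity is $\tau_v^{(N)}(xx^*)=\E[\tr_N(MM^*)]\ge 0$ with $M=\sum_j c_j H_{\ell_j}$; traciality follows from $H_{\ell\ell'}=H_{\ell'}H_\ell$ and centrality of $\tr_N$; unitality is $\tau_v^{(N)}(1)=1$; and $\tau_v^{(N)}(x^*)=\overline{\tau_v^{(N)}(x)}$ because $H_\ell^{-1}=H_\ell^*$ in the natural representation. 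Since $\tau_v$ is the pointwise limit of the $\tau_v^{(N)}$, it is again a state, so $(\C[\mathrm{RL}_v(\Gbb)],\tau_v,*)$ is a non-commutative probability space.

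\textbf{Step 2: the two properties.} Both come from Lemma \ref{lem:integral_reducedloops} applied with $B=\{f_\infty\}$ and $g=0$: it yields that under $\YM_{\Gbb,\{f_\infty\},a}$ the holonomies $H_{\ell_1},\dots,H_{\ell_r}$ are \emph{independent}, with $H_{\ell_i}$ distributed as a Brownian motion on $G_N$ run for time $a(f_i)$. I would then invoke Biane's computation \cite{Bia} together with its extension to the remaining classical series by L\'evy \cite{Lev} to get $\tau_v(\ell_i^n)=\lim_N\E[\tr_N(H_{\ell_i}^n)]=\nu_{a(f_i)}(n)$ for $n\in\Z^*$, which is $1.$; and the asymptotic freeness of independent Brownian motions on $G_N$ \cite{Lev} to conclude that $\ell_1,\dots,\ell_r$ are free under $\tau_v$, which is $2.$

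\textbf{Step 3: uniqueness.} By Lemma \ref{__Lem:Basis Reduced Loops} with $g=0$ and $|F_+|=r+1$, the group $\mathrm{RL}_v(\Gbb)$ is freely generated by $\ell_1,\dots,\ell_r$ (with $\ell_\infty$ the product of their inverses), so $\C[\mathrm{RL}_v(\Gbb)]$ is the algebraic free product of the commutative $*$-algebras $\C[\langle\ell_i\rangle]\cong\C[\Z]$. If $\tau$ is any state satisfying $1.$ and $2.$, then its restriction to each $\C[\langle\ell_i\rangle]$ is already fixed by $1.$ — using $\tau(\ell_i^{-n})=\overline{\tau(\ell_i^n)}$ and that $\nu_t$ is real-valued — and freeness then determines $\tau$ on an arbitrary reduced word $\ell_{i_1}^{k_1}\cdots\ell_{i_m}^{k_m}$ by the usual centering argument: replace each factor by (centered part) $+$ (scalar), expand, kill the fully centered alternating term and reduce the others to strictly shorter words, then induct on $m$. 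Hence $\tau=\tau_v$. The genuinely substantive inputs here are external (the identification of the limiting moments of Brownian motion with $\nu_t$ on all four series, and the asymptotic freeness of independent Brownian motions, both quoted from \cite{Bia,Lev}); modulo these the only point needing care is checking that Lemma \ref{lem:integral_reducedloops} does give exactly the independence and the correct marginal laws in the present normalisation, after which Steps 1 and 3 are routine.
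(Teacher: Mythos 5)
Your proposal is correct and follows essentially the route the paper intends: the paper justifies this lemma (and its genus-$g$ analogue, Lemma \ref{__Lem: Convergence MF One BD}, whose proof in Section \ref{-------sec: Convergence after surgery} is exactly your Step 2) by Lemma \ref{lem:integral_reducedloops} giving independent heat-kernel-distributed lasso holonomies, plus the Biane--L\'evy identification of the limiting moments with $\nu_t$ and asymptotic freeness, with uniqueness coming from the free-group presentation of $\mathrm{RL}_v(\Gbb)$ in Lemma \ref{__Lem:Basis Reduced Loops}. Your Steps 1 and 3 merely make explicit the standard positivity-via-matrix-approximation and freeness-determines-mixed-moments arguments that the paper leaves implicit.
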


Similarly the following  lemma follows from the classical result of \cite{Bia} and Lemma \ref{lem:integral_reducedloops}. It shows that the conclusion of the former one is 
valid when the genus condition is dropped. 

\begin{lem} \label{__Lem: Convergence MF One BD}Consider an area weighted regular map with boundary $(\Gbb,\{f_\infty\},a)$ of genus $g\ge 1$.  Assume $\Gbb=(V,E,F)$, $\#F=r+1$ with  
$F=\{f_1,\ldots,f_r,f_\infty\}$ and  $v\in V.$    
Assume that $a_1,\ldots, b_g$  and $\ell_1,\ldots,\ell_{r+1}$ are $2g$ simple loops  and 
$r+1$ lassos  as in Lemma  \ref{__Lem:Basis Reduced Loops}, with $\ell_i$ bounding $f_i$ for $1\le i\le r$ and $f_\infty$ for $i=r+1.$  
Assume that $G_N$ is a sequence of compact classical matrix Lie groups of size $N.$ Then for any $T>0,$ $a\in\Delta_{\Gbb,\{f_\infty\}}(T)$ and   
$\ell\in \mathrm{RL}_v(\Gbb)$,  
$$W_\ell \to \Phi_\ell^{1,g}(a) \text{ under } \YM_{\Gbb,\{f_\infty\},a},$$
where $\Phi^{1,g}_\ell(a)$ is constant. Moreover there is a constant  $K>0$ independent of $\Gbb$ and $N\ge 1,$ such that for any face $f\in F\setminus \{f_\infty\},$ 
\begin{equation*}
\E[1-\Re (W_{\pl f})]\le K a(f).\tag{*}
\end{equation*}
The $*$-algebra $(\C[\mathrm{RL}_v(\Gbb)],*)$ is endowed with a unique state $\tau_{v}$ satisfying  
$$\tau_v(\ell)=\Phi^{1,g}_{\ell}(a),\ \forall \ell\in \mathrm{RL}_v(\Gbb).$$
Moreover, $\tau_v$ is characterised by the following three properties:
\begin{enumerate}
\item $\ell_1,\ldots,\ell_r,a_1,\ldots, b_g$  are freely independent under $\tau_v.$ 
\item under $\tau_v,$ $a_1,\ldots ,b_g$ are $2g$ Haar unitaries.
\item for any $1\le i\le r$ and $n\in \Z^*,$ 
$$\tau_v(\ell_i^n)= \nu_{a(f_i)}(n).$$ 
\end{enumerate}
\end{lem}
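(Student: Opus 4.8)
The plan is to combine the integration formula of Lemma~\ref{lem:integral_reducedloops} with Biane's description of the free unitary Brownian motion \cite{Bia} and the asymptotic freeness of independent Haar and heat-kernel random matrices on the classical groups \cite{VoiFreeRM,CollinsSnia,Lev}. First, by Lemma~\ref{__Lem:Basis Reduced Loops} the surface relation of $\Gamma_{r+1,g}$ expresses the last lasso in terms of the others, so that $\mathrm{RL}_v(\Gbb)$ is a \emph{free} group with basis $\ell_1,\dots,\ell_r,a_1,b_1,\dots,a_g,b_g$; every $\ell\in\mathrm{RL}_v(\Gbb)$ is therefore a word $w_\ell$ in these generators and their inverses, and $W_\ell=\tr_N\!\bigl(w_\ell(h_{\ell_1},\dots,h_{\ell_r},h_{a_1},\dots,h_{b_g})\bigr)$. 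Since $B=\{f_\infty\}\neq\emptyset$ and the lassos with face in its complement are precisely $\ell_1,\dots,\ell_r$, the second formula of Lemma~\ref{lem:integral_reducedloops} shows that under $\YM_{\Gbb,\{f_\infty\},a}$ the matrices $U_i:=h_{\ell_i}$ ($1\le i\le r$) and $A_l:=h_{a_l}$, $B_l:=h_{b_l}$ ($1\le l\le g$) are independent, with $U_i$ heat-kernel distributed at time $a(f_i)$ and $A_l,B_l$ Haar distributed; moreover $W_{\partial f_i}=\tr_N(U_i)$ as $\ell_i$ is conjugate to $\partial f_i$.

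Next, by \cite{Bia} for $\U(N)$ and by \cite{Lev} for the other classical groups the $*$-distribution of a heat-kernel matrix at time $t$ on $G_N$ converges in probability to that of a free unitary Brownian motion at time $t$, while that of a Haar matrix on $G_N$ converges to that of a Haar unitary; and since the $U_i,A_l,B_l$ are independent and each of these ensembles is conjugation-invariant (the metric of Section~\ref{----sec:HK} being $\Ad$-invariant), they are asymptotically free \cite{VoiFreeRM,CollinsSnia,Lev}. Hence the joint $*$-distribution of $(U_1,\dots,U_r,A_1,\dots,B_g)$ converges in probability to that of a freely independent family $(u_1,\dots,u_r,c_1,\dots,d_g)$ in some non-commutative probability space $(\mathcal{C},\tau)$, with $u_i$ a free unitary Brownian motion at time $a(f_i)$ and the $c_l,d_l$ Haar unitaries. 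Evaluating on $w_\ell$ gives $W_\ell\to\Phi^{1,g}_\ell(a):=\tau\!\bigl(w_\ell(u_1,\dots,u_r,c_1,\dots,d_g)\bigr)$ in probability, a deterministic constant, which is the first assertion. (When $a(f_i)=0$ one has $U_i=I$, $\nu_0\equiv1$ and $u_i=1$, so the statement extends to the whole closed simplex, compatibly with Lemma~\ref{__Lem:YM Faces simplex}.)

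For the bound $(*)$: as $\tr_N$ is the normalised character of the defining representation of $G_N$ and $W_{\partial f_i}=\tr_N(U_i)$ with $U_i$ heat-kernel distributed at time $a(f_i)$, a standard computation gives $\E[W_{\partial f_i}]=e^{-\kappa_N a(f_i)}$, where $-\kappa_N$ is the eigenvalue of $\Delta_{G_N}$ on $\tr_N$; the normalisation of Section~\ref{----sec:HK} makes $(\kappa_N)_N$ bounded, so $\E[1-\Re W_{\partial f}]=1-e^{-\kappa_N a(f)}\le K\,a(f)$ with $K:=\sup_N\kappa_N$, independent of $\Gbb$ and $N$. For the state: for each $N$ the map $\ell\mapsto\E_{\YM_{\Gbb,\{f_\infty\},a}}[\tr_N(h_\ell)]$, extended linearly and sesquilinearly, is a tracial state on $(\C[\mathrm{RL}_v(\Gbb)],*)$ (positivity from $\tr_N(xx^*)\ge0$, traciality from that of $\tr_N$, and $h_{\ell^{-1}}=h_\ell^*$), so its pointwise limit $\tau_v$ — equal to $\ell\mapsto\Phi^{1,g}_\ell(a)$ by the previous paragraph — is again a tracial state. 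Transporting the free basis identifies $(\C[\mathrm{RL}_v(\Gbb)],\tau_v)$ with the $*$-algebra generated by $u_1,\dots,u_r,c_1,\dots,d_g$ in $(\mathcal{C},\tau)$, which yields properties 1 (free independence of $\ell_1,\dots,\ell_r,a_1,\dots,b_g$), 2 ($a_l,b_l$ Haar unitaries) and 3 ($\tau_v(\ell_i^n)=\tau(u_i^n)=\nu_{a(f_i)}(|n|)$). Conversely, since these generators freely generate $\mathrm{RL}_v(\Gbb)$, properties 1--3 determine every mixed moment, hence all of $\tau_v$, giving uniqueness.

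The argument is thus largely an assembly of the cited convergence and asymptotic-freeness results once Lemma~\ref{lem:integral_reducedloops} has reduced the problem to independent heat-kernel and Haar matrices. The two points needing genuine care are that $\mathrm{RL}_v(\Gbb)$ is \emph{exactly} free over $\ell_1,\dots,\ell_r,a_1,\dots,b_g$ — so that convergence of the joint $*$-distribution of the matrix generators upgrades to convergence of $W_\ell$ for \emph{all} reduced loops and to the uniqueness of $\tau_v$ — and the uniformity of $K$ in $(*)$ over all $N$ and all maps, which, the law of $W_{\partial f}$ depending only on $a(f)$, comes down to the boundedness of the defining-representation Laplacian eigenvalue under the chosen normalisation.
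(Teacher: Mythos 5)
Your proof follows essentially the same route as the paper's: the second integration formula of Lemma \ref{lem:integral_reducedloops} reduces the law of the free generators $\ell_1,\dots,\ell_r,a_1,\dots,b_g$ of $\mathrm{RL}_v(\Gbb)$ to independent heat-kernel and Haar matrices, and the conclusion (including the bound $(*)$ and properties 1--3 with uniqueness) is then the standard convergence and asymptotic-freeness result cited from \cite{Bia,CollinsSnia,Lev}. The details you add -- the exact freeness of $\mathrm{RL}_v(\Gbb)$ over these generators and the bounded Laplacian eigenvalue giving a constant $K$ uniform in $N$ and in the map -- are precisely what the paper leaves implicit, and they are correct.
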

A sketch of the proof is given in section \ref{-------sec: Convergence after surgery}.

\vspace{0.5em}

From  Lemma \ref{__Lem: Convergence MF One BD} and the absolute continuity result of \cite{DL}  follows  the corollary \ref{coro---off handle}, for loops avoiding at least one handle. Let us give now a discrete reformulation of corollary \ref{coro---off handle}.  Its proof is  given below in section \ref{-------sec: Convergence after surgery}. Let us recall the definition of the universal cover $\tilde \Gbb=(\tilde V,\tilde E,\tilde F)$ of a regular graph $(\Gbb,\Gbb_b)$ given in section \ref{----sec:DiscreteFundamentalCover}, with 
a canonical covering map $p:\tilde F\to F.$  When $a\in \Delta_\Gbb(T),$ let us set  $\tilde a= a\circ p: \tilde F\to [0,T].$   

\begin{thm}\label{-->THM: CUT Conv}  \label{-->TH: CV MF Disc} Assume that $(\Gbb,a)$ is an  area weighted map  cut along a simple loop  $\ell\in\Ld(\Gbb)$ given by $(\Gbb_1,\{f_{1,\infty}\})$ and
$(\Gbb_2,\{f_{2,\infty}\})$, with 
the 
same 
convention as in section \ref{----sec:Maps}.  Assume that
$\Gbb_2$ 
has genus $g_2\ge 1.$ Then,  for any loop $\ell\in \Ld(\Gbb_1)$ and $a\in \Delta_\Gbb(T)$ with $0<\sum_{f\in F_2}a(f)<T$, 
\begin{equation}
W_\ell\underset{N\to\infty}\to \Phi_{\ell}(a)= \left\{\begin{array}{ll}\Phi_{\tilde\ell}(\tilde a) & \text{ if }\ell\sim_h c_{\und\ell}, \\&\\ 0& \text{ if }\ell\not\sim_h c_{\und\ell}, \end{array}\right.\label{eq:Def MF Funda Cover} \text{ in 
probability under }\YM_{\Gbb,a},
\end{equation}
where $\tilde \ell$ is a lift of $\ell$ in $\tilde \Gbb.$
Moreover, when $g_2\ge 2,$ the convergence holds true uniformly in $a\in \Delta_\Gbb(T).$
Besides, there is a constant  $K>0$  independent of $\Gbb$ and $N\ge 1,$ and depending only on $a(F_2)\in (0,T)$  such that for any face $f\in F_1,$ 
\begin{equation}
\E[1-\Re (W_{\pl f})]\le K a(f). \label{eq:Bound area YM Bd}
\end{equation}
\end{thm}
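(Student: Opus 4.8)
The plan is to combine three ingredients already set up in the excerpt: the free-probabilistic description of the restriction of the Yang--Mills field to one side of an essential cut (Lemma \ref{__Lem: Convergence MF One BD}), the lifting picture through the universal cover of the regular map $\Gbb_1$ (Lemma \ref{__Lem:Basis Reduced Loops lift}), and the structure of $\mathrm{RL}_v(\Gbb)$ as the free product $\mathrm{RL}_v(\Gbb_1)*\mathrm{RL}_v(\Gbb_2)$ from Lemma \ref{__Lem:Basis Reduced Loops Cut}. First I would fix a vertex $v$ on $\ell$ and invoke Lemma \ref{__Lem:Basis Reduced Loops Cut}(2) to choose lassos $\ell_1,\ldots,\ell_{r_1}$ and loops $a_1,b_1,\ldots,a_{g_1},b_{g_1}$ that freely generate $\mathrm{RL}_v(\Gbb_1)$, with the faces of $F_1$ in bijection with the lassos. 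Since $\Gbb_2$ has genus $g_2\ge 1$, the cut is essential, so $\Gbb_1$ carries one boundary face $f_{1,\infty}$ with $a(f_{1,\infty})=a(F_2)\in(0,T)$; this is exactly the hypothesis of Lemma \ref{__Lem: Convergence MF One BD} applied to the area-weighted map with boundary $(\Gbb_1,\{f_{1,\infty}\},a|_{F_1})$. That lemma gives the convergence $W_\ell\to\Phi^{1,g_1}_\ell(a)$ in probability, the constancy of the limit, the bound \eqref{eq:Bound area YM Bd} (which is its estimate $(*)$), and the characterisation of the limiting state $\tau_v$ by free independence of $\ell_1,\ldots,\ell_{r_1},a_1,\ldots,b_{g_1}$ together with the free-unitary-Brownian-motion marginals of the lassos and the Haar-unitary marginals of the $a_i,b_i$.

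Second, I would identify this limiting functional with the right-hand side of \eqref{eq:Def MF Funda Cover}, i.e. with the pullback of the planar master field along a lift. The key point is that the planar master field $\Phi_{\R^2}$ restricted to loops in the lift of $\Gbb_1$ is itself characterised, by Lemma \ref{__Lem: Conv Planar MF Maps} (or Theorem \ref{-->THM Conv Planar Master Field}), by exactly the same two properties: the lifts of the lassos have free-unitary-Brownian-motion marginals with the correct areas $\tilde a$, and they are freely independent, because in the universal cover $\tilde\Gbb_1$ the lassos $\{\widetilde{\gamma_x\ell_i\gamma_x^{-1}}\}$ form a free basis of $\mathrm{RL}_{\tilde v}(\tilde\Gbb_1)$ by Lemma \ref{__Lem:Basis Reduced Loops lift}(3). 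Concretely: if $\ell\sim_h c_{\und\ell}$ then $\ell$ lifts to a loop $\tilde\ell$ of $\tilde\Gbb_1$, hence $\tilde\ell$ is a word in the lifted lassos, and applying the free-probability characterisation on both sides (finite $N$ via Lemma \ref{__Lem: Convergence MF One BD}, and the planar side via Lemma \ref{__Lem: Conv Planar MF Maps}) forces $\Phi^{1,g_1}_\ell(a)=\Phi_{\tilde\ell}(\tilde a)$. If $\ell\not\sim_h c_{\und\ell}$, then its class in $\pi_{1,v}(\Gbb_1)$ is nontrivial; writing $\ell$ as a reduced word in $\mathrm{RL}_v(\Gbb_1)$, its image in the fundamental group is a nonempty alternating word in which at least one $a_i$ or $b_i$ appears, and since those are Haar unitaries, freely independent from everything else, the standard freeness-vanishing argument (as in the proof of \eqref{eq: Conv FreeGp}) gives $\tau_v(\ell)=0$. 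This matches the second case of \eqref{eq:Def MF Funda Cover}.

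Third, for the uniformity statement when $g_2\ge 2$ I would note that the limit $\Phi_\ell(a)$, as just identified, depends on $a$ only through the finitely many face areas entering the planar master field applied to the fixed finite lift $\tilde\ell$, and $\Phi_{\R^2}$ is uniformly continuous on the relevant closed simplex of areas by Theorem \ref{-->THM Conv Planar Master Field}(3); combined with the area-continuity and compatibility of the discrete Yang--Mills measure on faces of the simplex (Lemma \ref{__Lem:YM Faces simplex}) and the fact that the convergence in Lemma \ref{__Lem: Convergence MF One BD} can be made uniform in $a$ in the higher-genus case, one upgrades the pointwise convergence to uniform convergence over $a\in\Delta_\Gbb(T)$ with $a(F_2)$ ranging in a fixed compact subinterval of $(0,T)$. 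The bound \eqref{eq:Bound area YM Bd} is immediate from $(*)$ of Lemma \ref{__Lem: Convergence MF One BD}, with $K$ depending only on $a(F_2)$ through the constant there (which depends only on the genus and on the total boundary area).

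The main obstacle I expect is the bookkeeping in the second step: matching the free-probability characterisation of the finite-$N$ limit with that of the pulled-back planar master field requires checking that the lassos chosen for $\Gbb_1$ lift to the canonical free basis of $\mathrm{RL}_{\tilde v}(\tilde\Gbb_1)$ with the correct areas $\tilde a=a\circ p$, and that the homotopy-triviality condition $\ell\sim_h c_{\und\ell}$ is precisely the condition for $\ell$ to lift to a \emph{loop} rather than a path with distinct endpoints — so that a non-liftable $\ell$ genuinely has a nontrivial image under the surjection $\mathrm{RL}_v(\Gbb_1)\to\pi_{1,v}(\Gbb_1)$ and the Haar-unitary vanishing applies. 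Once the free-probabilistic dictionary between the two sides is set up cleanly, the rest is routine.
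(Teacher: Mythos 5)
Your second step (identifying the limiting state with the lifted planar master field via the free basis of Lemma \ref{__Lem:Basis Reduced Loops Cut}, the universal-cover basis of Lemma \ref{__Lem:Basis Reduced Loops lift}, and the freeness/Haar-unitary vanishing argument) is in substance the paper's own identification argument, and that part of your plan is sound. The genuine gap is in your first step: Lemma \ref{__Lem: Convergence MF One BD} gives the convergence $W_\ell\to\Phi^{1,g_1}_\ell$ \emph{under the measure with boundary} $\YM_{\Gbb_1,\{f_{1,\infty}\},a}$, in which the cut loop's holonomy is unweighted, whereas the theorem asserts convergence under $\YM_{\Gbb,a}$ on the closed surface. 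These are different measures on the $\Gbb_1$-configuration: integrating out the $\Gbb_2$-variables produces an extra density which is a function of the holonomy of the cut loop (a genus-$g_2$, one-boundary partition function at area $a(F_2)$). Your proposal never addresses this transfer. The paper does it through Proposition \ref{prop:abs_cont_gluing} (Corollary 4.3 of \cite{DL}): $\E[f(H_\ell)]=\E'_1[f(H_\ell)I(H_{\ell_0}^{-1})]$ with $\|I\|_\infty\le Z_{g_2,a(F_2)}/Z_{g,T}$, and this ratio is bounded uniformly in $N$ because the partition functions converge to non-zero limits. That is precisely where the hypothesis $0<a(F_2)<T$ enters, it is what allows convergence in probability (and the bound $(*)$) to be carried from $\YM_{\Gbb'_1,a}$ over to $\YM_{\Gbb,a}$, and it is the source of the dependence of $K$ on $a(F_2)$ in \eqref{eq:Bound area YM Bd} --- not the constant of Lemma \ref{__Lem: Convergence MF One BD}, which is area-independent.

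The same omission breaks your third step. The uniformity in $a\in\Delta_\Gbb(T)$ claimed for $g_2\ge 2$ does not come from uniform continuity of the planar master field or from Lemma \ref{__Lem:YM Faces simplex}; it comes from the fact that when $g_2\ge 2$ the density bound $Z_{g_2,a(F_2)}/Z_{g,T}$ stays uniformly bounded as $a(F_2)$ ranges over all of $(0,T)$ (including the degenerate limit), so the transfer argument is uniform in $a$. Your version, which keeps $a(F_2)$ in a fixed compact subinterval of $(0,T)$, proves strictly less than the stated uniformity and does not distinguish $g_2\ge 2$ from $g_2=1$. To repair the proposal you should insert the absolute-continuity/partition-function step at the outset and route both \eqref{eq:Bound area YM Bd} and the $g_2\ge 2$ uniformity through the bound on $I$.
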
  

When $\Gbb$ has genus $1$ the above result gives information about loops included in a topological disc but does not say anything about other loops, for instance  contractible 
loops obtained by concatenation of simple loops of non trivial homology. A more satisfying  answer is then given by the following theorem.

\begin{thm}\label{-->THM: Torus} Consider a compact classical group $G_N$ of rank $N$, a torus $\Tbb_T$ of volume $T>0$ obtained as a quotient of the Euclidean plane $\R^2$ by the lattice $\sqrt{T}\Z^2$. Then, the following convergence holds in probability under $\YM_{\Tbb_T},$
$$W_\ell\underset{N\to\infty}\to  \Phi_{\Tb_T}(\ell)=\left\{\begin{array}{ll} \Phi_{\R^2}(\tilde \ell) & \text{if }\ell\text{ is contractible,} \\ &\\  0 &\text{ otherwise,} \end{array}\right. $$
where for any   loop $\ell\in \Ld(\Tbb_T) ,$ $\tilde\ell\in \mathrm{P}(\R^2)$ is a finite length path with projection to $\Tbb_T$ given by  $\ell.$ Besides, $\Phi_{\Tbb_T}: \Ld(\Tbb_T)\to \C$  
is the unique function satisfying
\begin{enumerate}
\item For any $x\in \Tbb_T,$ $\Phi_{\Tbb_T}: \Ld_x(\Tbb_T)\to \C $ is continuous for the length metric $d.$ 
\item  For any regular loop $\ell$ in a regular map $\Gbb$ of genus $1,$ there is a differentiable function 
$$\Phi_{\ell}:\Delta_\Gbb(T)\to \C$$
such that for any transverse intersection $v\in V_\ell,$ with $\delta_v\ell=\ell_1\otimes \ell_2,$
\begin{equation}
\mu_v\Phi_\ell=\Phi_{\ell_1}\Phi_{\ell_2},\label{eq:MM Torus}
\end{equation}
and such that $\Phi_\ell(a)=\Phi_{\Tbb_T}(\ell)$ if the weight $a$ corresponds to the area measure on $\Tbb_T$.
\item For any  loop $\ell\in \Ld(\Tbb_T)$ obtained by projection of a loop $\tilde\ell\in \Ld(\R^2)$ included in a fundamental domain of $\Tbb_T$, 
$$\Phi_{\Tbb_T}(\ell)=\Phi_{\R^2}(\tilde \ell).$$
\item For any non-contractible simple loop $\ell \in \Ld(\Tbb_T^2)$ and $n\in \Z^*,$
$$\Phi_{\Tbb_T}(\ell^n)=0.$$ 
\end{enumerate}
\end{thm}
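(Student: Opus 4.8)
The plan is to combine the two conditional results proved earlier in the paper with the existing convergence statements on the torus, and then to establish the characterisation of $\Phi_{\Tbb_T}$.

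\textbf{Step 1: convergence in probability.} First I would reduce the problem to regular loops. By Lemma~\ref{__Lem:Back to regular path}, any $\ell\in\Ld(\Tbb_T)$ of finite length has the same $W_\ell$-law as a regular loop $\mathfrak p$ in a regular weighted map $(\Gbb,a)$ of genus~$1$, up to $\sim_K$-equivalence for some $K\subsetneq F$; and Proposition~\ref{-->:Prop Extension Piecewise Geo} then lets me pass from the class $A(\Sigma)$ of piecewise geodesic loops to all finite-length loops, \emph{provided} I first verify its hypothesis (2), the uniform bound $\E_{\YM_{\Tbb_T}}[1-\Re(W_\ell)]\le Kt$ for simple contractible loops of area $t$. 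That bound I would get from \eqref{eq:Bound area YM Bd} in Theorem~\ref{-->THM: CUT Conv} (taking any essential cut of the genus-$1$ map, so that $\Gbb_2$ has genus~$1$). So the work is concentrated on piecewise geodesic loops. There I split into two cases according to homology. If $[\ell]\neq 0$ in $H_1(\Tbb_T;\Z)\cong\Z^2$, I invoke Proposition~\ref{--->Prop: Intro Non-Nul homology}: its hypothesis — that $W_\ell\to0$ for every geodesic loop with non-zero homology — holds on the torus because on the torus every non-trivial closed geodesic is either simple or a power of a simple closed loop, and for those point~2 of Theorem~\ref{--->THM: Simple loops Intro} gives exactly $W_{\ell^n}\to0$; hence $W_\ell\to 0$ for all non-zero-homology loops, which in particular covers all non-contractible loops (a non-contractible loop on the torus has non-zero homology, the abelianisation of $\Z^2$ being faithful). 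If $[\ell]=0$, then $\ell$ is contractible, and I apply Theorem~\ref{-->THM: Intro Delayed Geodesic}: its hypothesis \eqref{eq: Vanishing Loops Non-zero Homology} for the specified ``delayed geodesic'' loops is again supplied by the genus-$1$ case of Proposition~\ref{--->Prop: Intro Non-Nul homology} together with Theorem~\ref{--->THM: Simple loops Intro}(2), since those delayed-geodesic loops are non-contractible and hence of non-zero homology. This yields $W_\ell\to\Phi_{\R^2}(\tilde\ell)$ for contractible $\ell$, with $\tilde\ell$ a lift. (One should check the answer is independent of the choice of lift: two lifts differ by a deck translation, which is an area-preserving isometry of $\R^2$, under which $\Phi_{\R^2}$ is invariant.)

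\textbf{Step 2: the four characterising properties.} Property~3 is the \emph{definition} of $\Phi_{\Tbb_T}$ on contractible loops drawn in a fundamental domain, using Theorem~\ref{-->THM Conv Planar Master Field}. Property~4 is immediate from the limit formula, since a non-contractible simple loop has non-zero homology, so $\Phi_{\Tbb_T}(\ell^n)=0$ for $n\in\Z^*$. Property~1 (continuity in the length metric on loops based at a fixed point) follows from the corresponding continuity of $\Phi_{\R^2}$ (Theorem~\ref{-->THM Conv Planar Master Field}(1)): a sequence $\ell_n\to\ell$ in $d$ on $\Tbb_T$ lifts, after fixing the starting lift, to $\tilde\ell_n\to\tilde\ell$ in $d$ on $\R^2$ once $n$ is large (the lift is locally an isometry and lengths converge), and $\Phi_{\Tbb_T}(\ell_n)$ is $\Phi_{\R^2}(\tilde\ell_n)$ or $0$ according to the homology class of $\ell_n$, which is eventually constant. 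Property~2 (the Makeenko--Migdal relations $\mu_v\Phi_\ell=\Phi_{\ell_1}\Phi_{\ell_2}$ on $\Delta_\Gbb(T)$, with $\Phi_\ell$ differentiable) is the genus-$1$ specialisation of the Makeenko--Migdal equations for the Yang--Mills measure recalled in Section~\ref{-----sec:Strategy} and used throughout; concretely, $\E_{\YM_{\Gbb,a}}[W_\ell]$ satisfies a finite-$N$ version of these equations (from \cite{DGHK}, stated in the appendix), and passing to the $N\to\infty$ limit — which exists by Step~1 and, by the uniform control of \eqref{eq:Bound area YM Bd}, is differentiable in the area parameters — yields the stated identity with $\Phi_\ell(a)=\Phi_{\Tbb_T}(\ell)$ at the area weight of $\Tbb_T$.

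\textbf{Step 3: uniqueness.} Finally I must show these four properties pin down $\Phi_{\Tbb_T}$. Given any $\Psi$ satisfying 1--4, I argue by induction on the number of self-intersections of a regular representative, as in \cite{Lev,DN,Hal2} but using the machinery of Sections~\ref{====Sec: Shortening Homotopy sequence}--\ref{----sec:Vertex desingularisation}: for a loop of non-zero homology, property~4 and the induction via $\delta_v$ (Lemma~\ref{----lem: desingularisation}, where de-singularisation keeps non-zero homology and strictly decreases complexity $\mathcal C$) together with the shortening homotopy sequences of Proposition~\ref{__Prop: Shortening Homotopies} force $\Psi(\ell)=0$; for a contractible loop, property~3 handles loops in a fundamental domain (base case), and properties~1--2 plus the pull/twist moves of Section~\ref{-----sec:Nested Marked Loops} — which preserve the algebraic area required by the Makeenko--Migdal constraint (Lemma~\ref{__Lem: Charac MM}) — propagate the value along a shortening homotopy down to such a loop, matching $\Phi_{\R^2}(\tilde\ell)$ via the uniqueness part of Theorem~\ref{-->THM Conv Planar Master Field}. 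The main obstacle is precisely this uniqueness argument in the contractible case: it requires the full ``marked loop / complexity'' induction of Theorem~\ref{-->TH:  Homotopy for null homology}, because naive induction on the number of intersection points fails (Figure~\ref{----Fig:RaqMax}), and one must carefully track that each move in the shortening homotopy sequence either decreases $\mathcal C^{\mfk m}$ or is a Makeenko--Migdal-admissible deformation so that the value of any solution $\Psi$ is transported unchanged.
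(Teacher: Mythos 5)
Your overall architecture is the one the paper follows: boundary data coming from \cite{DL} (powers of simple loops, and loops avoiding a handle in the discrete form of Theorem \ref{-->THM: CUT Conv}), the Makeenko--Migdal uniqueness machinery specialised to genus one, and the extension from regular loops to piecewise geodesic and then to finite-length loops via Lemma \ref{__Lem:Back to regular path}, the bound \eqref{eq:Bound area YM Bd} and Proposition \ref{-->:Prop Extension Piecewise Geo}; the uniqueness claim is likewise obtained by running the same machinery against a candidate exact solution. Two points need repair, one of them a genuine gap. First, the statements you invoke, Proposition \ref{--->Prop: Intro Non-Nul homology} and Theorem \ref{-->THM: Intro Delayed Geodesic}, are stated for $g\ge 2$; for the torus the correct references are the $g\ge 1$ uniqueness results, Propositions \ref{-->Prop: MM  Gen Non Zero Homol} and \ref{-->Prop: MM  Gen}, in the form of Corollary \ref{-->Prop: MM  Torus}. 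This is not only cosmetic: a good boundary condition requires uniform convergence on the whole simplex $\Delta_\Gbb(T)$ of $\Psi^N_\ell$, i.e. of means \emph{and} variances, including at degenerate area vectors, which is why the paper feeds in Theorem \ref{-->THM: CUT Conv} (together with Lemma \ref{__Lem:YM Faces simplex}) rather than convergence in probability under $\YM_{\Tbb_T}$ alone.

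Second, and this is the genuine gap, your justification of property 2 does not work as written. Pointwise (even uniform) convergence $\phi^N_\ell\to\Phi_\ell$ together with \eqref{eq:Bound area YM Bd} gives at best continuity-type control in the area parameters; it gives neither differentiability of the limit nor the right to pass to the limit in $\mu_v\phi^N_\ell=\phi^N_{\ell_1\otimes\ell_2}+O(1/N)$, since convergence of functions does not imply convergence of their $\mu_v$-derivatives. The paper's route (point 2 of Lemma \ref{__Lem:Existence}) is to \emph{define} $\Phi_\ell(a)=\Phi_{\tilde\ell}(\tilde a)$ for contractible $\ell$ and $0$ otherwise, and to verify the Makeenko--Migdal equation directly from the planar statement (Theorem \ref{-->THM Conv Planar Master Field}, 3(a)), distinguishing whether the crossing $v$ lifts to a self-intersection of $\tilde\ell$ (planar MM equation applies) or to two distinct passages of the lift, in which case both desingularised loops are non-contractible and the derivative vanishes because at each lift of $v$ the Makeenko--Migdal vector is a combination of $d\omega_e$ over edges not visited by $\tilde\ell$; differentiability is then inherited from the planar master field. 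With this substitution, and with the citations redirected as above, your proof coincides with the paper's.
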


When $g\ge 2,$ we were unable to show a satisfying version of conjecture \ref{conj_Lift}, but are able to prove the following conditional results.

\begin{thm} \label{-->THM Wilson Loops Higher Genus} Consider a compact classical group $G_N$  of rank $N$,  $g\ge 2$ and $T>0$. Assume that for any regular area weighted map $(\Gbb,a)$ of genus $g,$
\begin{equation}
W_\ell\underset{N\to \infty}\to \Phi_{\tilde \ell}(\tilde a) \text{ in probability under }\YM_{\Gbb,a},\label{eq:Conv Discrete Statement}
\end{equation}
whenever $\ell\in\Ld( \Gbb)$ such that
\begin{enumerate}
\item any lift $\tilde\ell\in \Ld(\tilde\Gbb)$ of $\ell$ is included in a fundamental domain, or
\item $\ell= \g_{nest} \g,$ where $\g_{nest}$  is a nested loop and $\g$ is a geodesic path.\footnote{See Fig. \ref{Fig----Nested loops} and Section \ref{----sec:DiscreteFundamentalCover}.} 
\end{enumerate}
Then for any regular map $\Gbb$ of genus $g$,  \eqref{eq:Conv Discrete Statement} holds true for all $\ell\in \Ld(\Gbb).$ \end{thm}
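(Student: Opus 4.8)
The plan is to deduce the convergence of an arbitrary Wilson loop from the two families singled out in the hypotheses, by a descending induction driven by the Makeenko--Migdal equations. \emph{Reduction to regular loops.} By Lemma~\ref{__Lem:Back to regular path} together with Proposition~\ref{-->:Prop Extension Piecewise Geo}, it is enough to prove that, for every regular area-weighted map $(\Gbb,a)$ of genus $g$ and every regular loop $\ell$ of $\Gbb$, one has $W_\ell\to\Phi_{\tilde\ell}(\tilde a)$ in probability under $\YM_{\Gbb,a}$ --- with the convention that $\Phi_{\tilde\ell}(\tilde a)=0$ when $\ell$ is not contractible. The area estimate required as hypothesis~(2) of Proposition~\ref{-->:Prop Extension Piecewise Geo}, as well as the case of simple contractible loops, are provided by Theorem~\ref{-->THM: CUT Conv} and the absolute continuity argument of~\cite{DL}. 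Throughout, the finite-$N$ Makeenko--Migdal equations (recalled in the appendix) are used in the standard way: pairing the area-gradient of $\E_{\YM_{\Gbb,a}}[W_\ell]$ with a Makeenko--Migdal vector $\mu_v$ expresses it as $\E[W_{\ell_1}W_{\ell_2}]$ up to an $N\to\infty$ error, and a concentration estimate --- carried along the same induction --- replaces $\E[W_{\ell_1}W_{\ell_2}]$ by $\E[W_{\ell_1}]\,\E[W_{\ell_2}]$, so that the induction hypothesis controls these derivatives and the limit is obtained by integrating over the simplex $\Delta_\Gbb(T)$.

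\emph{Loops of non-zero homology.} By Proposition~\ref{--->Prop: Intro Non-Nul homology} it suffices to have $W_\ell\to 0$ for geodesic loops $\ell$ with $[\ell]\neq 0$; taking the nested part to be trivial, such a loop is of the form covered by item~(2) of the hypothesis, and being non-contractible its prescribed limit is indeed $0$. Hence $W_\ell\to 0=\Phi_{\tilde\ell}(\tilde a)$ for every loop of non-zero homology, and the proof of Proposition~\ref{--->Prop: Intro Non-Nul homology} simultaneously supplies the concentration needed above in this case.

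\emph{Loops of zero homology.} This is the core and is carried out by induction on the complexity: $\mathcal C(\ell)=|\ell|_D+\#V_{c,\ell}$ on regular loops and $\mathcal C^{\mfk m}$ on marked loops, the two being linked through the de-singularisation $\delta_v$ of \eqref{eq-------Conv Desing MarkedL}, which strictly decreases complexity by Lemma~\ref{----lem: desingularisation}. First one reduces an ordinary zero-homology loop to a \emph{proper} loop by de-singularising at each contractible intersection point $v\in V_{c,\ell}$: the vector $\mu_v$ lies in $\mfm_\ell$ (Lemma~\ref{__Lem: Charac MM}), so the equation at $v$ is available, one of the de-singularised pieces is an inner loop inside a fundamental domain and is covered by item~(1), the other has strictly smaller $\mathcal C$, and $[\ell]=[\ell_1]+[\ell_2]$ keeps the pieces under control. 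For a proper loop (or a marked loop with proper pruning) lying neither inside a fundamental domain nor geodesic, one runs the shortening homotopy sequence of Lemma~\ref{__Lem: Shortening Homotopies ML}, deforming $\ell^{\wedge_*}$ through $K$-homotopies with $K\subsetneq F$ down to a marked loop whose pruning is geodesic; each such $K$-homotopy leaves the law of the Wilson loop unchanged once the faces of $K$ have area zero, by point~3 of Lemma~\ref{__Lem:YM Faces simplex}. Since $[\ell]=0$, the Makeenko--Migdal equations only control the area-gradient inside $\{\mu_*\}^\perp\cap\{n_\ell\}^\perp$ (Lemma~\ref{__Lem: Charac MM}), so the homotopy sequence must stay on a fixed algebraic-area slice $\{a:\langle a,n_\ell\rangle=\mathrm{const}\}$: this is the role of the pull and twist moves, which absorb the required correction into the nested part $\g_{nest}$ --- invisible to $\mathcal C^{\mfk m}$ --- while a suitable twist shifts the winding function $n_\ell$ so that the degenerate area vector reached at the end of the sequence lies on the prescribed slice. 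Integrating the Makeenko--Migdal equations along a path inside that slice propagates the convergence from the degenerate endpoint, where $\ell$ is $K$-homotopic either to a loop inside a fundamental domain (item~1) or to a loop of the form $\g_{nest}\g$ with $\g$ geodesic (item~2), to an arbitrary area vector, and identifies the limit with $\Phi_{\tilde\ell}(\tilde a)$; the induction hypothesis applied to the pieces produced by the successive de-singularisations closes the recursion.

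The main obstacle is this last step: organising the zero-homology recursion so that (a) every Makeenko--Migdal deformation used preserves the algebraic area, (b) the de-singularisations strictly decrease $\mathcal C$ or $\mathcal C^{\mfk m}$ while the auxiliary pull/twist refinements and the growth of $\g_{nest}$ leave the complexity untouched, and (c) the leaves of the recursion genuinely fall under hypotheses~(1)--(2) --- in particular that a marked loop with geodesic pruning reduces to one of the form $\g_{nest}\g$ with $\g$ geodesic, and that the degenerate area vectors produced by the shortening homotopy sequences fill out enough of $\partial\Delta_\Gbb(T)$, slice by slice, to determine the one constant of integration left free by the Makeenko--Migdal equations on each algebraic-area slice.
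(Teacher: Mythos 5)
Your outline does follow the paper's route: the actual proof is a two-line deduction from Lemma \ref{__Lem:Existence} (the finite-$N$ Wilson loop functions form an approximate solution of the Makeenko--Migdal equations, and the lift of the planar master field an exact one) and Proposition \ref{-->Prop: MM  Gen} (marked loops with geodesic pruning form a good boundary condition), and your sketch correctly names the ingredients behind that proposition --- the complexities $\mathcal C$ and $\mathcal C^{\mathfrak m}$, de-singularisation (Lemma \ref{----lem: desingularisation}), shortening homotopy sequences, the algebraic-area constraint of Lemma \ref{__Lem: Charac MM}, point 3 of Lemma \ref{__Lem:YM Faces simplex} at degenerate area vectors, and the pull/twist moves. (Incidentally, Proposition \ref{-->:Prop Extension Piecewise Geo} and Theorem \ref{-->THM: CUT Conv} are not needed here: the statement is purely discrete, and hypothesis (1) plays the role of the boundary data that Theorem \ref{-->THM: CUT Conv} supplies elsewhere.)

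There is, however, a genuine gap exactly where you locate ``the main obstacle'', and it is not a bookkeeping issue. When you de-singularise at an intersection point of the \emph{nested} part, the two pieces are not both simpler: one is an inner loop (covered by hypothesis (1)), but the other is a contraction of $\ell$ itself along central faces, with the same tiling length and the same marked complexity $\mathcal C^{\mathfrak m}$, so your induction hypothesis does not apply to it and ``integrating the Makeenko--Migdal equations along a path inside the slice'' does not close the argument. The paper's resolution is Lemma \ref{__Lem:MM Gronwall}: since those pieces are $F_{nest}$-contractions of $\ell$, their $\Psi^N$ can be rewritten as $\Psi^N_\ell$ evaluated at area vectors in which the area of the contracted central faces has been moved (equation \eqref{eq: LA restriction pour Gronwall I}), turning the Makeenko--Migdal bound into a self-referential integral inequality for $H^N_a(t)=\sup_b \Psi^N_\ell(ta+b)$, which is then closed by Gr\"onwall's lemma --- not by a further decrease of complexity. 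Likewise your worry (c) about the degenerate vectors ``filling out'' the boundary slice by slice is resolved by a different mechanism: after a suitable $\lambda$-twist the central face carries the strictly maximal winding number, so \emph{every} $a$ in the half-simplex $\Delta'_+(T)$ admits a degenerate vector $a'$ supported on the two faces $f'_c,f'_-$ with the same algebraic area (hence $a'-a\in\mfm_\ell$), and the opposite half-simplex is handled by the symmetry $\Psi^N_{\ell^{-1}}=\Psi^N_\ell$; no coverage argument on $\partial\Delta_\Gbb(T)$ is needed or available. Without these two mechanisms the zero-homology recursion you describe does not terminate, so as written your text is an accurate plan of the paper's proof rather than a proof.
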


Besides, the following weaker statement can be proved independently.

\begin{prop} \label{-->Prop Wilson Loops Non zero Homol}Consider a compact classical group $G_N$  of rank $N$ and $g\ge 2$. Assume that for any regular area weighted map $(\Gbb,a)$ of genus $g,$
\begin{equation}
W_\ell\underset{N\to \infty}\to 0 \text{ in probability under }\YM_{\Gbb,a},\label{eq:Vanishing Discrete Statement}
\end{equation}
whenever $\ell\in\Ld( \Gbb)$ is a geodesic  loop with non zero-homology. Then for any regular map $\Gbb$ of genus $g$, \eqref{eq:Vanishing Discrete Statement} holds true for all 
$\ell\in \Ld(\Gbb) $ with non-zero 
homology. \end{prop}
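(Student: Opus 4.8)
By Lemma~\ref{__Lem:Back to regular path} we may assume $\ell$ is a regular loop of a regular area-weighted map $(\Gbb,a)$ of genus $g$ with $[\ell]\neq 0$, the area vector $a\in\Delta_\Gbb(T)$ being arbitrary. As in \cite{DN}, the variance $\var_{\YM_{\Gbb,a}}(W_\ell)$ tends to $0$ for \emph{every} loop --- the standard concentration argument, a statement independent of the hypothesis of the Proposition --- so it suffices to prove $\E_{\YM_{\Gbb,a}}[W_\ell]\to 0$. Two inputs are used. First, the finite-$N$ Makeenko--Migdal equations of the appendix: on $\Delta^o_\Gbb(T)$ the smooth function $a\mapsto\E_{\YM_{\Gbb,a}}[W_\ell]$ obeys $\mu_v\cdot\E[W_\ell]=\E[W_{\ell_1}W_{\ell_2}]+r_N$ for each $v\in V_\ell$ with $\delta_v\ell=\ell_1\otimes\ell_2$, where $\|r_N\|_\infty\to 0$, together with $d\omega_e\cdot\E[W_\ell]=0$ for $e\notin E_\ell$ (erasing such an $e$ merges two faces and leaves $\ell$ unchanged). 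Second, Lemma~\ref{__Lem: Charac MM}: since $[\ell]\neq 0$, it gives $\mfm_\ell=\{\mu_*\}^{\perp}$, so that \emph{every} area-preserving direction is admissible in the equations above --- this is the sole place where non-zero homology is used.

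We induct on the complexity $\mathcal{C}(\ell)=|\ell|_D+\#V_{c,\ell}$ of Section~\ref{----sec:Vertex desingularisation} (there is no case $\mathcal{C}(\ell)=0$ with $[\ell]\neq 0$). Assume $W_{\ell''}\to 0$ in probability for every regular loop $\ell''$ with $[\ell'']\neq 0$ and $\mathcal{C}(\ell'')<\mathcal{C}(\ell)$. The core of the step is a \emph{transport principle}: if $\E_{\YM_{\Gbb,a_0}}[W_\ell]\to 0$ for a single $a_0\in\Delta_\Gbb(T)$, then $\E_{\YM_{\Gbb,a}}[W_\ell]\to 0$ for all $a\in\Delta_\Gbb(T)$. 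Indeed, joining $a_0$ to $a$ by a segment $s\mapsto a(s)$ in the convex set $\Delta_\Gbb(T)$ and expanding $\dot a(s)\in\{\mu_*\}^{\perp}=\mfm_\ell$ in the fixed bounded linear forms $\a_v,\b_e$ of Section~\ref{----sec:Vertex desingularisation},
\[
\E_{\YM_{\Gbb,a}}[W_\ell]=\E_{\YM_{\Gbb,a_0}}[W_\ell]+\int_0^1\sum_{v\in V_\ell}\a_v(\dot a(s))\,\big(\E[W_{\ell_1^{(v)}}W_{\ell_2^{(v)}}]+r_N^{(v)}\big)\big(a(s)\big)\,ds,
\]
the $d\omega_e$-terms contributing nothing. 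By Lemma~\ref{----lem: desingularisation} one of $\ell_1^{(v)},\ell_2^{(v)}$ has non-zero homology and both have complexity $<\mathcal{C}(\ell)$, so the induction hypothesis and $|W|\le 1$ force each integrand to $0$; as all bounds are uniform in $N$, dominated convergence yields the claim. (These identities hold on $\Delta^o_\Gbb(T)$ and extend to $\Delta_\Gbb(T)$ by the uniform continuity of Lemma~\ref{__Lem:YM Faces simplex}, so $a_0$ may lie on the boundary.)

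It remains to produce one good $a_0$, which is where Section~\ref{====Sec: Shortening Homotopy sequence} enters. If $\#V_{c,\ell}>0$, its Step~1 gives $K\subsetneq F$ and a regular loop $\ell^{\flat}$ (inner loops erased) with $\ell\sim_K\ell^{\flat}$, $[\ell^{\flat}]=[\ell]\neq 0$ and $\mathcal{C}(\ell^{\flat})<\mathcal{C}(\ell)$; then for $a_0\in\Delta_{K,\Gbb}(T)$, Lemma~\ref{__Lem:YM Faces simplex}(3) and the induction hypothesis give $\E_{\YM_{\Gbb,a_0}}[W_\ell]=\E_{\YM_{\Gbb,a_0}}[W_{\ell^{\flat}}]\to 0$, and the transport principle finishes this case. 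If $\ell$ is proper, Proposition~\ref{__Prop: Shortening Homotopies} provides --- after reducing to a common regular map via Lemma~\ref{__Lem:YM Faces simplex}(2) --- a shortening homotopy sequence $\ell=\ell_1,\dots,\ell_m$, proper subsets $K_1,\dots,K_{m-1},K\subsetneq F$, a path $\eta$ and a geodesic loop $\ell'$ with $\ell_l\sim_{K_l}\ell_{l+1}$ and $\ell_m\sim_K\eta\ell'\eta^{-1}$; each $\ell_l$ satisfies $[\ell_l]=[\ell]\neq 0$ and $\mathcal{C}(\ell_l)\le\mathcal{C}(\ell)$, so the transport principle applies to each of them (their de-singularisations have complexity $<\mathcal{C}(\ell_l)\le\mathcal{C}(\ell)$). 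Since $W_{\eta\ell'\eta^{-1}}=W_{\ell'}$ pointwise and $\ell'$ is geodesic with $[\ell']\neq 0$, the hypothesis of the Proposition gives $\E_{\YM_{\Gbb,a_0}}[W_{\ell_m}]=\E_{\YM_{\Gbb,a_0}}[W_{\ell'}]\to 0$ for $a_0\in\Delta_{K,\Gbb}(T)$; transport then gives $\E[W_{\ell_m}]\to 0$ everywhere, and iterating backwards along the sequence with the identities $\E[W_{\ell_l}]=\E[W_{\ell_{l+1}}]$ on $\Delta_{K_l,\Gbb}(T)$ yields $\E_{\YM_{\Gbb,a}}[W_\ell]\to 0$ for all $a$, closing the induction.

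\textbf{Expected main difficulty.} The non-zero-homology case is designed to be the tractable one --- the genuine obstructions (loops whose self-intersection number cannot be decreased, and the nested and marked loops of Sections~\ref{-----sec:Nested Marked Loops}--\ref{----sec:Vertex desingularisation}) arise only for vanishing homology. The work here is bookkeeping: weaving together, inside one complexity induction, the shortening homotopies (which identify $\E[W_\ell]$ with the vanishing expectation of a geodesic loop only over the thin faces $\Delta_{K,\Gbb}(T)$) and the Makeenko--Migdal transport (which spreads this to all of $\Delta_\Gbb(T)$, and is available in every direction precisely because $[\ell]\neq 0$). The points to control are that de-singularisation never destroys non-zero homology on \emph{both} branches while always lowering the complexity, that the transport integrals stay bounded uniformly up to $\partial\Delta_\Gbb(T)$, and that the reduction of convergence in probability to convergence of $\E[W_\ell]$ through the vanishing of the variance goes through as in \cite{DN}.
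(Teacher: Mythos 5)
Your overall architecture is the same as the paper's: induction on the complexity $\mathcal{C}$, use of Lemma \ref{__Lem: Charac MM} to get that for $[\ell]\neq 0$ every total-area-preserving direction lies in $\mfm_\ell$, transport across the simplex by integrating the Makeenko--Migdal equations from a boundary face $\Delta_{K,\Gbb}(T)$ where Lemma \ref{__Lem:YM Faces simplex} identifies $\ell$ with a simpler $\sim_K$-equivalent loop, and Proposition \ref{__Prop: Shortening Homotopies} to reach a conjugate of a geodesic (this is exactly Lemma \ref{__Lem:MMStrict} and Proposition \ref{Prop------------Boundary Cond non zero homol}). The place where your proof genuinely breaks is the very first reduction: the claim that $\var_{\YM_{\Gbb,a}}(W_\ell)\to 0$ for every loop by a ``standard concentration argument, independent of the hypothesis''. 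No such unconditional variance bound is available for the Yang--Mills measure on a closed surface of genus $g\ge 2$; the measure $Z_{\Gbb,a}^{-1}\prod_f p_{a_f}(h_{\partial f})\,dh$ is not a product measure, the coupling through the commutator constraint and the partition function prevents any off-the-shelf concentration uniform in $N$, and \cite{DN} does not supply one either: on the sphere the variance is itself controlled through the Makeenko--Migdal induction with the simple-loop input, not by a standalone argument. A sanity check that your claim cannot be ``standard'': combined with the center-invariance lemma of Section 3.4 (which gives $\E[W_\ell]=0$ exactly for $G_N=\U(N)$ and $[\ell]_\Z\neq 0$), it would yield the conclusion of the Proposition for such loops with no geodesic hypothesis at all, which is far beyond what is known.

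This gap is load-bearing, not cosmetic. Your induction hypothesis must be convergence \emph{in probability} (or a second-moment bound) at lower complexity, because the desingularised term $\E[W_{\ell_1}W_{\ell_2}]$ is only killed through $\E[|W_{\ell_1}|^2]\to 0$ (or convergence in probability plus $|W_{\ell_2}|\le 1$); convergence of $\E[W_{\ell_1}]$ alone says nothing about the correlation. Since the variance is not free, it has to be propagated through the same induction, which is precisely what the paper does: it carries the combined quantity $\Psi^N_\ell=\scV_{\phi^N,\ell}+|\phi^N_\ell-\phi^\infty_\ell|^2$ of \eqref{eq:Psi}, and in Lemma \ref{__Lem:MMStrict} uses the Cauchy--Schwarz form \eqref{eq:MM Psi CS} of the Makeenko--Migdal bound, where at each intersection the branch with non-zero homology contributes a factor $\sqrt{\Psi^N_{\ell_{v,i}}}+|\phi^\infty_{\ell_{v,i}}|\to 0$ (its exact master-field value being $0$), while the other factor is merely bounded. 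To repair your proof you would have to add the second-moment equations \eqref{eq:MM_Wilson2} and this two-point bookkeeping to your transport step, at which point you recover the paper's argument; the rest of your write-up (pointwise-in-$a$ convergence plus dominated convergence along the segment, homology and complexity behaviour under desingularisation, extension to the closed simplex) is consistent with the paper.
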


\begin{rmk} The above statements may give the impression that any possible master field is expressed in terms of the planar case. This is nonetheless not the case as the Wilson loops  on the sphere converge to different limits \cite{DN}.  See also the discussion in \cite[Section 2.5]{DL}. 
\end{rmk}

The proofs of Theorems \ref{-->THM: Torus} , \ref{-->THM Wilson Loops Higher Genus} and Proposition \ref{-->Prop Wilson Loops Non zero Homol} are provided in the end of Section \ref{--------sec: MM Existence Uniqueness PB}.

\subsection{Invariance in law and Wilson loop expectation}

Before proceeding to the main part of this paper, let us give  a partial result that only holds in expectation, but relies on a simpler argument: the invariance in law by an action of the center of the structure group $G_N$. Consider a regular map $\Gbb=(V,E,F)$ with $r$ faces, $v\in V$ and a basis $\ell_1,\ldots,\ell_r,a_1,\ldots,b_g$ of the free group $\mathrm{RL}_v(\Gbb)$ as in Lemma \ref{__Lem:Basis Reduced Loops}.  For any  $h\in G^{2g}$ 
and $\phi\in\Hom(\mathrm{RL}_v(\Gbb),G)$, let us denote by 
$h.\phi\in\Hom(\mathrm{RL}_v(\Gbb),G)$ the unique group morphism with 
\[h.\phi (\ell_i)\mapsto \phi(\ell_i),\text{ for }1\le i\le r\] 
and 
\[ h.\phi(a_i)=h_{2i-1}\phi(a_i)\text{ and }h.\phi(b_i)=h_{2i}\phi(b_i) \text{ for }1\le i\le g.\]

Let us denote by $Z$ the center of $G.$ When $h\in Z^{2g},$  it follows easily from  point 2. of Lemma \ref{__Lem: Homology basis choice} that 
\begin{equation}
h.\phi(\ell)=\phi_{h}([\ell]_\Z)\phi(\ell),\ \forall \ell\in \mathrm{RL}_v(\Gbb),  \label{eq:Action Cohomol}
\end{equation}
where $\phi_h\in \Hom(H_1(d^*,\Z),Z)$ is the unique group morphism such that 
\[ \phi_h([a_i]_\Z)=h_{2i-1}\text{ and }\phi_h([b_i]_\Z)=h_{2i} \text{ for }1\le i\le g.\]

\begin{lem} Let  $\Gbb$ be regular  map, $T>0,$ $a\in \Delta_{\Gbb}(T)$. Denoting by $(H_\ell)_{\ell\in\mathrm{RL}_v(\Gbb)}$ the canonical $G$-valued random variable  on $\Hom(\mathrm{RL}_v(\Gbb),G),$  the following assertions hold true.
\begin{enumerate}
\item The measure $\YM_{a,\Gbb,v}$ on $\mathrm{Hom}(\mathrm{RL}_v(\Gbb),G)$ is invariant under the action of $Z^{2g}.$ 
\item Assume that  $\chi: G\to \C$ is continuous and $\a: Z\to \C$  is such that $\chi(z.h)=\a_\chi(z)\chi(h),\ \forall (z,h)\in Z\times G.$ Then

\begin{enumerate}
\item for any $h\in Z^{2g}$ and $\ell\in \mathrm{RL}_v(\Gbb),$
$$\E_{\YM_{a,\Gbb,v}}[\chi(H_{\ell})]= \a_\chi\circ\phi_{h}([\ell]_{\Z}) \E_{\YM_{a,\Gbb,v}}[\chi(H_{\ell})].$$
\item If   there is  $\phi\in  \Hom(H_1(d^*,\Z),Z)$ with $\phi([\ell]_\Z)\not=0,$ then 
\[\E_{{\YM_{a,\Gbb,v}}}[\chi(H_\ell)]=0.\]
\end{enumerate}
\item When $G$ is a classical compact matrix Lie group, for any $\ell \in\mathrm{RL}_v(\Gbb),$   $\E[W_\ell]= 0 $ if one of the following conditions is satisfied:
\begin{enumerate}
\item $G=\U(N)$ and $[\ell]_\Z\not=0.$
\item $G=\SU(N)$ and $[\ell]_{\Z_n}\not=0$
\item $G=\SO(2N)$ and $[\ell]_{\Z_2}\not=0.$ 
\end{enumerate}
\end{enumerate}
\end{lem}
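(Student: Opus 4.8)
The plan is to exploit the action of the center $Z^{2g}$ on $\Hom(\mathrm{RL}_v(\Gbb),G)$ described just above the lemma, together with the concrete description of the Yang--Mills measure $\YM_{a,\Gbb,v}$ afforded by Lemma \ref{lem:integral_reducedloops} (in the form recalled in Lemma \ref{__Lem:YM Faces simplex}). For point 1, I would argue directly from the integration formula: writing $\E_{\YM_{a,\Gbb,v}}$ as an integral over $G^{2g+r-1}$ in the variables $z_1,\dots,z_r,x_1,\dots,y_g$ with the constraint $z_k=(z_1\cdots z_{k-1})^{-1}[a_1,b_1]\cdots[a_g,b_g](z_{k+1}\cdots z_r)^{-1}$, the action of $h\in Z^{2g}$ replaces each $x_i$ by $h_{2i-1}x_i$ and each $y_i$ by $h_{2i}y_i$. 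Since the $h_j$ are central, each commutator $[a_i,b_i]=[h_{2i-1}x_i,h_{2i}y_i]=[x_i,y_i]$ is unchanged, so the constraint defining $z_k$ is preserved, the heat-kernel factors $p_{a_i}(z_i)$ are untouched, and the Haar measure $\prod dx_i\,dy_i$ is translation invariant; hence the pushforward of $\YM_{a,\Gbb,v}$ under the action of $h$ equals $\YM_{a,\Gbb,v}$. (The case $K\neq F$ of degenerate areas works identically using the corresponding formula in Lemma \ref{__Lem:YM Faces simplex}.)

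For point 2(a), I would combine invariance in law (point 1) with the identity \eqref{eq:Action Cohomol}: for $h\in Z^{2g}$,
\[
\E_{\YM_{a,\Gbb,v}}[\chi(H_\ell)]=\E_{\YM_{a,\Gbb,v}}[\chi(h.H_\ell)]=\E_{\YM_{a,\Gbb,v}}[\chi(\phi_h([\ell]_\Z)H_\ell)]=\alpha_\chi(\phi_h([\ell]_\Z))\,\E_{\YM_{a,\Gbb,v}}[\chi(H_\ell)],
\]
where the first equality is point 1, the second is \eqref{eq:Action Cohomol}, and the third uses the hypothesis $\chi(z.h)=\alpha_\chi(z)\chi(h)$ and that $\phi_h([\ell]_\Z)\in Z$. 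For point 2(b): given $\phi\in\Hom(H_1(d^*,\Z),Z)$ with $\phi([\ell]_\Z)\neq 0$ — which I read as $\phi([\ell]_\Z)\neq 1\in Z$, i.e. $\phi([\ell]_\Z)$ a nontrivial central element — I would choose $h_{2i-1}=\phi([a_i]_\Z)$, $h_{2i}=\phi([b_i]_\Z)$, so that by uniqueness in the definition of $\phi_h$ one has $\phi_h=\phi$ and hence $\phi_h([\ell]_\Z)=\phi([\ell]_\Z)\neq 1$. Since $\chi=\tr_N$ scales as $\alpha_\chi(z)=z$ (viewing central unitary scalars as complex numbers), $\alpha_\chi(\phi_h([\ell]_\Z))\neq 1$, and 2(a) forces $\E_{\YM_{a,\Gbb,v}}[\chi(H_\ell)]=0$.

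For point 3 I would specialise 2(b) to $\chi=W_\ell=\tr_N\circ H_\ell$, which satisfies $\tr_N(zh)=z\,\tr_N(h)$ for scalar $z$ in the relevant matrix representation, so $\alpha_{\tr_N}(z)=z$. It then suffices to produce, in each of the three cases, a morphism $\phi$ into the center with $\phi([\ell])\neq 1$: for $G=\U(N)$ the center is $\U(1)$, the coefficient group is $\Z$, $H_1(\Gbb;\Z)\cong\Z^{2g}$ is free, and any nonzero $[\ell]_\Z$ can be detected by a suitable character $H_1(\Gbb;\Z)\to\U(1)$; for $G=\SU(N)$ the center is $\Z_N$ and one uses $[\ell]_{\Z_n}\neq 0$ with $n\mid N$, or more precisely a character of $H_1(\Gbb;\Z_N)$; for $G=\SO(2N)$ the center is $\Z_2$ and one uses $[\ell]_{\Z_2}\neq 0$, detected by a homomorphism $H_1(\Gbb;\Z_2)\to\Z_2=\{\pm I\}\subset\SO(2N)$, noting $-I\in\SO(2N)$ since $2N$ is even and $\tr_{2N}(-h)=-\tr_{2N}(h)$. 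In each case point 2 of Lemma \ref{__Lem: Homology basis choice} guarantees that the homology class of a reduced loop is computed from the exponents of $a_i,b_i$, matching the exponents $h_{2i-1},h_{2i}$ we are free to choose. I expect the only mildly delicate point to be bookkeeping the compatibility between the universal-coefficient statement $H_1(\Gbb;R)=R\otimes_\Z H_1(\Gbb;\Z)$, the identification of the center of each $G_N$ with the appropriate $\U(1)$, $\Z_N$, or $\Z_2$, and the normalisation of $\tr_N$ (the factor $d_N$ being $2N$ in the symplectic case, though $\Sp(N)$ does not appear in the list of point 3 — its center $\{\pm I\}$ would give an analogous $\Z_2$ statement); the analytic content is entirely contained in the already-established integration formula.
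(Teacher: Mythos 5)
Your proposal is correct and takes essentially the same route as the paper: point 1 by invariance of the Haar measure under the $Z^{2g}$-action combined with centrality (so the commutators $[a_i,b_i]$ and the lasso variables are unchanged) in the integration formula of Lemma \ref{__Lem:YM Faces simplex}, point 2(a) from point 1 together with \eqref{eq:Action Cohomol}, and 2(b), 3 as elementary consequences, exactly as the paper asserts. The only slip is cosmetic: in 2(b) you invoke $\chi=\tr_N$, which belongs to point 3 — for a general $\chi$ the vanishing requires $\alpha_\chi(\phi([\ell]_\Z))\neq 1$ rather than merely $\phi([\ell]_\Z)\neq 1$, a looseness already present in the paper's own statement and handled correctly in your treatment of point 3.
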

\begin{proof}
The implication  $2.a)\Rightarrow 2.b)\Rightarrow 3$ are elementary. Thanks to \eqref{eq:Action Cohomol}, $1\Rightarrow 2.a).$  Lastly, consider 1. Denote by $d\phi$ the Haar measure on $\Hom(\mathrm{RL}_v(\Gbb),G)$ endowed with pointwise multiplication. By Lemma \ref{__Lem:YM Faces simplex}, it is enough to consider  $a\in \Delta_{\Gbb}^o(T)$ and denote by $a_1,\ldots a_r$ the area enclosed by the meanders of $\ell_1,\ldots,\ell_r$ and set $a_{r+1}=T-\sum_{i=1}^ra_i.$ For any continuous function $\chi:\Hom(\mathrm{RL}_v(\Gbb),G)\to\C$ and $h\in Z^{2g}$, $d\phi$ is invariant by the action of $Z^{2g}$ and
\begin{align*}
\int&_{\Hom(\mathrm{RL}_v(\Gbb),G)}\chi(h^{-1}.\phi){d\YM_{a,\Gbb,v}}(\phi)\\
&=\int_{\Hom(\mathrm{RL}_v(\Gbb),G)}\chi(h^{-1}.\phi) p_{a_{r+1}}(\phi((\ell_1\ldots\ell_r)^{-1}[a_1,b_1]\ldots[a_g,b_g])) \prod_{i=1}^r p_{a_i}(\phi(\ell_i)) d\phi\\
&= \int_{\Hom(\mathrm{RL}_v(\Gbb),G)}\chi(\phi) p_{a_{r+1}}(h.\phi((\ell_1\ldots\ell_r)^{-1}[a_1,b_1]\ldots[a_g,b_g])) \prod_{i=1}^r p_{a_i}(h.\phi(\ell_i)) d\phi\\
&=\int_{\Hom(\mathrm{RL}_v(\Gbb),G)}\chi(\phi) p_{a_{r+1}}(h.\phi((\ell_1\ldots\ell_r)^{-1}[a_1,b_1]\ldots[a_g,b_g])) \prod_{i=1}^r p_{a_i}(h.\phi(\ell_i)) d\phi,
\end{align*}
where in the last line we used that $h.\phi([a_i,b_i])=[\phi(a_i)h_{2i-1},\phi(b_i)h_{2i-1}])=\phi([a_i,b_i])$ for  $1\le i\le g$ and $h.\phi(\ell_j)=\phi(\ell_j),$ for $1\le j\le r.$
\end{proof}

\subsection{Makeenko--Migdal equations, existence and uniqueness problem}
\label{--------sec: MM Existence Uniqueness PB}

The main tool of the current article  are approximate versions of equations \eqref{eq:MM Torus}, satisfied on any surface when $G_N$ is a compact classical group and $N\to\infty.$  Let us introduce a  setting to 
prove existence and uniqueness of these equations.

\vspace{0.5em}

For any regular map $\Gbb$ and any vertex $v$ of $\Gbb$, let $\mathcal{A}_{v}(\Gbb)$ be the algebra with elements in $\C[\Ld_v(\Gbb)]$ endowed with the multiplication given by concatenation and setting $\ell^{*}=\ell^{-1}$ for 
all $\ell\in \Ld_{v}(\Gbb)$ and 
extending it skew-linearly. When $w$ is another vertex, we consider the $*$-algebra $\mathcal{A}_{v,w}(\Gbb)$ with elements in $\C[\Ld_v(\Gbb)] \otimes \C[\Ld_w(\Gbb)]$ and  multiplication and $*$-operation defined for all $(x_i,y_i) \in \mathcal{A}_v(\Gbb)\times\mathcal{A}_w(\Gbb)$ by
$$(x_1\otimes y_1).(x_2\otimes y_2)= (x_1x_2)\otimes (y_1y_2) \text{ and } (x_1\otimes y_1)^*=x_1^*\otimes y_1^*.$$

Let us fix $g\ge 1  $ and $T>0.$  A \emph{Wilson loop system} is a family of continuous functions  $\phi_{\ell_1},\phi_{\ell_1\otimes \ell_2}:\Delta_\Gbb(T)\to \C$ given for each  regular graph $\Gbb$ of genus $g$ and each pair of  loops  
$\ell_1,\ell_2\in \mathrm{L}(\Gbb)$, with the following properties:
\begin{enumerate}
\item For any constant loop $c,$
$$\phi_{\ell_1\otimes c} = \phi_{\ell_1} \text{ and } \phi_{c}=1. $$
\item For any pair of loops $\ell_1,\ell_2$ within a same regular graph of genus $g$,
$$\phi_{\ell_1\otimes \ell_2}=\phi_{\ell_2\otimes \ell_1}$$
depend on $\ell_1,\ell_2$ only through their $\sim_{r,c}$ equivalence class.
\item If $\Gbb'$ is finer than $\Gbb$ of genus $g,$ then for all loops $\ell,\ell_1,\ell_2\in \Ld(\Gbb)$ 
$$\phi_{\ell}\circ r_\Gbb^{\Gbb'}=\phi_{\ell}\text{ and }\phi_{\ell_1\otimes \ell_2}\circ r_\Gbb^{\Gbb'}=\phi_{\ell_1\otimes \ell_2}$$ 
where loops are identified in the right-hand-sides with elements of $\Ld(\Gbb').$  
\item  If $\Gbb'$ is isomorphic to $\Gbb$ of genus $g$, $a\in \Delta_\Gbb(T)$ is mapped to $a'\in\Delta_{\Gbb'}(T),$  while $\ell_1',\ell_2'\in\Ld(\Gbb)$ with $\ell_1\sim_\Sigma \ell'_1,\ell_2\sim_\Sigma\ell_2',$ through the same  isomorphism map, then  
$$\phi_{\ell_1}(a)=\phi_{\ell_1'}(a')\text{ and } \phi_{\ell_1\otimes \ell_2}(a)=\phi_{\ell_1'\otimes \ell_2'}(a').$$
\item If $\Gbb=(V,E,F)$ is a regular graph of genus $g$, $\ell_1,\ell_1',\ell_2\in \mathrm{L}(\Gbb),$ $K\subset F$ with $\ell_1\sim_K \ell_1',$ then  
$$\phi_{\ell_1\otimes \ell_2}(a)=\phi_{\ell_1'\otimes \ell_2}(a),\ \forall a\in \Delta_{K,\Gbb}(T).$$
\item For any regular graph $\Gbb$ of genus $g$ with vertex $v$, for any $a\in \Delta_\Gbb(T),$ extending $\ell \in\Ld_v(\Gbb) \mapsto\phi_{\ell}(a)$ 
linearly defines a non-negative states $\phi_{a,v}$ on $(\mathcal{A}_{v}(\Gbb),*)$ while for any $x\in\mathcal{A}_v(\Gbb),$
$$\phi_{x\otimes x^*}\ge 0.$$
\end{enumerate}
Whenever $G_N$ is a compact classical group, from the above definition of the Yang-Mills measure, the collection
$$a\in\Delta_{\Gbb}(T)\mapsto (\E_{\YM_{\Gbb,a}}[W_\ell],  \E_{\YM_{\Gbb,a}}[W_{\ell_1}W_{\ell_2}])$$
for all  regular maps $\Gbb$ of genus $g$ and loops $\ell,\ell_1,\ell_2\in \Ld(\Gbb),$ is a Wilson loop system. 

Moreover, it then follows from 1.   that for any vertex $v$ and  $\ell \in \Ld_v(\Gbb) $ $\ell$ has a unitary distribution in $(\cA_v,\phi_{v,a})$.  When $\phi$ is a Wilson loop system, for any regular graph $\Gbb$ and any loop $\ell\in \Ld(\Gbb),$  the second part of point 6. and point 1. yield
$$\scV_{\phi,\ell}=\phi_{\ell\otimes\ell^{-1}}-|\phi_{\ell}|^2= \phi_{\ell\otimes\ell^{-1}}-\phi_{\ell}\phi_{\ell^{-1}}\ge 0.$$

We say that a Wilson loop system $\phi$ is an \emph{exact solution} of Makeenko--Migdal equations if
\begin{enumerate}
\item   For any loop $\ell$ within a regular graph $\Gbb$ of genus $g,$ $\phi\in C^1(\Delta_\Gbb^o(T))$ and for any $v\in V_\ell,$ 
$$\mu_v. \phi_\ell=\phi_{\ell_1}\phi_{\ell_2}.$$
\item  For any pair of regular loops within the same graph, $\phi_{\a\otimes \b}=\phi_{\a}\phi_\b.$
\item For any regular loop $\ell$ with $\ell\not\sim_h c_{\und \ell},$  $ \phi_\ell=0. $ 
\end{enumerate}

We say that a sequence $(\phi^N)_{N\ge 1}$ of Wilson loop systems is an \emph{approximate solution of Makeenko--Migdal equations} if for any regular graph of genus $g$, any  loop $\ell$  in $\Ld(\Gbb)$, 
$\phi^N_{\ell}$ and $\scV^N_{\phi,\ell}$ are in $C^1(\Delta_\Gbb^o(T))$,  there is a constant $C>0$ independent of $\ell$ and $N\ge 1,$ such that for any intersection point $v\in V_\ell,$ 
\begin{equation}
|\mu_v.\phi^N_\ell- \phi^N_{\delta_v( \ell)}| \le \frac{C}{N},\label{eq:MM Wilson S E}
\end{equation}

\begin{equation}
|\mu_v.\scV_{\phi^N,\ell} |\le \scV_{\phi^N,\ell}+\scV_{\phi^N,\ell_1}+\scV_{\phi^N,\ell_2}+\frac{C}{N}  \label{eq:MM Wilson S Var}
\end{equation}
and 
\begin{equation}
|\mu_v.\scV_{\phi^N,\ell} |\le  \sqrt{\scV_{\phi^N,\ell_1}\scV_{\phi^N,\ell_2}}+|\phi_{\ell_1}^N| \sqrt{\scV_{\phi^N,\ell_2}}+|\phi_{\ell_2}^N| \sqrt{\scV_{\phi^N,\ell_1}}+\frac{C}{N}  \label{eq:MM Wilson S Var}
\end{equation}
where    $\ell_1\otimes\ell_2=\delta_v\ell$.

\begin{rmk} Note that it follows from point 3. that if  $\phi$ is a Wilson loop system and $\ell,\ell_1,\ell_2$ are a regular loops of a regular graph $\Gbb=(V,E,F)$ with  $e\in E^o\setminus( E^o_\ell \cup E^o_{\ell_1}\cup 
E^o_{\ell_2}),$ then 
\begin{equation}
d\omega_e .\phi_{\ell}=d\omega_e. \phi_{\ell_1\otimes\ell_2}=0.\label{eq:Non visited Edges}
\end{equation}
Consequently, for any regular loop $\ell$, using the same linear forms as in  section \ref{----sec:Vertex desingularisation}, if $\phi^\infty$ and $(\phi^N)$ are respectively  exact and  approximate solutions of Makeenko--Migdal 
equations, for any regular loop $\ell$ and $X\in \mfm_\ell,$
\begin{equation}
X.\phi^\infty_\ell=\phi_{\delta_X \ell}^\infty \text{ and } |X.\phi^N_\ell-\phi^N_{\delta_X \ell}|\le \frac {\|X\| C} N \label{eq:MM MM Vector}
\end{equation}
while 
\begin{equation}
|X.\scV_{\phi^N,\ell}|\le C \|X\| \left( \sum_{v\in V_\ell} \left(\sqrt{\scV_{\phi^N,\ell_1}\scV_{\phi^N,\ell_2}}+|\phi_{\ell_1}^N| \sqrt{\scV_{\phi^N,\ell_2}}+|\phi_{\ell_2}^N| \sqrt{\scV_{\phi^N,\ell_1}}\right)+\frac{1}{N}\right)\label{eq:MM MM Vector Variance CS} 
\end{equation}
and 
\begin{equation}
|X.\scV_{\phi^N,\ell}|\le  \|X\|C \left( \scV_{\phi^N,\ell} +\sum_{v\in V_\ell} (\scV_{\phi^N,\ell_{v,1}}+\scV_{\phi^N,\ell_{1,2}})+\frac{1}{N}\right) \label{eq:MM MM Vector Variance Young} 
\end{equation}
where for any $v\in V_\ell$ we wrote $\pl_v\ell=\ell_{1,v}\otimes \ell_{2,v}.$  
\end{rmk}

The existence problem of these equations is a consequence of \cite{DGHK} and \cite{Lev}  for the approximate solutions, and given Theorem \ref{-->THM Conv Planar Master Field}, of a simple computation for the exact ones.

\begin{lem} \label{__Lem:Existence}Consider $g\ge 1,T>0$. \begin{enumerate}
\item Assume that $G_N$ is a compact classical group of rank $N$, then setting for all regular graph $\Gbb,$ $a\in \Delta_\Gbb(T)$ and all  loops $\ell,\ell_1,\ell_2\in \Ld(\Gbb)$
$$\phi^N_\ell(a)= \E_{\YM_{\Gbb,a}}[W_\ell],\ \phi^N_{\ell_1\otimes\ell_2}(a)=\E_{\YM_{\Gbb,a}}[W_{\ell_1}W_{\ell_2}]$$
defines an approximate solution of the Makeenko--Migdal equations.
\item Denoting by $c_v$ the constant loop at a vertex $v,$ setting for any regular graph $\Gbb,a\in \Delta_\Gbb(T)$ and $\ell\in \Ld(\Gbb),$  
$$\phi_\ell(a)=\left\{\begin{array}{ll}\Phi_{\tilde\ell}(\tilde a) & \text{ if }\ell\sim_h c_{\und\ell}, \\&\\ 0& \text{ if }\ell\not\sim_h c_{\und\ell}, \end{array}\right.$$
defines an exact solution of the Makeenko--Migdal equations. 
\end{enumerate}
\end{lem}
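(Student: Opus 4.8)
The plan is to verify the two claims separately, treating each defining property of a (sequence of) Wilson loop system(s) and of an (approximate / exact) solution of the Makeenko--Migdal equations in turn, and reducing each one to a result already available in the excerpt.

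\emph{Part 1: the expectations form an approximate solution.} First I would observe that the six structural axioms of a Wilson loop system are immediate from the construction of the discrete Yang--Mills measure in Section \ref{------sec:YM area continuity}: continuity in $a$ and the compatibility and invariance properties (axioms 1--5) follow from Lemma \ref{__Lem:YM Faces simplex} (weak continuity of $\YM_{a,\Gbb,v}$, invariance under refinement via ${\mathcal{R}^{\Gbb'}_{\Gbb}}_*$, invariance under map isomorphism, and the $\sim_K$-invariance when $a\in\Delta_{K,\Gbb}(T)$), together with the elementary facts $\E[W_c]=1$ for a constant loop and $\E[W_{\ell_1}W_c]=\E[W_{\ell_1}]$. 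Axiom 6 (non-negativity of the states $\phi_{a,v}$ and of $\phi_{x\otimes x^*}$) follows because $\phi_{a,v}(x^*x)=\E_{\YM_{\Gbb,a}}[\,|\sum c_i W_{\ell_i}|^2\,]\ge 0$ pointwise, using that $W_{\ell^{-1}}=\overline{W_\ell}$ for $\tr_N$ of a matrix in a compact classical group. The differentiability of $\phi^N_\ell$ and $\scV^N_{\phi,\ell}$ on $\Delta^o_\Gbb(T)$ comes from smoothness of the heat kernel and the integral formula of Lemma \ref{lem:integral_reducedloops}. The substantive content is the three estimates \eqref{eq:MM Wilson S E}, \eqref{eq:MM Wilson S Var}: here I would invoke the local Makeenko--Migdal equations of \cite{DGHK} valid on any surface, which give the exact identity $\mu_v.\E[W_\ell]=\E[W_{\ell_1}W_{\ell_2}]$ up to a term controlled by $C/N$ (the $1/N$ coming from the normalisation $\tr_N=d_N^{-1}\Tr$ and the structure constants of $\mathfrak g_N$ as in \cite{Lev}); the variance estimates are then obtained by applying the same Makeenko--Migdal differentiation rule to $\E[W_\ell\overline{W_\ell}]=\E[W_\ell W_{\ell^{-1}}]$, expanding $\mu_v.\scV_{\phi^N,\ell}=\mu_v.\E[W_\ell W_{\ell^{-1}}]-\mu_v.|\E[W_\ell]|^2$ and regrouping, using Cauchy--Schwarz for \eqref{eq:MM Wilson S Var} in its first form and Young's inequality $ab\le\frac12(a^2+b^2)$ for the second. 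Since these computations for all classical groups are exactly what the appendix of the paper is said to collect, I would cite that appendix for the bookkeeping with the relevant structure constants.

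\emph{Part 2: the planar-field formula is an exact solution.} For the second item, the structural axioms 1--6 of a Wilson loop system are checked for the functional $\phi_\ell(a)=\Phi_{\tilde\ell}(\tilde a)$ (if $\ell\sim_h c_{\und\ell}$) or $0$ (otherwise) using that $\Phi_{\R^2}$ (resp. $\Phi_{\Dbb_\mfh}$) is itself area-continuous and satisfies the compatibility/uniform-continuity statements of Theorem \ref{-->THM Conv Planar Master Field} and Lemma \ref{lem-----positivity Projected MF}, the latter supplying the positivity of the associated state (which is why the paper remarks it is checked ``directly, without a matrix approximation''). That $\phi$ is well-defined, i.e. does not depend on the chosen lift $\tilde\ell$, follows because two lifts differ by a deck transformation, an isometry of $\tilde\Sigma_\Gbb$ preserving $\tilde a$, so $\Phi$ is unchanged by area-invariance on the plane/disc. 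The three exact Makeenko--Migdal properties are then verified thus: property 2 ($\phi_{\alpha\otimes\beta}=\phi_\alpha\phi_\beta$) and property 3 ($\phi_\ell=0$ for $\ell\not\sim_h c_{\und\ell}$) are built into the definition; property 1 ($\mu_v.\phi_\ell=\phi_{\ell_1}\phi_{\ell_2}$) is the key point. If $\ell$ is not contractible there is nothing to prove since each side vanishes (a contractible loop has no non-contractible desingularisation obstruction at a transverse double point — here one checks that $[\ell]=0$ forces the relevant contribution to be consistent, and $\phi_\ell=0$ while one of $\ell_1,\ell_2$ is non-contractible, so $\phi_{\ell_1}\phi_{\ell_2}=0$). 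If $\ell$ is contractible, lift the Makeenko--Migdal deformation near $v$ to the universal cover: the lift $\tilde\ell$ is an honest loop in $\tilde\Sigma_\Gbb$, the deformation lifts to a genuine Makeenko--Migdal deformation of $\tilde\ell$ at the lifted vertex $\tilde v$, and the two desingularised loops lift to $\tilde\ell_1,\tilde\ell_2$; since $\tilde a$ realises the areas of the lifted faces and $\mu_v$ lifts to the corresponding Makeenko--Migdal vector upstairs, the planar Makeenko--Migdal equation from Theorem \ref{-->THM Conv Planar Master Field}(3)(a) gives $\mu_{\tilde v}.\Phi_{\tilde\ell}(\tilde a)=\Phi_{\tilde\ell_1}(\tilde a)\Phi_{\tilde\ell_2}(\tilde a)$, which is exactly $\mu_v.\phi_\ell=\phi_{\ell_1}\phi_{\ell_2}$. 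One must also record that $\phi\in C^1(\Delta^o_\Gbb(T))$, which follows from the differentiability part of Theorem \ref{-->THM Conv Planar Master Field}(3) pulled back through $p$.

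\emph{Main obstacle.} I expect the routine structural axioms to be painless; the real care is needed in Part 1 in deriving the two variance inequalities \eqref{eq:MM Wilson S Var} with the correct $1/N$ error term uniformly over all loops and all classical group series — that is, making precise that the finite-$N$ Makeenko--Migdal identity for $\E[W_{\ell_1}W_{\ell_2}]$ and for the square function, when combined and estimated via Cauchy--Schwarz/Young, produces errors of size $O(1/N)$ with a constant independent of the loop. This is where the group-by-group computations in the appendix are indispensable, and where one must be careful that the ``extra'' terms generated by differentiating a product of two traces (rather than a single trace) are themselves of the stated form and do not hide an uncontrolled dependence on the combinatorics of $\ell$.
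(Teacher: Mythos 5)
Your Part 1 is essentially the paper's own argument (the finite-$N$ Makeenko--Migdal identities of the appendix, Proposition \ref{prop:MM_phi}, combined with Cauchy--Schwarz and the arithmetic-geometric mean inequality for the variance bounds), and your treatment of the structural axioms and of the non-contractible case in Part 2 is fine. The gap is in the contractible case of Part 2. You assert that the deformation at $v$ "lifts to a genuine Makeenko--Migdal deformation of $\tilde\ell$ at the lifted vertex $\tilde v$". That is only true when the two desingularised loops $\ell_1,\ell_2$ are themselves contractible. If $\ell\sim_h c_{\und\ell}$ but $\ell_1$ and $\ell_2$ are non-contractible (e.g. $\ell$ cyclically of the form $\a\b$ with $\a,\b$ homotopic to a non-trivial element and its inverse), the crossing at $v$ \emph{unwraps} in the universal cover: $\tilde\ell$ visits two distinct preimages of $v$, each exactly once, and has no self-intersection over $v$, so there is no lifted crossing and no planar Makeenko--Migdal equation to invoke. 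In that case the right-hand side $\phi_{\ell_1}\phi_{\ell_2}$ is $0$, but $\phi_\ell(a)=\Phi_{\tilde\ell}(\tilde a)$ is not identically zero, so you must still prove that $\mu_v.\phi_\ell=0$, and your argument gives no handle on this.

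The paper closes this case as follows, and your proposal needs the same two ingredients. First, since $\tilde a=a\circ p$, the chain rule gives $\mu_v.\phi_\ell(a)=\sum_{\tilde v\in p^{-1}(v)\cap T_{\tilde\ell}}(\mu_{\tilde v}.\Phi_{\tilde\ell})(\tilde a)$, a sum over the preimages of $v$ visited by $\tilde\ell$ (this decomposition is also implicitly needed in your "good" case, where it reduces to a single term because $\ell$, being regular, visits $v$ exactly twice). Second, at a preimage $\tilde v$ through which $\tilde\ell$ passes only once, two of the four outgoing edges, say $e_2,e_4$, are not used by $\tilde\ell$, and $\mu_{\tilde v}=\pm(d\omega_{e_2}+d\omega_{e_4})$; since $\Phi_{\tilde\ell}$ depends on the area vector only through the total areas of the faces of the map cut out by $\tilde\ell$ itself, derivatives across edges not used by $\tilde\ell$ annihilate it, so each such contribution vanishes and both sides equal $0$. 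With this case added (and the single-preimage case argued as you do via Theorem \ref{-->THM Conv Planar Master Field}(3)(a)), your verification of property 1 becomes complete and coincides with the paper's proof.
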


\begin{proof}  Point 1. is a direct consequence of Proposition \ref{prop:MM_phi} below, together with Cauchy--Schwarz or  arithmetic-geometric mean inequality to get \eqref{eq:MM MM Vector Variance CS} and \eqref{eq:MM MM Vector Variance Young}.  For point 2., we shall only check that the Makeenko--Migdal equations are satisfied and leave the other points to the reader. Consider  a regular graph $\Gbb$ of genus $g$ with $\ell\in \Ld(\Gbb)$ and $v\in V_\ell.$  Consider $\delta_v\ell=\ell_1\otimes\ell_2$ and let us show that $\mu_v\phi_\ell=\phi_{\ell_1}\phi_{\ell_2}.$  If $\ell\not\sim_h c_{\und\ell}$, then the rerooting $\ell'$ at $v$ of $\ell$ satisfies  $\ell'\not\sim_h c_{v}.$ Therefore $\ell_1\not\sim_{h}c_v$ or $\ell_2\not\sim_{h}c_v$ and we conclude that $\phi_\ell=\phi_{\ell'}=0=\phi_{\ell_1}\phi_{\ell_2}.$ Assume now $\ell\sim_h c_{\und{\ell}}$. Consider the universal cover $\tilde \Gbb=(\tilde V,\tilde E,\tilde F )$ of $\Gbb$ with projection map $p$.  For all $a\in\Delta_{\Gbb}^o(T),$   
$$\mu_v .\phi_\ell(a)=\mu_v.(\Phi_{\tilde \ell}(\tilde a ))= \sum_{\tilde v\in p^{-1}(v)\cap T_{\tilde\ell}} (\mu_{\tilde v}.\Phi_{\tilde \ell})(\tilde a),$$
where $T_{\tilde \ell}$ is the set of vertices of $\tilde \Gbb$ visited by $\tilde \ell.$ Since $\ell$ is regular, whether  $\# p^{-1}(v)\cap T_{\tilde \ell}=2$ and $\#(V_{\tilde \ell}\cap p^{-1}(v))=0,$ or  $\# (p^{-1}(v)\cap T_{\tilde \ell})=\#( p^{-1}(v)\cap V_{\tilde \ell})=1.$

In the first case,  $\ell_1\not\sim_h c_v$ and 
$\ell_2\not\sim_hc_v$, so that $\phi_{\ell_1}=\phi_{\ell_2}=0.$ Moreover for any  $\tilde v\in p^{-1}(v)\cap T_{\tilde \ell}$ and $e_1,\ldots,e_4\in \tilde E^o$   four  cyclically ordered, outgoing edges at $\tilde v,$  we may assume that $\tilde\ell$ uses $e_1^{-1}$ and $e_3$ while $e_2,e_4\not\in E_{\tilde\ell}.$   Therefore $d\omega_{e_2}.\Phi_{\tilde \ell}=d\omega_{e_4}.\Phi_{\tilde \ell}=0$ and as $\mu_{\tilde v}=\pm (d\omega_{e_2}+d\omega_{e_4}),$ $(\mu_{\tilde v}.\Phi_{\tilde \ell})(\tilde a) =0=\phi_{\ell_1}(a)\phi_{\ell_2}(a).$ 

In the second case, for  $\tilde v\in V_{\tilde \ell}\cap p^{-1}(v)=T_{\tilde \ell}\cap p^{-1}(v)$, by definition of the universal cover, $\ell_1\sim_h c_{v} \sim_h \ell_2$. Then $\delta_{\tilde v} \tilde\ell'= \tilde \ell_1\otimes\tilde \ell_2,$ where  $\tilde \ell_1,\tilde \ell_2$ are lift with initial condition $\tilde v,$  so that using 3. a) of Theorem \ref{-->THM Conv Planar Master Field}, we get
\[ (\mu_{\tilde v}.\Phi_{\tilde \ell})(\tilde a)= \Phi_{\tilde \ell_1}(\tilde a)\Phi_{\tilde \ell_2}(\tilde a)=\phi_{\ell_1}(a)\phi_{\ell_2}(a). \]
\end{proof}

The main technical result of this article is the proof of the following uniqueness statements. Denote by $\mathfrak{L}_{g}$ the space of regular loops of regular maps of genus $g\ge 1.$ Let us say that a subset $\mathcal{F}$ of $\mathfrak{L}_g$ is a \emph{good boundary condition of the Makeenko--Migdal equations} if  for any pair $\phi^\infty$ and $(\phi^N)_{N\ge 1}$ made of an exact and an approximate solutions of Makeenko--Migdal equations,
\begin{equation}
\lim_{N\to\infty} \|\phi^N_\ell-\phi^\infty_\ell\|_\infty+ \|\scV_{\phi^N,\ell}\|_\infty=0,\ \forall \ell\in \mathcal{F} 
\end{equation}
implies 
\begin{equation}
\lim_{N\to\infty} \|\phi^N_\ell-\phi^\infty_\ell\|_\infty+\|\scV_{\phi^N,\ell}\|_\infty=0,\ \forall \ell\in \mathfrak{L}_g.
\end{equation}
Setting
\begin{equation}
\Psi_\ell^N= \phi^N_{(\ell-\phi^\infty_\ell c)\otimes (\ell-\phi^\infty_{\ell}c)^*}=\scV_{\phi^N,\ell}+|\phi^N_\ell-\phi^\infty_\ell|^2,\label{eq:Psi}
\end{equation}
where $c$ is the constant loop at $\und \ell,$ this is equivalent to 
$$\lim_{N\to\infty}\|\Psi^N_\ell\|_\infty=0,\ \forall \ell\in\mathcal{F} \Rightarrow \lim_{N\to\infty}\|\Psi^N_\ell\|_\infty=0,\ \forall \ell\in\mathfrak{L}_g. $$

\begin{prop} \label{-->Prop: MM  Gen} For any genus $g\ge 1$ and total volume  $T>0$, the family of loops $\ell\in \mathfrak{L}_g$ with a sub-path $\g$ such that $(\ell,\g)$ is a marked loop  and $(\ell,\g)^\wedge$ is geodesic, is a good boundary condition. 
\end{prop}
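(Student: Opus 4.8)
The plan is to prove Proposition \ref{-->Prop: MM Gen} by induction on the complexity function $\mathcal{C}^{\mfk m}$ on marked loops defined in Section \ref{----sec:Vertex desingularisation}, using the machinery of shortening homotopy sequences (Proposition \ref{__Prop: Shortening Homotopies} and Lemma \ref{__Lem: Shortening Homotopies ML}) together with the pull and twist moves to respect the algebraic-area constraint imposed by Makeenko--Migdal equations for loops of vanishing homology. More precisely, it suffices to show that for $\mathcal{F}$ the family of loops admitting a marking $\g$ with $(\ell,\g)^\wedge$ geodesic, the quantity $\Psi^N_\ell$ of \eqref{eq:Psi} can be controlled: assuming $\|\Psi^N_\ell\|_\infty\to 0$ for all $\ell\in\mathcal{F}$, we must propagate this to all of $\mathfrak{L}_g$. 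The first reduction is to pass from an arbitrary regular loop $\ell$ to a \emph{proper} loop: using de-singularisation at contractible intersection points (Lemma \ref{----lem: desingularisation}, and the remark that de-singularisation of an inner loop yields a loop in a fundamental domain, for which Theorem \ref{-->THM Conv Planar Master Field} / Lemma \ref{__Lem: Conv Planar MF Maps} already gives convergence) combined with the bound \eqref{eq:MM MM Vector} along Makeenko--Migdal vectors, one reduces to proper loops, and then by a further induction on $|\ell|_D$ (lifting to the universal cover and applying \eqref{eq:MM MM Vector}) one reduces to marked loops $(\ell,\g_{nest})$ with $\ell^\wedge$ proper.

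\textbf{Second, the core induction.} Given a marked loop $x = (\ell,\g_{nest})$ with $\ell^\wedge$ proper, apply Lemma \ref{__Lem: Shortening Homotopies ML} to obtain a shortening homotopy sequence $\ell_1 \sim_c \ell, \ell_2,\ldots,\ell_m$ with a common nested sub-path $\g_{nest}$, proper sets $K_k \varsubsetneq F$ with $\ell_k^{\wedge_*}\sim_{K_k}\ell_{k+1}^{\wedge_*}$, and with $(\ell',\g_{nest}')$ at the end having ${\ell'}^\wedge$ geodesic, i.e. $\ell'\in\mathcal{F}$. The key point is to turn each homotopy step $\ell_k\leadsto\ell_{k+1}$ into a path in the area simplex $\Delta_\Gbb(T)$ along which $\Psi^N$ is controlled. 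Since $\ell_k^{\wedge_*}\sim_{K_k}\ell_{k+1}^{\wedge_*}$, property 5 of a Wilson loop system shows $\phi_{\ell_k}$ and $\phi_{\ell_{k+1}}$ (and the corresponding $\otimes$-quantities) agree on the face $\Delta_{K_k,\Gbb}(T)$; by uniform continuity and the differential inequalities \eqref{eq:MM Wilson S E}--\eqref{eq:MM MM Vector Variance Young}, integrating along a path from a generic point of $\Delta_\Gbb(T)$ into that face and back out, one bounds $\|\Psi^N_{\ell_k}\|_\infty$ in terms of $\|\Psi^N_{\ell_{k+1}}\|_\infty$, the $\Psi^N$ of the de-singularisations $\delta_v x$ appearing along the way (which have strictly smaller complexity by Lemma \ref{----lem: desingularisation}, hence are handled by the induction hypothesis), and an $O(1/N)$ error. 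Here is where the pull and twist moves enter: when $[\ell]_R = 0$, the allowed deformations must preserve the algebraic area $\langle \cdot, n_\ell\rangle$ (Lemma \ref{__Lem: Charac MM}), and the homotopy inflating a face to kill $|\ell|_D$ need not preserve it; one first performs a pull along a dual path followed by an appropriate $n$-twist to create a compensating "stem" face, so that the combined deformation lies in $\mfm_\ell$. For loops of non-zero homology this subtlety disappears. Summing the estimates along $k=1,\ldots,m$ and using $\ell'\in\mathcal{F}$ as the base of the induction closes the argument.

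\textbf{The main obstacle} I expect is the bookkeeping in the inductive step: one must verify that every loop produced as a de-singularisation $\delta_v x_k$ along the shortening sequence — at intersection points of the nested part, of the geodesic/generic part, or at contractible points — is again a marked loop (via the convention \eqref{eq-------Conv Desing MarkedL}) of strictly smaller $\mathcal{C}^{\mfk m}$, \emph{and} that the area-simplex path chosen to witness $\ell_k^{\wedge_*}\sim_{K_k}\ell_{k+1}^{\wedge_*}$ only ever desingularises at such controlled vertices, never at a vertex whose desingularisation fails to be a valid marked loop or whose complexity is not smaller. The nested structure of $\g_{nest}$ is exactly what makes this work: desingularising inside the nested part peels off a contractible loop in a fundamental domain (complexity drops through $\#V_c$), while the "moving edge" and "outer face" conditions guarantee the pull/twist-modified maps still satisfy the five defining conditions of a marked loop. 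Controlling the variance quantities $\scV_{\phi^N,\ell}$ simultaneously — they satisfy their own Grönwall-type inequalities \eqref{eq:MM Wilson S Var} and \eqref{eq:MM MM Vector Variance Young} which are quadratic/self-referential — requires running the induction on $\Psi^N = \scV + |\phi^N - \phi^\infty|^2$ as a single object, and checking that the right-hand sides of those inequalities, after substituting the induction hypothesis for the smaller loops, still close up; I would treat this by first establishing a uniform a priori bound $\scV_{\phi^N,\ell}\le 1$ (from positivity, point 6 of the Wilson loop system definition) and then a bootstrap along each homotopy segment. A secondary technical point is the passage, at the very end, from regular loops back to all finite-length loops, which is handled by Proposition \ref{-->:Prop Extension Piecewise Geo} and Lemma \ref{__Lem:Back to regular path} exactly as in \cite{DN} and needs no new idea.
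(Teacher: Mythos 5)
Your overall architecture --- first reduce to proper marked loops by deleting contraction points, then follow a shortening homotopy sequence until the pruning is geodesic, using pull and twist moves so that the relevant deformations lie in $\mfm_\ell$ --- is the same as the paper's (Proposition \ref{--->Prop: Marked Loops Induction}). The gap is in the step you use to close each homotopy segment. You assert that every de-singularisation $\delta_v x$ encountered when integrating along the area-simplex path has strictly smaller complexity $\mathcal{C}^{\mathfrak{m}}$ and is therefore covered by the induction hypothesis. That is true only for $v\in V_{\ell^\wedge}$ (point 2 of Lemma \ref{----lem: desingularisation}). At an intersection point $v_k$ of the nested part one has $\delta_{v_k}\ell=\a_k\otimes\ell_k$, where $\a_k$ is an inner loop (harmless, $|\a_k|_D=0$), but $\ell_k$ is $\ell$ with some edges of the nested part erased: it has the same cut $\ell^{\wedge_*}$, hence by point 2 of Lemma \ref{----lem: desingularisation} the \emph{same} complexity $\mathcal{C}^{\mathfrak{m}}$ as $\ell$, and neither its tiling length nor the contractible points of its pruning decrease. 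Since the whole purpose of the marked-loop device is to tolerate many such nested intersection points, these terms cannot be absorbed into your induction hypothesis, and the differential inequality you write for $\Psi^N_{\ell_j}$ does not close. (Your remark that ``complexity drops through $\#V_c$'' when desingularising inside the nested part confuses $\#V_{c,\ell}$ with $\#V_{c,\ell^\wedge}$, which is the quantity entering $\mathcal{C}^{\mathfrak{m}}$.)

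The paper's resolution of exactly this point is what is missing from your proposal: in Lemma \ref{__Lem:MM Gronwall}, after the twist move has made the winding number maximal at the central face (so that for every $a$ in the half-simplex there is an area vector $a'$ supported on two faces with the same algebraic area, whence $X=a'-a\in\mfm_\ell$), the path is arranged so that the faces $F_k$ bounded by the nested part carry no $a$-area; then $\ell_k\sim_{F_k}\ell$ lets one rewrite each $\Psi^N_{\ell_k}$ along the path as $\Psi^N_{\ell}$ itself evaluated at a shifted area vector, see \eqref{eq: LA restriction pour Gronwall I}, so the self-referential terms are controlled by the running supremum $H^N_a(t)=\sup_{b}\Psi^N_\ell(ta+b)$ and a Gr\"onwall inequality closes the estimate before any induction on $\mathcal{C}^{\mathfrak{m}}$ is invoked. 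Your only appeal to a Gr\"onwall-type mechanism concerns the quadratic variance inequalities, for which you propose an a priori bound and a bootstrap; that addresses a different and minor issue and is no substitute for the re-indexing of $\Psi^N_{\ell_k}$ as $\Psi^N_\ell$ at moved area vectors. Without this ingredient the induction, as you set it up, fails precisely at the loops (such as the example of Figure \ref{Fig----Homotopie Raquette 2}) that the proposition is designed to handle.
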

Denote by $\mathfrak{L}^*_{g}$ the subset of $\mathfrak{L}_g$ of loops $\ell$ with $[\ell]_\Z\not=0.$ Let us say that a subset $\mathcal{F}^*$ of $\mathfrak{L}_g^*$ is a \emph{good boundary condition in homology} if for any pair $\phi^\infty$ and $(\phi^N)_{N\ge 1}$ made of an exact and an approximate solution of Makeenko--Migdal equations, using the same notation as in \eqref{eq:Psi},
$$\lim_{N\to\infty}\|\Psi^N_\ell\|_\infty=0,\ \forall \ell\in\mathcal{F}^* \Rightarrow \lim_{N\to\infty}\|\Psi^N_\ell\|_\infty=0,\ \forall \ell\in\mathfrak{L}_g^*. $$
The following can be proven independently from Proposition \ref{-->Prop: MM  Gen}. 

\begin{prop} \label{-->Prop: MM  Gen Non Zero Homol} For any genus $g\ge 1$ and  total volume $T>0$, the family of geodesic loops in $\mathfrak{L}^*_g$ is a good boundary condition in homology.
\end{prop}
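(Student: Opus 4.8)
The overall strategy is a double induction that mirrors the informal description given in Section \ref{-----sec:Strategy}: an outer induction on the number of intersection points $\#V_\ell$ and, nested inside it, an induction on the complexity $\mathcal{C}(\ell)=|\ell|_D+\#V_{c,\ell}$ introduced in Section \ref{----sec:Vertex desingularisation}, restricted to the class $\mathfrak{L}_g^*$ of loops of non-zero homology. Fix a pair $(\phi^\infty,(\phi^N))$ consisting of an exact and an approximate solution, and assume $\|\Psi^N_\ell\|_\infty\to 0$ for every geodesic $\ell\in\mathfrak{L}_g^*$. I want to show $\|\Psi^N_\ell\|_\infty\to 0$ for all $\ell\in\mathfrak{L}_g^*$. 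The base of the outer induction is $\#V_\ell=0$: a proper loop with no intersection point and non-zero homology; its image in the universal cover is an embedded path crossing fundamental domains, and here one reduces to a geodesic by the shortening homotopy sequence of Proposition \ref{__Prop: Shortening Homotopies} (or the $g\ge2$ results of \cite{BiS}), using that at each step $|\ell_k|_D$ does not increase, the key point being that when $[\ell]_R\ne 0$ \emph{every} area-preserving deformation is an admissible Makeenko--Migdal deformation by Lemma \ref{__Lem: Charac MM}. Concretely, along a shortening homotopy $\ell_k\sim_{K_k}\ell_{k+1}$ with $K_k\subsetneq F$, the vector realizing the homotopy lies in $\mfm_{\ell_k}=\{\mu_*\}^\perp$, so \eqref{eq:MM MM Vector} and \eqref{eq:MM MM Vector Variance CS}–\eqref{eq:MM MM Vector Variance Young} let one integrate the differential inequalities along the deformation: on the face $\Delta_{K_k,\Gbb}(T)$ the two sides agree, and one propagates the smallness of $\Psi^N$ from that face into the interior by a Gronwall-type argument along straight segments in the simplex.

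For the inductive step I de-singularise at a well-chosen intersection point. Given $\ell\in\mathfrak{L}_g^*$ with $\#V_\ell>0$, first apply the shortening homotopy sequence of Proposition \ref{__Prop: Shortening Homotopies} together with the desingularisation bookkeeping of Lemma \ref{----lem: desingularisation}: I claim one can reduce to a \emph{proper} loop with non-zero homology without increasing $\#V_\ell$. The mechanism is that at a contractible intersection point $v\in V_{c,\ell}$ one has $\delta_v\ell=\ell_1\otimes\ell_2$ with $\ell_2$ an inner loop (hence $[\ell_2]_R=0$), so $[\ell_1]_R=[\ell]_R\ne 0$ and $\mathcal{C}(\ell_1)<\mathcal{C}(\ell)$; using \eqref{eq:MM MM Vector} applied to a vector in $\mfm_\ell$ that inflates the bulk of $\ell_2$ while preserving the total area, the contribution of $\ell_2$ is controlled by $\Psi^N_{\ell_1}$ and $\Psi^N_{\ell_2}$, both already handled by the complexity induction (the loop $\ell_2$ is contractible and lives in a disc, so $\|\Psi^N_{\ell_2}\|_\infty\to0$ by Theorem \ref{-->THM Conv Planar Master Field} and the absolute-continuity input of \cite{DL} via Theorem \ref{-->TH: CV MF Disc}). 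Once $\ell$ is proper with $[\ell]_R\ne0$, pick an intersection point $v$ and write $\delta_v\ell=\ell_1\otimes\ell_2$. The crucial structural fact, already highlighted in Section \ref{-----sec:Strategy} and recorded in Lemma \ref{----lem: desingularisation}, is that both $\ell_1$ and $\ell_2$ have non-zero homology and each visits strictly fewer fundamental domains — so $|\ell_i|_D<|\ell|_D$ for $i=1,2$, and both $\ell_1,\ell_2\in\mathfrak{L}_g^*$ with strictly smaller complexity. Hence the inner induction hypothesis gives $\|\Psi^N_{\ell_1}\|_\infty,\|\Psi^N_{\ell_2}\|_\infty\to0$; in particular $\phi^N_{\ell_1}\to\phi^\infty_{\ell_1}=0$ and likewise for $\ell_2$, since $\phi^\infty$ vanishes on non-contractible loops.

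It remains to turn this into control of $\Psi^N_\ell$ itself. Here I use exactly the differential inequalities \eqref{eq:MM MM Vector}, \eqref{eq:MM MM Vector Variance CS} and \eqref{eq:MM MM Vector Variance Young}: choosing $X\in\mfm_\ell$ appropriately (legitimate since $\mfm_\ell=\{\mu_*\}^\perp$ when $[\ell]_R\ne0$), one flows from a face of the simplex — where $\ell$ becomes a geodesic of non-zero homology, handled by the boundary hypothesis, or degenerates to a lower-complexity loop — into $\Delta^o_\Gbb(T)$. Along this flow the derivative of $\phi^N_\ell-\phi^\infty_\ell$ is $O(1/N)$ plus $\sum_v\alpha_v(X)(\phi^N_{\ell_1}\phi^N_{\ell_2}-\phi^\infty_{\ell_1}\phi^\infty_{\ell_2})$, which is $o(1)$, and the derivative of $\scV_{\phi^N,\ell}$ is bounded by $\scV_{\phi^N,\ell}$ itself plus $\sum_v(\scV_{\phi^N,\ell_1}+\scV_{\phi^N,\ell_2})+O(1/N)$; Gronwall then yields $\|\scV_{\phi^N,\ell}\|_\infty\to0$ and, combined with the first estimate, $\|\Psi^N_\ell\|_\infty\to0$, closing the induction. \textbf{The main obstacle} I anticipate is the reduction to a \emph{proper} loop of non-zero homology while keeping $\#V_\ell$ from growing: the shortening homotopy operations of Section \ref{====Sec: Shortening Homotopy sequence} (vertex switches, backtrack erasures, deletion of contraction points) a priori create or destroy intersection points, and one must verify — using the nested/marked-loop formalism of Section \ref{-----sec:Nested Marked Loops} and the complexity $\mathcal{C}^{\mfk m}$ — that each move either strictly decreases the complexity or leaves the homology class non-trivial, so that the whole process terminates and never leaves $\mathfrak{L}_g^*$. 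This is precisely where the hypothesis $[\ell]_R\ne 0$ is used decisively: it removes the algebraic-area constraint that, for null-homologous loops, forces the much more delicate pull/twist machinery of the general Proposition \ref{-->Prop: MM  Gen}.
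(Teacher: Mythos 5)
Your overall geometry is the right one (for $[\ell]\ne 0$ every total-area-preserving deformation lies in $\mfm_\ell$ by Lemma \ref{__Lem: Charac MM}; reduce to proper loops, then follow a shortening homotopy sequence to a geodesic and integrate the Makeenko--Migdal inequalities from a face of the simplex), but there is a genuine gap at the heart of your inductive step. You assert that for a proper loop of non-zero homology, de-singularisation at \emph{any} intersection point produces two loops that \emph{both} have non-zero homology. This is false: since $[\ell]=[\ell_1]+[\ell_2]$, only \emph{at least one} of them is guaranteed to be non-trivial --- this is exactly what Lemma \ref{----lem: desingularisation} states, and the sentence in the introduction you quote is an informal overstatement (a proper loop of non-zero homology can have an intersection point whose de-singularisation splits off a null-homologous excursion through neighbouring fundamental domains). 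As a consequence your Gr\"onwall step does not close: it is based on the Young-type bound \eqref{eq:MM Psi Young}, whose right-hand side contains $\Psi^N_{\ell_{v,1}}+\Psi^N_{\ell_{v,2}}$ for every $v$, and when one of the two pieces is null-homologous the induction hypothesis --- which lives entirely inside $\mathfrak{L}_g^*$ --- gives no control on it. Your attempted repair, invoking Theorems \ref{-->THM Conv Planar Master Field} and \ref{-->TH: CV MF Disc} to control $\Psi^N$ on contractible pieces (inner loops, loops in a disc), is not legitimate here: the proposition quantifies over \emph{arbitrary} pairs of exact and approximate solutions of the Makeenko--Migdal equations, with the sole input $\|\Psi^N_\ell\|_\infty\to 0$ for geodesic loops of non-zero homology, so convergence results about the actual Yang--Mills measure cannot be imported.

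The paper's proof is organised precisely to avoid ever needing information on null-homologous loops. In Lemma \ref{__Lem:MMStrict} one uses the Cauchy--Schwarz-type bound \eqref{eq:MM Psi CS}: at each intersection point only \emph{one} of the two factors has to vanish, namely the one attached to the de-singularised loop of non-zero homology and strictly smaller complexity, for which moreover $\phi^\infty=0$; the other factor is merely bounded (boundedness being automatic from the state property of a Wilson loop system). The conclusion then follows by a direct integration along $X=a-a'\in\mfm_\ell$ from the face $\Delta_{K,\Gbb}(T)$ --- no Gr\"onwall inequality is needed in the homologically non-trivial case (Gr\"onwall enters only in the null-homology argument, Lemma \ref{__Lem:MM Gronwall}). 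Finally, the induction is not on the total number of intersection points $\#V_\ell$, which the shortening homotopy moves do not control, but on the complexity $\mathcal{C}(\ell)=|\ell|_D+\#V_{c,\ell}$ (to erase inner loops and reach proper loops) and then on $|\ell|_D$ for proper loops along the sequence of Proposition \ref{__Prop: Shortening Homotopies}; see Proposition \ref{Prop------------Boundary Cond non zero homol} and the remark following it, which points out that an induction on the number of vertices would require enlarging the boundary condition by simple loops of non-vanishing homology. Also note that ``proper'' means $\#V_{c,\ell}=0$, not $\#V_\ell=0$, so your base case is not the one the reduction actually produces.
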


When $g=1,$ for any loop $\ell\in\ell_g$,  $\ell\sim_hc_{\und{\ell}}$ if and only if $[\ell]_\Z=0$ and any geodesic loop  is of the form $s^d$ where $s$ 
is a simple loop and $d\ge 1$. Therefore the  Proposition \ref{-->Prop: MM  Gen Non Zero Homol} and \ref{-->Prop: MM  Gen} have the following consequence. 

\begin{coro} \label{-->Prop: MM  Torus} Consider $g=1 ,$  $T>0,$ the set of regular loops $\ell\in \mathfrak{L}_g$ such that  $|\ell|_D=0$ or $\ell=s^d$ for some simple loop $s$ and some  integer $d\ge 1$ is a good boundary 
condition. \end{coro}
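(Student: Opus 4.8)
The plan is to deduce Corollary \ref{-->Prop: MM Torus} directly from the two uniqueness statements that precede it, namely Proposition \ref{-->Prop: MM Gen Non Zero Homol} (good boundary condition in homology) and Proposition \ref{-->Prop: MM Gen} (good boundary condition). The key is a dichotomy for regular loops $\ell\in\mathfrak{L}_1$ according to whether $[\ell]_\Z=0$ or $[\ell]_\Z\neq 0$; since the torus has abelian fundamental group $\Z^2$, the homology and homotopy classes coincide, so $[\ell]_\Z=0$ if and only if $\ell\sim_h c_{\und\ell}$.

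First I would treat the loops with non-zero homology. By Proposition \ref{-->Prop: MM Gen Non Zero Homol}, the family of geodesic loops in $\mathfrak{L}_1^*$ is a good boundary condition in homology. On the torus every non-contractible geodesic loop (in the sense of the tiling-path formalism of Section \ref{----sec:DiscreteFundamentalCover}, allowing iterates) is of the form $s^d$ for a simple closed geodesic $s$ and $d\ge 1$: this follows from the elementary $g=1$ structure of geodesics, where a geodesic tiling path in $\Z^2$ is a line of rational slope and the corresponding geodesic loop is an iterate of the primitive simple loop in that direction. Hence the set $\{s^d : s \text{ simple}, d\ge 1\}\cap\mathfrak{L}_1^*$ contains all geodesic loops in $\mathfrak{L}_1^*$, so it is itself a good boundary condition in homology. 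Therefore, if convergence holds on this set, it holds for all $\ell\in\mathfrak{L}_1^*$.

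Next I would treat the loops with zero homology. Such $\ell$ satisfy $\ell\sim_h c_{\und\ell}$, so by definition of a proper loop the tiling length controls the situation. Here I invoke Proposition \ref{-->Prop: MM Gen}: the family of loops $\ell\in\mathfrak{L}_1$ possessing a sub-path $\g$ with $(\ell,\g)$ a marked loop and $(\ell,\g)^\wedge$ geodesic is a good boundary condition. I would observe that when $[\ell]_\Z=0$ the marked-loop machinery, combined with Lemma \ref{__Lem: Shortening Homotopies ML} and Lemma \ref{----lem: desingularisation}, reduces convergence for such $\ell$ to convergence for loops $\ell'$ with $|\ell'|_D=0$ (i.e. loops whose lift lies in a fundamental domain) together with loops of the form $s^d$. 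More precisely, one runs the uniqueness argument of Proposition \ref{-->Prop: MM Gen} and notes that the boundary condition it requires — a marked loop whose pruning is geodesic — on the torus is reached only at loops that either sit in a single fundamental domain ($|\ell|_D=0$) or are iterates $s^d$ of a simple loop. Combining this with the non-zero homology case, I conclude that the union $\{\ell\in\mathfrak{L}_1 : |\ell|_D=0\}\cup\{s^d : s \text{ simple}, d\ge 1\}$ is a good boundary condition for all of $\mathfrak{L}_1$.

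The main obstacle I anticipate is the bookkeeping needed to pass from the abstract ``good boundary condition'' statements to the explicit set named in the corollary: one must verify that every loop appearing as a boundary case in Proposition \ref{-->Prop: MM Gen} (a marked loop with geodesic pruning) and in Proposition \ref{-->Prop: MM Gen Non Zero Homol} (a geodesic loop of non-zero homology) is, on the genus-one surface, either contained in a fundamental domain or an iterate of a simple loop. This rests on the classification of geodesics on the flat torus and on checking that the nested/marked structure cannot produce new exceptional loops when $g=1$, since the fundamental domain is a single square and a nested part confined to it contributes only $|\ell|_D=0$ behaviour. Once that identification is made, the corollary follows formally by transitivity: a good boundary condition contained in (the convergence-closure of) another good boundary condition is again good, and the explicit set in the statement sandwiches the abstract ones.
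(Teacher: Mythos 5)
Your deduction follows the paper's own route: the corollary is obtained exactly by combining Proposition \ref{-->Prop: MM  Gen Non Zero Homol} and Proposition \ref{-->Prop: MM  Gen} with the two genus-one facts that $[\ell]_\Z=0$ if and only if $\ell\sim_h c_{\und\ell}$ and that every non-constant geodesic loop of the torus is $s^d$ with $s$ simple, so both the approach and the conclusion agree with the paper.

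One intermediate claim should nevertheless be repaired, even though it does not affect the outcome. It is not true that at $g=1$ every loop in the boundary family of Proposition \ref{-->Prop: MM  Gen} (a marked loop whose pruning is geodesic) either sits in a fundamental domain or equals $s^d$: attaching a nested part to a non-trivial closed geodesic gives a boundary loop of neither form. The correct bookkeeping, which your phrase ``combining this with the non-zero homology case'' implicitly performs but never makes explicit, is the following. (i) Since all geodesic loops with non-zero homology are of the form $s^d$, hence lie in your set, Proposition \ref{-->Prop: MM  Gen Non Zero Homol} already gives convergence on all of $\mathfrak{L}_1^*$, and this covers every boundary loop of Proposition \ref{-->Prop: MM  Gen} with $[\ell]_\Z\neq 0$ (in particular the nested-plus-geodesic hybrids above). (ii) If instead such a boundary loop satisfies $[\ell]_\Z=0$, then its pruning is a geodesic loop of zero homology, which on the flat torus must be constant (its lift is a straight segment with equal endpoints); since pruning removes only the nested part inside one tile and so does not change the tiling length, this forces $|\ell|_D=0$, i.e.\ the loop belongs to your set. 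With this observation the passage from the abstract boundary sets of the two propositions to the explicit set in the corollary is complete; the auxiliary lemmas you cite (the shortening-homotopy and desingularisation lemmas) are internal to the proofs of those propositions and are not needed separately here.
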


\begin{proof}[Proof of Theorem \ref{-->THM Wilson Loops Higher Genus} and Proposition \ref{-->Prop Wilson Loops Non zero Homol}] Since $L^2$ convergence implies convergence in probability, both statements follow from Lemma \ref{__Lem:Existence} and of respectively proposition \ref{-->Prop: MM  Gen} and \ref{-->Prop: MM  Gen Non Zero Homol}. 
\end{proof}

\begin{proof}[Proof of Theorem \ref{-->THM: Torus}]

Using the  solutions given by 1. and 2. of Lemma \ref{__Lem:Existence}, Theorem \ref{-->THM: CUT Conv} implies that the boundary condition of corollary \ref{-->Prop: MM  Torus} are satisfied. Therefore the convergence in 
probability holds true for any regular loops.  Using Lemma \ref{__Lem:Back to regular path},  it follows that the convergence holds for all $\g\in A(\Sigma)\cap \Ld(\Sigma).$ When $\g\in A(\Sigma)\setminus \Ld(\Sigma),$ under  $\YM_{\Sigma},$
$W_\g$ is  Haar distributed, so that $\E_{\YM_{\Sigma}}[|W_\g|^2]\to0 $ as $N\to\infty$ by \cite{DiaconisEvans}. To prove the convergence in probability for any path of finite length,   it is now enough to combine the area bound  \eqref{eq:Bound area YM Bd}  with Proposition \ref{-->:Prop Extension Piecewise Geo}. 
The uniqueness claim is proved identically considering in place of the above approximate solution, a constant sequence given by an exact solution.
\end{proof}

\section{Proof of the main result, stability of convergence under deformation}

In this section we give a proof  first of Proposition \ref{-->Prop: MM  Gen Non Zero Homol}, then of Proposition \ref{-->Prop: MM  Gen}. We consider  exact and  approximate solutions $\phi^\infty$ and $(\phi^N)_{N\ge 1}$ of Makeenko--Migdal equations in genus $g\ge 1$ and volume $T>0$, define $\Psi^N$ as in \eqref{eq:Psi} and consider the subset $\mathfrak{B}_{g}\subset\ell_g$ of loops  $\ell$ with map $\Gbb,$ satisfying
\begin{equation}
\Psi^N_\ell\underset{N\to\infty}{\to}0 \text{ uniformly on } \Delta_\Gbb(T).\label{eq:MC}
\end{equation}
Our aim  is to find a small subset $\mathfrak{C}_g$ of loops in $\ell_g,$ such that $\mathfrak{C}_g\subset \mfB_g$ implies $\mathfrak{B}_g=\ell_g.$  In the first and second second sections, we shall use respectively the following bounds. Thanks to \eqref{eq:MM MM Vector}, \eqref{eq:MM MM Vector Variance CS} and \eqref{eq:MM MM Vector Variance Young}, using the same notation, for any $\ell\in\mathfrak{L}_g$ and $X\in \mfm_\ell,$ 
\begin{equation}
|X.\Psi^N_\ell|\le\|X\|C'_\ell \left( \sum_{v\in V_\ell} \left(\sqrt{\Psi^N_{\ell_{v,1}}}+ |\phi^\infty_{\ell_{v,1}}| \right)\left(\sqrt{\Psi^N_{\ell_{v,2}}}+|\phi^\infty_{\ell_{v,2}}|\right)+ \frac{1}{N}\right) \label{eq:MM Psi CS}
\end{equation}
and
\begin{equation}
|X.\Psi^N_\ell|\le\|X\|C'_\ell \left( \Psi^N_{\ell}+\sum_{v\in V_\ell} (\Psi^N_{\ell_{v,1}}+ \Psi^N_{\ell_{v,2}})+ \frac{1}{N}\right) \label{eq:MM Psi Young}
\end{equation}
where $C'_\ell>0$ is a constant independent of $N\ge 1.$

\subsection{Non-null homology loops}

\label{-----sec:Section Non-null Homology Proofs}
Let us denote by $\mfB_g^*$ the subset $\mfB_g\cap\mathfrak{L}^*_g$. The purpose of this section is to prove proposition \ref{-->Prop: MM  Gen Non Zero Homol}. It is equivalent to the following statement.

\bth \label{-->TH:  Homotopy for non null homology}Denote by $\mfC^*_g$ the subset of $\ell_g^*$ of regular loops with non-zero homology which are  geodesic. If $\mfC^*_g\subset\mfB_g^*$, then $\mfB_g^*=\ell_g^*.$
\eth

The proof of this Theorem  hinges on the following application of Makeenko--Migdal equations, similarly to the argument of \cite{DN,Hal2}. 

\begin{lem}\label{__Lem:MMStrict} Let $\ell,\ell'\in \ell_g^*$ be two loops of a regular map $\Gbb$ with faces set $F$, such that there is $K\subset F$ with $K\not=F$ and $\ell\sim_K\eta\ell'\eta^{-1}$ where $\eta$ is a path with 
$\und{\eta}=\und{\ell}'$ and $\ov{\eta}=\und{\ell}.$  Assume that $\ell'\in\mfB_g^*$ and that for any $v\in V_\ell,$
\begin{equation}
(\delta_v(\ell)=\ell_1\otimes\ell_2 )\Rightarrow (\ell_1 \text{ or } \ell_2 \text{ belongs to } \mfB_g^*).\label{---eq: Stab Conv deSing}
\end{equation}
Then $\ell\in\mfB_g^*.$
\end{lem}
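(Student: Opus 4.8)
The plan is to prove Lemma \ref{__Lem:MMStrict} by integrating the Makeenko--Migdal equation along a path in the area simplex that shrinks the faces of $K$ to zero, so that the estimate at the far end comes from the hypothesis $\ell'\in\mfB_g^*$ together with the invariance property in point 5 of a Wilson loop system (Lemma \ref{__Lem:YM Faces simplex}) and the conjugation-invariance of Wilson loops. First I would fix a regular map $\Gbb=(V,E,F)$ carrying $\ell$ and $\ell'$ with $\ell\sim_K\eta\ell'\eta^{-1}$ and $K\ne F$, and recall from Lemma \ref{__Lem: Charac MM} that since $[\ell]_\Z\ne 0$ the space $\mfm_\ell$ of Makeenko--Migdal vectors is exactly $\{\mu_*\}^\perp\subset\Omega^2(\Gbb,R)$; in particular $\mfm_\ell$ contains every area-preserving direction, so there is $X\in\mfm_\ell$ pointing from a generic $a\in\Delta_\Gbb^o(T)$ toward the face $\Delta_{K,\Gbb}(T)$ — concretely, one may take a convex-combination path $a(s)=(1-s)a+s\,a_K$ with $a_K\in\Delta_{K,\Gbb}^o(T)$ and note that its velocity lies in $\mfm_\ell$ at every interior time.

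Next I would apply the differential inequality \eqref{eq:MM Psi Young}: along this path, $|X.\Psi^N_\ell|\le \|X\|C'_\ell\bigl(\Psi^N_\ell+\sum_{v\in V_\ell}(\Psi^N_{\ell_{v,1}}+\Psi^N_{\ell_{v,2}})+\tfrac1N\bigr)$. By the de-singularisation hypothesis \eqref{---eq: Stab Conv deSing}, for each $v\in V_\ell$ one of $\ell_{v,1},\ell_{v,2}$ lies in $\mfB_g^*$, so its $\Psi^N$ tends to $0$ uniformly; I would bound the other, together with $\Psi^N_\ell$ itself, by the crude uniform bound $\Psi^N_m\le 4$ valid for any loop $m$ (since $\phi^N$ is built from traces of unitaries, $|\phi^N_m|\le 1$, $\phi^N_{m\otimes m^*}\le 1$, hence $\Psi^N_m=\scV_{\phi^N,m}+|\phi^N_m-\phi^\infty_m|^2\le 4$). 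Gronwall-type integration over $s\in[0,1]$ then yields
$$\sup_{a\in\Delta_\Gbb^o(T)}\Psi^N_\ell(a)\le e^{\|X\|C'_\ell}\Bigl(\Psi^N_\ell(a_K)+\sum_{v}\bigl(\Psi^N_{\ell_{v,1}}(a_K)+\Psi^N_{\ell_{v,2}}(a_K)\bigr)+\tfrac{C}{N}\Bigr)+o(1),$$
where the $o(1)$ collects the contributions of the $\mfB_g^*$-members (uniform in $a$). Here the point is that $\Psi^N_\ell$ extends continuously to the closed simplex, so it suffices to control $\Psi^N_\ell$ on $\Delta_{K,\Gbb}(T)$.

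It remains to identify $\Psi^N_\ell$ on $\Delta_{K,\Gbb}(T)$ with something converging to $0$. On $\Delta_{K,\Gbb}(T)$ we have $\ell\sim_K\eta\ell'\eta^{-1}$, so by point 5 of the Wilson loop system axioms $\phi^N_{\ell\otimes\ell^{-1}}=\phi^N_{(\eta\ell'\eta^{-1})\otimes(\eta\ell'\eta^{-1})^{-1}}$ and likewise $\phi^N_\ell=\phi^N_{\eta\ell'\eta^{-1}}$ there; and since traces are conjugation-invariant, $W_{\eta\ell'\eta^{-1}}=W_{\ell'}$ as random variables under $\YM_{\Gbb,a}$, hence $\phi^N_{\eta\ell'\eta^{-1}}=\phi^N_{\ell'}$ and $\Psi^N_{\eta\ell'\eta^{-1}}=\Psi^N_{\ell'}$. (Formally one invokes that for an exact solution $\phi^\infty_{\eta\ell'\eta^{-1}}=\phi^\infty_{\ell'}$ as well, by rerooting invariance of $\Phi_{\R^2}$ on lifts, and for the Yang--Mills approximate solution by the invariance in law of $\YM_{a,\Gbb,v}$ under conjugation.) Therefore $\Psi^N_\ell = \Psi^N_{\ell'}$ on $\Delta_{K,\Gbb}(T)$, and by hypothesis $\ell'\in\mfB_g^*$ this tends to $0$ uniformly. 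Feeding this back gives $\sup_{\Delta_\Gbb(T)}\Psi^N_\ell\to0$, i.e. $\ell\in\mfB_g^*$.

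The main obstacle is the bookkeeping around the endpoint of the homotopy path: one must make sure that the direction $X$ really lies in $\mfm_\ell$ at every interior point (this is where $[\ell]_\Z\ne0$ is essential — via Lemma \ref{__Lem: Charac MM} no algebraic-area constraint obstructs it), that $\Psi^N_\ell$ is genuinely $C^1$ on the open simplex and continuous up to $\Delta_{K,\Gbb}(T)$ so that the integration and the limit are legitimate, and that the $K$-homotopy relation $\ell\sim_K\eta\ell'\eta^{-1}$ together with cyclic/conjugation invariance is used correctly to pass from $\ell$ to $\ell'$ on the boundary face. All the de-singularised loops $\ell_{v,i}$ retain non-zero homology in at least one factor by the second assertion of Lemma \ref{----lem: desingularisation}, but we do not even need that here since \eqref{---eq: Stab Conv deSing} is assumed directly; the only genuinely delicate estimate is the Gronwall step, which is routine once the uniform a priori bound $\Psi^N_m\le4$ is in place.
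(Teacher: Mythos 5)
There is a genuine gap at the Gr\"onwall step, and it comes from your choice of differential inequality. You invoke \eqref{eq:MM Psi Young}, in which the de-singularised loops enter \emph{additively}: the right-hand side contains $\sum_{v}(\Psi^N_{\ell_{v,1}}+\Psi^N_{\ell_{v,2}})$. The hypothesis \eqref{---eq: Stab Conv deSing} only controls \emph{one} of the two loops at each vertex; for the other you explicitly fall back on the crude bound $\Psi^N_m\le 4$. But then the inhomogeneous term in your differential inequality is of size $O(\#V_\ell)$ along the whole path, not $o(1)$, and Gr\"onwall can only return $\sup_a\Psi^N_\ell(a)\le e^{C}\bigl(\Psi^N_{\ell'}(a_K)+O(1)\bigr)$, which does not tend to $0$. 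Your displayed conclusion hides this by evaluating $\Psi^N_{\ell_{v,1}},\Psi^N_{\ell_{v,2}}$ only at the endpoint $a_K$ — Gr\"onwall gives the inhomogeneous term integrated along the path, not at the endpoint — and even at $a_K$ nothing in the hypotheses makes the uncontrolled member $\Psi^N_{\ell_{v,2}}(a_K)$ small, since the $K$-homotopy relates $\ell$ to $\ell'$ but says nothing about the de-singularised loops.

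The paper's proof avoids this entirely by using the multiplicative (Cauchy--Schwarz) bound \eqref{eq:MM Psi CS}, where each vertex contributes the product $\bigl(\sqrt{\Psi^N_{\ell_{v,1}}}+|\phi^\infty_{\ell_{v,1}}|\bigr)\bigl(\sqrt{\Psi^N_{\ell_{v,2}}}+|\phi^\infty_{\ell_{v,2}}|\bigr)$. For the member lying in $\mfB_g^*$ the whole factor vanishes uniformly: $\Psi^N\to 0$ uniformly by hypothesis, and $\phi^\infty=0$ because that loop has non-zero homology, hence is non-contractible and the exact solution vanishes on it. The other factor is merely bounded, so the product, and hence $|X.\Psi^N_\ell|$, is $\le C\|X\|\varepsilon_N$ with $\varepsilon_N\to0$; a single integration of $\Psi^N_\ell$ from $a'=a_K$ (where $\Psi^N_\ell=\Psi^N_{\eta\ell'\eta^{-1}}=\Psi^N_{\ell'}\to0$ by the compatibility on the face $\Delta_{K,\Gbb}(T)$ and conjugation/cyclic invariance) to $a$ concludes, with no Gr\"onwall argument at all. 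The rest of your outline — the choice $X=a-a_K\in\mfm_\ell$ via Lemma \ref{__Lem: Charac MM} using $[\ell]_\Z\neq0$, and the boundary identification $\Psi^N_\ell=\Psi^N_{\ell'}$ on $\Delta_{K,\Gbb}(T)$ — coincides with the paper and is fine; only the derivative estimate needs to be replaced by \eqref{eq:MM Psi CS} together with the observation that $\phi^\infty$ vanishes on the $\mfB_g^*$-member.
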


\begin{proof}[Proof of  Lemma \ref{__Lem:MMStrict}] Setting
\begin{equation}
a'(f)= \left\{\begin{array}{ll} \frac{T}{\#F-\#K}  & \text{ if }f\not\in K,\\ &\\ 0& \text{ if }f\in K.\end{array}\right.\label{eq: one face zero area projection}
\end{equation}
defines an element of $\Delta_K(T).$ According to the compatibility condition 3. of Lemma \ref{__Lem:YM Faces simplex}

and using that $\ell'\in \mfB^*_g$, 
\begin{equation*}
\Psi_{\ell}^N(a')=\Psi_{\eta\ell'\eta^{-1}}^N(a')=\Psi_{\ell'}^N(a')\lto 0 \text{ as }N\to+\infty.\tag{*}
\end{equation*}
Now since $[\ell]\not=0$ and $a,a'\in \Delta_\Gbb(T),$ according to Lemma  \ref{__Lem: Charac MM}, $X=a- a'\in \mfm_{\ell}.$ 

Using the assumption \eqref{---eq: Stab Conv deSing} and the inequality \eqref{eq:MM Psi CS},  each term of the summand vanishes uniformly on $\Delta_\Gbb(T)$ as $N\to\infty$ and  for any $t\in (0,1),$
\begin{equation*}
|\partial_t \Psi_{\ell}^N(a+t X)|=  |X. \Psi_{\ell}^N(a+t X)|\le C_\ell \|X\| \varepsilon_N\le C_\ell(\|a\|+\|a'\|)\varepsilon_N \tag{**}
\end{equation*}
where $\varepsilon_N\to 0.$  
Thanks to the boundary condition (*), we conclude that   
$$\Psi_{\ell}^N(a)= \Psi_{\ell'}^N(a')+\int_0^1 \partial_t \Psi_{\ell}(a'+t X)dt$$
converges  to $0$ uniformly in $a\in \Delta_\Gbb(T),$ as $N\to \infty,$  that is $\ell\in \mfB_g^*.$
\end{proof}

We split the proof Theorem \ref{-->TH:  Homotopy for non null homology} into two steps.  The first one allows to contract inner loops, the second allows to follow a shortening 
sequence from proper loops to loops conjugated to a geodesic. Denote by $\mfP_g^*$ the subset of  $\ell_g^*$ of loops which are  proper or included in a fundamental domain.  Theorem \ref{-->TH:  Homotopy for non null homology} is a direct consequence of the following. 

\begin{prop} \label{Prop------------Boundary Cond non zero homol}
\begin{enumerate}[a)]
\item If $\mfP^*_g\subset\mfB^*_g$, then $\mfB^*_g= \ell_g^*.$ 
\item If $\mfC^*_g\subset\mfB_g^*$, then $\mfP^*_g\subset\mfB^*_g.$
\end{enumerate}
\end{prop}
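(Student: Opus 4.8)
The plan is to prove the two parts by two separate inductions, each using Lemma \ref{__Lem:MMStrict} as the engine, combined with the combinatorial reduction machinery of Section \ref{====Sec: Shortening Homotopy sequence} and the complexity function $\mathcal{C}$ of Section \ref{----sec:Vertex desingularisation}. For part (a), the claim is that $\mfP_g^*\subset\mfB_g^*$ forces $\mfB_g^*=\ell_g^*$; the idea is to argue by induction on the complexity $\mathcal{C}(\ell)=|\ell|_D+\#V_{c,\ell}$ over loops $\ell\in\ell_g^*$. If $\#V_{c,\ell}=0$ then $\ell$ is either proper (hence in $\mfB_g^*$ by hypothesis) or included in a fundamental domain; in the latter case one reduces to the planar master field via Theorem \ref{-->THM: CUT Conv}-type input (note that a loop included in a fundamental domain with nonzero homology is impossible, so actually $\#V_{c,\ell}=0$ alone gives $\ell$ proper). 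If $\#V_{c,\ell}>0$, pick a contractible intersection point $v\in V_{c,\ell}$ and apply Step 1 of Section \ref{====Sec: Shortening Homotopy sequence} (deleting contraction points): there is a proper subset of faces $K\subsetneq F$ and a loop $\ell'$ with all inner loops erased such that $\ell\sim_K\ell'$, and $\ell'$ is a conjugate of a loop with strictly smaller complexity. One must check that at every $v'\in V_\ell$, the desingularisation $\delta_{v'}\ell=\ell_1\otimes\ell_2$ has one of $\ell_1,\ell_2$ of strictly smaller complexity and of nonzero homology — this is exactly point 1 of Lemma \ref{----lem: desingularisation} together with the fact that $[\ell]=[\ell_1]+[\ell_2]$, so at least one $\ell_i$ has $[\ell_i]\neq 0$. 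Then, by the induction hypothesis, that $\ell_i\in\mfB_g^*$, so the hypothesis \eqref{---eq: Stab Conv deSing} of Lemma \ref{__Lem:MMStrict} is satisfied, and $\ell\in\mfB_g^*$.

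For part (b), the claim is that $\mfC_g^*\subset\mfB_g^*$ implies every proper loop lies in $\mfB_g^*$. Here one inducts on the tiling length $|\ell|_D$ (equivalently on the number of fundamental domains visited), as announced in Section \ref{-----sec:Strategy}. For the base case, a proper loop with minimal tiling length and geodesic tiling path is, after Steps 2--5, homotopic-through-faces to a conjugate $\eta\ell'\eta^{-1}$ of a geodesic loop $\ell'$; since $\mfC_g^*\subset\mfB_g^*$ and a nonzero-homology geodesic is in $\mfC_g^*$, Lemma \ref{__Lem:MMStrict} applies once one verifies \eqref{---eq: Stab Conv deSing} — but that verification is itself the induction hypothesis on tiling length, because at an intersection point of a proper loop both desingularised loops have strictly smaller tiling length (point 1 of Lemma \ref{----lem: desingularisation}, inequality \eqref{eq:TilingL Decre}) and nonzero homology. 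For the inductive step, given a proper loop $\ell$ with $|\ell|_D=n$, invoke Proposition \ref{__Prop: Shortening Homotopies}: there is a shortening homotopy sequence $\ell=\ell_1,\dots,\ell_m$ with $|\ell_1|_D\geq\cdots\geq|\ell_m|_D$, ending at $\ell_m\sim_K\eta\ell'\eta^{-1}$ with $\ell'$ geodesic and $K\neq F$. Along each step $\ell_k\sim_{K_k}\ell_{k+1}$ with $K_k\subsetneq F$, apply Lemma \ref{__Lem:MMStrict} going backwards from $\ell_m$ to $\ell_1$: at each stage, the loops $\ell_k$ involved either have strictly smaller tiling length (handled by the outer induction hypothesis) or are the desingularised pieces, whose complexity is strictly smaller by Lemma \ref{----lem: desingularisation}, so they too are in $\mfB_g^*$ by an inner induction on complexity. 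The key point making this work, as emphasized in the strategy section, is that at each intersection point of a proper loop the two de-singularised loops \emph{both} visit strictly fewer fundamental domains, so the induction on tiling length never stalls.

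The main obstacle I expect is bookkeeping the \emph{interleaving} of the three quantities being tracked — tiling length $|\ell|_D$, number of contractible intersection points $\#V_{c,\ell}$, and homology class — so that the induction is genuinely well-founded. Concretely: in Lemma \ref{__Lem:MMStrict} one needs \eqref{---eq: Stab Conv deSing} for \emph{every} $v\in V_\ell$, not just for those responsible for the particular deformation; and when $\ell$ is not itself proper (say it still has contractible intersection points that were not yet removed), one must order the reductions so that Step 1 (contraction-point deletion) and the tiling-length-reducing Steps 2--5 do not fight each other — this is precisely where Lemma \ref{__Lem:Complement Inner} is used to guarantee that the "outer" component $F\setminus K$ stays connected and avoids $\pl E$, so that successive proper-subset-of-faces constraints can be composed. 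A secondary subtlety is the case $g=1$ versus $g\geq 2$: for $g=1$ the geodesic loops are exactly powers of simple loops and the tiling-path arguments are elementary, whereas for $g\geq 2$ one must lean on \cite{BiS} (Lemmas \ref{__Lem: Towards Geodesic Tiling Path}, \ref{__Lem: Towards Geodesics}) for the existence of the shortening sequence; but both are already packaged in Proposition \ref{__Prop: Shortening Homotopies}, so this should not add real difficulty. Finally, one should double-check that the constant $C'_\ell$ in \eqref{eq:MM Psi CS} does not blow up along the (finitely many, but $\ell$-dependent) reduction steps — since the whole argument is uniform in the area vector $a\in\Delta_\Gbb(T)$ but only for \emph{fixed} $\ell$, and each reduction produces only finitely many new loops, this is fine.
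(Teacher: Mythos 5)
Your proposal follows essentially the same route as the paper: part a) by induction on the complexity $\mathcal{C}$, deleting contraction points and using Lemma \ref{----lem: desingularisation} to verify hypothesis \eqref{---eq: Stab Conv deSing} before applying Lemma \ref{__Lem:MMStrict}, and part b) by induction on the tiling length along the shortening homotopy sequence of Proposition \ref{__Prop: Shortening Homotopies}, closing each step with Lemma \ref{__Lem:MMStrict} (your ``inner induction on complexity'' for the non-proper de-singularised pieces is exactly the graded re-use of the argument of a) that the paper leaves implicit). The only slip is the parenthetical claim in b) that \emph{both} de-singularised loops have non-zero homology: in general only one of them need have, which is all that \eqref{---eq: Stab Conv deSing} requires, so the argument is unaffected.
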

\begin{proof} Let us recall the definition of $\mathcal{C}$ above Lemma \ref{----lem: desingularisation}.  
Let us prove first point a). Assume $\mfP^*_g\subset\mfB^*_g$ and introduce for any $n\ge0$ the subset  $\ell_{n,g}^*$ of loops  $\ell\in\ell_g^*$ with $\mathcal{C}(\ell)\le n.$ By assumption  $\ell^*_{0,g}\subset\mfP^*_g\subset \mfB^*_g.$

\vspace{0.5em}

Consider  $n>0$ and assume $\ell^*_{n-1,g}\subset \mfB^*_g.$ Consider $\ell\in \ell_{n,g}$  with  $\#V_{c,\ell}>0$.  According to   Lemma \ref{----lem: desingularisation}, for all $v\in V_\ell$ with $\delta_v \ell=\ell_1\otimes\ell_2,$ $\mathcal{C}(\ell_1),\mathcal{C}(\ell_2)<n$ and $[\ell_1]$ or $[\ell_2]\not=0.$ Hence $\ell_1$ or $\ell_2$ belongs to $\ell^*_{n-1,g}.$ Choosing $K$ as the bulk of an inner loop 
$\a$ of $\ell,$ and $ \ell'$ the  loop obtained from $\ell$ by erasing the edges of $\a,$   $\ell'\sim_{K} \ell,$ $\ell'\in \ell^*_{n-1,g}$  and   Lemma \ref{__Lem:MMStrict} implies $\ell\in\mfB_g^*.$ Point a) follows by induction.

\vspace{0.5em}

Let us now prove b), assume  that $\mfC^*_g\subset\mfB_g^*$ and introduce for any $n\ge0$ the subset  $\mfP_{n,g}^*$ of proper loops  $\ell\in\mfP_g^*$ with $|\ell|_D\le n$. 

By assumption $ \mfP_{0,g}^*\subset \mfC^*_g\subset \mfB_{g}^*.$  Assume that $n>0$ and $\mfP_{n-1,g}^*\subset \mfB_g^*,$ and consider $\ell\in\mfP_{n,g}^*.$ According to Proposition \ref{__Prop: Shortening Homotopies}, there is a geodesic loop $\ell'\in\mfC_g^*$ and shortening homotopy 
sequence $\ell_1,\ldots,\ell_m$ of 
proper loops with $\ell_1=\ell$ and $\ell_m\sim_K\eta \ell'\eta^{-1}$ for some path $\eta$ and proper subset of faces $K$.  By assumption $\ell'\in \mfB_g^*$.   Using Lemma  \ref{----lem: desingularisation} and Lemma  \ref{__Lem:MMStrict}, by induction on $m,$ $\ell\in \mfB_g^*.$

This concludes the proof of b) by induction on $n$.
\end{proof}

\begin{rmk} In the above proof,  if we furthermore assume simple loops with non vanishing homology to be included in $\mfB_g^*,$ it is also possible to argue by induction on the number of vertices. 
\end{rmk}

\subsection{Null homology loops}

\label{----sec: Proof Vanishing Homology}

The purpose of this sub-section is to prove Proposition \ref{-->Prop: MM  Gen}.  It is equivalent to the following statement.

\bth \label{-->TH:  Homotopy for null homology}Denote by $\mfC^\vee_g$ the subset of $\ell_g$ of regular loops $\ell$, such that there is a nested sub-path $\g_{nest}$ of $\ell$ making $(\ell,\g_{nest})$ a marked loop on a map of genus $g$ and with  $\ell^{\wedge}$ geodesic. If  $\mfC_g^\vee\subset\mfB_g$, then $\mfB_g=\ell_g.$
\eth

To prove this theorem, we shall use the following  lemma, formally analog to Lemma \ref{__Lem:MMStrict}. Though, unlike Lemma \ref{__Lem:MMStrict}, due to the new constraint on the Makeenko--Migdal vectors, we 
work here with marked loops  and change the nested part in order to keep the contraint satisfied while performing the required homotopy.  This will break the induction on the number of intersection points or the complexity $\mathcal{C}$ on regular 
loops.

\vspace{0.5em}

The following Lemma hinges on the observation, appearing in step 4 of the proof below,  that  loops obtained by de-singularisation at  the intersection points of the nested part   of a marked loops yields whether inner loops or a contraction of faces bounded by inner loops of the nested part. The Makeenko-Migdal equation  leads then to  a Gr\"onwall inequality that allows to use an induction on the complexity $\mathcal{C}^{\mathfrak{m}}$ on marked loops. 

\vspace{0.5em}

\emph{Uniqueness of Makeenko--Migdal equations, example of Figure \ref{Fig----Homotopie Raquette 2}:}  \label{example------Raq} Let us illustrate the main idea used in the lemma by a simple example related to the deformation considered in Figure \ref{Fig----Homotopie Raquette 2}.  Consider  $\Delta= \{(a,b)\in \R^2_+: a+b\le T\}$  and a function  $F\in C^1(\Delta)$  
associated to a solution $\psi$ of the Makeenko--Migdal equations for the loop illustrated on the left of Figure  \ref{Fig----Homotopie Raquette 3}.   
\begin{figure}[!h]
\centering
\includegraphics[scale=0.6]{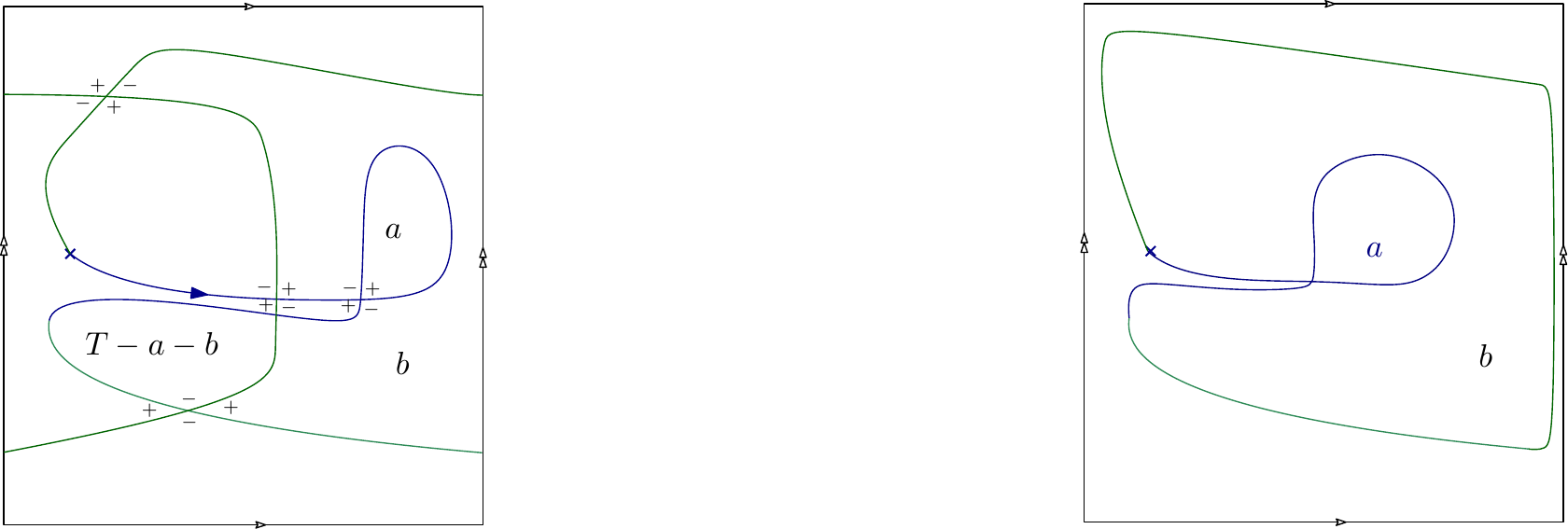}
\caption{\small Faces are labeled by their area. Faces without label have area $0$. In the left figure, $\pm$ symbols stand for the area change involved in the decomposition of $\delta_aF$ as  a sum of Makeenko--Migdal vectors acting on $\psi.$ Here only one vertex yields a de-singularisation with only null-homology loops.  }\label{Fig----Homotopie Raquette 3}
\end{figure}
Assume that  $\psi$ vanishes on loops of non-null homology and matches with the planar master field for loops included in a fundamental domain. Then the  restriction  $F_{|a+b=T}$  is  associated to the  loop on the right of Figure  
\ref{Fig----Homotopie Raquette 3} and  
\begin{equation*}
F(a,T-a)=(1-a)e^{-\frac{a+T}{2}},\ \forall a\in[0,T].\tag{$\clubsuit$}
\end{equation*}
Moreover, the Makeenko--Migdal equations imply 
\begin{equation*}
\partial_a F(a,b)=  -e^{-a}F(0,a+b),\ \forall (a,b)\in\Delta.\tag{$\spadesuit$}
\end{equation*}
The equation $(\spadesuit)$ with boundary condition $(\clubsuit)$ has a unique solution. Indeed denoting by $G$ the difference of two solutions, and  setting $H(t)=\sup_{a\in [0,t]}|G(a,t-a)|,$
\[H(t)\le \int_t^T H(s)ds,\ \forall t\in [0,T].\]
It follows easily that  $H(t)=0$ for all $t\in [0,T].$ We conclude that  
\[F(a,b)= (1-a)e^{-\frac{2T-b}{2}},\ \forall (a,b)\in\Delta.\]

Let us return to the proof of Theorem \ref{-->TH:  Homotopy for null homology}. Denote by $\ell_g^\mfm$ the set of marked loops on a regular map of genus $g$ and by $\mfB_g^\mfm$ the set of $( \ell,\g_{nest})\in \ell_g^\mfm $ such that  $\ell'\in \mfB_g,$ whenever $(\ell',\g'_{nest})\in\ell_g^\mfm$ with ${\ell'}^{\wedge_*}=\ell^{\wedge_*}.$ Recall the notation \eqref{eq-------Conv Desing MarkedL} for the de-singularisation of a marked loop.

\begin{lem}\label{__Lem:MM Gronwall}  Assume that for any regular loop $\ell$ with $|\ell|_D=0,$ $\ell\in\mfB_g.$ Let $x=(\a,\a_{nest}),y=(\b,\b_{nest})\in \ell_g^m$ be two marked loops on a same regular map $\mathbb{G}$ and $K$  a proper subset of faces of 
$\Gbb$, such that     $\a_{nest}=\b_{nest}$ with a moving edge that is not adjacent to any face of $K$,  $\a^{\wedge_*}\sim_K\b^{\wedge_*}$ and $y\in\mfB_g^\mfm, $ while
\begin{equation}
\forall v\in V_{\a^{\wedge}},   \delta_v(x)=x_1\otimes x_2 \text{ with  } x_1,x_2\in\mfB_g^{\mfm}. \label{eq--------Stab Desing Marked L}
\end{equation}
Then $\a\in\mfB_g.$
\end{lem}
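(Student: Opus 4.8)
The strategy mirrors Lemma \ref{__Lem:MMStrict} but the Makeenko--Migdal deformation must preserve the algebraic area, which forces us to work in the category of marked loops and to use the complexity $\mathcal{C}^{\mfk m}$ rather than $\mathcal{C}$. First I would set up the deformation. Define $a'\in\Delta_{K,\Gbb}(T)$ by $a'(f)=T/(\#F-\#K)$ for $f\notin K$ and $a'(f)=0$ for $f\in K$, exactly as in \eqref{eq: one face zero area projection}. Since $\a^{\wedge_*}\sim_K\b^{\wedge_*}$ and $y=(\b,\b_{nest})\in\mfB_g^{\mfm}$, the compatibility property 3.\ of Lemma \ref{__Lem:YM Faces simplex} gives $\Psi^N_{\a}(a')=\Psi^N_{\a'}(a')\to 0$ uniformly (here $\a'$ is any loop with the same $\wedge_*$-cut as $\a$, in particular $\b$ up to the allowed moves), so $a'$ serves as the boundary point where $\Psi^N_\a$ is already controlled. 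The point of the hypothesis ``the moving edge is not adjacent to any face of $K$'' is to guarantee that the twist/pull refinements used to build the marked-loop structure are compatible with the face collapse defining $a'$.

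Next I would identify the admissible direction. The loop $\a$ has vanishing homology (it is the pruning-free representative of a null-homology loop), so by Lemma \ref{__Lem: Charac MM} the admissible deformations at the vertices of $V_\a$ are precisely those $X\in\Omega^2(\Gbb,R)$ with $\langle X,\mu_*\rangle=\langle X,n_\a\rangle=0$. The vector $X=a-a'$ satisfies $\langle X,\mu_*\rangle=0$ automatically (both have total mass $T$); the real work is to arrange $\langle X,n_\a\rangle=0$, i.e.\ to choose $a'$ (equivalently, to choose in which faces one ``inflates'') so that the algebraic area is preserved. This is where the nested/marked structure enters: using the pull move and twist move of Section ``Pull and twist moves'', one can modify $(\a,\a_{nest})$ within its $\wedge_*$-class so that the winding function $n_\a$ vanishes on all faces of $K$ except possibly on the faces of $F_{nest}$, and then a twist by the appropriate integer $n$ adjusts the algebraic area to match. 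Concretely I would argue that after these moves $X=a-a'$ lies in $\mfm_\a$ (not merely in $\{\mu_*\}^\perp$), so that by \eqref{eq:MM MM Vector} we may write $X.\Psi^N_\a=\sum_{v\in V_\a}\alpha_v(X)\,\mu_v.\Psi^N_\a$ plus an $O(\|X\|/N)$ error.

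Now I would run the Gr\"onwall argument along the segment $t\mapsto a'+tX$, $t\in[0,1]$. For a vertex $v\in V_{\a^\wedge}$ with $\delta_v(x)=x_1\otimes x_2$, hypothesis \eqref{eq--------Stab Desing Marked L} gives $x_1,x_2\in\mfB_g^{\mfm}$, so $\phi^\infty_{(\a_i)}$ is the relevant planar value and $\Psi^N_{\a_i}\to 0$; by \eqref{eq:MM Psi CS} the contribution of these vertices to $|X.\Psi^N_\a|$ is $\|X\|C'_\a(\varepsilon_N+\text{lower-complexity terms})$. The remaining vertices are those lying on the nested part $\a_{nest}$ itself; here, as noted in the lemma's preamble and illustrated in the worked example of Figure \ref{Fig----Homotopie Raquette 3}, de-singularising at an intersection point of $\ell_{nest}$ produces either a genuine inner loop (hence a loop of tiling length $0$, which is in $\mfB_g$ by the standing hypothesis) tensored with a marked loop of strictly smaller $\mathcal{C}^{\mfk m}$, or a contraction of $\a$ along faces bounded by inner loops of the nested part. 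By Lemma \ref{----lem: desingularisation}.2 these pieces $x_1,x_2$ satisfy $\mathcal{C}^{\mfk m}(x_1),\mathcal{C}^{\mfk m}(x_2)<\mathcal{C}^{\mfk m}(x)$, so they are in $\mfB_g^{\mfm}$ by the outer induction of Theorem \ref{-->TH:  Homotopy for null homology}. This yields
\[
|\partial_t\Psi^N_\a(a'+tX)|\le C\Bigl(\varepsilon_N+\Psi^N_\a(a'+tX)\Bigr),\qquad t\in[0,1],
\]
with $C$ independent of $N$: the linear-in-$\Psi^N_\a$ term comes from the Young-type bound \eqref{eq:MM Psi Young} applied to those self-intersection contributions that cannot be routed through strictly lower complexity. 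Integrating from $t=1$ (where $\Psi^N_\a(a')\to0$) and applying Gr\"onwall gives $\sup_{t}\Psi^N_\a(a'+tX)\to 0$, in particular $\Psi^N_\a(a)\to0$. Since $a\in\Delta_\Gbb(T)$ was arbitrary, $\a\in\mfB_g$.

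\textbf{Main obstacle.} The delicate point is the construction of the direction $X$: one must simultaneously (i) collapse the faces of $K$, (ii) preserve total area, and (iii) preserve algebraic area $\langle X,n_\a\rangle=0$, and this can only be done after modifying the marked loop by pull and twist moves --- which themselves refine the map and alter $F_{nest}$. Checking that these moves do not disturb the hypotheses (that $y\in\mfB_g^{\mfm}$ and the desingularisation condition \eqref{eq--------Stab Desing Marked L} survive, that $K$ stays a proper face subset, and that the twist integer needed to correct the algebraic area is the one for which the central/outer face bookkeeping of the marked-loop definition still holds) is the real content; the Gr\"onwall estimate itself is then routine given \eqref{eq:MM Psi CS}, \eqref{eq:MM Psi Young} and Lemma \ref{----lem: desingularisation}.
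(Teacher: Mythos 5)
Your overall strategy (use the $\sim_K$ relation with $y$ to control a boundary face of the simplex, deform along an algebraic-area-preserving Makeenko--Migdal direction, and close with a Gr\"onwall estimate) is the right one, but two steps as you describe them do not work. First, the admissible direction: you fix a single endpoint $a'$ (the uniform collapse onto $F\setminus K$) and hope to arrange $\langle a-a',n_\a\rangle=0$ by pull/twist moves. This cannot succeed, because the constraint must hold for \emph{every} $a\in\Delta_\Gbb(T)$ simultaneously (uniform convergence is needed), and a twist changes the loop and its winding function but does not make one fixed $a'$ share the algebraic area of all $a$. In the paper the endpoint depends on $a$: after a $\lambda$-twist with $\lambda=2\max_f|n_\a(f)|$, the central face realises the maximum of the winding function \eqref{eq-------max Winding}, and together with a minimal-winding face $f'_-$ one can, for each $a\in\Delta'_+(T)$ (the reduction to $\Delta_+$ uses $\Psi^N_{\a^{-1}}=\Psi^N_\a$), choose $a'$ supported on $\{f'_c,f'_-\}$ with $\langle n_\ell,a'\rangle=\langle n_\ell,a\rangle$, so that $X=a'-a\in\mfm_\ell$. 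Your ``twist by the appropriate integer to adjust the algebraic area'' skips exactly this construction.

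Second, and more seriously, the Gr\"onwall step: desingularising at a vertex of the nested part produces $\a_k\otimes\ell_k$ where $\a_k$ has $|\a_k|_D=0$ (fine, covered by the standing hypothesis), but $\ell_k$ is a contraction of $\ell$ along faces $F_k\subset F_{nest}$; it has the \emph{same} tiling length and the same pruning as $\ell$, so $\mathcal{C}^{\mfk m}$ does not decrease (Lemma \ref{----lem: desingularisation}(2) only concerns $v\in V_{\ell^\wedge}$), it is not covered by hypothesis \eqref{eq--------Stab Desing Marked L}, and you cannot invoke ``the outer induction of Theorem \ref{-->TH:  Homotopy for null homology}'' inside a lemma whose statement contains no such induction hypothesis. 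Consequently your closed inequality $|\partial_t\Psi^N_\a(a'+tX)|\le C(\varepsilon_N+\Psi^N_\a(a'+tX))$ along a single segment is unjustified: the right-hand side of \eqref{eq:MM Psi Young} contains $\Psi^N_{\ell_k}$ at the current area vector, not $\Psi^N_\a$. The paper's way out is to use $\ell_k\sim_{F_k}\ell$ to rewrite $\Psi^N_{\ell_k}$ as $\Psi^N_\ell$ at a \emph{different} area vector in which the area of $F_k$ has been moved into the inner faces, and therefore to run Gr\"onwall not on a point value but on $H^N_a(t)=\sup_{b\in\Delta_{in}(1-t)}\Psi^N_\ell(ta+b)$, a supremum over all inner-face configurations; the boundary term $H^N_a(0)$ is then controlled by \eqref{eq:Unif CV Contraction Set} because $\Delta_{in}(1)\subset\Delta_K(T)$ (this is where the hypothesis on the moving edge and $K$ enters). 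Without this two-parameter bookkeeping your estimate does not close, so the proposal as written has a genuine gap.
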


\begin{proof}[Proof of  Lemma \ref{__Lem:MM Gronwall}] 

Since  $\a\sim_K\b$ and $\b\in\mfB_g,$ 
\begin{equation}
\Psi_\a^N=\Psi_\beta^N\to 0  \text{ uniformly on }\Delta_{\Gbb,K}(T).\label{eq:Unif CV Contraction Set}
\end{equation}

\und{\textbf{Step 1:}}   Let us first show that it is equivalent to show the convergence on another simplex.  Thanks to Theorem \ref{-->TH:  Homotopy for non null homology}, we can assume that $[\a]=0.$  Recall from Lemma \ref{__Lem: Homology basis choice}, that since $[\a]=0,$ $\a$ has a  
winding number function $n_{\a}\in \Omega^2(\Gbb)$ unique up to the choice of an additive constant.  Let $F_{nest}$ be the bulk of the nested part of  $(\a,\a_{nest})$ and let $f_o$ be its outer face. Let $n_{\a^\wedge}\in\Omega^2(\Gbb)$ be the winding number function of $\a^\wedge $ with $n_{\a^\wedge}(f_o)=n_{\a}(f_o).$  
Let us fix $n_\a,n_{\a^\wedge}$ setting $n_{\a}(f_o)=0,$  set 

\[\Delta_\pm(T)=\{a\in\Delta_\Gbb(T): \pm \<n_{\a^\wedge},a\>\ge 0\}\]
and consider  faces $f_-,f_+\in F_{nest}\cup\{f_o\}$ such that 
\[n_\a(f_-)=\min_{f\in F_{nest}\cup \{f_o\}} n_\a(f_-) =n_-\text{ and }n_\a(f_+)=\max_{f\in F_{nest}\cup \{f_o\}} n_\a(f_+)=n_+>\]

Since  $\Delta_+(T)\cup\Delta_-(T)=\Delta_\Gbb(T)$ and $\Psi_{{\a}^{-1}}^N=\Psi^N_{\a}$,  it is enough to show that  as $N\to \infty,$  $\Psi^N_{\a}\to 0$ uniformly on $\Delta_+(T)$.

\vspace{0.5em}

Let us modify $\a$ as follows. Consider $\l=2\max_{f\in F} |n_\a(f)|$ and define  $(\ell,\g_{nest})$ as the $\l$-twist of  $(\a,\a_{nest})$. Denote by $\Gbb^{'}=(V',E',F')$ the associated map finer than $\Gbb$ and by $F_{tw}$  the 
subset of $\l$ faces of $F'$ associated to the twist move such that $\ell\sim_{F_{tw}}\a$.  Denote by $f_l$ the face of $\Gbb$ left of the moving edge and respectively by $f'_l$ and $f'_c$ the unique face of $\Gbb'$ adjacent to $F_{tw}$  and the central face of $(\ell,\g_{nest}).$  Faces of $F\setminus \{f_l\}$ are not changed by the twist and can be identified with 
$F'\setminus \left(F_{tw} \cup \{f'_l\}\right). $ In particular, faces of $K$ can and will be identified with faces of $\Gbb'.$  We shall write $f'_-=f'_l$  when $f_-=f_l$,  and $f'_-=f_-$  otherwise.

Recall that  $[\ell]=[ \a]=0$ and denote by $n_\ell$ the winding number function of $\ell$ with $n_{\ell}(f'_o)=0.$ It satisfies
$$n_{\ell}(f_{c}')= \l+ n_\a(f_l), 1\le n_{\ell}(f)-n_{\a}(f_l)\ \le \l-1,\ \forall f\in F_{tw}\setminus \{f_{c}\} $$
while 
$$n_{\ell}(f)=n_\a(f),\ \forall f\in F'\setminus \left(F_{tw}\cup\{f'_o\}\right) \text{ and }n_{\ell}(f'_l)=n_\a(f_l).$$ 
It follows that
\begin{equation}
n_\ell(f_c')=\max_{f\in F'} n_\ell(f).\label{eq-------max Winding}
\end{equation}
Recall that   $\a^\wedge=\ell^\wedge$ viewed as loops in $\Gbb'$ and denote 
\[\Delta_+'(T)=\{a\in\Delta_{\Gbb'}(T): \<n_{\ell^\wedge},a\>\ge 0\}.\]
Since the restriction map from  $\Delta_+'(T)$ to $\Delta_{+}(T)$ is surjective,  it is enough to show that $\Psi_{\ell}^N\to0$ uniformly on $\Delta_+'(T).$  

\vspace{0.5em}

For any $a\in \Delta_+'(T),$ thanks to \eqref{eq-------max Winding} and since $n_{\ell}(f)\ge n_-$ for all 
$f\in F_{nest},$
\[ n_\ell(f_-') T\le \<n_{\ell^\wedge},a\>+a(F_{nest})n_-  \le \< n_\ell,a\>\le n_\ell(f_c')T .\]
Hence setting $K_*=F'\setminus \{f'_{c},f'_-\},$ there is a vector $a'\in\Delta_{K^*}(T)$ with 
\[\<n_{\ell},a'\>=\<n_{\ell}, a\>\]
and hence $X=a'-a\in \mfm_{\ell}.$ Moreover, since $n_{\ell^\wedge}$ vanishes on $\{f'_l,f'_-\}$, $\<n_{\ell^\wedge},a'\>=0$ and $a'\in\Delta'_+(T).$

\vspace{0.5em}

\und{\textbf{Step 2:}}  Let us now  use Makeenko--Migdal equations to  show the latter uniform convergence. Let us  bound $\delta_v \Psi_\ell  $ for all $v\in V_\ell.$ Denote $v_1,\ldots,v_n$  the  intersection points of the  nested part of $\ell,$ ordering them so that  $\ell_{nest}=(v_1\ldots v_nv_n\ldots v_1).$ Denote 
by $F'_{nest}$ the bulk  of $\ell_{nest}$.

\vspace{0.5em}

Note first that $V_{\ell^\wedge}=V_{\a^\wedge}$.   Writing $z=(\ell,\g_{nest})$, for all $v\in V_{\ell^\wedge}=V_{\a^\wedge}$, if   $\delta_v(x)=x_1\otimes x_2,$ then $\delta_v(z)=z_1\otimes z_2 ,$ where  one marked loop say $z_1$ is identical to or obtained 
from $x_i$ by $\l$-
twist at the moving edge $e,$ whereas  the other satisfies $z_2=x_2.$ In particular $z_i^\wedge=x_i^\wedge$ and using  \eqref{eq--------Stab Desing Marked L}, $z_i\in \mfB^\mfm_g$ and  $\ell_{v,i}\in \mfB_g$ for $i\in\{1,2\}.$ 
Consider next $V_{\ell_{nest}}.$ For all $1\le k\le n,$ w.l.o.g., $\delta_{v_k}(\ell)=\a_k\otimes\ell_k, $ where $\a_k$ is a nested loop with $|\a_k|_D=0$, hence  $\a_k\in\mathcal{B}_g,$ and $\ell_k$ is a sub-loop of $\ell$, with $\ell_1=\a$ and $\ell_k\sim_{F_{nest}} \ell$ for all  $1\le k\le n.$ Denote by $F_k$ the minimal subset of $F_{nest}$ with $\ell_k\sim_{F_{k}} \ell.$ Since  $X\in \mfm_{\ell},$  using the inequality \eqref{eq:MM Psi Young}, we find
\begin{equation}
|X. \Psi_{\ell}^N|\le C \left( \varepsilon_N+  \Psi^N_{\ell} + \sum_{k=1}^{n} \Psi^N_{\ell_k}\right), \label{eq:Bound Derivative GM} 
\end{equation}
where $C>0$ is a constant independent of $N$ and  $\varepsilon_N=\frac 1 N+ \sup_{1\le k\le n_+}\|\Psi^N_{\a_k}\|_\infty +\sup_{v\in V_{\ell^\wedge}}\left( \|\Psi^N_{\ell_{v,1}}\|_{\infty}+\|\Psi_{\ell_{v,2}}\|_\infty\right),$ and we have just shown that 
$\lim_{N\to\infty}\varepsilon_N=0.$ Consider now for all $t\in [0,1],$
$$\Delta_{in}(t)= \{a\in \Delta_{\Gbb'}(tT): a(f)=0,  \forall f\not\in  F_{nest}\cup\{f'_o\} \}$$ 
and for all $a\in\Delta_{F_{tw},+}(T)$ fixed, set
$$H_a^N(t)=\sup_{b\in \Delta_{in}(1-t)}\Psi_{\ell}( t a + b),\ \forall 0\le t\le 1.$$
On the one hand, for any $t\in (0,1)$ and  $b\in \Delta_{in}(1-t),$ 
$$\pl_s\Psi^N_{\ell}( s a +(t-s)a'+b)=X. \Psi^N_{\ell}(s a +(t-s)a'+b),\ \forall s\in (0,t).$$
On the other hand,
for all $s\in (0,t),$ since $a(F_k)=0$  and $\ell_k\sim_{F_k}\ell$ for all $k$, there are $b_1,\ldots, b_{n}\in \Delta_{in}(1-s)\cap \Delta_{F_k}((1-s)T)$ (see Figure \ref{Fig---: Area swap Gronwall}) such that 

\begin{figure}[!h] 
\centering
\includegraphics[scale=0.4]{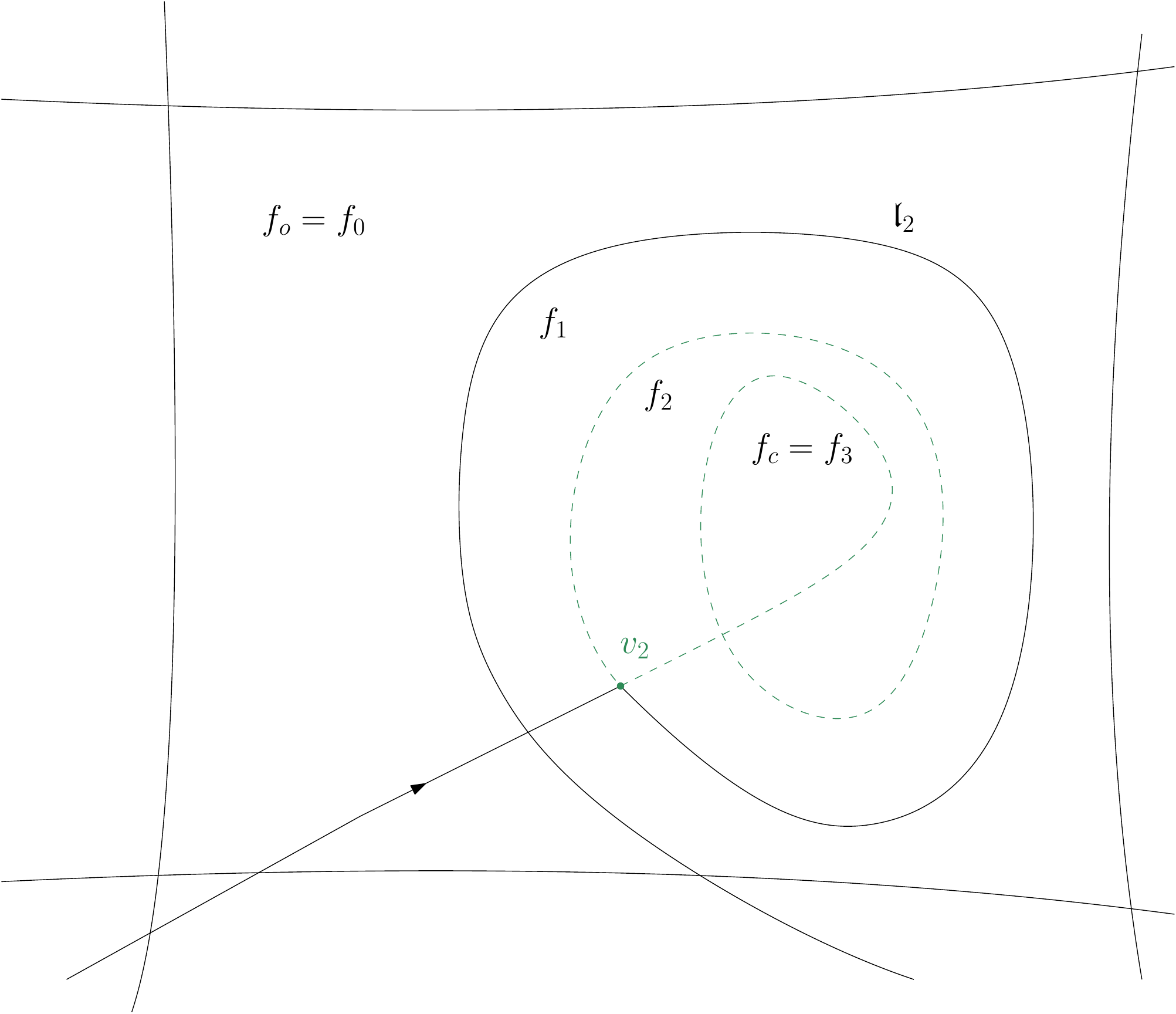} 
\caption{Example of a $n$-left twist with $n=3.$ We consider here $k=2$, the area of $F_2$ needs to be ``moved''  into $f_1$. We have $a(f_1)=a(f_2)=a(f_3)=0=a'(f_1)=a'(f_2).$ For all $0<s<t<1,$ define  $b_2$ setting $b_2(f_1)=b(F_1)+(t-s)a'(F_1)$ and $0$ for other faces. Denote  $a_{s,t}=sa+(t-s)a'+b$ and $\tilde a_{s,t}=sa+b_2$. On the one hand, for any face $f\not\in F_1,$  $a_{s,t}(f)=a'_{s,t}(f)$ while $a_{s,t}(F_1)=a'_{s,t}(F_1)$, therefore $\Psi_{\ell_2}^N(a_{s,t})=\Psi_{\ell_2}^N(\tilde a_{s,t}).$    On the other hand,   $\tilde a_{s,t}(F_2)=0$ so that $\Psi_{\ell_2}^N(\tilde a_{s,t})=\Psi_\ell^N(\tilde a_{s,t}).$
}\label{Fig---: Area swap Gronwall}
\end{figure}
\begin{equation}
\Psi_{\ell_k}(s a +(t-s)a'+b)=\Psi_{\ell}(s a +b_k),\ \forall 1\le k\le n.\label{eq: LA restriction pour Gronwall I}
\end{equation}
Combining the last two equalities with the bound 
\eqref{eq:Bound Derivative GM}, we find 
$$H^N_a(t)\le H_a^N(0)+\varepsilon_N C+(n+1) C\int_0^t H_a^N(s)ds,\ \forall t\in [0,1],a\in\Delta_{+}'(T).$$
By Gr\"onwall's inequality, 
\begin{equation}
H_a^N(t)\le (H_a^N(0)+\varepsilon_N C)\exp((n+1)Ct),\ \forall t\in [0,1].\label{eq:Gronwall I}
\end{equation}
Since $\Delta_{in}(1)\subset \Delta_K(T),$ by \eqref{eq:Unif CV Contraction Set}, 
$$\sup_{a\in \Delta'_+(T)} H_a^N(0)\le \sup_{x\in \Delta_{\Gbb,K}(T)} \Psi_{\ell}(x)$$
vanishes as $N\to \infty.$  
Since $\varepsilon_N\to0$ as $N\to\infty,$    from \eqref{eq:Gronwall I},   
$$ \Psi_{\ell}^N(a)=H^N_a(1)\to 0 $$
uniformly in $a\in\Delta_+'(T).$  
\end{proof}

Using this lemma, the rest of the proof is a refinement of the null-homology case. Denote by $\mfC_g^\mfm,\mfP_g^\mfm $ the set of  marked loops  $(\ell,\ell_{nest})\in\ell_g^\mfm$  with $|\ell|_D=0$, or respectively $\ell^\wedge\in\mfC_g$ and $\ell^\wedge$ proper. Theorem \ref{-->TH:  Homotopy for null homology} is then a direct consequence of the following Proposition. 

\begin{prop} \label{--->Prop: Marked Loops Induction}
\begin{enumerate}[a)]
\item If $\mfP^\mfm_g\subset\mfB^\mfm_g$, then $\mfB^\mfm_g= \ell_g^\mfm.$ 
\item If $\mfC^\mfm_g\subset\mfB_g^\mfm$, then $\mfP^\mfm_g\subset\mfB^\mfm_g.$
\end{enumerate}
\end{prop}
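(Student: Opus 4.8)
The statement to prove is Proposition \ref{--->Prop: Marked Loops Induction}, the marked-loop analogue of Proposition \ref{Prop------------Boundary Cond non zero homol}. As in the non-zero-homology case, the two assertions are proved by nested inductions, but now the complexity function is $\mathcal{C}^{\mfk m}$ on marked loops and the engine is Lemma \ref{__Lem:MM Gronwall} rather than Lemma \ref{__Lem:MMStrict}. Throughout one uses that, by point 2. of Lemma \ref{----lem: desingularisation}, $\mathcal{C}^{\mfk m}(x)$ depends only on $x^{\wedge_*}$, so that membership in $\mfB_g^\mfm$ (which is defined through $x^{\wedge_*}$) is compatible with filtering by $\mathcal{C}^{\mfk m}$. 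One also records at the outset that any marked loop $x=(\ell,\g_{nest})$ with $|\ell|_D=0$ lies in $\mfB_g^\mfm$: indeed then $\ell$ and hence any $\ell'$ with ${\ell'}^{\wedge_*}=\ell^{\wedge_*}$ is (after desingularising the nested part, which only produces loops in a fundamental domain) eventually reduced to loops with $|\cdot|_D=0$, and the hypothesis ``$\ell\in\mfB_g$ whenever $|\ell|_D=0$'' of Lemma \ref{__Lem:MM Gronwall} is exactly what we are entitled to assume in both parts of the proposition (it follows from $\mfP_g^\mfm\subset\mfB_g^\mfm$, resp. $\mfC_g^\mfm\subset\mfB_g^\mfm$, since such loops lie in $\mfP_g^\mfm\subset\mfC_g^\mfm$... — more precisely they lie in $\mfC_g^\mfm$ by definition, and one checks $\mfP_g^\mfm$ contains them too).

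\textbf{Proof of a).} Assume $\mfP_g^\mfm\subset\mfB_g^\mfm$. For $n\ge 0$ let $\ell_{n,g}^\mfm$ be the set of marked loops $x=(\ell,\g_{nest})$ with $\mathcal{C}^{\mfk m}(x)\le n$. I claim $\ell_{n,g}^\mfm\subset\mfB_g^\mfm$ by induction on $n$. For $n=0$ we have $|\ell|_D=0$ and $\#V_{c,\ell^\wedge}=0$, so $x\in\mfP_g^\mfm$ (these are loops in a fundamental domain) and we are done. For the inductive step, take $x=(\ell,\g_{nest})\in\ell_{n,g}^\mfm$ with $\#V_{c,\ell^\wedge}>0$ (if $\#V_{c,\ell^\wedge}=0$ then $\ell^\wedge$ is proper and $x\in\mfP_g^\mfm$, done). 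Pick an inner loop $\a\prec\ell^\wedge$ that is a sub-path of $\ell_D$ — or, if none is a sub-path of $\ell_D$, first apply a pull move (the operation of Section ``Pull and twist moves'') to bring an inner loop out, using Lemma \ref{__Lem:Complement Inner} to ensure the homotopy is supported off a proper set of faces — and let $y=(\ell',\g_{nest})$ be the marked loop obtained by erasing the edges of $\a$, with bulk $K$ of $\a$ as contraction set; then $K\ne F$, ${\ell'}^{\wedge_*}\sim_K\ell^{\wedge_*}$ and $\mathcal{C}^{\mfk m}(y)<\mathcal{C}^{\mfk m}(x)$, so $y\in\mfB_g^\mfm$ by induction. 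For every $v\in V_{\ell^\wedge}$, writing $\delta_v(x)=x_1\otimes x_2$, Lemma \ref{----lem: desingularisation} gives $\mathcal{C}^{\mfk m}(x_1),\mathcal{C}^{\mfk m}(x_2)<\mathcal{C}^{\mfk m}(x)$, hence $x_1,x_2\in\mfB_g^\mfm$ by induction; the same holds at intersection points of the nested part since those desingularisations produce a nested loop with $|\cdot|_D=0$ and a sub-loop of $\ell$. Also the moving edge can be arranged not to touch $K$ (refining the map if necessary, as permitted by the remark after the definition of marked loops). Lemma \ref{__Lem:MM Gronwall} then yields $x\in\mfB_g^\mfm$, completing the induction, so $\mfB_g^\mfm=\ell_g^\mfm$.

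\textbf{Proof of b).} Assume $\mfC_g^\mfm\subset\mfB_g^\mfm$. For $n\ge 0$ let $\mfP_{n,g}^\mfm$ be the set of marked loops $x=(\ell,\g_{nest})$ with $\ell^\wedge$ proper and $|\ell|_D\le n$. I show $\mfP_{n,g}^\mfm\subset\mfB_g^\mfm$ by induction on $n$. For $n=0$, $|\ell|_D=0$, so (as above) such $x$ lie in $\mfC_g^\mfm$ and hence in $\mfB_g^\mfm$. For the inductive step, take $x=(\ell,\g_{nest})\in\mfP_{n,g}^\mfm$. By Lemma \ref{__Lem: Shortening Homotopies ML} there is a shortening homotopy sequence $\ell_1,\ldots,\ell_m$ with $\ell_1\sim_c\ell$, a common nested sub-path making each $(\ell_k,\g_{nest})$ a marked loop with $\ell_k^\wedge$ proper for $k\ge 2$, proper face subsets $K_k$ with $\ell_k^{\wedge_*}\sim_{K_k}\ell_{k+1}^{\wedge_*}$, and a marked loop $(\ell',\g_{nest}')$ with $\ell_m\sim_\Sigma\ell'$ and ${\ell'}^\wedge$ geodesic, so $(\ell',\g'_{nest})\in\mfC_g^\mfm\subset\mfB_g^\mfm$. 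One now walks the sequence backwards from $\ell_m$ to $\ell_1$: at each step $\ell_{k}\leadsto\ell_{k+1}$, the homotopy is through $\sim_{K_{k}}$ with $K_k\ne F$; since $|\ell_k|_D\le|\ell|_D=n$ and each $\ell_k^\wedge$ is proper, any loop $\ell_{v,i}$ obtained by desingularising $\ell_k^\wedge$ at $v\in V_{\ell_k^\wedge}$ has strictly smaller $|\cdot|_D$ (by the tiling-length estimate \eqref{eq:TilingL Decre} in the proof of Lemma \ref{----lem: desingularisation}), hence lies in $\mfP_{n-1,g}^\mfm$ or has $|\cdot|_D=0$, so in $\mfB_g^\mfm$ by the induction on $n$; desingularisations at nested intersection points give a nested loop of tiling length $0$ and a sub-loop of $\ell_k$. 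Inserting any pull or twist moves needed to keep the Makeenko--Migdal constraint (algebraic-area preservation) satisfied along the homotopy — this is exactly the scenario for which pull/twist were designed — Lemma \ref{__Lem:MM Gronwall} applies at each step and propagates membership in $\mfB_g^\mfm$ from $\ell_{k+1}$ to $\ell_k$. After $m-1$ steps $\ell_1\sim_c\ell$ lies in $\mfB_g^\mfm$ (cyclic equivalence being harmless by point 2. of the Wilson-loop-system axioms), so $x\in\mfB_g^\mfm$. This finishes the induction on $n$, hence $\mfP_g^\mfm\subset\mfB_g^\mfm$.

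\textbf{Main obstacle.} The delicate point is not the bookkeeping of the inductions but checking that the hypotheses of Lemma \ref{__Lem:MM Gronwall} genuinely hold at every step: one must verify that the shortening homotopies and the auxiliary pull/twist refinements can always be arranged so that (i) the contraction set $K$ stays a proper subset of faces, (ii) the common nested sub-path — and in particular a moving edge — can be chosen disjoint from $K$, and (iii) all the marked loops arising from desingularising $\ell^\wedge$ (both at ``bulk'' intersection points and at the nested intersection points) have strictly smaller complexity $\mathcal{C}^{\mfk m}$ and thus fall under the induction hypothesis. Establishing (iii) rests on the structural facts proved for nested loops and marked loops in Sections \ref{-----sec:Nested Marked Loops}--\ref{----sec:Vertex desingularisation} (especially Lemma \ref{----lem: desingularisation} and the remark that desingularising inside the nested part yields a contraction of faces bounded by inner loops), while (i)--(ii) are guaranteed by the constructions in the ``pull and twist'' section together with Lemma \ref{__Lem:Complement Inner}; the real work is to assemble these pieces consistently.
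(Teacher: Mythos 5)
Your proposal is correct and follows essentially the same double-induction scheme as the paper: induction on the complexity $\mathcal{C}^{\mfm}$ for a) and on the tiling length combined with the shortening sequence of Lemma \ref{__Lem: Shortening Homotopies ML} for b), with Lemma \ref{----lem: desingularisation} supplying the complexity decrease at desingularisation points, Lemma \ref{__Lem:MM Gronwall} as the engine, and the pull move used in b) to place the moving edge off the contraction set. The only cosmetic deviations are in a): the paper contracts an arbitrary inner loop of $x^{\wedge}$ directly (no pull move or sub-path-of-$\ell_D$ restriction is needed, since condition 4 of the marked-loop definition already guarantees the moving edge is not adjacent to the bulk $K$, rather than your vaguer ``refine the map'' justification) and it explicitly reduces to $[\a]=0$ via Proposition \ref{Prop------------Boundary Cond non zero homol}, but these do not affect the validity of your argument.
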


\begin{proof} Let us recall the definition of $\mathcal{C}^\mfm$ above Lemma \ref{----lem: desingularisation}.  
Let us prove first point a). Assume $\mfP^\mfm_g\subset\mfB^\mfm_g$ and introduce for any $n\ge0$ the subset  $\ell_{n,g}^\mfm$ of marked loops  $x\in\ell_g^\mfm$ with $\mathcal{C}^\mfm(x)\le n.$ By assumption  $\ell^\mfm_{0,g}\subset\mfP^\mfm_g\subset \mfB^\mfm_g.$

\vspace{0.5em}

Consider  $n>0$ and assume $\ell^\mfm_{n-1,g}\subset \mfB^\mfm_g.$ Consider $x=(\a,\a_{nest})\in \ell_{n,g}$  with  $\#V_{c,x^\wedge}>0$.  According to   Lemma \ref{----lem: desingularisation}, for all $v\in V_\ell,$  $\delta_v x=x_1\otimes x_2,$
with  $\mathcal{C}^\mfm(\ell_1),\mathcal{C}^\mfm(\ell_2)<n.$ Hence $x_1,x_2 \in\ell^\mfm_{n-1,g}.$   Thanks to Proposition \ref{Prop------------Boundary Cond non zero homol},  we can assume $[\a]=0.$ Choosing $K$ as the bulk of an inner loop 
$\ell$ of $x^\wedge,$ and $y=(\b,\b_{nest})$ the marked loop obtained from $x$ by erasing the edges of $\ell,$   $\a\sim_{K} \b,$ $y\in \ell^\mfm_{n-1,g}.$ Since $\a_{nest}$ do not intersect inner loops of $\a^\wedge,$ the moving edge of $x$ is not 
adjacent to any face of  $K.$   Lemma \ref{__Lem:MM Gronwall} applies and yields  $\ell\in\mfB_g^\mfm.$ Point a) follows by induction.  

\vspace{0.5em}

Consider now b), assume  that $\mfC^\mfm_g\subset\mfB_g^\mfm$ and introduce for any $n\ge0$ the subset  $\mfP_{n,g}^\mfm$ of marked loops  $x\in\mfP_g^\mfm$ with $|x^\wedge|_D\le n$. By assumption $ \mfP_{0,g}^\mfm\subset \mfC^\mfm_g\subset \mfB_{g}^\mfm.$  Assume that $n>0$ and $\mfP_{n-1,g}^\mfm\subset \mfB_g^\mfm,$ and consider $x=(\a,\a_{nest})\in\mfP_{n,g}^\mfm.$ According to 
Proposition \ref{__Prop: Shortening Homotopies}, there is a geodesic loop $\ell'\in\mfC_g^\mfm$ and shortening homotopy 
sequence $x_1,\ldots,x_m$ of 
marked loops with $x_i^\wedge $ proper,  $x_1=x$ and $x_m= (\ell_m,\g_m)$ such that $\ell_m\sim_K\eta \ell'\eta^{-1}$ for some path $\eta$ and proper subset of faces $K$. By assumption $\ell'\in \mfB_g^\mfm$. Consider the first proper set of faces 
$K_1$ with $x^{\wedge_*}_1\sim_{K_1} x^{\wedge_*}_2. $ Denote by $x',y'$ the pull of $x_1$ and $x_2$ to a face  that does not belong to $K_1.$    Lemma \ref{__Lem:MM Gronwall} applies to $x',y'$. Since $\mathcal{C}^\mfm(x_i)$ is non-increasing,  
we conclude by induction on $m$  that  $x\in\mfB_g^\mfm.$ This concludes the proof of b) by induction on $n$.
\end{proof}

\section{Proof of convergence after surgery}

\label{-------sec: Convergence after surgery}

We give here  the main arguments to prove Theorem \ref{-->THM: CUT Conv}. 

\begin{proof}[Proof of Lemma \ref{__Lem: Convergence MF One BD}] Thanks to the second part of Lemma \ref{lem:integral_reducedloops},  under $\YM_{\Gbb,\{f_\infty\},a},$ $(h_{\ell_1},\ldots, h_{\ell_r}, h_{a_1},\ldots ,h_{b_g})$ 
are independent random 
variables on $G_N,$ such that for all $1\le i\le g,$ $h_{a_i},h_{b_i}$ are  Haar distributed, while for any $1\le k\le r,$ $h_{\ell_k}$ has same law as a Brownian motion at time $a(f_k)$. It is now standard, see \cite[Section 3]{Lev}, that as $N\to \infty,$ this tuple of matrices is asymptotically freely independent and its joint non-commutative distribution  converges towards $\tau_v$ satisfying the properties (*), 1,2 and 3. 
\end{proof}
Let us use the same notation as in Theorem \ref{-->THM: CUT Conv}. In what follows, we will denote by $\E$ (resp. $\E_i$, $\E'_i$) the expectation with respect to $\YM_{\Gbb,a}$ (resp. $\YM_{\Gbb_i,a_i}$, $\YM_{\Gbb'_i,a}$). In a previous paper, we proved that the restriction to $\Gbb'_1$ of $\YM_{\Gbb,a}$ is absolutely continuous with respect to $\YM_{\Gbb'_1,a}$.

\begin{prop}[\cite{DL}, Corollary 4.3]\label{prop:abs_cont_gluing}
Let $\ell\in\mathrm{RL}_v(\Gbb'_1)$. For any $f:G_N\to\C$ bounded, measurable and central,
\begin{equation}
\E[f(H_{\ell})]=\E'_1[f(H_{\ell})I(H_{\ell_0}^{-1})],
\end{equation}
where $I:G_N\to\C$ is a bounded measurable function such that
\[
\|I\|_\infty\leq \frac{Z_{g_2,a(F_2)}}{Z_{g,T}}.
\]
\end{prop}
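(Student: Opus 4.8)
The plan is to use the decomposition of $\mathrm{RL}_v(\Gbb)$ as a free product $\mathrm{RL}_v(\Gbb_1)*\mathrm{RL}_v(\Gbb_2)$ given in Lemma \ref{__Lem:Basis Reduced Loops Cut}, together with the integration formula of Lemma \ref{lem:integral_reducedloops}, and to integrate out the variables living on the ``$\Gbb_2$ side'' of the cut. Concretely, fix a basis of $\mathrm{RL}_v(\Gbb)$ as in Lemma \ref{__Lem:Basis Reduced Loops Cut}: lassos $\ell_1,\dots,\ell_{r_1}$ and loops $a_1,b_1,\dots,a_{g_1},b_{g_1}$ generating $\mathrm{RL}_v(\Gbb_1)$, and $\ell_{r_1+1},\dots,\ell_r$, $a_{g_1+1},\dots,b_g$ generating $\mathrm{RL}_v(\Gbb_2)$, chosen so that the separating loop $\ell_0$ is expressed through one block only — say $\ell_0 = [a_{g_1+1},b_{g_1+1}]\cdots[a_g,b_g]\,(\ell_{r_1+1}\cdots\ell_r)$ up to reduction, i.e. $\ell_0$ is (reduced-)homotopic to the boundary loop bounding the faces $F_2$ on the $\Gbb_2$ side. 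For a central function $f$ and a loop $\ell\in\mathrm{RL}_v(\Gbb_1)$, I would write $\E_{\YM_{\Gbb,a}}[f(H_\ell)]$ using the first display of Lemma \ref{lem:integral_reducedloops}, then perform the $\int_{G^{2g_2+r_2}}$ integral over the $\Gbb_2$-block variables with all the $\Gbb_1$-block variables held fixed. Since the heat-kernel densities attached to the faces of $F_2$ involve only the $\Gbb_2$-block variables and the product of commutators, and the integrand $f(H_\ell)$ depends only on the $\Gbb_1$-block, this partial integration produces, after using the convolution semigroup property \eqref{eq: HK convolution} and conjugation-invariance \eqref{eq:Inv HK Conj}, a function of the single group element $H_{\ell_0}$ (the holonomy of the separating loop, which is the ``interface'' between the two blocks). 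This is exactly the claimed density $I(H_{\ell_0}^{-1})$ against $\YM_{\Gbb'_1,a}$, where $\Gbb'_1$ is $\Gbb_1$ with its boundary face filled by the remaining area $a(F_2)$.

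The identification of $I$ and its sup-norm bound is then a direct computation. Writing $g_2,r_2$ for the genus and number of non-boundary faces of $\Gbb_2$, the partial integral is
\[
I(u) = \frac{1}{Z_{g,T}}\int_{G^{2g_2+r_2}} p_{a(f_{r_1+1})}(z_{r_1+1})\cdots p_{a(f_r)}(z_r)\,\ind\!\big(z_{r_1+1}\cdots z_r\,[x_{g_1+1},y_{g_1+1}]\cdots[x_g,y_g] = u\big)\,\prod dz_i\prod dx_l dy_l,
\]
with the delta interpreted via the Driver--Sengupta normalisation; collapsing the $z_i$ by \eqref{eq: HK convolution} and the Haar integrals over the commutator block gives $I(u) = Z_{g,T}^{-1}\,p_{a(F_2)}(u)\cdot(\text{Haar-average of }[x,y]\text{-block})$-type expression, whose value at a fixed $u$ is at most $Z_{g_2,a(F_2)}$ by positivity of the heat kernel and the fact that the full normalisation of the genus-$g_2$ surface with the prescribed interface is $Z_{g_2,a(F_2)}$. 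Hence $\|I\|_\infty \le Z_{g_2,a(F_2)}/Z_{g,T}$. Centrality of $f$ and conjugation-invariance \eqref{eq:Inv HK Conj} are used to make sure the orientation/root choices for $\ell_0$ are immaterial and that $I(H_{\ell_0}^{-1}) = I(H_{\ell_0})$ up to the convention in the statement.

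The main obstacle I anticipate is purely bookkeeping rather than conceptual: one must be careful that the ``$\Gbb_1$-block'' holonomies $(H_{\ell_1},\dots,H_{\ell_{r_1}},H_{a_1},\dots,H_{b_{g_1}})$ together with $H_{\ell_0}$ really do determine $H_\ell$ for every $\ell\in\mathrm{RL}_v(\Gbb'_1)$, and that the change of variables from the Lemma \ref{lem:integral_reducedloops} coordinates (where one face, of area $a_k$, is singled out and its variable eliminated) to coordinates adapted to the cut does not introduce extra Jacobian factors — this is where invariance of Haar measure and the freedom in choosing the eliminated face $k\in S$ are used. Since this is precisely Corollary 4.3 of \cite{DL}, I would in the write-up simply cite that corollary and its proof there, indicating the above as the structure of the argument; the estimate on $\|I\|_\infty$ in terms of partition functions is the one nontrivial quantitative point and follows from $p_t>0$ and the definition of $Z_{g_2,a(F_2)}$ as the integral of the genus-$g_2$ Driver--Sengupta density.
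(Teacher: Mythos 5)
The paper itself contains no proof of this statement: it is imported verbatim from \cite{DL} (Corollary 4.3), so there is nothing internal to compare with line by line. Your sketch does reconstruct the argument along the lines on which it is actually established there: take the cut-adapted free basis of Lemma \ref{__Lem:Basis Reduced Loops Cut}, apply Lemma \ref{lem:integral_reducedloops} with the eliminated face $k$ chosen inside $F_2$, and integrate out the $\Gbb_2$-block variables; the semigroup property \eqref{eq: HK convolution} collapses the $F_2$ heat kernels into a single $p_{a(F_2)}$, and what survives of the $\Gbb_2$-block is a function of the interface holonomy alone,
\begin{equation*}
I(u)\;=\;\frac{1}{Z_{g,T}}\int_{G^{2g_2}}p_{a(F_2)}\bigl([x_{g_1+1},y_{g_1+1}]\cdots[x_g,y_g]\,u\bigr)\,dx\,dy ,
\end{equation*}
while the remaining $\Gbb_1$-block density $\prod_{i\le r_1}p_{a_i}(z_i)$ against Haar is exactly the with-boundary measure of the second display of Lemma \ref{lem:integral_reducedloops}. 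Two points of care in that step: your indicator-function formula for $I$ should be written in the constrained form of Lemma \ref{lem:integral_reducedloops} (one $z$-variable eliminated by the relation), not as an indicator, and $\E'_1$ must be the measure $\YM_{\Gbb_1,\{f_{1,\infty}\},a}$ in which the boundary face carries \emph{no} heat-kernel weight; your phrase ``boundary face filled by the remaining area $a(F_2)$'' reads as weighting that face by $p_{a(F_2)}$, which would change both the density and the norm bound.

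The genuine gap is the estimate $\|I\|_\infty\le Z_{g_2,a(F_2)}/Z_{g,T}$, which you yourself identify as the one nontrivial quantitative point: positivity of the heat kernel does not give it. One needs the character expansion $p_t=\sum_\lambda d_\lambda e^{-c_\lambda t/2}\chi_\lambda$ (with $d_\lambda=\chi_\lambda(1)$ and $c_\lambda$ the Casimir eigenvalue) together with the Frobenius relation $\int_{G^2}\chi_\lambda([x,y]u)\,dx\,dy=d_\lambda^{-2}\chi_\lambda(u)$, which yield $I(u)=Z_{g,T}^{-1}\sum_\lambda d_\lambda^{1-2g_2}e^{-c_\lambda a(F_2)/2}\chi_\lambda(u)$. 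Since the coefficients are non-negative and $|\chi_\lambda(u)|\le d_\lambda$, the modulus of this central positive-definite function is maximised at $u=1$, where the sum is precisely the closed genus-$g_2$ partition function $Z_{g_2,a(F_2)}$; this, and not mere positivity of $p_t$, is what gives $\|I\|_\infty\le Z_{g_2,a(F_2)}/Z_{g,T}$ (with equality attained at the identity). Without this argument, or an equivalent positive-definiteness statement, the bound is unproved — and it is exactly this bound, combined with the convergence of the partition functions, that makes $I$ uniformly bounded in $N$ and the proposition usable in the proof of Theorem \ref{-->THM: CUT Conv}. Since you in any case propose to cite \cite{DL} for the proof, this is a flaw in the sketch rather than in the final write-up, but the justification as stated would not stand on its own.
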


Note that the bound in the previous proposition ensures that $I$ is uniformly bounded, because for any considered sequence $(G_N)_N$, the corresponding sequences of partition functions converge towards a non-zero limit.\footnote{Besides, when $g_2\ge 2$ and $G_N\neq\U(N),$  it remains bounded uniformly in $a\in \Delta_\Gbb(T)$, which  allows then to drop the condition $a(F_2)>0$ in  Theorem \ref{-->THM: CUT Conv}.  }

\begin{proof}[Proof of Theorem \ref{-->THM: CUT Conv}] Without loss of generality, we can assume that $\Gbb$ is a regular map, with  $v=p(\tilde v)$ where $\tilde v\in \tilde V_g.$  Let $\ell$ be a loop in $\mathrm{L}_v(\Gbb_1)$. According to Proposition \ref{prop:abs_cont_gluing},

\[
\E[W_\ell]=\E'_1[W_\ell I(H_{\ell_0^{-1}})]
\]
where $I$ is uniformly bounded in $N.$ From Lemma \ref{__Lem: Convergence MF One BD}, $W_\ell$ converges  in probability towards $\Phi_\ell^{1,g_1}(a_1)$ under $\YM_{\Gbb_1,a}$. Because $I$ is uniformly bounded in $N$, this convergence holds true as well under $\YM_{\Gbb,a}$. It remains to identify $\Phi_\ell^{1,g_1}(a_1)$ with $\Phi_\ell(a).$

\vspace{0.5em}

Consider a free basis $\ell_1,\ldots, \ell_{r},a_1,b_1,\ldots,a_{g}, b_{g}$ of $\mathrm{RL}_v(\Gbb)$ as in 
Lemma \ref{__Lem:Basis Reduced Loops Cut} and let us identify $\mathrm{RL}_v(\Gbb_1)$ as a subgroup of $\mathrm{RL}_v(\Gbb).$ Denote  by $\tilde\tau$ the linear functional on $(\C[\mathrm{RL}_v(\Gbb)],*)$ that satisfies for 
all $\ell\in \mathrm{RL}_v(\Gbb),$ 
\[\tilde\tau(\ell)=\Phi_{\ell}( a).\]
It is enough to show that the restriction of $\tilde\tau$ to $\C[\mathrm{RL}_v(\Gbb_1)]$ satisfies  1,2 and 3 of Lemma \ref{__Lem: Convergence MF One BD}. 

\vspace{0.5em}

Point 3 follows  from point 1 of Lemma \ref{__Lem: Conv Planar MF Maps}.  Consider point 2. For any $\ell\in S_{top}=\{a_1,b_1,\ldots,a_{g_1},b_{g_1}\}$ and $k\in \Z^*,$  $\ell^k$ is not contractible and therefore
$\tilde\tau(\ell^k)=0.$ Let us now prove point 1. Note that $\ell_1,\ldots,\ell_{r_1}$ have same joint distribution under $\tau_v$ and $\tilde\tau.$  Hence, thanks to point 2 of Lemma \ref{__Lem: Conv Planar MF Maps}, 
$\ell_1,\ldots, \ell_{r_1}$ are freely independent under $\tilde\tau.$ 

\vspace{0.5em}

Since $g_2\ge 1,$ according to Lemma \ref{__Lem:Basis Reduced Loops}, identifying $\pi_{1,v}(\Gbb)$ with $\pi_{1,v}(\Gbb),$ the images of $a_1,b_1,\ldots, a_{g_1},b_{g_1}$ in $\pi_{1,v}(\Gbb)\simeq\Gamma_g$   span a free sub-group $\Gamma^{\#}$ of  $\G_g$ of rank $2g_1,$ isomorphic to the group $\mathrm{RL}_{top,1}$ generated by $a_1,\ldots,b_{g_1}$ in $\mathrm{RL}_v(\Gbb).$  Therefore $\tilde V_{L}=\Gamma^\#.\tilde V_g$ is included in a spanning tree  $\mathcal{T}$ of $\tilde V_g.$ Choosing $(\gamma_x)_{x\in \tilde V_g}$ as in Lemma \ref{__Lem:Basis Reduced Loops lift}, $(\widetilde{\g_x\ell_i\g_x^{-1}})_{x\in \tilde V_g,1\le i\le r}$ is a free basis of lassos of $\mathrm{RL}_{\tilde v}(\tilde\Gbb).$ Therefore, thanks to Lemma \ref{__Lem: Conv Planar MF Maps}, $(\g_x\ell_i\g_x^{-1})_{x\in \tilde V_L,1\le i\le r_1}$ are freely independent under $\tilde\tau$.  For any $\g\in\mathrm{RL}_{top,1},$ denote by $\cA_{\g}$ the subalgebra  generated by $\g\ell_1\g^{-1},\ldots,\g\ell_{r_1}\g^{-1}.$ We infer in particular that the sub-algebras $(\cA_\g)_{\g\in \mathrm{RL}_{top,1}}$ are freely independent under $\tilde \tau.$ 

\vspace{0.5em}

Now since $\Gamma^\#$ is free over the image of $S_{top}$,  for any alternated word $w$ in $a_1,\ldots,b_{g_1},$ the image in $\Gamma^\#$ is not trivial and the associated loop  $\ell_w\in\mathrm{RL}_{top,1}$ is not contractible, 
hence $\tilde\tau(w)=0.$   To conclude, it remains to show that the  sub-algebra  $\cA_{\mfk f}$ and $\cA_{top}$ of $\C[\mathrm{RL}_v(\Gbb_1)]$ spanned respectively   by $S_\mathfrak{f}=\{\ell_1,\ldots\ell_{r_1}\}$ and $S_{top}$ are freely 
independent under $\tilde\tau.$ Since $\tilde\tau$ is tracial and unital, it is enough to show \[\tilde \tau(w_1\a_1 w_2\ldots w_n\a_n w_{n+1} )=0\]  whenever $w_1,\ldots,w_{n}\in\mathrm{RL}_{top,1}\setminus\{c_v\},w_{n+1}\in \mathrm{RL}_{top,1}$ and 
$\a_1,\ldots,\a_n\in \cA_{\mfk f}$ with $\tilde\tau (\a_1)=\ldots=\tilde\tau(\a_n)=0.$  Denote by $G_{\mfk f}$ the sub-group of $\mathrm{RL}_v(\Gbb_1)$ generated by $S_{\mfk f}.$  Since $\mathrm{RL}_{top,v}$ is isomorphic to $\Gamma^\#$, if $w_1\ldots w_{n+1}$ does not reduce to the constant loop,  then for any $x_1,\dots,x_n\in G_{\mfk f},$ $w_1x_1w_2\ldots w_n x_nw_{n+1}\sim_h w_1\ldots w_{n+1}$ is not contractible, and the claim follows. Otherwise, $w_1\ldots w_{n+1}=1\in\mathrm{RL}_{top,1}$ and
\begin{equation}
w_1\a_1 w_2\ldots w_n\a_n w_{n+1} =\g_1\a_1\g_1^{-1}\g_2\a_1\g_2^{-1}\ldots \g_n\a_1\g_n^{-1},\label{eq:Word decomposition lassos cover}
\end{equation}
where $\g_i=w_1\ldots w_i$ for all $1\le i\le n.$ Now for all $1\le i<n,$ since $w_{i+1}\not= 1\in\mathrm{RL}_{top,1},$ $\g_i\not=\g_{i+1}$ and it follows that \eqref{eq:Word decomposition lassos cover} is an alternated word in centered elements of $(\cA_g)_{g\in \mathrm{RL}_{top,1}}.$ Since these sub-algebras are free under $\tilde \tau,$ the claim follows.
\end{proof}

\section{Interpolation between regular representations}

\subsection{State extension and interpolation}
In this section, we remark that  the maps considered in conjecture \ref{conj_Lift} have a positivity property and can be seen as states of a non-commutative probability space.

\begin{lem}\label{lem---positivity extension} Consider two groups $G,\Gamma,$ a surjective morphism $\pi:G\to \Gamma$, and $\tau$ a unital state on $(\C[K],1_G,*)$, where $K=\mathrm{ker}(\pi)$ and  $1_G$ denote the neutral element of $G.$  For any $g\in G, $ set
\begin{equation}
\tilde\tau(g)=\left\{\begin{array}{ll}\tau(g)& \text{ if }\pi(g)=1_\Gamma,\\&\\0& \text{ otherwise.}\end{array}\right.
\end{equation}
Assume that for any $ (g,k)\in G\times K, $ 
\begin{equation}
\tau(gng^{-1})=\tau(g).\label{eq: invariance inner Aut}
\end{equation}
Then $\tilde\tau$ extends linearly to a unital state on $(\C[G],1,*).$
\end{lem}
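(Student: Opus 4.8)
The plan is to verify that $\tilde\tau$ is a positive, tracial, unital linear functional on $(\C[G],1,*)$, with positivity being the crux. First I would note that unitality is immediate: $\pi(1_G)=1_\Gamma$, so $\tilde\tau(1_G)=\tau(1_G)=1$ since $\tau$ is a unital state. Compatibility with $*$ is similarly direct: for $g\in G$, $\pi(g^{-1})=\pi(g)^{-1}$, so $\pi(g^{-1})=1_\Gamma$ iff $\pi(g)=1_\Gamma$, and in that case $g^{-1}\in K$, so $\tilde\tau(g^{-1})=\tau(g^{-1})=\overline{\tau(g)}=\overline{\tilde\tau(g)}$ by the $*$-property of $\tau$; when $\pi(g)\ne 1_\Gamma$ both sides are $0$. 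Traciality $\tilde\tau(gh)=\tilde\tau(hg)$ follows from $\pi(gh)=\pi(hg)$ in the abelian quotient... wait, $\Gamma$ need not be abelian, but $\pi(gh)=1_\Gamma \iff \pi(hg)=1_\Gamma$ since $\pi(hg)=\pi(h)\pi(gh)\pi(h)^{-1}$ is conjugate to $\pi(gh)$; and when both vanish, $gh,hg\in K$ and $hg = h(gh)h^{-1}$, so $\tilde\tau(hg)=\tau(h(gh)h^{-1})$. Here I would invoke hypothesis \eqref{eq: invariance inner Aut}: $\tau$ is invariant under conjugation by elements of $G$ (note the statement says $\tau(gng^{-1})=\tau(g)$ but surely means $\tau(gkg^{-1})=\tau(k)$ for $k\in K$, $g\in G$ — I would use the corrected form), giving $\tilde\tau(hg)=\tau(gh)=\tilde\tau(gh)$.

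The main obstacle is positivity: showing $\tilde\tau(x^*x)\ge 0$ for all $x\in\C[G]$. Write $x=\sum_{g\in G} c_g\, g$ (finite sum). Then $x^*x = \sum_{g,h} \overline{c_g}c_h\, g^{-1}h$, so $\tilde\tau(x^*x) = \sum_{g,h:\ \pi(g)=\pi(h)} \overline{c_g}c_h\,\tau(g^{-1}h)$, since $\pi(g^{-1}h)=1_\Gamma$ exactly when $\pi(g)=\pi(h)$, and in that case $g^{-1}h\in K$. The key observation is that this sum decomposes over the cosets of $K$: pick a set of coset representatives, so $G=\bigsqcup_{\gamma\in\Gamma} s_\gamma K$ for chosen lifts $s_\gamma$ of $\gamma\in\pi(G)$. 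For $g,h$ in the same coset $s_\gamma K$, write $g=s_\gamma k$, $h=s_\gamma k'$ with $k,k'\in K$; then $g^{-1}h = k^{-1}s_\gamma^{-1}s_\gamma k' = k^{-1}k'$. Hence
\[
\tilde\tau(x^*x) = \sum_{\gamma\in\pi(G)} \ \sum_{k,k'\in K} \overline{c_{s_\gamma k}}\, c_{s_\gamma k'}\, \tau(k^{-1}k') = \sum_{\gamma\in\pi(G)} \tau\bigl(y_\gamma^* y_\gamma\bigr),
\]
where $y_\gamma := \sum_{k\in K} c_{s_\gamma k}\, k \in \C[K]$. Since $\tau$ is a positive state on $\C[K]$, each summand $\tau(y_\gamma^* y_\gamma)\ge 0$, so $\tilde\tau(x^*x)\ge 0$. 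This is the whole content; the conjugation-invariance hypothesis is not even needed for positivity, only for traciality.

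I expect this to go through cleanly with no serious difficulty; the only subtlety worth flagging in the write-up is being careful that the coset decomposition $g^{-1}h=k^{-1}k'$ does not depend on the arbitrary choice of lift $s_\gamma$ (it does not, since any two lifts differ by an element of $K$ on the right, which cancels in $g^{-1}h$), and that the sums are all finite so no convergence issue arises. I would also remark, for the benefit of the reader, that this lemma is exactly the abstract mechanism behind the "vanishing on non-contractible loops" formula of Theorem \ref{-->THM: Torus Intro} and Conjecture \ref{conj_Lift}: taking $G=\mathrm{RL}_v(\Gbb)$, $\Gamma=\pi_{1,v}(\Gbb)$, $\pi$ the quotient, $K=p(\mathrm{RL}_{\tilde v}(\tilde\Gbb))$, and $\tau$ the planar master field state on the cover, the conjugation-invariance \eqref{eq: invariance inner Aut} is precisely the statement that conjugating a lifted loop by an element of $\mathrm{RL}_v(\Gbb)$ corresponds to a deck-transformation-equivariant re-rooting under which the planar master field is invariant.
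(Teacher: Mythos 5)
Your proof is correct and follows essentially the same route as the paper: traciality is obtained from the conjugation-invariance hypothesis exactly as in the paper's argument, and positivity is proved by choosing a section of $\pi$ and decomposing the sum over cosets of $K$ so that $\tilde\tau(x^*x)$ becomes a sum of terms $\tau(y_\gamma^* y_\gamma)\ge 0$, which is precisely the paper's computation (up to working with left rather than right cosets and with $x^*x$ rather than $xx^*$). Your observations that the hypothesis should read $\tau(gkg^{-1})=\tau(k)$ and that it is only needed for traciality, not positivity, are both accurate.
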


\begin{proof} Let us check that $\tilde \tau$ is tracial. For any $a,b\in G,$ if $\pi(a)\not=\pi(b)^{-1},$ then $\pi(ab),\pi(ba)\not=1_\Gamma$ and $\tilde \tau(ab)=\tilde\tau(ba)=0.$ Otherwise, thanks to \eqref{eq: invariance inner Aut}, $\tilde \tau(ab)=\tau(ab)=\tau(babb^{-1})=\tau(ba)=\tilde \tau(ba).$ Let us check now the positivity condition.  Since $\pi$ is surjective,  there is a right-inverse map $s: \Gamma\to G$   satisfying $\pi\circ s(\gamma)=\gamma$ for all $\g\in\Gamma.$ Consider $x=\sum_{g\in G}\a_g g$ for some finitely supported sequence $(\a_g)_{g\in G}.$ Then
\begin{align*}
\tilde\tau(xx^*)&=\sum_{a,b\in G} \a_a\overline{\a}_b\tilde\tau(ab^{-1})=\sum_{a,b\in G:\pi(a)=\pi(b)} \a_a\overline{\a}_b \tau(ab^{-1})\\
&=\sum_{\g\in \Gamma} \sum_{a,b\in K:} \a_{a s(\g)}\overline{\a}_{bs(\g)} \tau\left(a b^{-1}\right)\\
&=\sum_{\g\in \Gamma} \tau\left( y_\g y_\g^*\right)\ge 0,
\end{align*}
where we set for any $\g\in \G,$  $y_\g=   \sum_{a\in K} \a_{a s(\g)}a.$
\end{proof}

When $G$ is a group, let us denote by $\tau_{reg_G}$ and $\tau_{triv_G}$ the regular and the trivial states on $(\C[G],1_G,*)$ defined by
\[\tau_{reg_G}(g)= \left\{\begin{array}{ll}1& \text{ if } g =1_G,\\&\\0& \text{ otherwise,}\end{array}\right. \text{and } \tau_{triv_G}(g)=1,\ \forall g\in G. \]
The following lemma is straightforward and gives states  interpolating between regular representations of $G$ and $K.$ 
\begin{lem}\label{lem---interpolation}  Consider   $G,\Gamma,$    $\pi$ and $K$ as in Lemma \ref{lem---positivity extension} and $(\tau_T)_{T>0}$ a family of states on $(\C[K],1,*)$ satisfying \eqref{eq: invariance inner Aut}, such that for any $k\in K,$
\begin{equation}
\lim_{T\to 0}\tau_T(k)=\tau_{triv_K}(k) \text{ and } \lim_{T\to\infty}\tau_T(k)=\tau_{reg_K}(k). \label{eq---interpol trivial reg}
\end{equation}
Then for any $g\in G,$
\[\lim_{T\to 0}\tilde \tau_T(g)=\tau_{reg_\Gamma}\circ\pi(g) \text{ and } \lim_{T\to\infty}\tilde \tau_T(g)=\tau_{reg_G}(g).  \]
\end{lem}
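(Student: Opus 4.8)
The statement is Lemma \ref{lem---interpolation}, which the excerpt already calls "straightforward"; the point is simply to unwind the definition of $\tilde\tau_T$ from Lemma \ref{lem---positivity extension} and pass to the limit termwise. First I would fix $g\in G$ and split into the two cases dictated by the definition of $\tilde\tau_T$, namely $\pi(g)=1_\Gamma$ and $\pi(g)\neq 1_\Gamma$.

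In the case $\pi(g)\neq 1_\Gamma$, by the very definition $\tilde\tau_T(g)=0$ for every $T>0$, so both limits are $0$. On the other side, $\tau_{reg_G}(g)=0$ because $g\neq 1_G$ (as $\pi(g)\neq 1_\Gamma$ forces $g\neq 1_G$), and $\tau_{reg_\Gamma}\circ\pi(g)=0$ because $\pi(g)\neq 1_\Gamma$. Hence both identities hold trivially in this case. In the case $\pi(g)=1_\Gamma$, we have $g\in K$, and $\tilde\tau_T(g)=\tau_T(g)$ by definition. Now I would invoke the hypothesis \eqref{eq---interpol trivial reg}: as $T\to 0$, $\tau_T(g)\to\tau_{triv_K}(g)=1$, and as $T\to\infty$, $\tau_T(g)\to\tau_{reg_K}(g)$, which equals $1$ if $g=1_G$ and $0$ otherwise. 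It remains to match these against the right-hand sides: $\tau_{reg_\Gamma}\circ\pi(g)=\tau_{reg_\Gamma}(1_\Gamma)=1=\tau_{triv_K}(g)$, so the $T\to 0$ limit is correct; and $\tau_{reg_G}(g)$ equals $1$ if $g=1_G$ and $0$ otherwise, which is exactly $\tau_{reg_K}(g)$ for $g\in K$, so the $T\to\infty$ limit is correct as well. This closes both cases.

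Since $\tilde\tau_T$, $\tau_{reg_G}$ and $\tau_{reg_\Gamma}\circ\pi$ are all linear in the group-algebra variable and the above verifies the pointwise limits on the basis $\{g:g\in G\}$ of $\C[G]$, extending by linearity over any finitely supported combination completes the argument. There is no real obstacle here: the only mild care needed is bookkeeping the four sub-cases ($\pi(g)=1_\Gamma$ or not, crossed with $g=1_G$ or not) and observing that $\tau_{triv_K}$, $\tau_{reg_K}$ restricted to $K$ coincide respectively with $\tau_{reg_\Gamma}\circ\pi$ and $\tau_{reg_G}$ restricted to $K$. I would present this as a short two-paragraph proof, case by case, with no computation beyond these identifications.
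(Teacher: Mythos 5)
Your proof is correct and is exactly the direct case-by-case verification that the paper has in mind when it declares the lemma ``straightforward'' and omits the proof: unwind the definition of $\tilde\tau_T$, treat $\pi(g)\neq 1_\Gamma$ and $g\in K$ separately, and use that $\tau_{triv_K}$ and $\tau_{reg_K}$ agree on $K$ with $\tau_{reg_\Gamma}\circ\pi$ and $\tau_{reg_G}$ respectively. The closing remark about extending by linearity is unnecessary (the statement only concerns group elements $g\in G$) but harmless.
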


Let us consider two examples of extensions of the surface group $\G_g$.

\noindent   \textbf{\emph{Extensions to the free group of even rank:}} Consider the free group $\Fbb_{2g}$ in $2g$ generators $a_1,b_1,\ldots,a_g,b_g$ and  the morphism 
\[\pi: \Fbb_{2g}\to \Gamma_g=\langle x_1,y_1\ldots,x_g,y_g\ |\ [x_1,y_1]\ldots[x_g,y_g]\rangle\]
with $\pi(a_i)=x_i,\pi(b_i)=y_i,\ \forall i$ and $K=\mathrm{ker}(\pi).$ Identifying $\Fbb_{2g}$ with $\Gamma_{1,g},$ this morphism coincides with $\Gamma_{1,g}\to\Gamma_g$ considered in 3. of Lemma \ref{__Lem:Basis Reduced Loops lift}, and accordingly,  there is a right-inverse $s:\Gamma_g\to \Fbb_{2g}$ such that $K$ is free over 
\[ (w_\g)_{\g\in \G_g}=(s(\g) [a_1,b_1]\ldots [a_g,b_g]s(\g)^{-1})_{\g\in \Gamma_g}.\]
Assume that $(\mu_T)_{T>0}$ is a family of measures on the unit circle such that for any integer $n\not=1,$
\[\lim_{T\to 0}\int_{\mathbb U}\omega^n\mu_T(d\omega) =1\text{ and }\lim_{T\to \infty}\int_{\mathbb U}\omega^n\mu_T(d\omega) =0. \]
Denote by $1\in \Fbb_{2g}$  the empty word and consider the unique state  $\tau_T$ on $(\C[K],1,*)$ such that  under $\tau_T,$ $(w_{\g})_{\g\in \G_g}$ are freely independent and identically distributed with distribution $\mu_T.$  

\begin{prop} For any $g\ge 1,$for all $T>0,$ $\tilde\tau_T$ is a state on $(\C[\Fbb_{2g}],1,*)$ with 
\[\lim_{T\to 0}\tilde\tau_T(w)=\tau_{reg_{\G_g}}\circ\pi (w)\text{ and }\lim_{T\to \infty}\tilde\tau_T(w)=\tau_{reg_{\Fbb_{2g}}}\circ\pi (w),\ \forall w\in \Fbb_{2g}.\]
\end{prop}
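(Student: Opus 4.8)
The plan is to obtain $\tilde\tau_T$ as the extension furnished by Lemma \ref{lem---positivity extension} and then read off the two limits from Lemma \ref{lem---interpolation}, applied with $G=\Fbb_{2g}$, $\Gamma=\Gamma_g$, the surjection $\pi$ of the statement, and $K=\ker\pi$. Recall that $K$ is free over the Reidemeister--Schreier basis $w_\gamma=s(\gamma)\,c\,s(\gamma)^{-1}$, $\gamma\in\Gamma_g$, with $c=[a_1,b_1]\cdots[a_g,b_g]$, and let $\tau_T$ be the \emph{free i.i.d.\ state} on $(\C[K],1,*)$: the $w_\gamma$ are freely independent and each has distribution $\mu_T$. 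Equivalently $\tau_T$ is the reduced free product over $\gamma\in\Gamma_g$ of copies of the state $P\in\C[\Z]\mapsto\int_{\mathbb U}P\,d\mu_T$, so it is a well-defined unital tracial state. Two things then remain to check: that $\tau_T$ satisfies the conjugation invariance \eqref{eq: invariance inner Aut}, and that $(\tau_T)_{T>0}$ has the limiting behaviour \eqref{eq---interpol trivial reg}.

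The limiting behaviour is the routine part. Since the moments $m_n=\int_{\mathbb U}\omega^n\,d\mu_T$ of a probability measure on $\mathbb U$ obey $m_{-n}=\overline{m_n}$, the hypothesis that $m_n\to 1$ (resp.\ $\to 0$) for every integer $n\neq 1$ forces the same for $n=1$, hence for all $n\neq 0$; together with $m_0=1$ this says $\mu_T\to\delta_1$ as $T\to 0$ and $\mu_T\to$ Haar measure as $T\to\infty$. Each $w_\gamma$ then converges in distribution to $1$, resp.\ to a Haar unitary; by continuity of the free product state in its components one gets $\tau_T(n)\to 1=\tau_{triv_K}(n)$ for all $n\in K$ as $T\to 0$, while as $T\to\infty$ the $(w_\gamma)$ become free Haar unitaries on a free generating set of $K$, so $\tau_T(n)\to\tau_{reg_K}(n)$. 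This is exactly \eqref{eq---interpol trivial reg}.

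The main obstacle is \eqref{eq: invariance inner Aut}: that $\tau_T\circ\Ad(g)=\tau_T$ for $g\in\Fbb_{2g}$ (it suffices to take $g$ among the generators, since the invariant $g$'s form a subgroup). Here $\Ad(g)(w_\gamma)=(gs(\gamma))\,c\,(gs(\gamma))^{-1}$, and since $\{gs(\gamma):\gamma\}$ is again a transversal of $K$ in $\Fbb_{2g}$, and $c$ is not a proper power in $\Fbb_{2g}$ (its centraliser is $\langle c\rangle$, so $tct^{-1}$ depends only on $t\langle c\rangle$ and the Reidemeister--Schreier basis is well defined from any transversal), the family $(\Ad(g)w_\gamma)_\gamma$ is, after the relabelling $\gamma\mapsto\pi(g)\gamma$, precisely the Reidemeister--Schreier basis attached to that new transversal. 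I would therefore reduce \eqref{eq: invariance inner Aut} to the claim that the free-i.i.d.-$\mu_T$ structure carried by such a basis does not depend on the transversal: granting it, $(\Ad(g)w_\gamma)_\gamma$ is again free i.i.d.\ $\mu_T$, whence for $n=W((w_\gamma))\in K$ one gets $\tau_T(\Ad(g)n)=\tau_T(W((\Ad(g)w_\gamma)))=\tau_T(W((w_\gamma)))=\tau_T(n)$, because $\tau_T$ evaluated on a word in a free i.i.d.\ family depends only on the combinatorial pattern of the word and on $\mu_T$. To prove the claim one notes that changing transversal replaces each $w_\gamma$ by $u_\gamma w_{\sigma(\gamma)}u_\gamma^{-1}$ with $u_\gamma\in K$ and $\sigma$ a bijection of $\Gamma_g$, and then shows that free independence together with identical distribution survives such position-wise conjugations, using traciality and the stability fact that if $(\cA_i)_i$ are free and $u$ is a unitary in the algebra generated by $(\cA_i)_{i\neq i_0}$ then $(u\cA_{i_0}u^*,(\cA_i)_{i\neq i_0})$ is still free; the delicate bookkeeping is keeping track of which generators the $u_\gamma$ actually involve. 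With \eqref{eq: invariance inner Aut} and \eqref{eq---interpol trivial reg} in hand, Lemma \ref{lem---positivity extension} gives that $\tilde\tau_T$ is a state on $(\C[\Fbb_{2g}],1,*)$ and Lemma \ref{lem---interpolation} yields the stated limits, $\tau_{reg_{\Gamma_g}}\circ\pi$ as $T\to 0$ and $\tau_{reg_{\Fbb_{2g}}}$ as $T\to\infty$.
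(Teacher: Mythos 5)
Your skeleton is the right one and matches the paper's: reduce the proposition to the hypotheses of Lemma \ref{lem---interpolation} (hence of Lemma \ref{lem---positivity extension}), verify the limiting behaviour \eqref{eq---interpol trivial reg} (your argument for this part, including the observation that the hypothesis for $n\neq 1$ forces the case $n=1$ via $m_{-n}=\overline{m_n}$, is fine; the paper leaves it to the reader), and verify the conjugation invariance \eqref{eq: invariance inner Aut}. The gap is in this last, crucial step. You reduce \eqref{eq: invariance inner Aut} to the claim that the family $\big(u_\gamma w_{\sigma(\gamma)}u_\gamma^{-1}\big)_{\gamma}$, with $u_\gamma\in K$ words in the generators, is again free and i.i.d.\ $\mu_T$, and you propose to prove this by iterating the stability fact that conjugating one algebra of a free family by a unitary lying in the algebra generated by the \emph{other} algebras preserves freeness. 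That one-step fact is true, but it does not iterate to position-dependent simultaneous conjugations, and the unrestricted statement you would need is false: for two free self-inverse unitaries $w_1,w_2$ (spectrum in $\{\pm1\}\subset\mathbb{U}$) with common mean $m\in(0,1)$, a direct moment computation gives $\tau\big(w_2w_1w_2^{-1}\,w_1w_2w_1^{-1}\big)-\tau(w_1)\tau(w_2)=2m^2(1-m^2)^2\neq 0$, so the swap-conjugated family is not free i.i.d.; since $\mu_T$ is not Haar (its moments $\nu_T(n)$ are nonzero), the same obstruction is present in your setting. The transversal-change conjugators $u_\gamma=g\,s(\gamma)s(\pi(g)\gamma)^{-1}$ are generically long words in many $w_\delta$'s (already for $g=1$ they encode the area enclosed by a loop of $\Z^2$), so there is no evident ordering of the conjugations in which the one-step fact applies, and the ``delicate bookkeeping'' you defer is precisely the mathematical content of \eqref{eq: invariance inner Aut}; as it stands the proof does not go through. (A secondary point you also leave unproved is that the conjugates attached to an arbitrary transversal form a free basis of $K$.)

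The paper circumvents exactly this difficulty by enlarging the algebra rather than working inside $\C[K]$: Lemma \ref{lem---- groupoid} produces the free group $P=\Fbb(\tilde E_+)$ on the edges of the universal cover of the bouquet map, together with a \emph{tracial} state $\eta_T$ (free generators, Haar unitary on spanning-tree edges, $\mu_T$ on the remaining edges) and a subgroup $L$ on which $\eta_T$ realises $\tau_T$ through the surjection $p:P\to\Fbb_{2g}$. The invariance \eqref{eq: invariance inner Aut} is then an immediate consequence of the traciality of $\eta_T$ in the bigger algebra, where the conjugator lifts to an honest element; no invariance of the free i.i.d.\ structure of $\C[K]$ under automorphisms is ever needed. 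If you want to complete your route, the clean way is to prove your ``transversal-independence'' claim by exhibiting such an enlarged model (equivalently, by identifying $\tau_T$ with the restriction of the planar master field on $\tilde\Gbb_g$ as in Lemma \ref{__Lem: Conv Planar MF Maps}, which is tracial), not by iterating the conjugation-stability fact.
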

\begin{rmk}
\begin{enumerate}
\item Mind that under $\tau_{reg_{\Fbb_{2g}}},$ $a_1,b_1,\ldots,a_g,b_g$ are freely independent, whereas when $g=1,$ under $\tau_{reg_{\Z^2}},$ $a_1,b_1$ are classically independent. Hence when $g=1,$  $(\tau_T)_{T>0}$ gives an interpolation between freely and classically independent Haar unitaries.

\item Recall that when $(\mu_T)_{T>0}$ is given by a free unitary Brownian motion,  according to Lemma \eqref{__Lem: Conv Planar MF Maps}, $\tau_T$  can be identified with the restriction of the master field on $\tilde \Gbb_g$ where all polygon faces  have area $T.$
\end{enumerate}
\end{rmk}
\begin{proof} It is enough to prove that  $(\tau_T)_{T>0}$ satisfies the assumptions of Lemma \ref{lem---interpolation}.  We shall only prove \eqref{eq: invariance inner Aut} and leave the proof of the other conditions to the reader.  According to Lemma \ref{lem---- groupoid} there is a surjective group morphism $p:  P\to \Fbb_{2g},$   a state $\eta_T$ on  $(\C[ P],1,*)$ and a sub-group $L$ such that $ p:L\to K$ is surjective with
\[ \eta_T(\ell)=\tau_T\circ \pi(\ell),\ \forall \ell\in L. \]
Hence for any $w\in \Fbb_{2g}$ and $k\in K,$ there are  $\g\in P,\ell\in L$ with $p(\g)=w,$  $p (\ell)=k$,  and since $\eta_T$ is a trace
\[ \tau_T(wkw^{-1})=\tau_T(p(\g \ell\g^{-1}))= \eta_T(\g \ell\g^{-1})=\eta_T(\ell)=\tau_T(k).\]
\end{proof}

Following the same convention as in section \ref{----sec:DiscreteFundamentalCover}, consider the covering map $\tilde \Gbb_g=(\tilde V,\tilde E,\tilde F)$ of the  2g-bouquet map, $\Gbb_g=(V,E,F),$ its $2g$ distinct edges $a_1,b_1,\ldots,a_g,b_g,$ a  vertex $r\in \tilde V,$  an orientation $\tilde E_+$ of the edges of $\tilde \Gbb_g,$ and  the free group $P=\Fbb(\tilde E_+)$ over the $\tilde E_+.$ When $e\in \tilde E_+,$ let us identify  the inverse of $e$ in $H$ with the edge $e^{-1}$ of $\tilde\Gbb$ with reverse orientation.  Denote by $p:P\to\Fbb_{2g}$  the group morphism mapping any edge $\tilde e\in\tilde E$ to its projection $p(e)\in E.$ Note that we can identify any non-trivial reduced path of $(\tilde V,\tilde E)$ with a (strict) subset of $P$ and through this identification, the group of reduced loops of $(\tilde V,\tilde E)$ based at $r$  is identified with a subgroup $L_r$ of $P$ such that 
\[p:L_r\to\ker(\pi)=K\]  
is an isomorphism. Let us fix a spanning tree $\mathcal{T}$ of $(\tilde V,\tilde E).$ As in Lemma \ref{__Lem:Basis Reduced Loops lift},  consider the associated basis $(\omega^r_\g)_{\g\in\G_g} $ of $N$ and denote by 
$(\ell^r_{\g.r})_{\g\in\G_g} $ its pre-image  in $L_r.$   Let us recall another basis of $L_r.$ Denote by $E_+(\mathcal{T})$ the subset of edges of $\mathcal{T}$ in $\tilde E_+.$  For any vertex $v\in\tilde V,$ there is a unique reduced path in $\mathcal{T}$ from $r$ to $v$ and we identify it with an element  $[r,v]_{\mathcal{T}}\in P. $   Then, setting for any $e\in \tilde E_+\setminus E_+(\mathcal{T}), $ 
\[\ell_{r,e}=[r,v]_{\mathcal{T}}e [r,v]_{\mathcal{T}}^{-1} \] 
defines a free basis of $L_r$ indexed by $\tilde E_+\setminus \mathcal{T}.$ It is easy to check that the family $(\ell_{r,e})_{e\in E_+\setminus\mathcal{T}}, (e)_{e\in E_+(\mathcal{T})}$ form a free basis of $P.$ In particular, $P$ is isomorphic to the free product
\[\Fbb(\tilde E_+)=  \Fbb(E_+(T))*L_r. \]
Consider now freely independent unitary non-commutative random variables indexed by $\tilde E_+,$ such that a random variable of this family is Haar unitary if it is indexed by $E_+(\mathcal{T})$, and is distributed according to $\mu_T$ otherwise. Denote by $\eta_T:\C[\Fbb(\tilde E_+)]\to \C$ its non-commutative distribution.  Since the distribution of $(w_\g)_{\g\in\Gamma_g}$ under $\tau_T$ is identical to the one of $(\ell_{\g.r})_{\g\in\G_g}$ the next lemma follows.

\begin{lem}\label{lem---- groupoid}  For any $T>0,$ $\eta_T:\C[\Fbb(\tilde E)]\to \C$ is a state, the morphism  $p:\Fbb(\tilde E_+)\to \Fbb_{2g}$ is surjective, such that $p:L_r\to K$ is an isomorphism with 
\[\eta_T(\ell)= \tau_T(p(\ell)),\ \forall \ell\in L_r.\] \end{lem}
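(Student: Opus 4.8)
The statement in question is Lemma \ref{lem---- groupoid}, which is essentially a bookkeeping result: it packages the free-probabilistic construction on the free group $P=\Fbb(\tilde E_+)$ so that it is compatible, via the covering map $p$, with the state $\tau_T$ on $\C[K]$.

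\medskip

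\textbf{Plan of proof.} The proof is almost entirely a matter of unwinding the definitions and invoking the structure of free products; there is no analytic content. First I would recall the three facts set up in the paragraph preceding the lemma: (i) $P\cong\Fbb(E_+(\mathcal{T}))*L_r$ as abstract groups, because $(\ell_{r,e})_{e\in\tilde E_+\setminus\mathcal{T}}$ together with $(e)_{e\in E_+(\mathcal{T})}$ is a free basis of $P$; (ii) the restriction $p\colon L_r\to K=\ker(\pi)$ is a group isomorphism, this being the standard identification of the reduced-loop group of the universal cover graph based at $r$ with $\pi_1$ of the $2g$-bouquet — this is exactly point 1 (and the explicit basis) of Lemma \ref{__Lem:Basis Reduced Loops lift} transported to the present graph-theoretic setting; and (iii) $p\colon P\to\Fbb_{2g}$ is surjective since every edge of $\Gbb_g$ lifts. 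Each of these is quoted or immediate, so I would state them with a one-line justification and move on.

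\medskip

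Next, $\eta_T$ is a state: it is defined as the non-commutative distribution of a family of $*$-free unitary elements indexed by $\tilde E_+$ (Haar unitaries on $E_+(\mathcal{T})$, $\mu_T$-distributed elsewhere), realised in some non-commutative probability space; the non-commutative distribution of any tuple of elements of a $*$-probability space is by definition a positive, tracial, unital state on the corresponding polynomial $*$-algebra, here $\C\<X_e,X_e^*,e\in\tilde E_+\>$, which we identify with $\C[\Fbb(\tilde E_+)]=\C[P]$ because the $X_e$ are unitary (so $X_e^*=X_e^{-1}$ and the polynomial algebra collapses to the group algebra). This is precisely the mechanism described in the subsection on non-commutative distributions in the introduction. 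So positivity, traciality and $\eta_T(1)=1$ are inherited for free.

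\medskip

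The only point requiring a genuine (if short) argument is the identity $\eta_T(\ell)=\tau_T(p(\ell))$ for all $\ell\in L_r$. Here I would argue as follows: under the free product decomposition $P=\Fbb(E_+(\mathcal{T}))*L_r$, the sub-family of generators lying in $L_r$ is the free basis $(\ell_{r,e})_{e\in\tilde E_+\setminus E_+(\mathcal{T})}$, and by construction each such generator is $\mu_T$-distributed, and the whole family indexed by $\tilde E_+$ is free; in particular the restriction of $\eta_T$ to $\C[L_r]$ is the unique state making $(\ell_{r,e})_{e\in\tilde E_+\setminus\mathcal{T}}$ freely independent, each with distribution $\mu_T$. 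On the other hand, the lemma's hypothesis (from the surrounding text) is that the distribution of $(w_\g)_{\g\in\Gamma_g}$ under $\tau_T$ equals that of $(\ell_{\g.r})_{\g\in\Gamma_g}$, i.e.\ $\tau_T$ pulled back along $p|_{L_r}$ is also the unique state on $\C[L_r]$ making the basis $(\ell^r_{\g.r})_{\g}$ freely independent with law $\mu_T$; since both bases of $L_r$ — the $e$-basis and the $\g$-basis — are related by a change of free generators and both $\eta_T|_{L_r}$ and $\tau_T\circ p|_{L_r}$ are determined by freeness plus the common one-variable marginal $\mu_T$, they coincide. (If one prefers to avoid appealing to "freeness is independent of the choice of free generators of the factors," one can instead note directly that both the $\ell_{r,e}$ and the $\ell^r_{\g.r}$ are free bases of the same free group $L_r$, each of whose elements is $\mu_T$-distributed and mutually free under the respective states, which forces equality of the states on $\C[L_r]$.) This establishes $\eta_T=\tau_T\circ p$ on $\C[L_r]$, which is the displayed equation.

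\medskip

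\textbf{Main obstacle.} There is no real obstacle; the ``hard part'' is purely organisational — making sure that the two descriptions of the state on $\C[L_r]$ (one coming from the chosen spanning tree $\mathcal{T}$ of $(\tilde V,\tilde E)$ and the $\tilde E_+$-indexed free family, the other coming from the $\Gamma_g$-indexed lasso basis $(\ell^r_{\g.r})_\g$ used to define $\tau_T$) genuinely refer to the same free-probabilistic data, i.e.\ that the relevant change of free generators of $L_r$ sends the $\mu_T$-marginal to the $\mu_T$-marginal. Once the bases are matched up via Lemma \ref{__Lem:Basis Reduced Loops lift}, the identity is forced by uniqueness of a state on a free group algebra given the marginals of a free generating set and their mutual freeness.
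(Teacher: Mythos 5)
Your argument establishes statehood of $\eta_T$, surjectivity of $p$ and the isomorphism $p:L_r\to K$ exactly as the paper does (these are quoted from the set-up), but the one substantive claim of the lemma, namely $\eta_T(\ell)=\tau_T(p(\ell))$ for $\ell\in L_r$, is derived from a principle that is false: a state on the group algebra of a free group obtained by declaring \emph{some} free generating set to be freely independent with common marginal $\mu_T$ does depend on which free generating set one chooses, unless $\mu_T$ is the Haar measure. In $\Fbb_2=\langle x,y\rangle$, the state making $(x,y)$ free with first moment $m=\int\omega\,d\mu_T(\omega)$ gives $\tau(xy)=m^2$, while the state making $(x,xy)$ free with the same marginals gives $\tau(xy)=m$; for $\mu_T$ the law of a free unitary Brownian motion at time $T>0$, $m=e^{-T/2}\notin\{0,1\}$, so these states differ. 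This is exactly the discrepancy between your two descriptions of the state on $\C[L_r]$: the edge-lassos $(\ell_{r,e})_{e\in\tilde E_+\setminus E_+(\mathcal{T})}$ and the tile-lassos $(\ell^r_{\g.r})_{\g\in\Gamma_g}$ are different free bases of $L_r$, related by an automorphism which does not preserve such states. Concretely, for $g=1$, the grid $\tilde\Gbb_1$ and the comb spanning tree (all vertical edges plus the horizontal edges on the $x$-axis), the tile-lasso of the square $[0,1]\times[1,2]$ is, up to conjugation and orientation, the product of the inverse of one edge-lasso with another (those attached to the non-tree horizontal edges at heights $1$ and $2$); if the edge-lassos are free, each with first moment $m$, this element has trace of modulus $|m|^2=e^{-T}$, whereas $\tau_T\circ p$ assigns it $e^{-T/2}$, since $p$ sends it to one of the $w_\g$. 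So the equality of $\eta_T|_{\C[L_r]}$ with $\tau_T\circ p$ does not follow from your argument and, with the state you describe (edge variables, or edge-lassos, carrying the $\mu_T$ marginals), it is in fact false.

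The paper's one-sentence justification works along the other basis: the only family of generators of $L_r$ that is relevant is the tile-lasso basis $(\ell^r_{\g.r})_{\g\in\Gamma_g}$, because $p$ carries it onto the defining basis $(w_\g)_{\g\in\Gamma_g}$ of $K$, and the construction of $\eta_T$ is to be read so that it is precisely this family of lassos (free from the Haar variables attached to $E_+(\mathcal{T})$) whose joint distribution under $\eta_T$ is free with marginals $\mu_T$; the identity on $\C[L_r]$ is then immediate, since a state on a group algebra is determined by its values on group elements and $p$ matches words in one basis with the corresponding words in the other. Your reading, in which the $\mu_T$-distributed variables sit on the non-tree edge generators themselves (and hence, as you assert, on the $\ell_{r,e}$), does not yield this property --- under it the tile-lassos acquire the wrong law, as the computation above shows --- and no change-of-free-basis argument can bridge the gap. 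To repair the proof you must work with the tile-lasso basis from the start, which is where the actual content of the lemma lies.
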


\noindent   \textbf{\emph{Extension to the group of reduced loops:}} Consider a compact surface $\Sigma$ and $r$  a point of $\Sigma.$  The set  $\mathrm{L}_r(\Sigma)$ of 
Lipschitz\footnote{recall the notation introduced in page \pageref{section------ continuous YM statements}}  loop  of $\Sigma$ based at $r$ is  a monoid with multiplication given by concatenation, whose unit element is 
the constant loop at $r.$  It can be turned into a group through the following quotient \cite{HamblyLyons,Lev}.  Following \cite[Sect. 6.7]{Lev}, let us say that a loop $\ell\in\mathrm{L}_r(\Sigma)$ is a \emph{thin loop} if it is homotopic to the constant loop 
at $r$ within its own range.  For any pair $\ell,\ell'\in\mathrm{L}_r(\Sigma)$,  let us define a binary relation setting   $\ell\sim\ell'$ whenever $\ell'\ell^{-1}$ is a thin loop. Let us recall the following. 
\begin{thm}[\cite{Lev}] \begin{enumerate}
\item The relation $\sim$ is an equivalence relation and $\mathrm{RL}_r(\Sigma)=\mathrm{L}_r(\Sigma)/\sim$ is a group. 
\item When $\Sigma=\mathbb{R}^2$ or $\Dbb_\mfk{h},$ the master field $\Phi_\Sigma$ on $\Sigma$ satisfies
\begin{enumerate}
\item    for any pair $\ell,\ell'\in\mathrm{L}_r(\Sigma),$ 
\begin{equation}
\ell\sim\ell'\Rightarrow\Phi_\Sigma(\ell)=\Phi_{\Sigma}(\ell')
\end{equation}
and 
\begin{equation}
\Phi_{\Sigma}(\ell)=1\Rightarrow \ell\sim 1.
\end{equation}
Setting $\Phi_\Sigma(l)=\Phi_\Sigma(\ell)$ for any   $\ell\in \mathrm{L}_r$ with quotient image $l\in\mathrm{RL}_r(\Sigma)$ defines by linear extension a state $\Phi_\Sigma$ on the group algebra $(\C[\mathrm{RL}_r(\Sigma)],1,*).$
\item For any path $a,b\in \mathrm{P}(\Sigma)$ with $\overline a=\underline b$ and $\overline b=\underline a,$
\begin{equation}
\Phi_{\Sigma}(ab)=\Phi_\Sigma(ba).\label{eq----inner aut master field}
\end{equation}
\end{enumerate}

\end{enumerate}
\end{thm}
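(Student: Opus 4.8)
The plan is to follow \cite{Lev}, organising the argument into the elementary group--theoretic part, the ``formal'' parts of statement $2$, and the one genuinely nontrivial point.

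\emph{Part 1.} First I would observe that for any $\ell\in\mathrm{L}_r(\Sigma)$ the concatenation $\ell\ell^{-1}$ retracts, inside its own range $\ell([0,1])$, to the constant loop $c_r$ by the standard backtracking homotopy; hence $\ell\ell^{-1}$ is thin and $\ell\sim\ell$. The reverse of a thin loop is thin (same range, reversed homotopy), so $\sim$ is symmetric. The remaining statements rest on the fact that the set $T_r$ of thin loops based at $r$ is stable under concatenation and under conjugation by loops based at $r$: if $s,s'\in T_r$ then $ss'$ is homotopic inside $s([0,1])\cup s'([0,1])$ first to $s$, then to $c_r$; and if $s\in T_r$, $\alpha\in\mathrm{L}_r(\Sigma)$, then $\alpha s\alpha^{-1}$ is homotopic inside $\alpha([0,1])\cup s([0,1])$ to $\alpha\alpha^{-1}\in T_r$. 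From this one gets that $\ell\sim\ell'$ iff $\ell'\ell^{-1}\in T_r$, whence transitivity ($\ell''\ell^{-1}=(\ell''\ell'^{-1})(\ell'\ell^{-1})\in T_r$), compatibility of $\sim$ with concatenation on both sides (reduce $\ell_1\ell_2\sim\ell_1'\ell_2$ and $\ell_1'\ell_2\sim\ell_1'\ell_2'$ to ``conjugate of thin is thin'' and ``product of thin is thin''), and that $\mathrm{L}_r(\Sigma)/{\sim}$ is a group with $[\ell]^{-1}=[\ell^{-1}]$ and unit $[c_r]$. Bi-Lipschitz reparametrisations are harmless; the measure-theoretic care about ``range'' of a rectifiable loop is handled in \cite{HamblyLyons,Lev}.

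\emph{Part 2(a), constancy and the state property.} The input here is that the Yang--Mills holonomy process is invariant under thin homotopy, i.e. $H_s=\Id$ almost surely under $\YM_\Sigma$ for every thin loop $s$; this is built into the construction of $(H_\gamma)_\gamma$ in \cite{Lev2}, or can be obtained by approximating $s$ in the length metric $d$ by loops that reduce combinatorially to $c_r$ and using the continuity property of the holonomy process. Then if $\ell\sim\ell'$ we get $\Id=H_{\ell'\ell^{-1}}=H_{\ell^{-1}}H_{\ell'}$ by the multiplicativity relation, so $H_{\ell'}=H_\ell$ pointwise, $W_{\ell'}=W_\ell$, and letting $N\to\infty$ gives $\Phi_\Sigma(\ell')=\Phi_\Sigma(\ell)$. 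Moreover $W_{\ell_1\ell_2}=\tr_N(H_{\ell_2}H_{\ell_1})=\tr_N(H_{\ell_1}H_{\ell_2})=W_{\ell_2\ell_1}$ and, for $x=\sum_i\alpha_i[\ell_i]$, $W(x^*x)=\tr_N(A^*A)\ge 0$ with $A=\sum_i\alpha_iH_{\ell_i}$; so $W$ is a positive tracial unital state on $\C[\mathrm{RL}_r(\Sigma)]$, and $\Phi_\Sigma$, being the pointwise limit of such states, is one too. This also gives $2(b)$: for paths $a,b$ with $\overline a=\underline b$, $\overline b=\underline a$, one has $W_{ab}=\tr_N(H_bH_a)=\tr_N(H_aH_b)=W_{ba}$ by cyclicity, hence $\Phi_\Sigma(ab)=\Phi_\Sigma(ba)$.

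\emph{Part 2(a), faithfulness — the main obstacle.} It remains to show $\Phi_{\R^2}(\ell)=1\Rightarrow\ell\sim c_r$ (the case of $\Dbb_\mfh$ then follows by transporting through an area-preserving diffeomorphism, as in the introduction). Using continuity of $\Phi_{\R^2}$ on each $\mathrm{P}_{x,x}(\R^2)$ and the density of piecewise geodesic loops, I would reduce to showing that a \emph{reduced} loop $\ell\ne c_r$ drawn in a finite graph $\Gbb$ has $\Phi_{\R^2}(\ell)\ne 1$. For this I would embed $\ell$ in an area-weighted planar map with one boundary component and all internal areas positive (after a subdivision), and invoke Lemma \ref{__Lem: Conv Planar MF Maps}: then $\Phi_{\R^2}(\ell)$ is the trace, in the free product $*_iL^\infty(\nu_{a(f_i)})$, of the image of $\ell$ under the isomorphism $\mathrm{RL}_v(\Gbb)\cong\Gamma_{r,0}$ of Lemma \ref{__Lem:Basis Reduced Loops} sending the lasso generators to free unitaries of distributions $\nu_{a(f_i)}$. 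Since each $\nu_t$ with $t>0$ is a probability measure on the unit circle distinct from $\delta_1$, this free product carries a faithful trace and the image of a non-trivial reduced word is a unitary that is not a scalar, so its trace has modulus $<1$; in particular it is $\ne1$, a contradiction. The two delicate ingredients are the non-collapse/faithfulness of the free product of these abelian algebras and the approximation argument passing from loops in finite graphs to arbitrary rectifiable loops; both are carried out in \cite{Lev}, and I expect this faithfulness step, rather than anything in Part 1 or in the ``formal'' half of Part 2, to be where the real work lies.
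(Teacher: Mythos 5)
This theorem is not proved in the paper at all: it is quoted verbatim from \cite{Lev} (with \cite{HamblyLyons} cited alongside for the group structure), so there is no in-paper argument to compare yours with; I can only assess your sketch on its own terms. The formal parts are fine: reflexivity, symmetry, stability of thin loops under concatenation and conjugation, compatibility of $\sim$ with concatenation (which only needs the easy ``insertion of a backtrack'' direction plus conjugation), the deduction of 2(a)-constancy and of the state property from $H_s=\Id$ for thin $s$ together with positivity and traciality of $\tr_N$, point 2(b), and the free-probability mechanism (non-scalarity of a nontrivial reduced word in free unitaries with non-Dirac distributions $\nu_{a(f_i)}$, plus the Cauchy--Schwarz equality case for a faithful trace) for reduced loops drawn in a finite planar graph.

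However, two steps that carry the real content are not actually established. First, your transitivity argument rests on the identity $\ell''\ell^{-1}=(\ell''\ell'^{-1})(\ell'\ell^{-1})$, which is false as an identity of loops: the right-hand side traverses $\ell'$ twice and has range containing the range of $\ell'$. Knowing that this larger loop is contractible \emph{within its own range} does not give a contraction of $\ell''\ell^{-1}$ within its (possibly strictly smaller) range; deleting a backtrack is precisely the direction that does not follow formally from closure of thin loops under products and conjugation. This is the non-obvious point for which the paper cites \cite{HamblyLyons,Lev} (transitivity of tree-like/thin equivalence is obtained there via the reduced-path/signature characterisation, not by the naive manipulation), so labelling it ``measure-theoretic care about the range'' understates a genuine gap. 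Second, in the faithfulness step, ``continuity of $\Phi_{\R^2}$ on $\mathrm{P}_{x,x}(\R^2)$ plus density of piecewise geodesic loops'' does not reduce the implication $\Phi_{\R^2}(\ell)=1\Rightarrow\ell\sim 1$ for an arbitrary rectifiable loop to the case of a reduced loop in a finite graph: the hypothesis $\Phi(\ell)=1$ is not quantitative, approximating loops need not be thin, and thinness would in any case not pass to the limit loop. One must first replace $\ell$ by its reduced representative (again Hambly--Lyons/L\'evy) and then argue for reduced rectifiable loops, of which graph loops are only a special case; as written, the case you actually treat is the easy one. The case of $\Dbb_\mfh$ via an area-preserving diffeomorphism of a containing disc is fine once the planar statement is granted.
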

Consider now a compact orientable Riemannian manifold $\Sigma$, its foundamental cover $p:\tilde \Sigma\to \Sigma,$ a point $\tilde r$ of $\tilde\Sigma$ and $r=p(\tilde r).$ It is elementary to check that the map 
\[\pi:\mathrm{RL}_r(\Sigma)\to\pi_1(\Sigma)\]
sending a based loop  to its based-homotopy class, is a group morphism and that its kernel is given by
\[K =p(\mathrm{RL}_{\tilde r}(\tilde\Sigma)).\] 
For any $l\in K$, let $\tilde l\in \mathrm{RL}_{\tilde r}(\tilde \Sigma)$ be its unique lift starting at $\tilde r.$
\begin{lem} \label{lem-----positivity Projected MF}Setting  
\begin{equation}
\Phi_\Sigma(l)=\left\{\begin{array}{ll}\Phi_{\tilde \Sigma}(\tilde l)& \text{ if }\pi(l)=1,\\&\\0& \text{ otherwise,}\end{array}\right.
\end{equation}
and extending $\Phi_{\Sigma}$ linearly defines a unital state on $(\C[\mathrm{RL}_{r}(\Sigma)],1,*).$
\end{lem}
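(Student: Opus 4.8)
The plan is to apply Lemma \ref{lem---positivity extension} directly, with $G=\mathrm{RL}_r(\Sigma)$, $\Gamma = \pi_1(\Sigma)$, $\pi$ the morphism sending a reduced based loop to its homotopy class, and $K = \ker(\pi) = p(\mathrm{RL}_{\tilde r}(\tilde\Sigma))$. The map $l\mapsto \Phi_{\tilde\Sigma}(\tilde l)$ on $\C[K]$ plays the role of $\tau$. So I need to check two things: that $l\mapsto \Phi_{\tilde\Sigma}(\tilde l)$ is a well-defined unital state on $(\C[K],1,*)$, and that it satisfies the inner-automorphism invariance \eqref{eq: invariance inner Aut}, i.e. $\Phi_{\tilde\Sigma}(\widetilde{g l g^{-1}}) = \Phi_{\tilde\Sigma}(\tilde l)$ for all $g\in G$, $l\in K$. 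Once these are verified, Lemma \ref{lem---positivity extension} yields precisely that the stated extension $\Phi_\Sigma$ is a unital state on $(\C[\mathrm{RL}_r(\Sigma)],1,*)$.

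First I would verify that $\tau(l) := \Phi_{\tilde\Sigma}(\tilde l)$ is a state on $\C[K]$. The key point is the lifting map $K\to\mathrm{RL}_{\tilde r}(\tilde\Sigma)$, $l\mapsto\tilde l$: since $\tilde\Sigma$ is the universal cover and $l$ is null-homotopic, each such $l$ has a unique lift starting (hence ending) at $\tilde r$, and this lifting is a group isomorphism $K\cong \mathrm{RL}_{\tilde r}(\tilde\Sigma)$ — concatenation lifts to concatenation, the constant loop lifts to the constant loop, and $(l^{-1})\,\widetilde{\ } = \tilde l^{\,-1}$. Composing this isomorphism with the state $\Phi_{\tilde\Sigma}$ on $(\C[\mathrm{RL}_{\tilde r}(\tilde\Sigma)],1,*)$ from the quoted theorem of L\'evy gives a state on $(\C[K],1,*)$ immediately: positivity, traciality on $K$, unitality, and the $*$-compatibility $\tau(l^*) = \overline{\tau(l)}$ are all transported along a $*$-isomorphism of group algebras. (One should note that $\Phi_{\tilde\Sigma}$ being a state requires $\tilde\Sigma$ to be $\R^2$ or $\Dbb_\mfh$, which holds since $\Sigma$ has genus $\ge 1$; for genus $0$ the statement as written would need the corresponding result for $\Sbb^2$, but that lies outside the scope here.)

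The substantive step is the inner-automorphism invariance. Given $g\in\mathrm{RL}_r(\Sigma)$ represented by a based loop, and $l\in K$, the conjugate $g l g^{-1}$ lifts as follows: the lift of $g$ starting at $\tilde r$ ends at some point $\tilde r' = g\cdot\tilde r$ in the deck-transformation orbit (where I identify $\pi_1(\Sigma)$ with the deck group acting on $\tilde\Sigma$), and since $l$ is null-homotopic, the lift of $g l g^{-1}$ from $\tilde r$ is $\tilde g\,(\text{lift of }l\text{ from }\tilde r')\,\tilde g^{-1}$, where the middle piece is the deck-translate of $\tilde l$ based at $\tilde r'$. Thus $\widetilde{glg^{-1}} = \tilde g \cdot (\phi_g(\tilde l)) \cdot \tilde g^{-1}$ with $\phi_g$ the deck transformation. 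Now I invoke two invariance properties of the planar/hyperbolic master field: equation \eqref{eq----inner aut master field}, $\Phi_{\tilde\Sigma}(ab) = \Phi_{\tilde\Sigma}(ba)$ for paths $a,b$ with matching endpoints, which kills the conjugation by $\tilde g$ (write $a = \tilde g$, $b = \phi_g(\tilde l)\tilde g^{-1}$); and the fact that $\Phi_{\tilde\Sigma}$ is invariant under the isometry $\phi_g$ of $\tilde\Sigma$, since the master field on $\R^2$ (resp.\ $\Dbb_\mfh$) depends only on the areas enclosed and is invariant under area-preserving diffeomorphisms, in particular under isometries of the ambient metric. Combining, $\Phi_{\tilde\Sigma}(\widetilde{glg^{-1}}) = \Phi_{\tilde\Sigma}(\phi_g(\tilde l)) = \Phi_{\tilde\Sigma}(\tilde l)$, which is exactly \eqref{eq: invariance inner Aut}.

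I expect the main obstacle — really the only point requiring care rather than bookkeeping — to be the identification of the lift of $g l g^{-1}$ with a deck-translate conjugated by $\tilde g$, and the invocation of isometry-invariance of $\Phi_{\tilde\Sigma}$: one must be slightly careful that the base point of the relevant lift of $l$ is $\tilde r' = g\cdot\tilde r$ rather than $\tilde r$, and that $\phi_g$ maps $\tilde l$ (based at $\tilde r$) to a loop based at $\tilde r'$ with the same enclosed-area data, so that isometry-invariance applies verbatim. With that pinned down, the rest is a direct citation of Lemma \ref{lem---positivity extension}. I would close by remarking that this shows, without any matrix approximation, that the candidate limit in Conjecture \ref{conj_Lift} (and Theorem \ref{-->THM: Torus Intro}) is genuinely a state, which is the content promised in the introduction.
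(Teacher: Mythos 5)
Your proposal is correct and follows essentially the same route as the paper: the paper's proof is exactly the reduction to Lemma \ref{lem---positivity extension} (noting $\Phi_{\tilde\Sigma}\circ p^{-1}$ is a state on $\C[K]$) with the invariance \eqref{eq: invariance inner Aut} deduced from the trace property \eqref{eq----inner aut master field}. The only difference is that you spell out the step the paper leaves implicit, namely that after cyclically permuting one lands on the lift of $l$ based at $\tilde r'=g\cdot\tilde r$, so one also needs invariance of $\Phi_{\tilde\Sigma}$ under the deck transformation (an isometry, hence area-preserving) to return to $\tilde l$ -- a correct and worthwhile clarification.
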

\begin{proof} Since $\Phi_{\tilde \Sigma}\circ p^{-1}$ defines a state on $(\C[K],1,*),$ thanks to Lemma \ref{lem---positivity extension} it is enough to check \eqref{eq: invariance inner Aut}. The latter follows from \eqref{eq----inner aut master field} applied to $\Phi_{\tilde \Sigma}.$
\end{proof}
\subsection{Master field on the torus and $t$-freeness}

\label{-----sec:Liberation}

Let us give here a proof of corollary \ref{-->CORO: Interpol Class <--> Free}. For $T>0,$ let us consider the two dimensional torus $\Tbb^2_{T}$ obtained as the quotient $\R^2/\sqrt{T}\Z^2$ endowed with the push-forward of the Euclidean metric, so that it has total volume $T$. Denote by  $\a$ and $\b$  the  loop of $\Tbb^2_T$ obtained by projecting the segments from $(0,0)$ to respectively $(\sqrt{T},0)$ and $(0,\sqrt{T}).$ Then, under $\YM_\Sigma,$ the law $(a,b)$  on $G^2$ is given by \eqref{eq:Law GenLoops}. Therefore, for any word $w$ in $\a,\b,\a^{-1},\b^{-1}$ denoting by $[w]\in\Z^2$ the signed number of  occurences of $\a$ and $\b$ and by  $\tilde \g_w$ the path of $\R^2$ starting from $(0,0)$ obtained by lifting the loop  $\Sigma$ formed by $w,$ under $\YM_{\Sigma}$, the following converge holds in probability as $N\to\infty,$
$$\tau_{\rho_N}(w)\to  \left\{\begin{array}{ll}\Phi_{\R^2}(\tilde\g_w) & \text{ if }[\g_w]=0 \\ &\\ 0& \text{ if }[\g_w]\not=0.  \end{array}\right. $$

The first statement of Corollary \ref{-->CORO: Interpol Class <--> Free} follows considering the non-commutative distribution $\Phi_T$ of $\a$t and $\b$ under the limit of $\tau_{\rho_N}$ as $N\to \infty.$ 

\vspace{0.5em}

On the one hand,  for any word $w$ with $[w]=0,$ $\g_w$ is a loop and by continuity of the master field (Point 1 of Theorem \ref{-->THM Conv Planar Master Field}), $\Phi_T(w)=\Phi_{\R^2}(\g_w)\to 1 \text{ as }T\to0.$ On the other hand, for any word $w$ with $[w]\not=0,$ $\tilde\g_w$ is not a loop, $[\g_w]\not=0,$  and  for all $T>0,$ $\Phi_T(w)=0.$ Therefore, for any word in $\a,\b,\a^{-1},\b^{-1},$ $\lim_{T\to0}\Phi_T(w)= \tau_u\star_c\tau_u(w),$ 
since 
\[\tau_u\star_c\tau_u(w)=\left\{\begin{array}{ll}1& \text{ if }[w]=0,\\&\\0& \text{ otherwise.}\end{array}\right.\] 
Consider now the second limit of  corollary \ref{-->CORO: Interpol Class <--> Free}. When  $(\Gbb,a)$  an area weighted map embedded in $\R^2$ with $v$ a vertex of $\Gbb$ sent to $0$ by the embedding, consider the state $\hat{\tau}_T$ on $(\mathrm{RL}_v(\Gbb),*)$ such that $\hat\tau_T(\ell)=\Phi_{\R^2}(\ell_T),$ where  $\ell$ is the drawing of $\ell$ while $\ell_T=\sqrt{T}\ell.$    Consider a free basis of lassos $\ell_1,\ldots \ell_r$ of $\mathrm{RL}_v(\Gbb),$ with meanders given by distinct faces of area $a_1,\ldots, a_r.$  Under $\hat \tau _T,$ $\ell_1,\ldots, \ell_r$ are $r$ independent free unitary Brownian motion marginals at time $\sqrt{T} a_1,\ldots, \sqrt{T}a_r.$ It follows easily from its definition in moments, that the free unitary Brownian motion at time $s$ converges weakly towards a Haar unitary as $s\to\infty$. Since $a\in \Delta^o(T),$  $(\ell_1,\ldots, \ell_r)$ converges weakly toward $r$ freely independent Haar unitary variables as $T\to \infty.$ Therefore,  for any reduced loop $\ell,$  $ \lim_{T\to\infty}\hat{\tau}_T(\ell)=1$ if $\ell$ is the constant loop and $0$ otherwise. Now for any word $w$ in $\a,\b,\a^{-1},\b^{-1},$ with $[w]=0,$  it follows that \[\lim_{T\to\infty}\Phi_T(w)= \left\{\begin{array}{ll}1& \text{ if }\g_w\sim_r c \text{ with }c\text{ constant,}\\&\\0& \text{ otherwise.}\end{array}\right.\]
Since $\g_w\sim_r c$ where $c$ is a constant loop if and only if $w$ can be reduced to the empty word, it follows that  $\lim_{T\to\infty}\Phi_T(w)=\tau_u\star\tau_u(w).$

\vspace{0.5em}

Let us now recall a way introduced in  \cite{BenaychLev} to compute the evaluation of  $\tau_\cA\star_t\tau_\cB$ given $\tau_\cA$ and $\tau_\cB$, solving systems of ODEs in the parameter $t$ and present an argument for \eqref{eq:Free}.  Let us say that a  non-commutative monomial $P$ in $(X_{1,i})_{i\in I},(X_{2,j})_{j\in I}$ is alternated if it is of the form $X_{\varepsilon_1,i_1}X_{\varepsilon_2,i_2}\ldots X_{\varepsilon_n,i_n}$ with $\varepsilon_k\not=\varepsilon_{k+1}$ for all $1\le k<n.$ Denote by 
$d_{X_2}$ is degree in the variables $(X_{2,j})_{i\in I}.$ For such a monomial, let us set
\begin{align*}
\Delta_{ad}.P= &- \frac{d_{X_2}(P)}{2}(P\otimes 1 +1\otimes P)+\sum_{Q_1,Q_2,i}  X_{2,i}\otimes Q_1Q_2,   \\
&-\sum_{P_{1,1},P_{1,2},P_2,i,j} \big[   X_{2,i} P_2\otimes (P_{1,1}X_{2,j}P_{1,2})+ (P_{1,1}X_{2,i}P_{1,2}) \otimes P_2X_{2,j} \\
&\hspace{3 cm} -(P_{1,1}P_{1,2}) \otimes (X_{2,i} P_2X_{2,j}) -(P_{1,1}X_{2,i}X_{2,j}P_{1,2}) \otimes P_2 \big]
\end{align*}
where the first sum is over all monomials   $Q_1,Q_2$ and $i\in I$ such that $P=Q_1X_{2,i}Q_2, $ while the second is over all monomials  $P_{1,1},P_{1,2},P_2$ and $i,j\in I$ such that $P=P_{1,1}X_{2,i} P_2 X_{2,j}P_{1,2}.$ With these notations, Theorem 3.4 of \cite{BenaychLev} states that for all alternated non-commutative monomial  $P$ in $(X_{1,i})_{i\in I},(X_{2,j})_{j\in I}$, $\tau_\cA\star_t\tau_\cB(P)$ is differentiable with
$$\pl_t \tau_\cA\star_t\tau_\cB(P)=(\tau_\cA\star_t\tau_\cB)^{\otimes 2} (\Delta_{ad}.P),\ \forall t\ge0.$$
For instance  assume that for all $t\ge 0,$  $(a,b)$  is a $t$-free couple within a non-commutative probability space $(\cC, \tau_t),$ such that $a$ and 
$b$ are Haar unitaries for all $t>0. $   Then for any $n\ge1,$
$$\pl_t\tau_t(ab^n)=-\tau_t(ab^n)+\tau(a)\tau_t(b^n)=-\tau_t(ab^n),\ \forall t\ge 0$$
and since $\tau_0(ab^n)=\tau_0(a)\tau_0(b^n)=0,$ 
$$\tau_t(ab^n)=0.$$
Likewise
\begin{align*}
\pl_t\tau_t(ab^na^*(b^*)^n)&=-2\tau_t(ab^na^*(b^*)^n)+\tau_t(b^n)\tau_t(aa^*(b^*)^n)+\tau_t(ab^na^*)\tau_t((b^*)^n)\\
&\hspace{- 2 cm} -\tau_t(a(b^*)^n)\tau_t(b^na^*)-\tau_t(a(b^*)^n)\tau_t(b^na^*) +\tau_t(a)\tau_t(b^na^*(b^*)^n)+\tau_t(ab^n(b^*)^n)\tau_t(a^*)\\
&=-2\tau_t(ab^na^*(b^*)^n).
\end{align*}
Since $\tau_0(ab^na^*(b^*)^n)=\tau_0(aa^*)\tau_0(b^n(b^*)^n)=1,$ this implies
\begin{equation}
\tau_t(ab^na^*(b^*)^n)=e^{-2t}.\label{eq: Example tFree}
\end{equation}
A similar argument  together with \eqref{eq:ODE UBM} implies the following lemma. 

\begin{lem} \begin{enumerate}
\item For any word $w$ in $a,b,a^{-1},b^{-1},$ if $[w]\not=0,$
\[\tau_{t}(w)=0.\]
\item For any $n\ge 1$,
\[\pl_t\tau_t([a,b]^n)=-2n \tau_t([a,b]^n)- 2n \sum_{k=1}^{n-1}\tau_t([a,b]^k)\tau([a,b]^{n-k}).\]
\item For any  $n\in\Z$ and $t\ge 0,$ 
\[\tau_t([a,b]^n)=\nu_{4t}(|n|).\]
\end{enumerate}
\end{lem}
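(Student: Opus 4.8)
The three claims concern the $t$-free convolution $\tau_t = \tau_u \star_t \tau_u$ of two Haar unitaries $a,b$, and all of them should follow from the differential equation $\partial_t (\tau_\cA \star_t \tau_\cB)(P) = (\tau_\cA \star_t \tau_\cB)^{\otimes 2}(\Delta_{ad}.P)$ of \cite[Thm 3.4]{BenaychLev}, exactly as in the two worked examples just before the statement (the computations of $\tau_t(ab^n)$ and $\tau_t(ab^na^*(b^*)^n)$). The strategy is therefore: write the relevant $\tau_t$-evaluations as the unique solutions of closed linear ODE systems in $t$ with explicit initial data at $t=0$ (where $\tau_0 = \tau_u \star_c \tau_u$ is the classical convolution of two Haar unitaries, hence $\tau_0(w) = \mathbf{1}_{[w]=0}\tau_u(a^{e})\tau_u(b^{f})$ for a word $w$ with balances $e,f$), and solve them.

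For point 1, fix a word $w$ in $a,b,a^{-1},b^{-1}$ with $[w] = (e,f) \neq 0 \in \Z^2$; say $e \neq 0$. First reduce $w$ to a cyclically reduced alternated monomial $P$ in $\{X_1 \mapsto a, X_2 \mapsto b\}$ and their $*$'s, using traciality of $\tau_t$ and the fact that $\tau_t(X_1^{\pm 1}X_1^{\mp 1}\cdots) $ telescopes; since $a,b$ are Haar unitaries for all $t>0$ we may further use $\tau_t(a^k)=\tau_t(b^k)=0$ for $k\neq 0$ to kill any $P$ whose alternation collapses. What remains is to show that on the finite-dimensional span $\mathcal{W}_n$ of alternated monomials of a fixed total length with fixed nonzero balance, the operator $\Delta_{ad}$ (followed by applying $\tau_t$ to the left factor or the right factor) maps into a triangular system in which every "lower" term still has the same nonzero $X_1$-balance: indeed each of the terms in $\Delta_{ad}.P$ either keeps $P$ (the $-\tfrac{d_{X_2}}{2}(P\otimes 1 + 1\otimes P)$ piece, giving a diagonal decay term) or splits $P$ into two shorter alternated words whose balances add up to $[w]$, so one of the two factors is shorter with nonzero balance and one is a word on which $\tau_t$ vanishes or is a bounded known quantity. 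An induction on word length then shows every such $\tau_t(P)$ solves a linear ODE with vanishing initial condition and bounded inhomogeneity built from strictly shorter such quantities, all of which vanish identically by the induction hypothesis; hence $\tau_t(w)=0$ for all $t\ge 0$. The main obstacle is bookkeeping: making the "split into strictly shorter alternated pieces of the same nonzero balance" claim precise for every one of the several summands in $\Delta_{ad}$, including the mixed terms $P_{1,1}X_{2,i}X_{2,j}P_{1,2}\otimes P_2$.

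For points 2 and 3, specialise to $P = [a,b]^n = (aba^{-1}b^{-1})^n$. I would either compute $\Delta_{ad}.[a,b]^n$ directly — noting that every cyclic sub-split of $[a,b]^n$ that is not balanced contributes a $\tau_t$-factor that vanishes by point 1, so only the splits into $[a,b]^k \otimes [a,b]^{n-k}$ (and the diagonal term) survive — to obtain
\[
\partial_t \tau_t([a,b]^n) = -2n\,\tau_t([a,b]^n) - 2n\sum_{k=1}^{n-1}\tau_t([a,b]^k)\,\tau_t([a,b]^{n-k}),
\]
which is point 2; or, more cleanly, observe that $[a,b]$ has the same $*$-distribution under $\tau_t$ as a free unitary Brownian motion at a suitable time and match ODEs. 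Setting $m_n(t) = \tau_t([a,b]^n)$ and comparing with \eqref{eq:ODE UBM}, the system for $(m_n)_{n\ge 1}$ is, after the time change $t \mapsto 4t$, literally the system defining $\nu_\bullet$: $\tfrac{d}{ds}\nu_s(n) = -\tfrac n2 \nu_s(n) - \tfrac n2 \sum_{l=1}^{n-1}\nu_s(l)\nu_s(n-l)$, with $\nu_0(n)=1 = m_n(0)$ (since $\tau_0([a,b]^n) = \tau_u(a^0)\tau_u(b^0)=1$). By uniqueness of solutions to this triangular (in $n$) ODE system with Lipschitz right-hand side on bounded sets, $m_n(t) = \nu_{4t}(n)$; extending to negative $n$ by $\tau_t(c^{-n}) = \overline{\tau_t(c^n)} = \nu_{4t}(n)$ and using $\nu_s(|n|)$ real gives point 3. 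The slightly delicate point here is justifying differentiability and the ODE for $[a,b]^k$ with $k<n$ simultaneously (so that the sum on the right-hand side makes sense as a genuine ODE system), and checking that all the unbalanced cyclic splittings of $[a,b]^n$ really do drop out via point 1 — but this is exactly the pattern already verified in the displayed computation of $\tau_t(ab^na^*(b^*)^n) = e^{-2t}$, which is the $n$-fold-commutator identity for $n=1$ up to rewriting $[a,b] = ab a^{-1}b^{-1}$ and $aba^*b^* $ in the Haar-unitary case.
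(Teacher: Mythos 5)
Your proposal takes essentially the same route as the paper, whose proof is exactly the one sketched in the two displayed examples preceding the lemma: apply the differential equation of Benaych-Georges--L\'evy (Thm 3.4) to alternated monomials, use the vanishing of unbalanced words (point 1, with zero initial data at $t=0$ since $\tau_0$ is the classical convolution of Haar unitaries) to reduce $\Delta_{ad}.[a,b]^n$ to the balanced splits $[a,b]^k\otimes[a,b]^{n-k}$, and then identify the resulting triangular ODE system with \eqref{eq:ODE UBM} after the time change $t\mapsto 4t$ using $\tau_0([a,b]^n)=1=\nu_0(n)$. The only caution is that your alternative phrasing ``observe that $[a,b]$ has the same $*$-distribution under $\tau_t$ as a free unitary Brownian motion at a suitable time'' is precisely the content of point 3 and hence circular unless read, as you indeed do, as ``derive and match the ODEs''.
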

The last equality of corollary \ref{-->CORO: Interpol Class <--> Free} follows from the last point of the above Lemma. Besides for any $t>0,T>0$
\[\tau_u\star_t\tau_u(XYX^*Y^*)=e^{-2t}\text{ and }\Phi_T(XYX^*Y^*)=e^{-\frac T 2},\]
so that if $\tau_u\star_t\tau_u=\Phi_T$ then $T=4t.$ But \eqref{eq: Example tFree}  implies $\tau_u\star_t\tau_u(XY^2X^*Y^{-2})=e^{-2t}>e^{ -4t}=\Phi_{4t}(XY^2X^*Y^{-2}) $. 
Therefore for all $t,T>0,$ $\Phi_T\not= \tau_u\star_t\tau_u.$

\section{Appendix}
\subsection{Casimir element and trace formulas }

Let us recall some  tensor identities,  instrumental to prove Makeenko--Migdal relations. 

\begin{dfn}
Consider a Lie algebra $\mathfrak{g} $   endowed with an inner product $\<\cdot,\cdot\>.$ The \emph{Casimir element} of $(\mathfrak{g},\<\cdot,\cdot\>)$ is the tensor $C_\mathfrak{g}\in\mathcal{M}_d(\C)\otimes_\R\mathcal{M}_d(\C)$ defined by
\begin{equation}
C_\mathfrak{g}=\sum_{X\in\mathcal{B}} X\otimes X,
\end{equation}
where $\mathcal{B}$ is an orthonormal basis of $\mathfrak{g}$ for the inner product $\<\cdot,\cdot\>$.
\end{dfn}
It is simple to check that the definition of the  Casimir element does not depend on the choice of the basis but only on the inner product $\<\cdot,\cdot\>$.   We   focus on the setting recalled in section \ref{----sec:HK}; we consider the Lie algebra $\mathfrak{g}_N$ of a compact classical group $G_N$ with the inner product (1) considered in \cite[Section 2.1.]{DL}.   We set  the value $\b$ to be respectively $1$ and $4$ when $G_N$ is  $O(N)$ and $Sp(N)$ and $2$ otherwise, that is when $G_N$ is $SU(N)$ or $U(N).$ We set $\g=1$ when $G_N=SU(N)$ and $0$ otherwise.

\vspace{0.5em}

Most of the following results can be proved by a direct computation using an arbitrary chosen basis. For any $(a,b)\in\{1,\ldots,N\}^2$, the elementary matrix $E_{ab}\in\mathcal{M}_N(\R)$ is defined by $(E_{ab})_{ij}=\delta_{ai}\delta_{bj}$.

\vspace{0.5em}

We shall need the following standard result on the Casimir element in this setting, which gives computation rules for traces of products and product of traces involving elements of $\mathcal{B}$.

\begin{lem}\label{lem:magic_formula}
For any $A,B\in G_N$ we have :
\begin{equation}\label{eq:tr_1}
\sum_{X\in\mathcal{B}} \tr(AXBX) =-\tr(A)\tr(B)-\frac{\beta-2}{\beta N}\tr(AB^{-1})+\frac{\g}{N^2}\tr(AB) \end{equation}
and
\begin{equation}\label{eq:tr_2}
\sum_{X\in\mathcal{B}} \tr(AX)\tr(BX) = -\tr(AB)-\frac{\b-2}{\b N} \tr(AB^{-1}) +\g \tr(A)\tr(B). 
\end{equation}
\end{lem}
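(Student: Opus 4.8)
\textbf{Proof plan for Lemma \ref{lem:magic_formula}.}

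The plan is to prove both identities by a direct computation, working separately in each family of classical groups, since the orthonormal basis $\mathcal{B}$ of $\mfk g_N$ has an explicit description in each case. The key observation is that both sides of \eqref{eq:tr_1} and \eqref{eq:tr_2} are, for fixed $\mathcal{B}$, expressions in the matrix coefficients of $A$ and $B$, so it suffices to compute the tensor $\sum_{X\in\mathcal{B}} X\otimes X = C_{\mfk g_N}$ acting on $\C^N\otimes\C^N$ (or $\C^{2N}\otimes\C^{2N}$ in the symplectic case), and then contract with $A$ and $B$ in the two relevant ways. The cleanest route is to first establish the ``universal'' identity for $\U(N)$, where $\mfk g_N=\mfk u_N$ has orthonormal basis (up to the normalisation (1) of \cite[Section 2.1.]{DL}) given by the antihermitian matrices built from $E_{ab}-E_{ba}$, $i(E_{ab}+E_{ba})$ and $iE_{aa}$; here $\beta=2$, $\gamma=0$, and one finds $C_{\mfk u_N} = -\frac1N\sum_{a,b}E_{ab}\otimes E_{ba}$, i.e. $-\frac1N P$ where $P$ is the flip operator. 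Contracting $-\frac1N P$ appropriately gives $\sum_X\tr(AXBX)=-\tr(A)\tr(B)$ and $\sum_X\tr(AX)\tr(BX)=-\tr(AB)$, which is the $\beta=2,\gamma=0$ case.

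Next I would treat $\SU(N)$: its Lie algebra is the codimension-one subspace of $\mfk u_N$ of traceless matrices, so $C_{\mfk{su}_N}=C_{\mfk u_N}+\frac1N\,(iI_N/\sqrt N)\otimes(iI_N/\sqrt N)=C_{\mfk u_N}-\frac1{N^2}I_N\otimes I_N$ after correcting the trace-part basis vector with the normalisation (1); contracting the extra term $-\frac1{N^2}I\otimes I$ yields the $+\frac{\gamma}{N^2}\tr(AB)$ and $+\gamma\tr(A)\tr(B)$ corrections with $\gamma=1$. For $\SO(N)$ ($\beta=1$) the Lie algebra is the real antisymmetric matrices, with orthonormal basis proportional to $E_{ab}-E_{ba}$ ($a<b$); a short computation gives $C_{\mfk{so}_N}=-\frac{c}{N}(P-\Sigma)$ where $\Sigma=\sum_{a,b}E_{ab}\otimes E_{ab}$ is the ``transposition'' tensor and $c$ is fixed by the normalisation, and contracting $\Sigma$ with $A,B$ introduces $\tr(AB^{\mathrm T})=\tr(AB^{-1})$ (using $B^{-1}=B^{\mathrm T}$ on $\mathrm O(N)$), which after matching constants is exactly the $-\frac{\beta-2}{\beta N}\tr(AB^{-1})$ term with $\beta=1$. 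For $\Sp(N)$ ($\beta=4$) the analogous computation in $\mathcal M_{2N}$, using the standard description of $\mfk{sp}_N$ in terms of the symplectic form $J$ and the identity $B^{-1}=J^{-1}B^{\mathrm T}J$, produces a term $+\,\text{const}\cdot\tr(AB^{-1})$ with the sign and magnitude matching $-\frac{\beta-2}{\beta N}$ at $\beta=4$. In each case one must also double-check the normalisation of $\tr$ (namely $\tr=\frac1{d_N}\Tr$ with $d_N=2N$ symplectic, $N$ otherwise) so that the prefactors come out in the stated form.

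The main obstacle is purely bookkeeping: getting all the normalisation constants right simultaneously — the $\Ad$-invariant inner product (1) of \cite[Section 2.1.]{DL} carries group-dependent scaling, the trace $\tr$ is the dimension-normalised one, and the coefficients $\frac{\beta-2}{\beta}$ and $\gamma$ must emerge uniformly across all four series. I would organise the computation by writing, for each series, $C_{\mfk g_N}$ as an explicit linear combination of the three ``elementary'' tensors $P=\sum E_{ab}\otimes E_{ba}$ (flip), $\Sigma=\sum E_{ab}\otimes E_{ab}$ (transpose), and $I\otimes I$, then recording the two contraction rules $\sum_X\tr(AXBX)$ and $\sum_X\tr(AX)\tr(BX)$ as linear functionals of these three tensors (the first contraction sends $P\mapsto \tr(A)\tr(B)$, $I\otimes I\mapsto\tr(AB)$, $\Sigma\mapsto\tr(AB^{\mathrm T})$ up to normalisation, with analogous rules for the second), and finally verifying that the resulting coefficients collapse to the claimed closed forms. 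Alternatively, since these identities are entirely standard, I would cite \cite[Section 1]{Lev} (or \cite{DL}) for the underlying basis conventions and only spell out the contraction steps, as the lemma is invoked in the appendix precisely to make the Makeenko--Migdal computations self-contained for all classical groups rather than only the unitary one.
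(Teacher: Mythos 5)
Your proposal is correct and follows essentially the same route as the paper's (sketched) proof: both reduce the identities to the explicit Casimir tensor $C_{\mathfrak{g}_N}$ for each classical series, computed with the normalisation of \cite{DL}, and then contract it with $A$ and $B$ in the two indicated ways. One small slip in the bookkeeping you already flag: for $\SU(N)$ the trace direction has unit vector $X_0=iI_N/N$, so $C_{\mathfrak{su}_N}=C_{\mathfrak{u}_N}-X_0\otimes X_0=C_{\mathfrak{u}_N}+\tfrac{1}{N^2}I_N\otimes I_N$ (not $-\tfrac{1}{N^2}I_N\otimes I_N$), and it is this sign that produces the $+\tfrac{\gamma}{N^2}\tr(AB)$ and $+\gamma\,\tr(A)\tr(B)$ corrections with $\gamma=1$.
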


\begin{proof}
We only sketch the proof in order to show where the expressions come from. First of all, remark that by linearity they only need to be proved for $A=E_{ij}$ and $B=E_{k\ell}$. We have for instance
\[
\sum_{X\in\mathcal{B}} \tr(AXBX) = \frac{1}{N}\sum_{X\in\mathcal{B}} \sum_{a,b,c,d} A_{ab} X_{bc} B_{cd} X_{da} = \frac{1}{N}(C_{\mathfrak{g}})_{jk\ell i},
\]
where we have set
\[
\big(\sum_i X^i\otimes Y^i\big)_{abcd}=\sum_iX_{ab}^iY_{cd}^i.
\]
Using the expression of $C_{\mathfrak{g}}$ for each value of $\mathfrak{g}$ leads to Eq. \eqref{eq:tr_1}. By similar computations we also obtain Eq. \eqref{eq:tr_2}.
\end{proof}

In the unitary case, the formulas in Lemma \ref{lem:magic_formula} are known as the ``magic formulas'', as stated in \cite{DGHK} for instance, and appeared already in \cite{Sen2}; they are crucial to the derivation of Makeenko--Migdal equations for Wilson loops, that we briefly recall in the next section. Although we do not detail it, there exist a beautiful interpretation Lemma \ref{lem:magic_formula} in terms of Schur--Weyl duality; the interested reader can refer to \cite{Lev5} or \cite{Dah} for an explanation and discussion of this fact and  to \cite[Chap. I, Section 1.2 ]{Lev} about the above Lemma.

\subsection{Makeenko--Migdal equations}\label{sec:MM Reminder}

Given a topological map $\Gbb$ of genus $g$ with $m$ edges, a vertex of $\Gbb$ will be said to be an \emph{admissible crossing} if it 
possesses four outgoing edges labelled $e_1,e_2,e_3,e_4$ counterclockwise.

\begin{dfn}
Let $\Gbb$ be map of genus $g$ with $m$ edges, and $v$ be an admissible crossing. A function $f:G^m\to\C$ has an \emph{extended gauge invariance at} $v$ if for any $x\in G$,
\begin{equation}
f(a_1,a_2,a_3,a_4,\mathbf{b})=f(a_1x,a_2,a_3x,a_4,\mathbf{b})=f(a_1,a_2x,a_3,a_4x,\mathbf{b}),
\end{equation}
where $a_i$ denotes the variable associated to the edge $e_i$ and $\mathbf{b}$ denotes the tuple of other edge variables than $e_1,e_2,e_3,e_4$.
\end{dfn}

The extended gauge-invariance was first introduced by L\'evy in \cite{Lev2} to prove Makeenko--Migdal equations in the plane, then used in  \cite{DHK} to 
give  alternative, local proofs of these equations, which allowed in \cite{DGHK} to prove their validity on any surface; these last equations  were then applied in \cite{DN,Hal2}. 

\begin{thm}[Abstract Makeenko--Migdal equations]
Let $(\Gbb,a)$ be an area weighted map of area $T$ and genus $g$ with $m$ edges, and $f:G^m\to\C$ be a function with extended gauge invariance at an admissible crossing $v$. 
Denote by $f_1$ (resp. $f_2,f_3,f_4$) the face of $\Gbb$ whose boundary contains $(e_1,e_2)$ (resp. $(e_2,e_3)$, $(e_3,e_4)$, $(e_4,e_1)$). Denote by $t_i$ the area of the face 
$f_i$, choose an orthonormal basis $\mathcal{B}$ of $\mathfrak{g}$ with respect to the chosen inner product, and set
\[
(\nabla^{a_1}\cdot\nabla^{a_2} f)(a_1,a_2,a_3,a_4,\mathbf{b})=\sum_{X\in\mathcal{B}}\frac{\partial^2}{\partial s\partial t}f(a_1e^{sX},a_2e^{tX},a_3,a_4,\mathbf{b})\big\vert_{s=t=0}.
\]
We have
\begin{equation}\label{eq:abstract_MM}
\left(\frac{\partial}{\partial t_1}-\frac{\partial}{\partial t_2}+\frac{\partial}{\partial t_3}-\frac{\partial}{\partial t_4}\right)\int_{G^m} f d\mu=-\int_{G^m} \nabla^{a_1}\cdot\nabla^{a_2} f d\mu.
\end{equation}
\end{thm}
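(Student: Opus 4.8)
The plan is to derive the abstract Makeenko--Migdal equation \eqref{eq:abstract_MM} by combining two ingredients: the variation of the heat kernel weights with respect to the face areas, which brings down a Laplacian, and the extended gauge invariance at $v$, which allows one to transfer the derivative from one heat kernel to a pair of gradient operators acting on $f$ supported on the four edges at $v$. The heart of the matter is that the Laplacian $\Delta_G$ appearing in the heat equation \eqref{eq:heat_kernel} is, written in the orthonormal basis $\mathcal{B}$, equal to $\sum_{X\in\mathcal{B}}\mathcal{L}_X\circ\mathcal{L}_X$, and this operator acting on the boundary holonomy $h_{\partial f_i}$ of a face can be rewritten, thanks to the invariance of $\langle\cdot,\cdot\rangle$, as a sum of left/right-invariant vector fields on the edge variables occurring in $\partial f_i$.

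First I would set up the computation on the edge group $G^m$. For each of the four faces $f_1,f_2,f_3,f_4$ adjacent to $v$, I write the Yang--Mills density as a product $\prod_{f} p_{t_f}(h_{\partial f})$ and differentiate with respect to $t_i$; by \eqref{eq:heat_kernel} this replaces $p_{t_i}$ by $\Delta_G p_{t_i}$. Integrating by parts on $G$ (using that $\Delta_G$ is self-adjoint with respect to Haar measure), the Laplacian is transferred onto the rest of the integrand, i.e. onto $f$ together with the other heat kernels. Using the chain rule and the $\mathrm{Ad}$-invariance of the inner product, the Laplacian acting through the variable $h_{\partial f_i}$ decomposes as a sum over the edges traversed by $\partial f_i$, each term being a second-order invariant differential operator; the cross terms between two distinct edges are exactly the ``mixed gradient'' operators $\nabla^{a}\cdot\nabla^{b}$. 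The point is that the edges $e_1,e_2,e_3,e_4$ at $v$ each appear in exactly two of the four faces $f_1,\dots,f_4$, with a definite sign pattern dictated by the counterclockwise labelling, so that in the alternating combination $\partial_{t_1}-\partial_{t_2}+\partial_{t_3}-\partial_{t_4}$ the pure (diagonal) second-derivative terms and all terms involving edges other than $e_1,\dots,e_4$ cancel, leaving only $-\nabla^{a_1}\cdot\nabla^{a_2}f$ (equivalently $-\nabla^{a_3}\cdot\nabla^{a_4}f$).

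The step where extended gauge invariance enters is the identification of which mixed-gradient terms survive: the invariance $f(a_1x,a_2,a_3x,a_4,\mathbf b)=f(a_1,a_2x,a_3,a_4x,\mathbf b)=f(\dots)$ forces relations among $\nabla^{a_1}f,\nabla^{a_2}f,\nabla^{a_3}f,\nabla^{a_4}f$ — differentiating the invariance relations at $x=e$ gives $\nabla^{a_1}f+\nabla^{a_3}f=0$ and $\nabla^{a_2}f+\nabla^{a_4}f=0$ in the appropriate sense — and these are precisely what is needed to collapse the a priori complicated alternating sum of Laplacian pieces into the single clean term on the right-hand side, and also to see that the contributions are independent of the choices made (orientation, root of each face boundary). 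This bookkeeping — tracking the sign with which each of $e_1,e_2,e_3,e_4$ sits in the boundary of each $f_j$, and checking that the extended gauge invariance kills exactly the unwanted terms — is the main obstacle; it is entirely elementary but must be done carefully, and it is where one genuinely uses that $v$ is an \emph{admissible crossing} with four outgoing edges in counterclockwise order.

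Finally I would note that none of this uses the magic formulas of Lemma \ref{lem:magic_formula}; those are needed only afterwards, when one specialises $f$ to a Wilson-loop functional $h\mapsto \tr(h_\ell)$ and wants to rewrite $\nabla^{a_1}\cdot\nabla^{a_2}\tr(h_\ell)$ as a product of Wilson loops of the two loops obtained by desingularising $\ell$ at $v$, with the $1/N$ and $1/N^2$ corrections coming from the $\beta$- and $\gamma$-dependent terms. For the abstract statement one stops at \eqref{eq:abstract_MM}. I would also remark that the assumption that $f$ has extended gauge invariance at $v$ is automatically satisfied by the integrand one cares about — products of traces of holonomies of loops passing through $v$ — since inserting $x$ and $x^{-1}$ along the two strands crossing at $v$ leaves each such trace unchanged; this is what makes the theorem applicable.
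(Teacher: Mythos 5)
First, a remark on the comparison: the paper does not actually prove this theorem --- it states it and relies on the local proofs of \cite{DHK,DGHK} --- so your sketch has to be measured against that standard argument, whose toolbox (heat equation \eqref{eq:heat_kernel} for the $t_i$-derivatives, $\mathrm{Ad}$-invariance of $\langle\cdot,\cdot\rangle$, integration by parts in the edge variables, and the infinitesimal form of extended gauge invariance) you have correctly identified.

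There is, however, a genuine gap in the mechanism you describe. The step ``the Laplacian acting through the variable $h_{\partial f_i}$ decomposes as a sum over the edges traversed by $\partial f_i$, \ldots the cross terms between two distinct edges are exactly the mixed gradient operators'' is not a valid identity. Because the metric is $\mathrm{Ad}$-invariant, one has $\sum_{X\in\mathcal{B}}\frac{d^2}{ds^2}\big|_{0}\,p_t(u\,e^{sX}w)=(\Delta_G p_t)(uw)$ for \emph{any single} insertion point; hence $\Delta_G p_{t_i}(h_{\partial f_i})$ is equal to a pure second derivative in \emph{one} edge variable adjacent to $v$ (inserted at the $v$-end), not to a sum of diagonal and cross terms over all edges of $\partial f_i$. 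In particular the operators $\nabla^{a}\cdot\nabla^{b}$ never arise from ``expanding the Laplacian by the chain rule'', so the subsequent claim that in the alternating combination ``the pure second-derivative terms and all terms involving edges other than $e_1,\ldots,e_4$ cancel, leaving only $-\nabla^{a_1}\cdot\nabla^{a_2}f$'' is asserted rather than derived, and as stated it is not how the cancellation works. Relatedly, ``integrating by parts on $G$ using that $\Delta_G$ is self-adjoint'' is not directly meaningful here: $h_{\partial f_i}$ is not an integration variable of $G^m$, so one must first rewrite $\Delta_G p_{t_i}(h_{\partial f_i})$ as $\sum_X(\mathcal{L}^{a_j}_X)^2$ acting on a single edge variable $a_j$ at the crossing, and only then integrate by parts using the skew-adjointness of the first-order invariant vector fields.

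In the actual proof the mixed term is produced differently: for each face one expresses $\Delta_G p_{t_i}$ through a well-chosen edge at $v$, integrates by parts \emph{one} derivative at a time, observes that the terms in which both derivatives land on heat kernels cancel pairwise in the alternating sum $\partial_{t_1}-\partial_{t_2}+\partial_{t_3}-\partial_{t_4}$ (here one uses that each $a_i$ enters only the two heat kernels of the faces adjacent to $e_i$, and the gauge invariance at $v$ of the full heat-kernel density), and then converts the surviving terms, in which one derivative hits $f$, into $-\nabla^{a_1}\cdot\nabla^{a_2}f$ by means of the relations you correctly wrote down, $(\nabla^{a_1}_X+\nabla^{a_3}_X)f=0$ and $(\nabla^{a_2}_X+\nabla^{a_4}_X)f=0$. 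So your relations from extended gauge invariance are the right ones, but they are the \emph{last} step of a bookkeeping argument whose engine (single-edge realisation of the Laplacian at $v$ plus one-derivative integration by parts and pairwise cancellation) is missing from, and partly contradicted by, your sketch. Your closing remarks --- that Lemma \ref{lem:magic_formula} is only needed afterwards for Proposition \ref{prop:MM_phi}, and that Wilson-loop functionals have extended gauge invariance at $v$ --- are correct.
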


Equation \eqref{eq:abstract_MM} might be confusing, as it involves partial derivatives with respect to variables that do not appear explicitly in the function $\int_{G^m} fd\mu$; it becomes in fact clearer after being translated in terms of the area simplex. We define the differential operator $\mu_v$ on functions $\Delta_{\Gbb}(T)\to\C$ by
\[
\mu_v=\frac{\partial}{\partial a_1}-\frac{\partial}{\partial a_2}+\frac{\partial}{\partial a_3}-\frac{\partial}{\partial a_4},
\]
using the labelling of $a=(a_1,\ldots,a_p)\in\Delta_{\Gbb}(T)$ such that $a_i$ corresponds to the face $f_i$.
Equation \eqref{eq:abstract_MM} becomes then
\[
\mu_v \E(f)=-\E(\nabla^{a_1}\cdot\nabla^{a_2}f),
\]
and now everything only depends on the areas of the faces. We want to apply these abstract Makeenko--Migdal equations to functionals of Wilson loops, in order to obtain the convergence to the master field. We define, for $k$  unrooted loops $\ell_1,\ldots,\ell_k\in \Ld_c(\Gbb)$, the $k$-point function $\phi_{\ell_1,\ldots,\ell_k}^G:\Delta_\Gbb(T)\to\C$ by
\[
\phi_{\ell_1\otimes \ldots\otimes \ell_k}^G = \E(W_{\ell_1}\cdots W_{\ell_k}).
\]
and extend it linearly to $\C[\Ld_c(\Gbb)]^{\otimes k}.$
The following proposition offers an estimate of the face-area variation of the functions $\phi_{\ell_1\otimes \ldots\otimes\ell_k}^G$. 

\begin{prop}[Makeenko--Migdal equations for Wilson loops]\label{prop:MM_phi}
Assume that $G_N$ is a compact classical group and $\<\cdot,\cdot\>$ is fixed as in section \ref{----sec:HK}. Let $(\Gbb,a)$ be a weighted map of area $T$ and genus $g$ with $m$ edges, and $v$ be an admissible crossing in $\Gbb$.
\begin{enumerate}
\item If $v$ is a self-intersection of a single loop $\ell_1$ such that the edges $(e_j^{\pm 1}, 1\leq j\leq 4)$ are visited in the following order: $e_1,e_4^{-1},e_2,e_3^{-1}$, then define $\ell_{11}$ the subloop of $\ell_1$ starting at $e_1$ and finishing at $e_4^{-1}$, $\ell_{12}$ the subloop starting at $e_2$ and finishing at $e_3^{-1}$. We have, for any loops $\ell_2,\ldots,\ell_k$ that do not cross $v$,
\begin{equation}\label{eq:MM_Wilson1}
\mu_v\phi_{\ell_1\otimes\ldots\otimes\ell_k}^G = \phi_{\ell_{11}\otimes \ell_{12}\otimes \ell_2\otimes\ldots\otimes\ell_k}^G+\frac{2-\beta}{\beta N}\phi_{\ell_{11}\ell_{12}^{-1}\otimes\ell_2 \otimes\ldots\otimes \ell_k}^G+\frac{\g}{N^2}\phi_{\ell_1\otimes\ldots\otimes \ell_k },
\end{equation}
\begin{equation}\label{eq:MM_Wilson2}
\mu_v\phi_{\ell_1\otimes \ell_1^{-1}}^G = \phi_{\ell_{11}\otimes \ell_{12}\otimes\ell_1^{-1}}^G+\phi_{\ell_1\otimes \ell_{11}^{-1}\otimes \ell_{12}^{-1}}^G+\frac{R_{\ell_1}}{N},
\end{equation}
where the $|R_{\ell_1}|\le 10$ uniformly on $\Delta_\Gbb(T)$.
\item If $v$ is the intersection between two loops $\ell_1$ and $\ell_2$ such that $\ell_1$ starts at $e_1$ and finishes at $e_3^{-1}$, and $\ell_2$ starts at $e_2$ and finishes at $e_4^{-1}$, then define $\ell$ the loop obtained by concatenation of $\ell_1$ and $\ell_2$. We have, for any loops $\ell_3,\ldots,\ell_k$ that do not cross $v$,
\begin{equation}\label{eq:MM_Wilson3}
\mu_v\phi_{\ell_1\otimes \ell_2\otimes \ldots\otimes \ell_k}^G = \frac{R_{\ell_1\otimes \ell_2\otimes \ldots\otimes \ell_k}}{N^2}
\end{equation}
with $|R_{\ell_1\otimes \ell_2\otimes \ldots\otimes \ell_k}|\le 3$ uniformly on $\Delta_\Gbb(T)$.
\end{enumerate}

\end{prop}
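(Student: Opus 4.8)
The plan is to derive Proposition~\ref{prop:MM_phi} as a direct consequence of the abstract Makeenko--Migdal equation \eqref{eq:abstract_MM} together with the trace identities of Lemma~\ref{lem:magic_formula}. The first step is to check that each Wilson loop functional has the required extended gauge invariance at an admissible crossing $v$. Fixing a local trivialisation and writing $W_{\ell}$ in terms of edge variables, if $\ell$ uses the outgoing edges $e_1,e_2,e_3,e_4$ at $v$, then the holonomy around $v$ factors as a product of the form (stuff)$\cdot a_i^{\pm 1}\cdot$(stuff)$\cdot a_j^{\pm 1}\cdot$(stuff); inserting $x$ and $x^{-1}$ on the two strands as in the definition cancels by cyclicity of $\tr$. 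One does this separately for the self-intersection case (where both insertions live on the same loop $\ell_1$, splitting it into $\ell_{11}$ and $\ell_{12}$) and for the two-loop case (where the two insertions are distributed to $\ell_1$ and $\ell_2$). The loops $\ell_2,\dots,\ell_k$ not passing through $v$ contribute trivially to the invariance, and products of Wilson loops inherit the invariance from the factors.

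Next I would compute the right-hand side $-\E(\nabla^{a_1}\cdot\nabla^{a_2} f)$ for $f = W_{\ell_1}\cdots W_{\ell_k}$. The mixed derivative $\nabla^{a_1}\cdot\nabla^{a_2}$ produces two left-invariant derivatives applied on the two strands at $v$; after expanding, each term is of the shape $\sum_{X\in\mathcal B}\tr(A X B X)$ or $\sum_{X\in\mathcal B}\tr(A X)\tr(B X)$ depending on whether the two differentiated strands belong to the same loop or to two distinct loops. Here $A$ and $B$ are the relevant holonomy words: for the self-intersection one gets $\tr(A X B X)$ with $A,B$ encoding $\ell_{11}$ and $\ell_{12}$, and Lemma~\ref{lem:magic_formula}, Eq.~\eqref{eq:tr_1}, converts it into the desingularised two-point term $\phi^G_{\ell_{11}\otimes\ell_{12}}$ plus the correction $\tfrac{2-\beta}{\beta N}\phi^G_{\ell_{11}\ell_{12}^{-1}}$ plus $\tfrac{\gamma}{N^2}\phi^G_{\ell_1}$, each tensored with the untouched $W_{\ell_2},\dots,W_{\ell_k}$; translating $\partial_{t_i}$ to the face-area operator $\mu_v$ yields \eqref{eq:MM_Wilson1}. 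For the two-loop case the two strands lie on $\ell_1$ and $\ell_2$, so one uses Eq.~\eqref{eq:tr_2}: the leading term $-\tr(AB)$ reassembles into $\phi^G_{\ell_1\ell_2}$ but is precisely cancelled by the derivative structure (the abstract equation already accounts for it), leaving only the $O(1/N)$ and $O(1/N^2)$ pieces; a careful bookkeeping shows the surviving remainder is $O(1/N^2)$ with the stated bound $3$, giving \eqref{eq:MM_Wilson3}.

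The variance-type identity \eqref{eq:MM_Wilson2} is handled the same way with $f = W_{\ell_1}W_{\ell_1^{-1}} = |W_{\ell_1}|^2$: now $v$ is a self-intersection of $\ell_1$, and $\nabla^{a_1}\cdot\nabla^{a_2}$ hits the product, producing by Leibniz both ``diagonal'' terms (both derivatives on the same factor $W_{\ell_1}$, giving $\phi^G_{\ell_{11}\otimes\ell_{12}\otimes\ell_1^{-1}}$ and $\phi^G_{\ell_1\otimes\ell_{11}^{-1}\otimes\ell_{12}^{-1}}$ via Eq.~\eqref{eq:tr_1}) and ``cross'' terms (one derivative on $W_{\ell_1}$, one on $W_{\ell_1^{-1}}$, handled by Eq.~\eqref{eq:tr_2}). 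The cross terms and the $O(1/N)$ parts of \eqref{eq:tr_1}/\eqref{eq:tr_2} are all uniformly $O(1/N)$; collecting them into a single $R_{\ell_1}/N$ and crudely counting the number of such contributions (at most a handful, each a normalised trace bounded by $1$) gives the bound $|R_{\ell_1}|\le 10$.

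The main obstacle is the careful combinatorial/sign bookkeeping in matching the four partial area derivatives $\partial_{t_1}-\partial_{t_2}+\partial_{t_3}-\partial_{t_4}$ on the left of \eqref{eq:abstract_MM} with the operator $\mu_v$, and in verifying that in the two-loop case the naively leading $O(1)$ term $-\tr(AB)$ genuinely drops out rather than contributing, so that the remainder is truly $O(1/N^2)$ and not merely $O(1/N)$; this requires tracking exactly which faces $f_1,\dots,f_4$ are adjacent to $v$ and how the concatenated loop $\ell=\ell_1\ell_2$ traverses them. The uniformity of the remainders on $\Delta_\Gbb(T)$ follows from the fact that $|W_\ell|\le 1$ pointwise and that the $1/N$ and $1/N^2$ coefficients in Lemma~\ref{lem:magic_formula} are explicit constants independent of the areas.
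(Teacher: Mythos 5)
Your overall route is the same as the paper's: check extended gauge invariance of the product of Wilson loops at the crossing, apply the abstract equation \eqref{eq:abstract_MM}, and convert $-\E(\nabla^{a_1}\cdot\nabla^{a_2}f)$ via the Casimir trace identities, using \eqref{eq:tr_1} for the self-intersection case, the Leibniz expansion into two ``diagonal'' and two ``cross'' terms for \eqref{eq:MM_Wilson2}, and \eqref{eq:tr_2} for the two-loop case; the counting argument for $|R_{\ell_1}|\le 10$ is also how the paper proceeds. For \eqref{eq:MM_Wilson1} and \eqref{eq:MM_Wilson2} your outline is correct.

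There is, however, a genuine gap in your treatment of \eqref{eq:MM_Wilson3}. You assert that the leading term $-\tr(AB)$ ``reassembles into $\phi^G_{\ell_1\ell_2}$ but is precisely cancelled by the derivative structure (the abstract equation already accounts for it)'', and you propose to verify this by tracking which faces $f_1,\ldots,f_4$ the concatenated loop traverses. No such cancellation exists: the right-hand side of \eqref{eq:abstract_MM} is exactly $-\E(\nabla^{a_1}\cdot\nabla^{a_2}f)$, and whatever the Casimir identity yields is the answer — there is no further mechanism, combinatorial or otherwise, that removes an $O(1)$ contribution. The actual reason \eqref{eq:MM_Wilson3} is $O(N^{-2})$ is purely a matter of normalisation: when the two differentiated strands sit on distinct traces, the mixed identity for $\sum_X\tr(AX)\tr(BX)$ is suppressed by a factor $N^{-2}$ relative to \eqref{eq:tr_1} (for $\U(N)$ one computes directly $\sum_X\tr(AX)\tr(BX)=-\tfrac{1}{N^2}\tr(AB)$, and analogously with the $\beta,\gamma$ corrections for the other series), so each of the at most three resulting terms is bounded by $N^{-2}$ since normalised traces of group elements have modulus at most $1$; this gives the bound $3$ immediately and uniformly on $\Delta_\Gbb(T)$. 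Note that if you take the displayed formula \eqref{eq:tr_2} completely literally (without this $N^{-2}$ scaling, which appears to have been dropped in the display), your argument cannot close at all, because you would be left with a genuine $O(1)$ term and no cancellation to invoke; the fix is to recompute the Casimir identity in the two-trace case, not to hunt for a cancellation via face bookkeeping.
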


It was  proved for all classical Lie algebras if $\Gbb$ is a planar combinatorial graph by L\'evy in \cite[Prop. 6.16]{Lev} when the loops form what he called a skein. If $\Gbb$ is a map of genus $0$ and $\mathfrak{g}$ is the Lie algebra of $\U(N)$, this result was proved by the first author with Norris in \cite[Prop. 4.3]{DN}.  See also \cite[Thm. 1.1]{DHK}

\begin{proof}[Proof of Prop. \ref{prop:MM_phi}]
Let us start with the first case, which is when $v$ is a self-intersection of a loop $\ell_1$. We take $E=\{e_1,e_2,e_3,e_4,e'_1,\ldots,e'_{m-4}\}$ as an orientation of $E$, with $e_1,e_2,e_3,e_4$ the four outgoing edges from $v$. We identify any multiplicative function $h\in\mathcal{M}(P(\mathbb{G}),G)$ to a tuple $(a_1,a_2,a_3,a_4,\mathbf{b})$ by setting $a_i=h_{e_i}$ and $\mathbf{b}=(h_{e'_i})_{1\leq i\leq m-4}$ the tuple of all other images of edges by $h$. There are words $\alpha,\beta,w_2,\ldots,w_k$ in the elements of $\mathbf{b}$ such that
\[
h_{\ell_1}=a_3^{-1}\alpha a_2a_4^{-1}\beta a_1, h_{\ell_i}=w_i\ \forall 2\leq i\leq k.
\]
It appears that $\phi_{\ell_1,\ldots,\ell_k}^G=E(f)$, where $f$ is the extended gauge-invariant function 
\[
f:\left\lbrace\begin{array}{ccc}
G^m & \to & \C\\
(a_1,a_2,a_3,a_4,\mathbf{b}) & \mapsto & \tr(a_3^{-1}\alpha a_2a_4^{-1}\beta a_1)\tr(w_2)\cdots\tr(w_k).
\end{array}\right.
\]
Then, by the abstract Makeenko--Migdal equation \eqref{eq:abstract_MM}, we get
\[
\mu_v \E(f)=-\E(\nabla^{a_1}\cdot\nabla^{a_2} f),
\]
and by definition
\[
\nabla^{a_1}\cdot\nabla^{a_2} f=\left(\sum_{X}\tr(a_3^{-1}\alpha a_2Xa_4^{-1}\beta a_1 X)\right)\tr(w_2)\cdots\tr(w_k)
\]
where $X$ runs through an orthonormal basis of $\mathfrak{g}$. A straightforward application of \eqref{eq:tr_1} from Lemma \ref{lem:magic_formula} yields \eqref{eq:MM_Wilson1}, by noticing that $h_{\ell_{11}}=a_4^{-1}\beta a_1$ and $h_{\ell_{12}}=a_3^{-1}\alpha a_2$.

Similarly, we have $\phi_{\ell,\ell^{-1}}^G=E(f')$, where
\[
f':\left\lbrace\begin{array}{ccc}
G^m & \to & \C\\
(a_1,a_2,a_3,a_4,\mathbf{b}) & \mapsto & \tr(a_3^{-1}\alpha a_2a_4^{-1}\beta a_1)\tr(a_1^{-1}\beta^{-1}a_4a_2^{-1}\alpha^{-1}a_3).
\end{array}\right.
\]
We have
\begin{align*}
\nabla^{a_1}\cdot\nabla^{a_2} f' =  \sum_{X} & \big\lbrace\tr(a_3^{-1}\alpha a_2Xa_4^{-1}\beta a_1 X)\tr(a_1^{-1}\beta^{-1}a_4a_2^{-1}\alpha^{-1}a_3)\\
& -\tr(a_3^{-1}\alpha a_2a_4^{-1}\beta a_1 X)\tr(a_1^{-1}\beta^{-1}a_4Xa_2^{-1}\alpha^{-1}a_3)\\
& - \tr(a_3^{-1}\alpha a_2Xa_4^{-1}\beta a_1)\tr(Xa_1^{-1}\beta^{-1}a_4a_2^{-1}\alpha^{-1}a_3)\\
& + \tr(a_3^{-1}\alpha a_2a_4^{-1}\beta a_1 )\tr(Xa_1^{-1}\beta^{-1}a_4Xa_2^{-1}\alpha^{-1}a_3)\big\rbrace,
\end{align*}
and a simultaneous application of \eqref{eq:tr_1} and \eqref{eq:tr_2} leads to the result. We detail the case of $\SU(N)$ and leave the others as an exercise: if we set $A=h_{\ell_{11}}$ and $B=h_{\ell_{12}}$, then
\begin{align*}
\sum_X \tr(AXBX)\tr(B^{-1}A^{-1}) = & -\tr(A)\tr(B)\tr((AB)^{-1})+\frac{1}{N}\tr(AB)\tr((AB)^{-1})\\
\sum_X\tr(ABX)\tr(B^{-1}XA^{-1}) = & -\frac{1}{N^2}\tr([A,B])+\frac{1}{N}\tr(AB)\tr(A^{-1}B^{-1})\\
\sum_X\tr(AXB)\tr(XB^{-1}A^{-1}) = & -\frac{1}{N^2}\tr([A,B]^{-1})+\frac{1}{N}\tr(BA)\tr(B^{-1}A^{-1})\\
\sum_X \tr(AB)\tr(XB^{-1}XA^{-1}) = & -\tr(A^{-1})\tr(B^{-1})\tr((AB))+\frac{1}{N}\tr(AB)\tr(A^{-1}B^{-1}).
\end{align*}
We can then take the expectation of the alternated sum of these expressions, and as all traces are bounded by 1 because they apply to special unitary matrices, we find that all terms with a coefficient $\frac{1}{N}$ or $\frac{1}{N^2}$ fall into $O\big(\frac{1}{N}\big)$ which does not depend on any loop\footnote{we add up a finite number of terms, 6 to be precise, which are bounded by $\frac{1}{N}$, so their sum is bounded by $\frac{6}{N}$ which is indeed independent from the loops or the face-area vector.}, so that
\[
\phi_{\ell_1\otimes\ell_1^{-1}}^{\SU(N)}= \phi_{\ell_{11}\otimes\ell_{12}\otimes(\ell_{11}\ell_{12})^{-1}}^{\SU(N)}+\phi_{\ell_{11}^{-1}\otimes\ell_{12}^{-1}\otimes(\ell_{11}\ell_{12})}^{\SU(N)}+O\big(\frac{1}{N}\big).
\] 

Let us now turn to the second case, when $v$ is the intersection of $\ell_1$ and $\ell_2$. We take $E=\{e_1,e_2,e_3,e_4,e'_1,\ldots,e'_{m-4}\}$ as an orientation of $E$, with $e_1,e_2,e_3,e_4$ the four outgoing edges from $v$. There are words $\alpha,\beta,w_2,\ldots,w_k$ in the elements of $\mathbf{b}$ such that
\[
h_{\ell_1}=a_3^{-1}\alpha a_1,h_{\ell_2}=a_4^{-1}\alpha a_2, h_{\ell_i}=w_i\ \forall 3\leq i\leq k.
\]
We have $\phi_{\ell_1,\ldots,\ell_k}^G=E(f)$, where $f$ is the extended gauge-invariant function 
\[
f:\left\lbrace\begin{array}{ccc}
G^m & \to & \C\\
(a_1,a_2,a_3,a_4,\mathbf{b}) & \mapsto & \tr(a_3^{-1}\alpha a_1)\tr(a_4^{-1}\beta a_2)\tr(w_2)\cdots\tr(w_k),
\end{array}\right.
\]
then
\[
\mu_v E(f)=-E(\nabla^{a_1}\cdot\nabla^{a_2} f),
\]
where
\[
\nabla^{a_1}\cdot\nabla^{a_2} f=\left(\sum_{X}\tr(a_3^{-1}\alpha a_1)\tr(Xa_4^{-1}\beta a_2 X)\right)\tr(w_2)\cdots\tr(w_k).
\]
The result follows then from \eqref{eq:tr_2}.
\end{proof}

By letting $N\to\infty$ in Prop. \ref{prop:MM_phi}, one immediately gets the following.

\begin{coro}[Makeenko--Migdal equations for a master field]\label{cor:MM_mf}
Assume for some some sequence $(G_N)_N$ of compact classical groups, we have for all maps $\Gbb$ of genus $g\ge 1$ and $\ell\in \Ld(\Gbb),$ 
$\lim_{N\to \infty}\Phi^{G_N}_\ell  $ and $\lim_{N\to \infty}\Phi^{G_N}_{\ell\otimes \ell^{-1}}= |\Phi_\ell|^2 $ uniformly on $\Delta_\Gbb(T),$ then $\Phi$ defines an exact solution of the Makeenko--Migdal solution as defined in section 
\ref{--------sec: MM Existence Uniqueness PB}.
\end{coro}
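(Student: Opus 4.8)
\textbf{Proof plan for Corollary \ref{cor:MM_mf}.}

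The statement to establish is that a large-$N$ limit $\Phi$ of the $1$- and $2$-point Wilson loop functions, assuming it exists uniformly on each area simplex, satisfies the three defining properties of an exact solution of the Makeenko--Migdal equations listed in Section \ref{--------sec: MM Existence Uniqueness PB}: (i) the differential equation $\mu_v.\phi_\ell = \phi_{\ell_1}\phi_{\ell_2}$ on $\Delta^o_\Gbb(T)$, (ii) factorisation $\phi_{\alpha\otimes\beta}=\phi_\alpha\phi_\beta$ for pairs of regular loops in the same graph, and (iii) vanishing on non-contractible loops, $\phi_\ell=0$ whenever $\ell\not\sim_h c_{\und\ell}$. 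The plan is to obtain each of these three properties as a limit of the corresponding finite-$N$ statement.

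First I would address the differential equation. Starting from the finite-$N$ Makeenko--Migdal identity for a single-loop self-intersection, Proposition \ref{prop:MM_phi} gives $\mu_v\phi^{G_N}_\ell = \phi^{G_N}_{\ell_1\otimes\ell_2} + \tfrac{2-\beta}{\beta N}\phi^{G_N}_{\ell_1\ell_2^{-1}} + \tfrac{\g}{N^2}\phi^{G_N}_\ell$. The error terms are $O(1/N)$ uniformly on $\Delta_\Gbb(T)$ since all Wilson loops are bounded by $1$ in modulus. The hypothesis that $\Phi^{G_N}_\ell\to\Phi_\ell$ and $\Phi^{G_N}_{\ell\otimes\ell^{-1}}\to|\Phi_\ell|^2$ uniformly, combined with the variance bound $\scV_{\phi^{G_N},\mu}\to 0$ for every loop $\mu$ (which follows from the $2$-point hypothesis applied to $\mu$, using the definition $\scV_{\phi,\mu}=\phi_{\mu\otimes\mu^{-1}}-|\phi_\mu|^2$), yields that $\phi^{G_N}_{\ell_1\otimes\ell_2}\to\phi_{\ell_1}\phi_{\ell_2}$ uniformly: indeed $|\phi^{G_N}_{\ell_1\otimes\ell_2}-\phi^{G_N}_{\ell_1}\phi^{G_N}_{\ell_2}|\le\sqrt{\scV_{\phi^{G_N},\ell_1}\scV_{\phi^{G_N},\ell_2}}$ by Cauchy--Schwarz for the state $\phi^{G_N}_{a,v}$. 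Hence the right-hand side of the finite-$N$ identity converges uniformly on $\Delta_\Gbb(T)$; since $\mu_v\phi^{G_N}_\ell$ is a directional derivative and the $\phi^{G_N}_\ell$ converge uniformly, a standard argument (uniform convergence of functions plus uniform convergence of their directional derivatives forces the limit to be $C^1$ with derivative the limit of the derivatives, restricted here to the open simplex) gives $\Phi_\ell\in C^1(\Delta^o_\Gbb(T))$ and $\mu_v.\Phi_\ell=\Phi_{\ell_1}\Phi_{\ell_2}$. Property (ii) is immediate from the Cauchy--Schwarz inequality above together with the hypotheses. For property (iii), I would invoke the result of the preceding section: the invariance-in-law lemma (the unnamed Lemma in Section ``Invariance in law and Wilson loop expectation'') shows $\E_{\YM_{\Gbb,a}}[W_\ell]=0$ for suitable non-contractible $\ell$ when $G=\U(N),\SU(N)$ or $\SO(2N)$ via the action of the centre; alternatively and more robustly, property (iii) should be derived from property (i) itself — a non-contractible regular loop can be de-singularised so that at least one factor is again non-contractible with strictly smaller complexity, and integrating the differential equation down to a simplex face where the loop becomes manifestly trivial forces $\Phi_\ell$ to be constant in a direction transverse to that face; but since $\Phi_\ell$ must also be rotation/translation-covariant and bounded, the only consistent value is $0$. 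Actually the cleanest route is to note that this is exactly what Lemma \ref{__Lem:Existence}(2) shows the exact solution does, so it suffices to match $\Phi$ with that solution on the relevant loops — but to stay self-contained I would spell out the centre-invariance argument for the non-contractible case.

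The main obstacle I anticipate is the regularity claim: passing from uniform convergence of $\phi^{G_N}_\ell$ and uniform convergence of the combination $\mu_v\phi^{G_N}_\ell$ on the open simplex to the conclusion that $\Phi_\ell$ is genuinely $C^1$ there with the expected derivative. This requires controlling all the Makeenko--Migdal directional derivatives simultaneously (the vectors $\mu_v$ for $v\in V_\ell$ together with $d\omega_e$ for $e\notin E_\ell$, which by Lemma \ref{__Lem: Charac MM} span the full tangent space $\{\mu_*\}^\perp$ — or its codimension-one subspace when $[\ell]=0$) and checking that the limiting directional derivatives are consistent, i.e. that they integrate to $\Phi_\ell$. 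This is where the uniformity hypothesis in the statement is essential, and where one must be slightly careful because the error terms in Proposition \ref{prop:MM_phi}, though $O(1/N)$, depend on the loop; fortunately for a single fixed graph and loop this is harmless. Everything else — the Cauchy--Schwarz step, the vanishing of variances, and the centre-invariance argument for (iii) — is routine given the results already assembled in the excerpt.
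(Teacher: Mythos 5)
For conditions 1 and 2 of the definition of an exact solution, your route is the paper's own: the proof given in the text is literally ``let $N\to\infty$ in Proposition \ref{prop:MM_phi}'', and your bookkeeping — the uniform $O(1/N)$ error terms, the observation that $\scV_{\phi^N,\ell}=\phi^N_{\ell\otimes\ell^{-1}}-\phi^N_\ell\phi^N_{\ell^{-1}}\to 0$ under the hypotheses, Cauchy--Schwarz to turn vanishing variances into factorisation of arbitrary two-point functions, and the passage from uniform convergence of the Makeenko--Migdal directional derivatives to $\mu_v.\Phi_\ell=\Phi_{\ell_1}\Phi_{\ell_2}$ — is exactly what that one-liner leaves implicit. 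Your caveat that when $[\ell]=0$ the space $\mathfrak{m}_\ell$ misses the $n_\ell$-direction, so that full $C^1$-regularity on $\Delta^o_\Gbb(T)$ is not literally delivered by these identities, is a fair observation which the paper also glosses over; it is not where the real problem lies.

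The genuine gap is condition 3, namely $\Phi_\ell=0$ whenever $\ell\not\sim_h c_{\und\ell}$, which is part of the definition you undertake to verify and which you declare routine. None of your three suggestions proves it. The centre-invariance lemma yields exact vanishing of $\E[W_\ell]$ only when the homology class pairs nontrivially with the centre of $G_N$: it covers $\U(N)$ with $[\ell]_\Z\neq0$, $\SU(N)$ with $[\ell]_{\Z_N}\neq0$ and $\SO(2N)$ with $[\ell]_{\Z_2}\neq0$, but it says nothing for $\Sp(N)$, nothing when the class dies in the relevant finite quotient, and nothing at all for homologically trivial non-contractible loops, which exist for every $g\ge2$ (for instance separating simple loops). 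Your fallback of deriving 3 from 1 by ``integrating down to a simplex face'' together with boundedness and a rotation/translation covariance (which has no meaning in this discrete, fixed-map setting) is not an argument: the Makeenko--Migdal equations do not determine their solutions without boundary data — this failure of uniqueness is precisely why the notions of good boundary conditions and the whole of Section 4 exist — and the value of $\Phi$ on, say, a simple non-contractible loop of the torus is exactly such boundary data, obtained in the paper from Theorem \ref{-->THM: CUT Conv} and the results of \cite{DL}, not from any MM deformation. Finally, ``matching $\Phi$ with the exact solution of Lemma \ref{__Lem:Existence}(2) on the relevant loops'' is circular: that matching on non-contractible loops is the content of Theorem \ref{-->THM: Torus} for $g=1$ and of Conjecture \ref{conj_Lift} for $g\ge2$, not a soft consequence of the corollary's hypotheses. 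In fairness, the paper's one-line proof only addresses what letting $N\to\infty$ in Proposition \ref{prop:MM_phi} actually yields, i.e. conditions 1--2; but since your proposal takes on condition 3 and labels the remaining step routine, that step, as written, fails.
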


To address uniqueness  questions, it is convenient to work with \emph{centered Wilson loops}. Define, for any $\ell_1,\ldots,\ell_k$ in an area-weighted graph $(\Gbb,a)$,
\[
\psi_{\ell_1\otimes \ldots\otimes \ell_k}^G = \E\left[\prod_{i=1}^k(W_{\ell_i}-\Phi_{\ell_i})\right].
\]

\begin{prop}[Makeenko--Migdal equations for centered Wilson loops] Assume $g\ge 0,$ $T>0,$ $\ell\in \ell_g$, $v\in V_\ell$ with $\delta_v\ell=\ell_1\otimes \ell_2.$ Then for any compact classical group $G_N,$

\begin{align}
\begin{split}
\mu_v\psi^G_{\ell\otimes\ell^{-1}} = & \psi^G_{\ell_1\otimes\ell_2\otimes\ell^{-1}}+\psi^G_{\ell_1^{-1}\otimes\ell_2^{-1},\ell}+\psi^G_{\ell_1\otimes\ell^{-1}}\Phi_{\ell_2}+\psi^G_{\ell_1^{-1}\otimes\ell}\Phi_{\ell_2^{-1}}\\
&+\psi^G_{\ell_2\otimes\ell^{-1}}\Phi_{\ell_1}+\psi_{\ell_2^{-1}\otimes\ell}\Phi_{\ell_1^{-1}}+\frac{R_{\ell}}{N},
\end{split}
\end{align}
where the $|R_\ell|\le 10$  uniformly on $\Delta_\Gbb(T)$.  There is a constant $C_\ell$ independent of $G,$ such that for all $X\in \mfm_\ell,$
\begin{align*}
\mu_v\psi^G_{\ell\otimes\ell^{-1}} = & \psi_{\delta_X(\ell)\otimes\ell^{-1}}+\psi^G_{\ell,\delta_X(\ell^{-1})}+\psi_{\ell_1\otimes\ell^{-1}}\Phi_{\ell_2}+\psi_{\ell_1^{-1}\otimes\ell}\Phi_{\ell_2^{-1}}\\
&+\psi^G_{\ell_2\otimes\ell^{-1}}\Phi_{\ell_1}+\psi_{\ell_2^{-1}\otimes\ell}\Phi_{\ell_1^{-1}}+\frac{R_{\ell}}{N},
\end{align*}
with $|R_\ell|\le 10$ uniformly on $\Delta_{\Gbb}(T).$
\end{prop}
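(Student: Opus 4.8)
The statement to prove is the Makeenko--Migdal equation for centered Wilson loops $\psi^G_{\ell\otimes\ell^{-1}}$, together with its consequence for an arbitrary Makeenko--Migdal vector $X\in\mfm_\ell$. The strategy is to \emph{reduce everything to Proposition \ref{prop:MM_phi}}, which already gives the analogous statements for the (uncentered) $k$-point functions $\phi^G$. The only new ingredient is purely algebraic: expanding the products $\prod_i(W_{\ell_i}-\Phi_{\ell_i})$ into sums of products of ordinary Wilson loops, differentiating term by term, and recollecting.

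\textbf{Step 1: expansion of $\psi$ in terms of $\phi$.} Write, for the loop $\ell$ and its reverse $\ell^{-1}$,
\[
\psi^G_{\ell\otimes\ell^{-1}}=\phi^G_{\ell\otimes\ell^{-1}}-\Phi_{\ell^{-1}}\phi^G_\ell-\Phi_\ell\phi^G_{\ell^{-1}}+\Phi_\ell\Phi_{\ell^{-1}},
\]
and similarly for the three-fold terms $\psi^G_{\ell_1\otimes\ell_2\otimes\ell^{-1}}$, etc., using that $\Phi$ is a constant (the exact-solution value, independent of the area vector, by point 2 of the definition of an exact solution and by property 2 of a Wilson loop system). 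The point is that $\Phi_{\ell}$ does not depend on $a$, hence $\mu_v\Phi_\ell=0$ and $d\omega_e.\Phi_\ell=0$; so when one applies the differential operator $\mu_v$ to the expansion of $\psi^G_{\ell\otimes\ell^{-1}}$, only the $\phi^G$ factors get differentiated.

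\textbf{Step 2: apply Proposition \ref{prop:MM_phi}.} Differentiate the expansion of $\psi^G_{\ell\otimes\ell^{-1}}$ using \eqref{eq:MM_Wilson2} for the two-point term $\mu_v\phi^G_{\ell\otimes\ell^{-1}}$ and \eqref{eq:MM_Wilson1} for $\mu_v\phi^G_\ell$ and $\mu_v\phi^G_{\ell^{-1}}$ (note $\delta_v\ell=\ell_1\otimes\ell_2$ means, in the notation of that proposition, $\ell_{11}=\ell_1$, $\ell_{12}=\ell_2$ up to cyclic equivalence and orientation bookkeeping, which one must line up carefully). This produces a sum of $\phi^G$ terms with error $O(1/N)$. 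Now run Step 1 in reverse: regroup the resulting $\phi^G$ expression into centered $\psi^G$ terms plus the cross terms $\psi^G_{\ell_1\otimes\ell^{-1}}\Phi_{\ell_2}$, $\psi^G_{\ell_1^{-1}\otimes\ell}\Phi_{\ell_2^{-1}}$, $\psi^G_{\ell_2\otimes\ell^{-1}}\Phi_{\ell_1}$, $\psi^G_{\ell_2^{-1}\otimes\ell}\Phi_{\ell_1^{-1}}$. The identity $\Phi_\ell=\Phi_{\ell_1}\Phi_{\ell_2}$ (valid at area vectors in the relevant simplex by the Makeenko--Migdal equation satisfied by the exact solution $\Phi$, or rather: here $\Phi$ is the \emph{limiting} field, so one uses $\mu_v\Phi_\ell=\Phi_{\ell_1}\Phi_{\ell_2}$ and that the $\Phi$'s are constants — in fact what is needed is just that the constant terms $\Phi_\ell\Phi_{\ell^{-1}}$ etc. have vanishing $\mu_v$-derivative, so they simply drop out) is what makes the bookkeeping close. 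Collect the $1/N$ remainders into a single $R_\ell/N$ with $|R_\ell|\le 10$, using the bound on $R_{\ell_1}$ in \eqref{eq:MM_Wilson2} and that $|\Phi_{\ell_i}|\le 1$, $|W_{\ell_i}|\le 1$.

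\textbf{Step 3: pass from $\mu_v$ to $X\in\mfm_\ell$.} By the definition of $\delta_X$ in Section \ref{----sec:Vertex desingularisation} and the remark \eqref{eq:Non visited Edges}--\eqref{eq:MM MM Vector}, write $X=\sum_{v\in V_\ell}\alpha_v(X)\mu_v+\sum_{e\notin E_\ell}\beta_e(X)d\omega_e$; the $d\omega_e$ terms kill all the $\phi^G$ and $\Phi$ factors appearing (none of $\ell,\ell_1,\ell_2,\ell^{-1}$ uses $e$), so only the $\mu_v$ part contributes, and summing the per-vertex identity of Step 2 against the coefficients $\alpha_v(X)$ gives the claimed formula with $\delta_X(\ell)$ in place of $\ell_1\otimes\ell_2$ and a remainder $R_\ell/N$ where now the constant absorbs $\|X\|$ and the number of intersection points — one can still normalize so that $|R_\ell|\le 10$ \emph{uniformly on $\Delta_\Gbb(T)$} provided $C_\ell$ is allowed to depend on $\ell$, exactly as stated (the statement says ``There is a constant $C_\ell$ independent of $G$''; the $\le 10$ bound in the displayed conclusion should be read with $X$ normalized, or the constant reabsorbed — I would match the paper's convention here and simply carry $C_\ell\|X\|$ through, stating the $1/N$ bound with that constant).

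\textbf{Main obstacle.} The genuine difficulty is \emph{not} analytic but combinatorial/notational: correctly matching the de-singularization convention $\delta_v\ell=\ell_1\otimes\ell_2$ (with $\ell_1$ left of $\ell_2$, as in Figure \ref{Fig----MMDeSing}) against the orientation-and-subloop bookkeeping of Proposition \ref{prop:MM_phi} (which uses $\ell_{11},\ell_{12}$ and a specific visiting order $e_1,e_4^{-1},e_2,e_3^{-1}$), and then verifying that the regrouping in Step 2 genuinely closes — i.e. that every $\phi^G$ term produced by differentiating the expansion of $\psi^G_{\ell\otimes\ell^{-1}}$ reassembles into one of the six advertised $\psi^G$ terms plus a controlled $O(1/N)$, with no leftover. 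This is a finite check but must be done with care about signs and which of $\ell_1,\ell_2$ carries which orientation; the appearance of $\ell_1^{-1},\ell_2^{-1}$ in \eqref{eq:MM_Wilson2} (coming from the reversed copy $\ell^{-1}$) is the reason all six cross terms, rather than three, show up. I expect the second displayed equation (the $X$-version) to follow from the first with only routine linear-algebra overhead once the first is established.
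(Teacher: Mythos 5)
Your overall route — expand the centered functions $\psi^G$ into the uncentered $\phi^G$, differentiate with Proposition \ref{prop:MM_phi}, and regroup — is the intended one (the paper leaves this verification to the reader). But your Step 1 contains a genuine error that breaks the regrouping: you assert that $\Phi_\ell$ is a constant independent of the area vector, so that $\mu_v\Phi_\ell=0$ and ``only the $\phi^G$ factors get differentiated.'' This is false: $\Phi_\ell$ (the exact solution / master field entering the centering) is a function on $\Delta_\Gbb(T)$, and its whole defining property is the exact Makeenko--Migdal equation $\mu_v\Phi_\ell=\Phi_{\ell_1}\Phi_{\ell_2}$ (likewise $\mu_v\Phi_{\ell^{-1}}=\Phi_{\ell_1^{-1}}\Phi_{\ell_2^{-1}}$). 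The terms produced by differentiating the $\Phi$ factors are exactly what makes the six advertised $\psi$-terms appear. Concretely, writing
\begin{equation*}
\psi^G_{\ell\otimes\ell^{-1}}=\phi^G_{\ell\otimes\ell^{-1}}-\Phi_{\ell^{-1}}\phi^G_{\ell}-\Phi_{\ell}\phi^G_{\ell^{-1}}+\Phi_{\ell}\Phi_{\ell^{-1}},
\end{equation*}
applying $\mu_v$ with the product rule and Proposition \ref{prop:MM_phi} gives, up to $O(1/N)$,
\begin{align*}
\mu_v\psi^G_{\ell\otimes\ell^{-1}}=\;&\phi^G_{\ell_1\otimes\ell_2\otimes\ell^{-1}}+\phi^G_{\ell\otimes\ell_1^{-1}\otimes\ell_2^{-1}}
-\Phi_{\ell^{-1}}\phi^G_{\ell_1\otimes\ell_2}-\Phi_{\ell_1^{-1}}\Phi_{\ell_2^{-1}}\phi^G_{\ell}\\
&-\Phi_{\ell}\phi^G_{\ell_1^{-1}\otimes\ell_2^{-1}}-\Phi_{\ell_1}\Phi_{\ell_2}\phi^G_{\ell^{-1}}
+\Phi_{\ell_1}\Phi_{\ell_2}\Phi_{\ell^{-1}}+\Phi_{\ell}\Phi_{\ell_1^{-1}}\Phi_{\ell_2^{-1}},
\end{align*}
and it is precisely the terms $-\Phi_{\ell_1}\Phi_{\ell_2}\phi^G_{\ell^{-1}}$, $-\Phi_{\ell_1^{-1}}\Phi_{\ell_2^{-1}}\phi^G_{\ell}$ and $\mu_v(\Phi_\ell\Phi_{\ell^{-1}})$ — all coming from $\mu_v$ hitting $\Phi$ — that allow the identities
\begin{equation*}
\phi^G_{\ell_1\otimes\ell_2\otimes\ell^{-1}}-\Phi_{\ell^{-1}}\phi^G_{\ell_1\otimes\ell_2}-\Phi_{\ell_1}\Phi_{\ell_2}\phi^G_{\ell^{-1}}+\Phi_{\ell_1}\Phi_{\ell_2}\Phi_{\ell^{-1}}
=\psi^G_{\ell_1\otimes\ell_2\otimes\ell^{-1}}+\Phi_{\ell_2}\psi^G_{\ell_1\otimes\ell^{-1}}+\Phi_{\ell_1}\psi^G_{\ell_2\otimes\ell^{-1}}
\end{equation*}
(and its mirror for the reversed loop) to close the computation. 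Under your hypothesis $\mu_v\Phi=0$ these contributions are absent and the remaining four $\phi^G$-terms do not reassemble into the stated right-hand side; there are uncancelled leftovers of the form $\Phi_{\ell_1}\Phi_{\ell_2}\phi^G_{\ell^{-1}}$, etc. Your own parenthetical in Step 2 oscillates between ``$\Phi_\ell=\Phi_{\ell_1}\Phi_{\ell_2}$'' (false), ``$\mu_v\Phi_\ell=\Phi_{\ell_1}\Phi_{\ell_2}$ and the $\Phi$'s are constants'' (contradictory) and ``the constant terms drop out'' (the opposite of what is needed), which is where the argument goes astray.

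Once this is corrected — i.e. you carry the product rule together with the exact equations $\mu_v\Phi_\ell=\Phi_{\ell_1}\Phi_{\ell_2}$ and $\mu_v\Phi_{\ell^{-1}}=\Phi_{\ell_1^{-1}}\Phi_{\ell_2^{-1}}$ — the rest of your plan is fine: the $O(1/N)$ remainders from \eqref{eq:MM_Wilson1}--\eqref{eq:MM_Wilson2} are bounded using $|W|\le 1$, $|\Phi|\le 1$, and the passage to a general $X\in\mfm_\ell$ is the routine linear combination you describe, with the $d\omega_e$ components annihilating all terms.
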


\section*{Acknowledgments} 

A.D. acknowledges partial support from ``The Dr Perry James (Jim) Browne Research Centre on Mathematics and its Applications.'' T.L acknowledges support from ANR AI chair BACCARAT (ANR-20-CHIA-0002).


\begin{thebibliography}{10}

\bibitem{AlbeverioI}
Sergio Albeverio, Raphael H{\diameter}egh-Krohn, and Helge Holden.
\newblock Stochastic {L}ie group-valued measures and their relations to
  stochastic curve integrals, gauge fields and {M}arkov cosurfaces.
\newblock In {\em Stochastic processes---mathematics and physics ({B}ielefeld,
  1984)}, volume 1158 of {\em Lecture Notes in Math.}, pages 1--24. Springer,
  Berlin, 1986.

\bibitem{AlbeverioII}
Sergio Albeverio, Raphael H{\diameter}egh-Krohn, and Helge Holden.
\newblock Stochastic multiplicative measures, generalized {M}arkov semigroups,
  and group-valued stochastic processes and fields.
\newblock {\em J. Funct. Anal.}, 78(1):154--184, 1988.

\bibitem{AS}
Michael Anshelevich and Ambar~N. Sengupta.
\newblock Quantum free {Y}ang--{M}ills on the plane.
\newblock {\em J. Geom. Phys.}, 62(2):330--343, 2012.

\bibitem{AB}
M.~F. Atiyah and R.~Bott.
\newblock The {Y}ang-{M}ills equations over {R}iemann surfaces.
\newblock {\em Philos. Trans. Roy. Soc. London Ser. A}, 308(1505):523--615,
  1983.

\bibitem{BenaychLev}
Florent Benaych-Georges and Thierry L\'{e}vy.
\newblock A continuous semigroup of notions of independence between the
  classical and the free one.
\newblock {\em Ann. Probab.}, 39(3):904--938, 2011.

\bibitem{Bia}
Philippe Biane.
\newblock Free {B}rownian motion, free stochastic calculus and random matrices.
\newblock In {\em Free probability theory ({W}aterloo, {ON}, 1995)}, volume~12
  of {\em Fields Inst. Commun.}, pages 1--19. Amer. Math. Soc., Providence, RI,
  1997.

\bibitem{BiS}
Joan~S. Birman and Caroline Series.
\newblock Dehn's algorithm revisited, with applications to simple curves on
  surfaces.
\newblock In {\em Combinatorial group theory and topology ({A}lta, {U}tah,
  1984)}, volume 111 of {\em Ann. of Math. Stud.}, pages 451--478. Princeton
  Univ. Press, Princeton, NJ, 1987.

\bibitem{BismutLab}
Jean-Michel Bismut and Fran\c{c}ois Labourie.
\newblock Symplectic geometry and the {V}erlinde formulas.
\newblock In {\em Surveys in differential geometry: differential geometry
  inspired by string theory}, volume~5 of {\em Surv. Differ. Geom.}, pages
  97--311. Int. Press, Boston, MA, 1999.

\bibitem{CDG}
Guillaume C\'{e}bron, Antoine Dahlqvist, and Franck Gabriel.
\newblock The generalized master fields.
\newblock {\em J. Geom. Phys.}, 119:34--53, 2017.

\bibitem{Chatt}
Sourav Chatterjee.
\newblock Rigorous solution of strongly coupled so(n) lattice gauge theory in
  the large n limit.
\newblock {\em Communications in Mathematical Physics}, 366(1):203--268, 2019.

\bibitem{Chev}
Ilya Chevyrev.
\newblock Yang-{M}ills measure on the two-dimensional torus as a random
  distribution.
\newblock {\em Comm. Math. Phys.}, 372(3):1027--1058, 2019.

\bibitem{ColGuionnetMS}
Beno\^{\i}t Collins, Alice Guionnet, and Edouard Maurel-Segala.
\newblock Asymptotics of unitary and orthogonal matrix integrals.
\newblock {\em Adv. Math.}, 222(1):172--215, 2009.

\bibitem{CollinsSnia}
Beno\^{\i}t Collins and Piotr \'{S}niady.
\newblock Integration with respect to the {H}aar measure on unitary, orthogonal
  and symplectic group.
\newblock {\em Comm. Math. Phys.}, 264(3):773--795, 2006.

\bibitem{Dah2}
Antoine Dahlqvist.
\newblock Free energies and fluctuations for the unitary {B}rownian motion.
\newblock {\em Comm. Math. Phys.}, 348(2):395--444, 2016.

\bibitem{Dah}
Antoine Dahlqvist.
\newblock Integration formulas for {B}rownian motion on classical compact {L}ie
  groups.
\newblock {\em Ann. Inst. Henri Poincar\'{e} Probab. Stat.}, 53(4):1971--1990,
  2017.

\bibitem{DL}
Antoine Dahlqvist and Thibaut Lemoine.
\newblock Large n limit of yang-mills partition function and wilson loops on
  compact surfaces, 2023.

\bibitem{DN}
Antoine Dahlqvist and James~R. Norris.
\newblock Yang--mills measure and the master field on the sphere.
\newblock {\em Communications in Mathematical Physics}, 377(2):1163--1226,
  2020.

\bibitem{DiaconisEvans}
Persi Diaconis and Steven~N. Evans.
\newblock Linear functionals of eigenvalues of random matrices.
\newblock {\em Trans. Amer. Math. Soc.}, 353(7):2615--2633, 2001.

\bibitem{Dri}
Bruce~K. Driver.
\newblock Y{M{${}_2$}}: continuum expectations, lattice convergence, and
  lassos.
\newblock {\em Comm. Math. Phys.}, 123(4):575--616, 1989.

\bibitem{DriverMM}
Bruce~K. Driver.
\newblock A functional integral approaches to the {M}akeenko-{M}igdal
  equations.
\newblock {\em Comm. Math. Phys.}, 370(1):49--116, 2019.

\bibitem{DGHK}
Bruce~K. Driver, Franck Gabriel, Brian~C. Hall, and Todd Kemp.
\newblock The {M}akeenko-{M}igdal equation for {Y}ang-{M}ills theory on compact
  surfaces.
\newblock {\em Comm. Math. Phys.}, 352(3):967--978, 2017.

\bibitem{DHK}
Bruce~K. Driver, Brian~C. Hall, and Todd Kemp.
\newblock Three proofs of the {M}akeenko-{M}igdal equation for {Y}ang-{M}ills
  theory on the plane.
\newblock {\em Comm. Math. Phys.}, 351(2):741--774, 2017.

\bibitem{Goldman}
William~M. Goldman.
\newblock The symplectic nature of fundamental groups of surfaces.
\newblock {\em Adv. in Math.}, 54(2):200--225, 1984.

\bibitem{GG}
Rajesh Gopakumar and David~J. Gross.
\newblock Mastering the master field.
\newblock {\em Nuclear Phys. B}, 451(1-2):379--415, 1995.

\bibitem{GrossMat}
David~J. Gross and Andrei Matytsin.
\newblock Some properties of large-n two-dimensional yang-mills theory.
\newblock {\em Nuclear Physics B}, 437(3):541--584, 1995.

\bibitem{GrossTaylor}
David~J. Gross and Washington Taylor, IV.
\newblock Two-dimensional {QCD} is a string theory.
\newblock {\em Nuclear Phys. B}, 400(1-3):181--208, 1993.

\bibitem{Gro}
Leonard Gross.
\newblock The {M}axwell equations for {Y}ang-{M}ills theory.
\newblock In {\em Mathematical quantum field theory and related topics
  ({M}ontreal, {PQ}, 1987)}, volume~9 of {\em CMS Conf. Proc.}, pages 193--203.
  Amer. Math. Soc., Providence, RI, 1988.

\bibitem{GKS}
Leonard Gross, Christopher King, and Ambar Sengupta.
\newblock Two-dimensional {Y}ang-{M}ills theory via stochastic differential
  equations.
\newblock {\em Ann. Physics}, 194(1):65--112, 1989.

\bibitem{Hal2}
Brian~C. Hall.
\newblock The large-{$N$} limit for two-dimensional {Y}ang--{M}ills theory.
\newblock {\em Comm. Math. Phys.}, 363(3):789--828, 2018.

\bibitem{HamblyLyons}
Ben Hambly and Terry Lyons.
\newblock Uniqueness for the signature of a path of bounded variation and the
  reduced path group.
\newblock {\em Ann. of Math. (2)}, 171(1):109--167, 2010.

\bibitem{KazPlan}
V.~A. Kazakov.
\newblock Wilson loop average for an arbitrary contour in two-dimensional
  {${\rm U}(N)$} gauge theory.
\newblock {\em Nuclear Phys. B}, 179(2):283--292, 1981.

\bibitem{KK}
V.~A. Kazakov and I.~K. Kostov.
\newblock Nonlinear strings in two-dimensional {${\rm U}(\infty )$} gauge
  theory.
\newblock {\em Nuclear Phys. B}, 176(1):199--215, 1980.

\bibitem{Ken}
Richard Kenyon.
\newblock Lectures on dimers.
\newblock In {\em Statistical mechanics}, volume~16 of {\em IAS/Park City Math.
  Ser.}, pages 191--230. Amer. Math. Soc., Providence, RI, 2009.

\bibitem{Lev3}
Thierry L{\'{e}}vy.
\newblock Yang-{M}ills measure on compact surfaces.
\newblock {\em Mem. Amer. Math. Soc.}, 166(790):xiv+122, 2003.

\bibitem{Lev8}
Thierry L\'{e}vy.
\newblock Discrete and continuous {Y}ang-{M}ills measure for non-trivial
  bundles over compact surfaces.
\newblock {\em Probab. Theory Related Fields}, 136(2):171--202, 2006.

\bibitem{Lev5}
Thierry L{\'{e}}vy.
\newblock Schur-{W}eyl duality and the heat kernel measure on the unitary
  group.
\newblock {\em Adv. Math.}, 218(2):537--575, 2008.

\bibitem{Lev2}
Thierry L{\'{e}}vy.
\newblock Two-dimensional {M}arkovian holonomy fields.
\newblock {\em Ast\'{e}risque}, (329):172, 2010.

\bibitem{Lev}
Thierry L{\'{e}}vy.
\newblock The master field on the plane.
\newblock {\em Ast\'{e}risque}, (388):ix+201, 2017.

\bibitem{Lev7}
Thierry L{\'e}vy.
\newblock {\em Two-Dimensional Quantum Yang--Mills Theory and the
  Makeenko--Migdal Equations}, pages 275--325.
\newblock Springer International Publishing, Cham, 2020.

\bibitem{LiechtyWang}
Karl Liechty and Dong Wang.
\newblock Nonintersecting brownian motions on the unit circle.
\newblock {\em The Annals of Probability}, 44(2):1134--1211, 2016.

\bibitem{Liu}
Kefeng Liu.
\newblock Heat kernels, symplectic geometry, moduli spaces and finite groups.
\newblock In {\em Surveys in differential geometry: differential geometry
  inspired by string theory}, volume~5 of {\em Surv. Differ. Geom.}, pages
  527--542. Int. Press, Boston, MA, 1999.

\bibitem{MageeII}
Michael {Magee}.
\newblock {Random Unitary Representations of Surface Groups II: The large $n$
  limit}.
\newblock {\em arXiv e-prints}, page arXiv:2101.03224, January 2021.

\bibitem{MageeI}
Michael Magee.
\newblock Random unitary representations of surface groups {I}: asymptotic
  expansions.
\newblock {\em Comm. Math. Phys.}, 391(1):119--171, 2022.

\bibitem{MM}
Yuri Makeenko and Alexander~A. Migdal.
\newblock Exact equation for the loop average in multicolor {QCD}.
\newblock {\em Physics Letters {B}}, 88:135--137, 1979.

\bibitem{MerPot}
Pierre Mergny and Marc Potters.
\newblock {Rank one HCIZ at high temperature: interpolating between classical
  and free convolutions}.
\newblock {\em SciPost Phys.}, 12:022, 2022.

\bibitem{Mig}
Alexander~A. Migdal.
\newblock Recursion equations in gauge field theories.
\newblock {\em Sov. Phys. JETP}, pages 413--418, 1975.

\bibitem{MingoSpeicher}
James~A. Mingo and Roland Speicher.
\newblock {\em Free probability and random matrices}, volume~35 of {\em Fields
  Institute Monographs}.
\newblock Springer, New York; Fields Institute for Research in Mathematical
  Sciences, Toronto, ON, 2017.

\bibitem{MR2044286}
Wojciech Mlotkowski.
\newblock {$\Lambda$}-free probability.
\newblock {\em Infin. Dimens. Anal. Quantum Probab. Relat. Top.}, 7(1):27--41,
  2004.

\bibitem{Moser}
J\"{u}rgen Moser.
\newblock On the volume elements on a manifold.
\newblock {\em Trans. Amer. Math. Soc.}, 120:286--294, 1965.

\bibitem{PPSY}
Minjae Park, Joshua Pfeffer, Scott Sheffield, and Pu~Yu.
\newblock Wilson loop expectations as sums over surfaces on the plane, 2023.

\bibitem{Sen0}
Ambar Sengupta.
\newblock Gauge theory on compact surfaces.
\newblock {\em Mem. Amer. Math. Soc.}, 126(600):viii+85, 1997.

\bibitem{Sen3}
Ambar Sengupta.
\newblock Yang-{M}ills on surfaces with boundary: quantum theory and symplectic
  limit.
\newblock {\em Comm. Math. Phys.}, 183(3):661--705, 1997.

\bibitem{Sen6}
Ambar~N. Sengupta.
\newblock The volume measure for flat connections as limit of the yang--mills
  measure.
\newblock {\em Journal of Geometry and Physics}, 47(4):398--426, 2003.

\bibitem{Sen2}
Ambar~N. Sengupta.
\newblock Traces in two-dimensional {QCD}: the large-{$N$} limit.
\newblock In {\em Traces in number theory, geometry and quantum fields},
  Aspects Math., E38, pages 193--212. Friedr. Vieweg, Wiesbaden, 2008.

\bibitem{Sin}
Isadore~M. Singer.
\newblock On the master field in two dimensions.
\newblock In {\em Functional analysis on the eve of the 21st century, {V}ol. 1
  ({N}ew {B}runswick, {NJ}, 1993)}, volume 131 of {\em Progr. Math.}, pages
  263--281. Birkh\"{a}user Boston, Boston, MA, 1995.

\bibitem{MR3552222}
Roland Speicher and Janusz Wysocza\'{n}ski.
\newblock Mixtures of classical and free independence.
\newblock {\em Arch. Math. (Basel)}, 107(4):445--453, 2016.

\bibitem{Sti}
John Stillwell.
\newblock {\em Classical topology and combinatorial group theory}, volume~72 of
  {\em Graduate Texts in Mathematics}.
\newblock Springer-Verlag, New York, second edition, 1993.

\bibitem{Hoo}
Gerard {'}{t}~Hooft.
\newblock A planar diagram theory for strong interactions.
\newblock {\em Nuclear Physics B}, 72(3):461 -- 473, 1974.

\bibitem{VoiFreeRM}
Dan Voiculescu.
\newblock Limit laws for random matrices and free products.
\newblock {\em Invent. Math.}, 104(1):201--220, 1991.

\bibitem{Voi}
Dan Voiculescu.
\newblock The analogues of entropy and of {F}isher's information measure in
  free probability theory. {VI}. {L}iberation and mutual free information.
\newblock {\em Adv. Math.}, 146(2):101--166, 1999.

\bibitem{VoiLect}
Dan Voiculescu.
\newblock Lectures on free probability theory.
\newblock In {\em Lectures on probability theory and statistics
  ({S}aint-{F}lour, 1998)}, volume 1738 of {\em Lecture Notes in Math.}, pages
  279--349. Springer, Berlin, 2000.

\bibitem{Wil}
Kenneth~G. Wilson.
\newblock Confinement of quarks.
\newblock {\em Phys. Rev.}, 10, 1974.

\bibitem{Wit2}
Edward Witten.
\newblock {The $1/N$ expansion in atomic and particle physics}.
\newblock {\em NATO Sci. Ser. B}, 59:403--419, 1980.

\bibitem{Wit}
Edward Witten.
\newblock On quantum gauge theories in two dimensions.
\newblock {\em Comm. Math. Phys.}, 141(1):153--209, 1991.

\bibitem{Xu}
Feng Xu.
\newblock A random matrix model from two-dimensional {Y}ang--{M}ills theory.
\newblock {\em Comm. Math. Phys.}, 190(2):287--307, 1997.

\end{thebibliography}
\end{document}